\documentclass{amsart}
\usepackage{amsmath,amssymb,amsthm,amsfonts}
\usepackage{paralist}
\usepackage{booktabs}
\usepackage[usenames,dvipsnames]{color}
\usepackage{comment}
\usepackage{graphicx}
\usepackage{latexsym}
\usepackage[latin1]{inputenc}
\usepackage[shortlabels]{enumitem}
\usepackage{thmtools}
\usepackage{url}
\usepackage[all,cmtip]{xy}
\usepackage{tikz}
\tikzset{font={\fontsize{4pt}{12}\selectfont}}
\usepackage{hyperref} % wants to be loaded last

\voffset=5mm
\oddsidemargin=18pt \evensidemargin=18pt
\headheight=9pt     \topmargin=0pt%-24pt
\textheight=620pt   \textwidth=443.pt

\usepackage[enableskew]{youngtab}
\usepackage{ytableau}

\newcommand{\dss}{\displaystyle}

 %for capitalization of \autoref'ed sections
 %for capitalization of \autoref'ed subsections

\newcommand{\Dyck}{\mathsf{Dyck}}
\newcommand{\nonpeaks}{\mathcal{NP}}
\newcommand{\FDyck}{\mathsf{FanDyck}}
\newcommand{\Alt}{\mathsf{Alt}}

\newcommand{\f}{{\bf f}}
\newcommand{\ssf}{\mathsf{f}}

\newcommand{\pleasant}{pleasant}

\title[Hook formulas for skew shapes II]{Hook formulas for skew
  shapes II. Combinatorial proofs \\ and enumerative applications}

\author[Alejandro Morales, Igor Pak, Greta Panova]{Alejandro H.~Morales$^\star$,
\ \ Igor Pak$^\star$, \ and \ \ Greta Panova$^\dagger$}

\thanks{\today}
\thanks{\thinspace ${\hspace{-.45ex}}^\star$Department of Mathematics,
UCLA, Los Angeles, CA~90095.
\hskip.06cm
Email:
\hskip.06cm
\texttt{\{ahmorales,\ts{pak}\}@math.ucla.edu}}

\thanks{\thinspace ${\hspace{-.45ex}}^\dagger$Department of Mathematics,
 UPenn, Philadelphia, PA~19104.
\hskip.06cm
Email:
\hskip.06cm
\texttt{panova@math.upenn.edu}}

%----------------------------------------------------------------
% PDF properties

\hypersetup{
  breaklinks, % more discreet colours for links
  pdftitle={Hook-length formulas for skew shapes II.}
}

%----------------------------------------------------------------
% Custom commands

%\newcommand{\ZZ}{\mathbb{Z}}
%\newcommand{\QQ}{\mathbb{Q}}
%%\newcommand{\RR}{\mathbb{R}}
%%\newcommand{\CC}{\mathbb{C}}

\newcommand{\EP}{E^\ast}

\newcommand{\HP}{\mathcal{HP}}
\newcommand{\dd}{\mathsf{d}}
\newcommand{\PD}{\mathcal{P}}
\newcommand{\sPD}{p}
\newcommand{\ED}{\mathcal{E}}
\newcommand{\sED}{e}

\newcommand{\NIP}{\mathcal{NI}}

\newcommand{\BT}{\mathcal{BT}}
\newcommand{\PT}{\mathcal{PT}}

\newcommand{\RPP}{\operatorname{RPP}}
\newcommand{\SSYT}{\operatorname{SSYT}}
\newcommand{\SYT}{\operatorname{SYT}}

\DeclareMathOperator{\expeaks}{expk}

\DeclareMathOperator{\maj}{maj}

\DeclareMathOperator{\height}{ht}
\DeclareMathOperator{\outdeg}{deg}

%\DeclareMathOperator{\hgstat}

%\newcommand{\a}{\alpha}
%\newcommand{\aa}{{\bf a}}

%----------------------------------------------------------------
% Theorem-like environments

\declaretheorem[numberwithin=section]{theorem}
\declaretheorem[numberlike=theorem]{lemma}
\declaretheorem[numberlike=theorem]{proposition}
\declaretheorem[numberlike=theorem]{corollary}
\declaretheorem[numberlike=theorem]{conjecture}

\declaretheorem[numberlike=theorem, style=definition]{definition}
\declaretheorem[numberlike=theorem, style=definition]{remark}

\declaretheorem[numberlike=theorem, style=definition]{example}

\numberwithin{equation}{section} % requires package amsthm

\def\p{\mathsf{p}}
\def\wh{\widehat}

\def\emp{\varnothing}
\def\sq{\square}

\def\la{\lambda}
\def\ga{\gamma}
\def\si{\sigma}
\def\de{\delta}

\def\ssu{\subset}

\def\wt{\widetilde}
\def\<{\langle}
\def\>{\rangle}

\def\y{ {\text {\rm y}  } }

\def\0{{\mathbf 0}}

\def\SS{{S}}

\def\.{\hskip.06cm}
\def\ts{\hskip.03cm}

\def\nin{\noindent}

%----------------------------------------------------------------

\begin{document}

\begin{abstract}
The \emph{Naruse hook-length formula} is a recent general formula for
the number of standard Young tableaux of skew shapes, given as a positive
sum over \emph{excited diagrams} of products of hook-lengths. In
\cite{MPP1} we gave two different $q$-analogues of Naruse's formula: for the
skew Schur functions, and for counting reverse plane partitions of skew
shapes.  In this paper we give an elementary proof of Naruse's formula
based on the case of border strips. For special border strips, we obtain curious
new formulas for the \emph{Euler} and \emph{$q$-Euler numbers} in
terms of certain Dyck path summations.
\end{abstract}

\keywords{Hook-length formula, excited tableau, standard Young tableau,
flagged tableau, reverse plane partition,  alternating permutation,
Dyck path, Euler numbers, Catalan numbers, factorial Schur function}

\ytableausetup{smalltableaux}

\maketitle
%\tableofcontents

%----------------------------------------------------------------
\section{Introduction} \label{sec:intro}
%----------------------------------------------------------------

In Enumerative Combinatorics, when it comes to fundamental results,
one proof is rarely enough, and one is often on the prowl for a better,
more elegant or more direct proof.  In fact, there is a wide belief in
multitude of ``proofs from the Book'', rather than a singular best approach.
The reasons are both cultural and mathematical: different proofs
elucidate different aspects of the underlying combinatorial objects
and lead to different extensions and generalizations.

The story of this series of papers is on our effort to
understand and generalize the \emph{Naruse hook-length formula}
(NHLF) for the number of standard Young tableaux of a skew shape in
terms of \emph{excited diagrams}.
In our previous paper~\cite{MPP1}, we gave two
$q$-analogues of the NHLF, the first with an algebraic proof and
the second with a bijective proof.  We also gave a (difficult)
``mixed'' proof of the first $q$-analogue, which combined the
bijection with an algebraic argument.  Naturally, these provided
new proofs of the NHLF, but none which one would call ``elementary".

This paper is the second in the series.  Here we consider a special
case of border strips which turn out to be extremely fruitful both
as a technical tool and as an important object of study.
We give two elementary proofs of the NHLF in this case,
both inductive: one using weighted paths argument and another
using determinant calculation.  We then deduce the general case
of NHLF for all skew diagrams by using the
\emph{Lascoux--Pragacz identity} for Schur functions.
Since the latter has its own elementary proof~\cite{HGoul}
(see also~\cite{CYY}), we obtain an elementary proof of the~HLF.

But surprises do not stop here.  For the special cases of
the \emph{zigzag strips}, our approach gives a number of
curious new formulas for the \emph{Euler}
and two types of \emph{$q$-Euler numbers}, the second of
which seems to be new.  Because the
excited diagrams correspond to Dyck paths in this case,
the resulting summations have \emph{Catalan number} of terms.
We also give type~B analogues, which have a similar structure but
with $\binom{2n}{n}$ terms.  Despite their strong ``classical feel'', 
all these formulas are new and quite mysterious.

\medskip

\subsection{Hook formulas for straight and skew shapes}
Let us recall the main result from the first paper~\cite{MPP1} in this series.
We assume here the reader is familiar with the basic definitions, which
are postponed until the next two sections.

The {\em standard Young tableaux} (SYT)  of straight and skew shapes are
central objects in enumerative and algebraic combinatorics.  The number
$f^{\lambda} = |\SYT(\la)|$ of standard Young tableaux of shape $\lambda$ has the
celebrated {\em hook-length formula} (HLF):

\begin{theorem}[HLF; Frame--Robinson--Thrall~\cite{FRT}]
Let $\lambda$ be a partition of~$n$.  We have:
\begin{equation} \label{eq:hlf} \tag{HLF}
f^{\lambda} \, = \, \frac{n!}{\prod_{u\in [\lambda]} h(u)}\,,
\end{equation}
where $h(u)=\lambda_i-i+\lambda'_j-j+1$ is the {\em hook-length} of the
square $u=(i,j)$.
\end{theorem}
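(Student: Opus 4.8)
The plan is to prove the HLF by induction on $n$, using the \emph{branching rule} for standard Young tableaux. The base case $n=1$ is trivial. For the inductive step I would use that in any tableau $T\in\SYT(\lambda)$ the largest entry $n$ occupies a removable corner $c$ of $\lambda$ (a cell whose deletion again leaves a Young diagram), and that erasing this entry is a bijection onto $\bigsqcup_{c}\SYT(\lambda\setminus c)$. This yields the recurrence
\[
f^{\lambda} \, = \, \sum_{c}\, f^{\lambda\setminus c}\,,
\]
where $c$ runs over the removable corners of $\lambda$.

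Write $g^{\lambda}:=n!/\prod_{u\in[\lambda]}h(u)$ for the right-hand side of \eqref{eq:hlf}. By the inductive hypothesis $f^{\lambda\setminus c}=g^{\lambda\setminus c}$ for every $c$, so it suffices to show that $g$ obeys the \emph{same} recurrence $g^{\lambda}=\sum_{c}g^{\lambda\setminus c}$. Dividing by $(n-1)!$ and clearing the factor $\prod_{u\in[\lambda]}h(u)$ turns this into the purely numerical identity
\[
n \, = \, \sum_{c}\, \frac{\prod_{u\in[\lambda]}h_{\lambda}(u)}{\prod_{u\in[\lambda\setminus c]}h_{\lambda\setminus c}(u)}\,.
\]
The simplification I would exploit is that deleting a corner $c=(a,b)$ changes the hook-length of a cell only when that cell lies in row $a$ or column $b$: each such hook drops by exactly $1$, while $h_{\lambda}(c)=1$. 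Hence each summand collapses to a product of factors of the form $x/(x-1)$ taken over the arm- and leg-hooks of $c$, and the identity reduces to a statement about the multiset of hook-lengths along the last row and column of $\lambda$.

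I expect this rational identity to be the main obstacle, since the rest is bookkeeping. The cleanest way I know to dispatch it is the Greene--Nijenhuis--Wilf \emph{hook walk}: start at a uniformly random cell of $[\lambda]$ and repeatedly step to a uniformly random other cell of the current hook until a corner is reached. A short computation identifies the probability of terminating at corner $c$ with $g^{\lambda\setminus c}/g^{\lambda}$, and since the walk must stop at some corner these probabilities sum to $1$, which is exactly the needed recurrence. Alternatively the identity can be proved directly by induction on the number of corners through a partial-fraction expansion in the corner hook-lengths.

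As an elementary alternative avoiding probability, I would instead start from the Aitken determinant $f^{\lambda}=n!\,\det\!\big[1/(\lambda_i-i+j)!\big]_{i,j=1}^{k}$, evaluate it as a Vandermonde-type product to obtain $f^{\lambda}=n!\,\prod_{i<j}(\ell_i-\ell_j)\big/\prod_i \ell_i!$ with $\ell_i=\lambda_i+k-i$, and then match this against the hook product via the row-by-row description of hook-lengths, namely $\prod_{u\text{ in row }i}h(u)=\ell_i!\big/\prod_{j>i}(\ell_i-\ell_j)$. Taking the product over $i$ gives $\prod_{u\in[\lambda]}h(u)=\prod_i\ell_i!\big/\prod_{i<j}(\ell_i-\ell_j)$, from which \eqref{eq:hlf} follows at once; here the obstacle shifts to establishing the determinantal formula, which itself can be gotten from the Lindström--Gessel--Viennot lemma applied to non-intersecting lattice paths counting the tableaux.
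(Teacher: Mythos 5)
Your proposal is correct, but it proves \eqref{eq:hlf} by a genuinely different route than this paper does. The paper never proves the HLF on its own: it cites it from \cite{FRT} and recovers it as the $\mu=\varnothing$ case of the NHLF \eqref{eq:Naruse}, where the unique excited diagram is $D=\varnothing$; the NHLF itself is established by first proving the border-strip case (Lemma~\ref{thm:NHLFborderstrips}) and then assembling general skew shapes via the Lascoux--Pragacz determinantal identity (Lemma~\ref{lem:LascouxPragacz-SYT}) and the Lindstr\"om--Gessel--Viennot lemma. Your first route --- the branching recurrence $f^{\lambda}=\sum_c f^{\lambda\setminus c}$ closed by the Greene--Nijenhuis--Wilf hook walk --- is the classical probabilistic proof; note that the paper's first border-strip proof rests on the same branching rule, in the form \eqref{eq:ChevalleySYT} (removing the smallest entry from an inner corner rather than the largest from an outer corner), but closes the induction with a multivariate cancellation identity (Lemma~\ref{lem:ChevalleyStrips}) instead of a random walk. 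Your second route, via the Aitken determinant $f^{\lambda}=n!\det\bigl[1/(\lambda_i-i+j)!\bigr]_{i,j=1}^k$ and its Vandermonde evaluation $n!\prod_{i<j}(\ell_i-\ell_j)\big/\prod_i\ell_i!$ with $\ell_i=\lambda_i+k-i$, is essentially the original Frame--Robinson--Thrall argument; it also shadows the paper's second proof, since Lemma~\ref{lemma:factorial_syt} invokes exactly this Aitken formula, and at $\mu=\varnothing$ the factorial Schur ratio $D(\mu)/D(\lambda)$ degenerates to precisely your Vandermonde-over-product computation. What your approaches buy is brevity and self-containment for straight shapes; what the paper's machinery buys is uniformity, proving the full skew NHLF (of which your target is the degenerate case) at the cost of a Lascoux--Pragacz/LGV superstructure that is vacuous when $\mu=\varnothing$. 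The only points needing care in a full write-up are standard and you flag them correctly: the row factorization $\prod_{u\in \text{row } i} h(u)=\ell_i!\big/\prod_{j>i}(\ell_i-\ell_j)$, the sign and ordering bookkeeping in the determinant evaluation, and the hook-walk termination probability, each of which is classical.
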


Most recently, Naruse generalized~\eqref{eq:hlf} as follows.
For a skew shape $\lambda/\mu$, an {\em excited diagram} is a subset of the
Young diagram~$[\lambda]$ of size $|\mu|$, obtained from the Young diagram~$[\mu]$
by a sequence of {\em excited moves}:
\begin{center}
\includegraphics{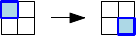}.
\end{center}
Such move $(i,j) \to (i+1,j+1)$ is allowed only if cells $(i,j+1)$,
$(i+1,j)$ and $(i+1,j+1)$ in $[\lambda]$ are unoccupied (see the precise definition and
an example in~$\S$\ref{ss:excited-def}).
We use $\ED(\lambda/\mu)$ to denote
the set of excited diagrams of~$\lambda/\mu$.

\begin{theorem}[NHLF; Naruse \cite{Strobl}] \label{thm:IN}
Let $\lambda,\mu$ be partitions, such that $\mu \ssu \la$.  We have:
\begin{equation} \label{eq:Naruse} \tag{NHLF}
f^{\lambda/\mu} \,  = \, |\la/\mu|! \, \sum_{D \in \ED(\lambda/\mu)}\,\.\.
 \prod_{u \in [\lambda]\setminus D} \frac{1}{ h(u)}\.\ts\..
\end{equation}
\end{theorem}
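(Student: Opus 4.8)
The plan is to prove the formula first for \emph{border strips} (connected skew shapes containing no $2\times 2$ square), and then to lift it to an arbitrary skew shape $\lambda/\mu$ by means of the Lascoux--Pragacz determinantal identity for skew Schur functions. This two-stage strategy is natural because border strips are exactly the shapes for which the excited diagrams admit a transparent lattice-path description, and because Lascoux--Pragacz expresses any skew Schur function as a determinant whose entries are Schur functions of border strips.

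First I would settle the base case by induction on $n = |\lambda/\mu|$ when $\lambda/\mu$ is a border strip. For such a shape the excited moves can only slide the inner cells along the single antidiagonal band, so the excited diagrams $D \in \ED(\lambda/\mu)$ are in bijection with a family of monotone lattice paths; I would set up a weighted path model whose total weight records $\prod_{u \in [\lambda]\setminus D} h(u)^{-1}$, turning the right-hand side into a transfer-matrix sum. Peeling off the extreme corner cell of the strip then yields a recursion for this sum that I would match against the branching recursion $f^{\lambda/\mu} = \sum_{\nu} f^{\nu/\mu}$ for standard Young tableaux, where $\nu$ ranges over shapes obtained by deleting an outer corner of $\lambda$. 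An alternative that avoids paths is to write the excited sum directly as a determinant and evaluate it in closed form; either way the base case reduces to a finite, verifiable identity.

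Next, to pass to a general skew shape, I would use the \emph{exponential specialization} $\mathrm{ex}\colon \Lambda \to \mathbb{Q}[t]$, the algebra homomorphism determined by $p_1 \mapsto t$ and $p_k \mapsto 0$ for $k \ge 2$, which satisfies $\mathrm{ex}(s_{\lambda/\mu}) = f^{\lambda/\mu}\, t^{|\lambda/\mu|}/|\lambda/\mu|!$. Decompose $\lambda/\mu$ into its maximal outer border strips $\theta_1, \dots, \theta_k$ (the Lascoux--Pragacz cutting strips); the Lascoux--Pragacz identity then gives $s_{\lambda/\mu} = \det\!\big[s_{\theta_{ij}}\big]_{i,j=1}^{k}$, where each $\theta_{ij}$ is a border strip obtained by concatenating consecutive cutting strips. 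Since $\mathrm{ex}$ is a ring homomorphism, every term of the determinant expansion carries the same power $t^{n}$; applying $\mathrm{ex}$ and dividing out this common factor converts the identity into a determinant whose $(i,j)$ entry is the border-strip excited sum $\sum_{D \in \ED(\theta_{ij})} \prod_{u} h(u)^{-1}$, already known from Step~1.

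The hard part is to show that this determinant of per-strip excited sums collapses to the single excited-diagram sum $\sum_{D \in \ED(\lambda/\mu)} \prod_{u \in [\lambda]\setminus D} h(u)^{-1}$ demanded by the NHLF. I expect this to be the crux, and I would attack it with a Lindström--Gessel--Viennot argument: realize every excited diagram of $\lambda/\mu$ as a non-intersecting $k$-tuple of excited paths, one threaded through each cutting strip, so that the hook-length weight factors strip by strip and the global non-intersecting condition is precisely what the determinant enforces, the identity permutation contributing the genuine excited diagrams and all other permutations cancelling in sign-reversing pairs. Making this correspondence precise --- in particular checking that the concatenation operation defining $\theta_{ij}$ is compatible with the hook-length weights along overlapping strips --- is where the real technical work resides.
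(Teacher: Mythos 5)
Your proposal follows the paper's proof essentially step for step: the base case for border strips is established by an inductive weighted-path recursion matched against the SYT branching rule (the paper peels the inner corner carrying the entry $1$ via a multivariate Pieri--Chevalley identity, rather than an outer corner of $\lambda$, but this is immaterial, and your determinant alternative is the paper's second proof via factorial Schur functions), the reduction to general skew shapes uses the exponentially specialized Lascoux--Pragacz determinant, and the collapse of that determinant proceeds exactly as you predict, by the weighted Lindstr\"om--Gessel--Viennot lemma with weights $1/h(r,s)$ together with Kreiman's identification of supports of non-intersecting path tuples with complements of excited diagrams. What you flag as the crux --- compatibility of the substrips $\theta_i \# \theta_j$ with the path endpoints and hook weights --- is precisely what the paper verifies by matching the contents of the endpoints in the Lascoux--Pragacz and Kreiman decompositions.
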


When $\mu = \emp$, there is a unique excited diagram $D=\emp$, and we obtain
the usual~HLF.

\smallskip

The NHLF has two natural $q$-analogues which were proved in the previous
paper in the series.

\begin{theorem}[\cite{MPP1}] \label{thm:skewSSYT}
We have:
\begin{equation} \label{eq:skewschur}  \tag{first $q$-NHLF}
s_{\lambda/\mu}(1,q,q^2,\ldots) \, = \, \sum_{S\in \ED(\lambda/\mu)}
\. \.\.\prod_{(i,j) \in [\lambda]\setminus S}\frac{q^{\lambda'_j-i}}{1-q^{h(i,j)}}\ts.
\end{equation}
\end{theorem}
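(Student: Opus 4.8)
The plan is to realize the principal specialization as a generating function over tableaux and then expand it through \emph{factorial} (double) Schur functions, which are the natural source of excited diagrams. First I would rewrite the left-hand side in tableau form,
\begin{equation*}
s_{\lambda/\mu}(1,q,q^2,\ldots) \, = \, \sum_{T \in \SSYT(\lambda/\mu)} q^{\sum_{u \in [\lambda/\mu]}(T(u)-1)}\ts,
\end{equation*}
so that the theorem becomes a graded enumeration of semistandard tableaux of shape $\lambda/\mu$. As a guiding special case, when $\mu=\emp$ there is a single excited diagram $S=\emp$ and the identity collapses to Stanley's principal-specialization formula $s_\lambda(1,q,q^2,\ldots)=q^{n(\lambda)}/\prod_{u\in[\lambda]}(1-q^{h(u)})$, where I use $\sum_{(i,j)\in[\lambda]}(\lambda'_j-i)=\sum_j\binom{\lambda'_j}{2}=n(\lambda)$. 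This already fixes the shape of each factor and suggests dividing through by the straight-shape answer: writing $c(i,j):=\lambda'_j-i$, the theorem is equivalent to the cleaner statement
\begin{equation*}
\frac{s_{\lambda/\mu}(1,q,q^2,\ldots)}{s_{\lambda}(1,q,q^2,\ldots)} \, = \, \sum_{D \in \ED(\lambda/\mu)} \ \prod_{u \in D} \frac{1-q^{h(u)}}{q^{c(u)}}\ts,
\end{equation*}
since the complementary product over $[\lambda]\setminus D$ factors as the full straight-shape product divided by the product over $D$. I would aim to prove this ratio form, in which each excited cell carries a single simple weight.

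The engine is the excited-diagram evaluation of factorial Schur functions due to Ikeda--Naruse. Recall the tableau definition $s_\mu(x\,|\,a)=\sum_{T\in\SSYT(\mu)}\prod_{(i,j)\in[\mu]}\big(x_{T(i,j)}-a_{T(i,j)+j-i}\big)$ in variables $x=(x_1,\ldots,x_n)$ with parameters $a=(a_k)$. Their result states that the evaluation of $s_\mu(x\,|\,a)$ at the point determined by $\lambda$ (namely $x_i = a_{\lambda_i+n-i+1}$) expands as a positive sum
\begin{equation*}
s_\mu\big(x=\lambda \,\big|\, a\big) \, = \, \sum_{D \in \ED(\lambda/\mu)} \ \prod_{(i,j)\in D} \big(a_{p(i,j)} - a_{p'(i,j)}\big)\ts,
\end{equation*}
where the index pair $(p,p')$ of an excited cell is read off from its position, and the intended matching is that the difference $p(i,j)-p'(i,j)$ records the $\lambda$-hook-length $h(i,j)$. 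This is exactly where the set $\ED(\lambda/\mu)$ enters the argument, with one linear factor per excited cell, matching the right-hand side of the ratio form.

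It remains to specialize the parameters and identify the two sides. Taking $a_k=q^{k}$ converts each linear factor into $a_p-a_{p'}=q^{p'}\big(q^{\,p-p'}-1\big)=-\,q^{p'}\big(1-q^{h(i,j)}\big)$, so that, after collecting the monomial prefactors into the content powers $q^{c(i,j)}$ and absorbing the global sign (the number of excited cells is always $|\mu|$), the right-hand sum becomes precisely $\sum_D\prod_{u\in D}(1-q^{h(u)})/q^{c(u)}$. In parallel I would show that under the same specialization the normalized factorial Schur function equals the ratio $s_{\lambda/\mu}(1,q,q^2,\ldots)/s_\lambda(1,q,q^2,\ldots)$, using the classical relation between the $a_k=q^k$ factorial Schur function and the $q$-specialized skew Schur function together with Stanley's formula for the denominator. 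The main obstacle I anticipate is this last alignment: one must pass from the finite-variable factorial identity to the infinite principal specialization, controlling the limit of the normalizing products $\prod_{i=n}^{n+k-1}(1-q^i)\to1$ as $n\to\infty$, and one must carry out the index bookkeeping carefully enough to certify that the specialized weights assemble into exactly $q^{c(i,j)}/(1-q^{h(i,j)})$ over the complementary cells, with no spurious shift. Once these two specializations are matched, the theorem follows.
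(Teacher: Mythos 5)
Your proposal is correct in outline, but it is not the route this paper takes: what you describe is essentially the algebraic proof of this theorem from \cite{MPP1}, where the statement originates, and which the present paper deliberately replaces with a combinatorial argument. You invoke the Ikeda--Naruse excited-diagram evaluation of factorial Schur functions for an \emph{arbitrary} skew shape (Theorem~\ref{thm:IkNa} here, proved in \cite{IkNa09} via equivariant Schubert calculus), specialize $a_k=q^k$, and then match the normalized ratio with the stable principal specialization; that final alignment is exactly the content of \cite[\S 4]{MPP1}. The paper instead proves the multivariate identity only for border strips (Lemma~\ref{lem:key_border_strips}), by an elementary induction on determinants of factorial Schur functions, deduces Lemma~\ref{lem:qNHLFborderstrips} by the same $q$-specialization you describe, and then assembles the general shape from the Lascoux--Pragacz determinantal identity for principal specializations (Lemma~\ref{lem:LascouxPragacz-SSYT}), the weighted Lindstr\"om--Gessel--Viennot lemma (Lemma~\ref{lem:LGVlambda}) with weights $y_{r,s}=q^{\lambda'_s-r}/(1-q^{h(r,s)})$, and Kreiman's identification of supports of tuples of non-intersecting paths with complements of excited diagrams (Proposition~\ref{prop:NIP2ED}). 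Your route buys brevity and conceptual clarity, at the cost of importing a non-elementary theorem wholesale; the paper's route is longer but combinatorial except at the border-strip specialization step, which is its stated purpose. One point you should make explicit in the bookkeeping you rightly flag: after setting $a_k=q^k$, the factor of an excited cell $(i,j)\in D$ is $q^{d+j-\lambda'_j}\bigl(q^{h(i,j)}-1\bigr)$, which does \emph{not} equal the target weight $q^{i-\lambda'_j}\bigl(1-q^{h(i,j)}\bigr)$ cell by cell --- the quotient is $-q^{d+j-i}$, whose product over $D$ equals $(-1)^{|\mu|}q^{d|\mu|+\sum_{(i,j)\in D}(j-i)}$. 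This is constant over all $D\in\ED(\lambda/\mu)$ precisely because excited moves preserve contents, so that $|D|=|\mu|$ and $\sum_{(i,j)\in D}(j-i)=\sum_{(i,j)\in[\mu]}(j-i)$; with that observation, and with the $d\to\infty$ limit handled as you indicate, your argument closes.
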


\begin{theorem}[\cite{MPP1}] \label{thm:skewRPP}
We have:
\begin{equation} \label{eq:skew-RPP} \tag{second $q$-NHLF}
\sum_{\pi \in \RPP(\lambda/\mu)} q^{|\pi|} \, = \, \sum_{S \in
 \PD(\lambda/\mu)} \. \.\prod_{u\in S}\frac{q^{h(u)}}{1-q^{h(u)}}\,,
\end{equation}
where $\PD(\lambda/\mu)$ is the set of pleasant diagrams $($see {\rm Definition~\ref{def:agog}}$)$.
\end{theorem}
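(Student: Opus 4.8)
The plan is to prove the identity bijectively, via the \emph{Hillman--Grassl correspondence}, by realizing both sides as weighted enumerations of non-negative integer arrays on $[\la]$. First I would rewrite the right-hand side: expanding each factor as a geometric series, $\frac{q^{h(u)}}{1-q^{h(u)}} = \sum_{k\ge 1} q^{k\ts h(u)}$, so that
\begin{equation*}
\sum_{S \in \PD(\la/\mu)} \prod_{u\in S}\frac{q^{h(u)}}{1-q^{h(u)}}
\, = \, \sum_{\substack{\phi\colon [\la]\to \zz_{\ge 0}\\ \supp(\phi)\in \PD(\la/\mu)}} q^{\sum_{u} h(u)\,\phi(u)}\ts.
\end{equation*}
Thus the claim is equivalent to a weight-preserving bijection between $\RPP(\la/\mu)$, enumerated by $|\pi|$, and the set of arrays $\phi$ on $[\la]$ whose support is a pleasant diagram, enumerated by $\sum_u h(u)\,\phi(u)$. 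As a sanity check, when $\mu=\emp$ every subset of $[\la]$ is pleasant and the right-hand side collapses to $\prod_u \bigl(1-q^{h(u)}\bigr)^{-1}$, recovering Stanley's classical formula for $\RPP(\la)$.

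Next I would set up the two maps to be composed. Extending a reverse plane partition of $\la/\mu$ by zeros on $[\mu]$ gives a bijection between $\RPP(\la/\mu)$ and those $\sigma \in \RPP(\la)$ with $\sigma|_{[\mu]}\equiv 0$, and since the adjoined cells carry value $0$ this preserves $|\,\cdot\,|$. The classical Hillman--Grassl correspondence $\HG$ is a weight-preserving bijection between $\RPP(\la)$ and arrays $\phi\colon [\la]\to\zz_{\ge 0}$, with $|\sigma| = \sum_{u} h(u)\,\phi(u)$. Composing these, it remains only to show that, under $\HG$, the condition $\sigma|_{[\mu]}\equiv 0$ corresponds exactly to the condition $\supp(\phi)\in \PD(\la/\mu)$.

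The crux of the argument, and the step I expect to be the main obstacle, is precisely this characterization: that whether the reverse plane partition $\HG^{-1}(\phi)$ vanishes on $[\mu]$ depends \emph{only} on $\supp(\phi)$, and that the supports arising this way are exactly the pleasant diagrams. I would establish it by analysing the inverse algorithm $\HG^{-1}$, which reconstructs $\sigma$ from $\phi$ by successively adding along lattice paths, one associated with each cell of $S=\supp(\phi)$ in the prescribed processing order. The key lemma to prove is that the set of cells traversed by these paths is determined by $S$ alone, independent of the actual values $\phi(u)$; granting this, a cell of $[\mu]$ acquires a nonzero entry in $\sigma$ if and only if some such path passes through it, which is a condition on $S$. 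Matching the resulting admissibility condition on $S$ against Definition~\ref{def:agog} then identifies these supports with $\PD(\la/\mu)$. Finally, summing $q^{\sum_u h(u)\,\phi(u)}$ over all arrays with a fixed pleasant support $S$ returns $\prod_{u\in S} q^{h(u)}/(1-q^{h(u)})$, which completes the proof.
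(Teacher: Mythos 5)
Your outline is, in substance, the proof this theorem actually has: the present paper does not prove Theorem~\ref{thm:skewRPP} at all, but imports it from \cite{MPP1}, and it remarks in the introduction that the Hillman--Grassl inspired bijection of \cite{MPP1} remains the only combinatorial proof of the second $q$-NHLF. Your reductions are all correct and match that proof: the geometric-series expansion of the right-hand side, the zero-extension identifying $\RPP(\lambda/\mu)$ with $\{\sigma\in\RPP(\lambda): \sigma|_{[\mu]}\equiv 0\}$, and the weight preservation $|\sigma|=\sum_u h(u)\,\phi(u)$ under $\HG$. You have also correctly isolated the crux. One clarifying observation: since $\HG^{-1}$ builds $\sigma$ by incrementing by one along each added path, the ``set of cells traversed'' is exactly $\supp(\sigma)$, so your key lemma is precisely the claim that $\supp\bigl(\HG^{-1}(\phi)\bigr)$ depends only on $S=\supp(\phi)$. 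Where your plan diverges from \cite{MPP1} is in how this claim is established: there it is not obtained by tracking the inverse algorithm, but rests on a Greene-type duality for the Hillman--Grassl correspondence going back to Gansner, namely that the number of nonzero entries of $\sigma$ on the $k$-th diagonal equals the length of the longest descending chain of $S$ inside the rectangle $\square_k^{\lambda}$. Since entries of an RPP weakly increase along diagonals, this determines $\supp(\sigma)$ from $S$ alone and simultaneously shows that $\sigma|_{[\mu]}\equiv 0$ holds if and only if $S$ satisfies the chain condition of Definition~\ref{def:agog}, which is exactly how the pleasant diagrams enter. Your proposed direct analysis of $\HG^{-1}$ would amount to reproving this duality, and you should be aware that it is genuinely delicate: when $\phi(u)\ge 2$, the successive paths added for the same cell $u$ need not traverse the same cells (already in $\lambda=(2,2)$ with $\phi$ supported on $\{(1,1),(2,2)\}$ the two $(1,1)$-paths differ), and only the union of traversed cells is stable; so expect the inductive bookkeeping there to require the full strength of a Gansner-style argument rather than a routine induction.
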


The second theorem employs a new family of combinatorial objects called
{\em pleasant diagrams}.  These diagrams can be defined as
subsets of complements of excited diagrams,
and are technically useful.  This allows us to write
the RHS of~\eqref{eq:skew-RPP} completely in terms of excited diagrams (see \cite[\S 6]{MPP1}).

\smallskip

\subsection{Combinatorial proofs}

Our approach to the combinatorial proof of the NHLF in
Section~\ref{sec:all_shapes}-\ref{sec:2ndproof} is as follows. We
start by proving the case of border strips (connected skew shapes with no $2\times
2$ square). In this case the NHLF is more elegant,
\[
\frac{f^{\lambda/\mu}}{|\lambda/\mu|!} = \sum_{\ga}
\prod_{(i,j) \in \ga} \frac{1}{h(i,j)},
\]
where the sum is over lattice paths $\ga$ from $(\lambda'_1,1)$ to
$(1,\lambda_1)$ that stay inside $[\lambda]$. We give two self
contained inductive proofs of
this case. The first proof in Section~\ref{sec:1stproof} is based on
showing a multivariate identity of paths. The second proof in
Section~\ref{sec:2ndproof} uses determinants to show that a multivariate
identity of paths equals a ratio of \emph{factorial Schur functions}.

We then use a corollary of the Lascoux--Pragacz identity for skew Schur
functions: if $(\theta_1,\ldots,\theta_k)$ is a decomposition of the
shape $\lambda/\mu$ into outer border strips $\theta_i$ (see
Section~\ref{subsec:LP}) then
\[
\frac{f^{\lambda/\mu}}{|\lambda/\mu|!} \,=\,  \det\left[\,  \frac{f^{\theta_i
      \# \theta_j}}{|\theta_i \# \theta_j|!} \,\right]_{i,j=1}^k,
\]
where $\theta_i\#\theta_j$ is a certain substrip of the outer border
strip of $\lambda$.

Combining the case for border strips and this determinantal identity
we get
\[
\frac{f^{\lambda/\mu}}{|\lambda/\mu|!} \,=\, \det\left[\,
  \sum_{\substack{\ga:(a_j,b_j) \to (c_i,d_i),\\\ga\subseteq \lambda}}
  \prod_{(r,s)\in \ga} \frac{1}{h(r,s)} \,\right]_{i,j=1}^k,
\]
where $(a_j,b_j)$ and $(c_i,d_i)$ are the endpoints of the border
strip $\theta_i\# \theta_j$. Lastly, using the Lindstr\"om--Gessel--Viennot lemma this
determinant is written as a weighted sum over non-intersecting
lattice paths in $\lambda$. By an explicit characterization of excited
diagrams in Section~\ref{sec:excited_diagrams}, the supports of such paths are exactly the complements of
excited diagrams. The NHLF then follows.

A similar approach is used in Section~\ref{sec:all_shapes} to give a
combinatorial proof of the first $q$-NHLF for all skew shapes given in~\cite{MPP1}.
The Hillman--Grassl inspired bijection in~\cite{MPP1} remains the only combinatorial
proof of the second $q$-NHLF.

\begin{remark}
We should mention that our inductive proof is involutive, but basic enough
to allow ``bijectification'', i.e.~an involution principle proof of the NHLF.
We refer to \cite{Kratt-invol,Rem,Zei} for the involution principle proofs
of the (usual) HLF.
\end{remark}

\smallskip

\subsection{Enumerative applications}\label{ss:intro-enum}
In sections~\ref{sec:enum_strips_SSYT} and \ref{sec:enum_strips_RPP},
we give enumerative formulas which follow from NHLF.  They involve $q$-analogues
of Catalan, Euler and Schr\"oder numbers. We highlight several of these formulas.

Let $\Alt(n)= \{\si(1)<\si(2)>\si(3)<\si(4)>\ldots\} \ssu \SS_n$ be the set of
{\em alternating permutations}.
The number $E_n=|\Alt(n)|$ is the $n$-th {\em Euler number} (see~\cite{Stanley_SurveyAP}
and \cite[\href{http://oeis.org/A000111}{A000111}]{OEIS}), with the
generating function
\begin{equation}\label{eq:tan-sec}
\sum_{n=0}^\infty \. E_n \. \frac{x^n}{n!} \,\. = \,\, \tan(x) \ts + \ts \sec(x)\..
\end{equation}

Let $\delta_n =(n-1,n-2,\ldots,2,1)$ denote the staircase shape and observe that
$E_{2n+1}= f^{\de_{n+2}/\de_n}$.  Thus, the NHLF relates Euler numbers with excited
diagrams of $\de_{n+2}/\de_n$.  It turns out that these excited diagrams are
in correspondence with the set $\Dyck(n)$ of {\em Dyck paths} of length~$2n$
(see Corollary~\ref{cor:excitedcat}).  More precisely,
\[
|\ED(\de_{n+2}/\de_n)| \, = \, |\Dyck(n)| \, = \, C_n \, = \, \frac{1}{n+1}\binom{2n}{n}\ts,
\]
where $C_n$ is the $n$-th Catalan number, and $\Dyck(n)$ is the set of lattice paths from
$(0,0)$ to $(2n,0)$ with steps $(1,1)$ and $(1,-1)$ that stay on or above
the $x$-axis  (see e.g. \cite{StCat}). Now the NHLF implies the following identity.
\begin{corollary}\label{cor:euler-nhlf}
We have:
\begin{equation}\label{eq:cat2euler} \tag{EC}
\sum_{\p \in \Dyck(n)} \, \prod_{(a,b) \in   \p}\frac{1}{2b+1}
\, = \,  \frac{E_{2n+1}}{(2n+1)!} \ts\,,
\end{equation}
where $(a,b) \in \p$ denotes a point $(a,b)$ of the Dyck
path~$\p$.
\end{corollary}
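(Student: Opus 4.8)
The plan is to specialize the NHLF to the skew shape $\la/\mu = \de_{n+2}/\de_n$ and then read off the resulting sum as a summation over Dyck paths. First I would note that $\de_{n+2}/\de_n$ is a border strip (a zigzag with no $2\times 2$ square) of size $|\de_{n+2}/\de_n| = 2n+1$, and that $f^{\de_{n+2}/\de_n} = E_{2n+1}$ as observed above. Applying the border strip form of the NHLF recorded in the introduction yields
\[
\frac{E_{2n+1}}{(2n+1)!} \, = \, \sum_{\ga} \, \prod_{(i,j)\in\ga} \frac{1}{h(i,j)},
\]
where $\ga$ ranges over lattice paths from $(\la'_1,1)=(n+1,1)$ to $(1,\la_1)=(1,n+1)$ that stay inside $[\de_{n+2}]$.

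The key computation is the value of the hook-lengths along such a path. Since the staircase $\de_{n+2}$ is self-conjugate, $\la_i = \la'_i = n+2-i$, so for every cell $(i,j) \in [\de_{n+2}]$ we get
\[
h(i,j) \,=\, \la_i - j + \la'_j - i + 1 \,=\, 2(n+2-i-j)+1.
\]
Thus the hook-length depends only on the anti-diagonal index of the cell: setting $b := n+2-i-j$ gives $h(i,j) = 2b+1$, and $(i,j)$ lies in $[\de_{n+2}]$ precisely when $i+j \le n+2$, i.e.\ when $b \ge 0$.

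Next I would make the bijection between the paths $\ga$ and the Dyck paths $\p \in \Dyck(n)$ explicit, so that it agrees with the correspondence of excited diagrams from Corollary~\ref{cor:excitedcat}. Reading $\ga$ from $(n+1,1)$ to $(1,n+1)$, each up-step (which decreases $i$ by one) raises $b$ by one and each right-step (which increases $j$ by one) lowers $b$ by one; the two endpoints both satisfy $i+j = n+2$, hence $b=0$. Therefore the sequence of heights $b$ along the $2n+1$ cells of $\ga$ is exactly the height profile of a path with $2n$ unit up/down steps starting and ending at height $0$, and the containment condition $b \ge 0$ is precisely the requirement that this path stay weakly above the axis, i.e.\ that it be a Dyck path in $\Dyck(n)$. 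Under this bijection the $2n+1$ cells of $\ga$ correspond to the $2n+1$ lattice points $(a,b)$ of $\p$ with matching heights.

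Finally, combining the two observations matches the summands: for the corresponding pair $\ga \leftrightarrow \p$,
\[
\prod_{(i,j)\in\ga}\frac{1}{h(i,j)} \,=\, \prod_{(a,b)\in\p}\frac{1}{2b+1},
\]
and summing over all $\ga$, equivalently over all $\p \in \Dyck(n)$, gives~\eqref{eq:cat2euler}. The only content beyond the NHLF is the self-conjugacy computation of the hooks together with the height bookkeeping; I expect the main (and rather minor) obstacle to be verifying carefully that the staircase-containment constraint corresponds exactly to the nonnegativity of the Dyck path, and that the cell-to-point indexing is off by none rather than off by one.
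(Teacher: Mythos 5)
Your proposal is correct and follows essentially the same route as the paper: the paper likewise specializes the NHLF to the zigzag $\de_{n+2}/\de_n$, identifies complements of excited diagrams with lattice paths $\ga:(n+1,1)\to(1,n+1)$ rotated $45^\circ$ into Dyck paths (Corollary~\ref{cor:excitedcat}), and uses $f^{\de_{n+2}/\de_n}=E_{2n+1}$ together with $h(u)=2b+1$. The only difference is presentational: you verify the hook-length identity $h(i,j)=2(n+2-i-j)+1$ and the height/containment bookkeeping explicitly via self-conjugacy of the staircase, where the paper delegates this to Figure~\ref{fig:excited2cat}.
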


Consider the following two $q$-analogues of $E_n$, the first of which
is standard in the literature:

$$
E_n(q):=\sum_{\sigma\in
    \Alt(n)} q^{\maj(\sigma^{-1})} \qquad \text{and} \qquad
\EP_n(q) :=\sum_{\sigma\in
    \Alt(n)} q^{\maj(\sigma^{-1} \kappa)}\,,
$$
where $\maj(\sigma)$ is the {\em major index} of permutation $\sigma$
in $\SS_n$ and $\kappa$ is the permutation
$\kappa=(13254\ldots)$.
% \footnote{In the survey
% \cite[\S 2]{Stanley_SurveyAP} our $E_n(q)$ is denoted by
% $E^{\star}_n(q)$, while our $\EP_n(q)$ does not appear to have been studied before.}
See examples~\ref{ex:qEulerA} and~\ref{ex:qEulerB} for the initial values.

Now, for the skew shape $\de_{n+2}/\de_n$, Theorem~\ref{thm:skewSSYT}
gives the following $q$-analogue of Corollary~\ref{cor:euler-nhlf}.
\begin{corollary}\label{cor:euler-nhlf-ssyt}
We have:
\[
\sum_{\p \in \Dyck(n)} \. \prod_{(a,b) \in
  \p}\frac{q^b}{1-q^{2b+1}}
\, = \,
\frac{E_{2n+1}(q)}{(1-q)(1-q^2)\cdots (1-q^{2n+1}) }
\ts\..
\]
%where $(a,b) \in \gamma$ denotes a point $(a,b)$ of the path $\gamma$.
\end{corollary}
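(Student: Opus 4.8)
The plan is to specialize the first $q$-NHLF (Theorem~\ref{thm:skewSSYT}) to the zigzag strip $\la/\mu = \de_{n+2}/\de_n$ and to recognize both sides of the claimed identity as two evaluations of the principal specialization $s_{\de_{n+2}/\de_n}(1,q,q^2,\ldots)$. Since $|\de_{n+2}/\de_n| = 2n+1$ and $f^{\de_{n+2}/\de_n} = E_{2n+1}$, this is the natural $q$-refinement of Corollary~\ref{cor:euler-nhlf}. First I would apply Theorem~\ref{thm:skewSSYT} verbatim, writing the right-hand side as a sum over excited diagrams $S \in \ED(\de_{n+2}/\de_n)$ of $\prod_{(i,j)\in[\de_{n+2}]\setminus S} q^{(\de_{n+2})'_j - i}/(1-q^{h(i,j)})$.

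Next I would invoke the bijection of Corollary~\ref{cor:excitedcat} between excited diagrams of $\de_{n+2}/\de_n$ and Dyck paths $\p \in \Dyck(n)$, under which the $2n+1$ cells of the complement $[\de_{n+2}]\setminus S$ are matched with the $2n+1$ lattice points $(a,b)$ of $\p$. The key computation is on the staircase: for $\la = \de_{n+2} = (n+1,n,\ldots,1)$ one has $\la_i = n+2-i$ and $\la'_j = n+2-j$, so the numerator exponent is $\la'_j - i = n+2-i-j$ and the hook length is $h(i,j) = \la_i - i + \la'_j - j + 1 = 2(n+2-i-j)+1$. Setting $b := \la'_j - i$, which is exactly the height of the corresponding Dyck point, each factor becomes $q^{b}/(1-q^{2b+1})$. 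This identifies the left-hand side of the corollary with $s_{\de_{n+2}/\de_n}(1,q,q^2,\ldots)$, and is a direct bookkeeping step rather than a genuine obstacle.

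It then remains to show $s_{\de_{n+2}/\de_n}(1,q,q^2,\ldots) = E_{2n+1}(q)/\big((1-q)(1-q^2)\cdots(1-q^{2n+1})\big)$. For this I would use the fundamental quasisymmetric expansion $s_{\la/\mu} = \sum_{T\in\SYT(\la/\mu)} F_{\Des(T)}$ together with the standard principal-specialization evaluation $F_{\Des(T)}(1,q,q^2,\ldots) = q^{\maj(T)}/\prod_{i=1}^{2n+1}(1-q^i)$, giving $s_{\de_{n+2}/\de_n}(1,q,\ldots) = \frac{1}{(q;q)_{2n+1}}\sum_{T} q^{\maj(T)}$, where $\maj(T) = \sum_{i\in\Des(T)} i$.

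The crux, and the step I expect to be the main obstacle, is the purely combinatorial identity $\sum_{T\in\SYT(\de_{n+2}/\de_n)} q^{\maj(T)} = E_{2n+1}(q)$. Here the standard Young tableaux of the zigzag strip are precisely the linear extensions of the associated zigzag (fence) poset on $2n+1$ elements, whose linear extensions are in descent-preserving bijection with the up-down alternating permutations of $[2n+1]$. Tracking the major index through Stanley's theory of $(P,\omega)$-partitions (equivalently, via the reading-word/RSK argument) converts the tableau statistic $\maj(T)$ into $\maj(\sigma^{-1})$, which is exactly why the definition of $E_n(q)$ is phrased with the inverse permutation. The care needed lies in matching $\Des(T)$ with the descent data of $\sigma^{-1}$ and in checking that the up-down (rather than down-up) alternating condition is the one attached to $\de_{n+2}/\de_n$ and not to its conjugate. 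Combining this with the previous displays yields the corollary.
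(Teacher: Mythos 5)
Your proposal is correct and takes essentially the same approach as the paper: it likewise specializes Theorem~\ref{thm:skewSSYT} to $\delta_{n+2}/\delta_n$, uses the excited-diagram/Dyck-path correspondence with $h(i,j)=2b+1$ and $\lambda'_j-i=b$, and invokes the identity $E_{2n+1}(q)=s_{\delta_{n+2}/\delta_n}(1,q,q^2,\ldots)\cdot\prod_{i=1}^{2n+1}(1-q^i)$, which is equation~\eqref{eq:qEuler}. The only difference is one of detail, not of route: where you sketch a proof of that identity via the fundamental quasisymmetric expansion and the descent-preserving bijection between $\SYT(\delta_{n+2}/\delta_n)$ and alternating permutations (correctly flagging the $\maj(\sigma^{-1})$ matching), the paper simply cites it from the theory of $(P,\omega)$-partitions.
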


Similarly, Theorem~\ref{thm:skewRPP} in this case
gives a different $q$-analogue.
\begin{corollary}\label{cor:euler-nhlf-rpp}
We have:
$$
\sum_{\p \in \Dyck(n)} \,
  q^{H(\p)} \, \.\prod_{(a,b)\in \p} \frac{1}{1-q^{2b+1}}
\, = \,
\frac{\EP_{2n+1}(q)}{(1-q)(1-q^2)\cdots (1-q^{2n+1})}\ts\.,
$$
where
$$H(\p) \, = \, \sum_{(c,d)\in \HP(\p)} \, (2d+1) \.,
$$
and \. $\HP(\p)$ \. denotes the set of peaks $(c,d)$ in $\p$ with height $d>1$.
\end{corollary}

All three corollaries are derived in sections~\ref{sec:enum_strips_SSYT}
and~\ref{sec:enum_strips_RPP}.

\smallskip

Section~\ref{sec:enum_strips_SSYT} considers the special case
when $\lambda/\mu$ is a thick strip shape $\delta_{n+2k}/\delta_n$,
which gives the connection with Euler and Catalan numbers.
In Section~\ref{sec:enum_strips_RPP},
we consider the pleasant diagrams of the thick strip shapes,
establishing a
connection with Schr\"oder numbers. We also state conjectures
on certain determinantal formulas.  We conclude with final remarks and open problems in Section~\ref{sec:finrem}.

%----------------------------------------------------------------
\bigskip\section{Notation and Background} \label{sec:notation}
%----------------------------------------------------------------

\subsection{Young diagrams} \label{ss:not-yd}
Let $\lambda=(\lambda_1,\ldots,\lambda_r),
\mu=(\mu_1,\ldots,\mu_s)$ denote integer partitions of
length $\ell(\lambda)=r$ and $\ell(\mu)=s$. The {\em size} of the partition
is denoted by $|\lambda|$ and $\lambda'$
denotes the {\em conjugate partition} of $\lambda$. We use $[\lambda]$ to
denote the Young diagram of the partition $\lambda$. The \emph{hook length}
$h_{ij} = \la_i - i +\la_j' -j +1$ of a square $u=(i,j)\in [\la]$ is
the number of squares directly to the right and directly below~$u$
in~$[\la]$ including $u$.  The {\em Durfee square} $\square^{\lambda}$ is the largest
square of the form $\{(i,j), 1\le i,j \le k\}$ inside~$[\la]$. Let $\square_k^\lambda$ be the largest $i
\times (i+k)$ rectangle that fits inside the Young diagram starting at
$(1,1)$. For $k=0$, the rectangle $\square_0^{\lambda}=\sq^\la$ is the Durfee
square of~$\lambda$.

A {\em skew shape} is denoted by $\lambda/\mu$. A skew shape can have
multiple edge connected components.
 For an integer $k$, let $\dd_k$ be the diagonal
$\{(i,j) \in \lambda/\mu \mid i-j = k\}$,
where $\mu_k=0$ if $k> \ell(\mu)$. For an integer $t$, $1\leq t
\leq \ell(\lambda)-1$ let $\dd_t(\mu)$ denote the diagonal $\dd_{\mu_t-t}$
where $\mu_t=0$ if $\ell(\mu)<t \leq \ell(\lambda)$.

Given the skew shape $\lambda/\mu$, let $P_{\lambda/\mu}$ be the poset
of cells $(i,j)$ of $[\lambda/\mu]$ partially ordered by component. This poset is {\em
  naturally labelled}, unless otherwise stated.

%Finally, let $\delta_n =(n-1,n-2,\ldots,2,1)$ denotes the staircase shape.

\subsection{Young tableaux} \label{ss:not-yt}
A {\em reverse plane partition} of skew shape
$\lambda/\mu$ is an array $\pi=(\pi_{ij})$ of nonnegative integers of shape $\lambda/\mu$
that is weakly increasing in rows and columns. We denote the set of
such plane partitions by $\RPP(\lambda/\mu)$.
 A {\em semistandard Young tableau} of shape $\lambda/\mu$ is a RPP
of shape $\lambda/\mu$ that is
strictly increasing in columns. We denote the set of such tableaux by
$\SSYT(\lambda/\mu)$. A {\em standard Young
  tableau} (SYT) of shape $\lambda/\mu$ is an array $T$ of shape
$\lambda/\mu$ with the numbers $1,\ldots,n$, where
$n=|\lambda/\mu|$, each $i$
appearing once, strictly increasing in rows and columns.  For example,
there are five SYT of shape $(32/1)$:
\[
\ytableausetup{smalltableaux}\ytableaushort{\none12,34}\ \quad
\ytableaushort{\none13,24}\ \quad  \ytableaushort{\none14,23}\ \quad
\ytableaushort{\none23,14}\ \quad \ytableaushort{\none24,13}
\]
 The {\em
  size} of a RPP or tableau $T$ is the
sum of its entries and is denoted by $|T|$.

\subsection{Skew Schur functions} \label{ss:not-sym}
Let $s_{\lambda/\mu}({\bf x})$ denote the  {\em skew Schur function}
of shape $\lambda/\mu$  in variables
${\bf x} = (x_0,x_1,x_2,\ldots)$. In particular,
\[
s_{\lambda/\mu}({\bf x}) \. = \. \sum_{T\in \SSYT(\lambda/\mu)} {\bf x}^T\., \ \  \qquad
s_{\lambda/\mu}(1,q,q^2,\ldots) \. = \. \sum_{T\in \SSYT(\lambda/\mu)} q^{|T|}
\.,
\]
where \ts ${\bf x}^T = x_0^{\#0s \text{ in } (T)}\ts x_1^{\#1s \text{
    in } (T)}\ldots$ \ts
Recall, the skew shape $\lambda/\mu$ can have multiple edgewise-connected
components $\theta_1,\ldots,\theta_m$. Since then $s_{\lambda/\mu} =
s_{\theta_1} \cdots s_{\theta_k}$ we assume without loss of generality
that $\lambda/\mu$ is edgewise connected.

\subsection{Determinantal identities for $s_{\lambda/\mu}$}

The Jacobi-Trudi identity (see e.g.~\cite[\S 7.16]{EC2}) states that
\begin{equation} \label{eq:JTid}
s_{\lambda/\mu}({\bf x}) \, = \, \det\bigl[h_{\lambda_i-\mu_j -i+j}({\bf x})\bigr]_{i,j=1}^{n},
\end{equation}
where $h_k({\bf x}) = \sum_{i_1\leq i_2 \leq \cdots \leq i_k}
x_{i_1}x_{i_2}\cdots x_{i_k}$ is the $k$-th {\em complete symmetric
  function}.

There are other determinantal identities of (skew) Schur functions
like the Giambelli formula (e.g. see \cite[Ex. 7.39]{EC2}) and the {\em Lascoux--Pragacz}
identity \cite{LP}. Hamel and Goulden \cite{HGoul} found a vast common generalization to these
three identities by giving an exponential number of determinantal
identities for $s_{\lambda/\mu}$ depending on {\em outer
  decompositions} of the shape $\lambda/\mu$. We focus on the
Lascoux--Pragacz identity that we describe next through the
Hamel--Goulden theory (e.g. see \cite{CYY}).

A {\em border strip} is a connected skew shape without any $2\times 2$
squares. The starting point and ending point of a strip are its
southwest and northeast endpoints. Given $\lambda$, the {\em outer border strip}  is the strip
containing all the boxes inside $[\lambda]$ sharing a vertex with the boundary of
$\lambda$., i.e. $\lambda/(\lambda_2-1,\lambda_3-1,\ldots)$. A {\em Lascoux--Pragacz}
decomposition of $\lambda/\mu$ is a decomposition of the skew shape
into $k$ maximal outer border
strips $(\theta_1,\ldots,\theta_k)$, where $\theta_1$ is the outer
border strip of $\lambda$, $\theta_2$ is the outer border strip of the
remaining diagram $\lambda\setminus \theta_1$, and so on until we
start intersecting $\mu$. In this case, we continue the decomposition
with each remaining connected component. The strips are ordered
$\succeq$ by the contents of their northeast endpoints. See
Figure~\ref{exLascouxPragacz}:Left, for an example.

We call the border strip $\theta_1$ the {\em cutting strip} of the
decomposition and denote it by $\tau$ \cite{CYY}. For integers $p$ and
$q$, let $\phi[p,q]$ be the substrip of $\tau$ consisting of the cells
with contents between $p$ and $q$. By convention,
$\phi[p,p]=(1)$, $\phi[p+1,p]=\varnothing$ and $\phi[p,q]$ with
$p>q+1$ is undefined.
The strip
$\theta_i \# \theta_j$ is the substrip
$\phi[p(\theta_j),q(\theta_i)]$ of $\tau$,  where $p(\theta_i)$ and
$q(\theta_i)$ are the contents of the starting point and ending point of
$\theta_i$.

\begin{theorem}[Lascoux--Pragacz \cite{LP}, Hamel--Goulden \cite{HGoul}] \label{thm:LascouxPragacz}
If $(\theta_1,\ldots,\theta_k)$ is a Lascoux--Pragacz decomposition of
$\lambda/\mu$, then
\begin{equation} \label{eq:LPid}
s_{\lambda/\mu} = \det{\big [} \, s_{\theta_i \# \theta_j}\, {\big ]}_{i,j=1}^k.
\end{equation}
where $s_{\varnothing}=1$ and $s_{\phi[p,q]} =0$ if $\phi[p,q]$ is undefined.
\end{theorem}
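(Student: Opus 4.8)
The plan is to prove the identity by the Lindstr\"om--Gessel--Viennot (LGV) method: realize each matrix entry $s_{\theta_i\#\theta_j}$ as a weighted generating function over \emph{single} lattice paths in one fixed planar network attached to the cutting strip $\tau$, and then read the determinant as a signed sum over path families in which the non-intersecting ones biject with $\SSYT(\lambda/\mu)$. This is the unified lattice-path argument of Hamel--Goulden, specialized to the Lascoux--Pragacz decomposition, and it subsumes the Jacobi--Trudi and Giambelli cases by the same mechanism.

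First I would set up the path model for a single border strip. Fix $\tau=\theta_1$ and build an acyclic planar digraph $G$ running along $\tau$ whose edges are weighted by the variables $x_0,x_1,x_2,\ldots$, equipped with a source vertex $U_c$ and a sink vertex $V_c$ for each content $c$ occurring in $\tau$. The model is arranged so that directed paths from $U_p$ to $V_q$ are in weight-preserving bijection with column-strict fillings of the substrip $\phi[p,q]$; hence the generating function of such paths equals $s_{\phi[p,q]}$. Writing $U_j:=U_{p(\theta_j)}$ and $V_i:=V_{q(\theta_i)}$, the entry $s_{\theta_i\#\theta_j}=s_{\phi[p(\theta_j),q(\theta_i)]}$ becomes exactly the path generating function from $U_j$ to $V_i$. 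The degenerate conventions match the network automatically: $s_{\varnothing}=1$ is the empty path, and an undefined $\phi[p,q]$ (with $p>q+1$) corresponds to no directed path from $U_p$ to $V_q$, giving entry $0$.

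Next I would apply the LGV lemma. Because the strips are ordered by the contents of their northeast endpoints, the sources $U_1,\ldots,U_k$ and sinks $V_1,\ldots,V_k$ sit in compatible positions along $\tau$, so that the identity is the only permutation admitting a vertex-disjoint family. The lemma then gives
\[
\det\big[s_{\theta_i\#\theta_j}\big]_{i,j=1}^k \,=\, \sum_{(\gamma_1,\ldots,\gamma_k)} \ \prod_{i=1}^k \mathrm{wt}(\gamma_i),
\]
the sum being over families of vertex-disjoint paths with $\gamma_i\colon U_i\to V_i$; every intersecting family cancels under the standard sign-reversing involution that swaps the tails of two paths past their first common vertex. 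Note each surviving $\gamma_i$ runs over the content range $[p(\theta_i),q(\theta_i)]$ of $\theta_i\#\theta_i=\theta_i$.

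The heart of the argument, and the step I expect to be the main obstacle, is the final bijection: vertex-disjoint families $(\gamma_1,\ldots,\gamma_k)$ with $\gamma_i\colon U_i\to V_i$ correspond, weight for weight, to elements of $\SSYT(\lambda/\mu)$. Here one uses that a Lascoux--Pragacz decomposition tiles $\lambda/\mu$ by the nested outer border strips $\theta_1,\ldots,\theta_k$: the path $\gamma_i$ records a column-strict filling of the $i$-th strip, while the disjointness of consecutive paths is precisely equivalent to the row-weak and column-strict compatibility of the fillings across the shared boundary of adjacent strips. Verifying this equivalence carefully---that non-crossing of the path system encodes exactly the inequalities defining a semistandard tableau of the assembled shape---is the technical core, and it is what forces the specific $\#$-substrips to appear in the matrix. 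Granting it, the right-hand side above equals $\sum_{T\in\SSYT(\lambda/\mu)}{\bf x}^T=s_{\lambda/\mu}$, which completes the proof.
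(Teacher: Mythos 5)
You should first note a structural point: the paper does not prove this theorem at all. It is imported as background, attributed to Lascoux--Pragacz \cite{LP} and Hamel--Goulden \cite{HGoul}, with the introduction pointing to \cite{HGoul} (see also \cite{CYY}) for an elementary proof; the paper's own contributions start downstream of it (e.g.\ Theorem~\ref{thm:num_excited_HG} and Lemma~\ref{lem:LascouxPragacz-SYT} are derived \emph{from} it). So there is no in-paper argument to compare against; the relevant comparison is with the cited literature. Measured against that, your outline is exactly the Hamel--Goulden planar-decomposition argument specialized to the Lascoux--Pragacz decomposition: a single weighted network along the cutting strip $\tau$, sources and sinks indexed by contents, entries $s_{\theta_i\#\theta_j}$ as single-path generating functions, LGV cancellation, and a final bijection between disjoint families and $\SSYT(\lambda/\mu)$. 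The tail-swapping involution is indeed what forces the $\#$-substrips of $\tau$ to appear as matrix entries, and the degenerate conventions ($s_{\varnothing}=1$ as an empty path, undefined $\phi[p,q]$ as an unreachable source--sink pair) are handled the way the references handle them. This is the right route, and it is the same route as the paper's cited source.

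That said, as a proof your proposal has a genuine gap precisely where you flag it, and it is not a small one. The claim that vertex-disjointness of the family $(\gamma_1,\ldots,\gamma_k)$ is \emph{equivalent} to the cross-strip semistandard inequalities is the entire content of the theorem, and it is sensitive to how the path encoding is chosen: along a border strip read by content, east-adjacencies impose weak inequalities and north-adjacencies impose strict ones, and in a Lascoux--Pragacz decomposition adjacent strips $\theta_i$, $\theta_{i+1}$ share edges of both kinds, so the encoding must be arranged so that a single geometric condition (strict dominance of consecutive paths) captures both a weak row condition and a strict column condition simultaneously. With a careless encoding this equivalence simply fails, which is why \cite{HGoul} and \cite{CYY} devote their technical work to it. Two smaller items also need verification rather than assertion: (i) that the sources $U_1,\ldots,U_k$ and sinks $V_1,\ldots,V_k$ are positioned so that \emph{only} the identity permutation admits a disjoint family (this uses the ordering of strips by contents of northeast endpoints, but must be checked against the network's planarity); and (ii) that the degenerate entries, in particular $s_{\phi[p+1,p]}=1$, are realized consistently inside the LGV bookkeeping, e.g.\ as zero-length paths, without breaking the involution. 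None of these is a wrong turn --- they are the standard obstacles of this proof --- but as written your argument defers the theorem's core to a ``granting it,'' so it is an outline of the known proof rather than a proof.
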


\begin{example} \label{ex:LP}
Figure~\ref{exLascouxPragacz}:Right, shows the Lascoux--Pragacz decomposition for the
shape $\lambda/\mu=(5441/21)$ into two strips
$(\theta_1,\theta_2)$ where $\tau=\theta_1 = (5441/33)$ and $\theta_2
= (33/21)$. The substrips of $\tau$ appearing in the matrix of the identity are
\[
\theta_1 \# \theta_1 = \theta_1, \quad \theta_1 \# \theta_2 =
\phi[0,4] = (322/11), \quad \theta_2 \# \theta_1 = \phi[-3,2] =
(441/3), \quad \theta_2\# \theta_2 = \theta_2.
\]
Then by the Lascoux--Pragacz identity  $s_{(5441/21)}$ can be written
as the following  $2\times 2$
determinant
\[
s_{\lambda/\mu} = \det \begin{bmatrix} s_{\theta_1 \# \theta_1} &
  s_{\theta_1 \# \theta_2} \\ s_{\theta_2\# \theta_1} & s_{\theta_2 \#
    \theta_2} \end{bmatrix} = \det \begin{bmatrix} s_{5441/33} & s_{322/11} \\
  s_{441/3} & s_{22/1}\\ \end{bmatrix}.
\]
\end{example}

\begin{figure} 
\includegraphics{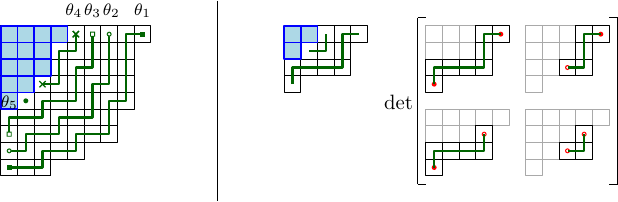}
\caption{Left: Example of a
  Lascoux--Pragacz outer decomposition. Right: the
  Lascoux--Pragacz decomposition of the shape $\lambda/\mu=(5441/21)$ and the border strips of
  the determinantal identity for $s_{\lambda/\mu}$}
\label{exLascouxPragacz}
\end{figure}

\subsection{Factorial Schur functions} \label{sec:factorial_schurs}

The
{\em factorial Schur function} (e.g. see \cite{MS}) is defined as
\[
s_{\mu}^{(d)}({\bf x} \mid {\bf a}) := \frac{\det \left[
    (x_i-a_1)\cdots (x_i -
    a_{\mu_j+d-j})\right]_{i,j=1}^d}{\dss \prod_{1\leq i < j \leq d}(x_i-x_j)},
\]
where ${\bf x} = x_1,\ldots,x_d$ and ${\bf a} =a_1,a_2,\ldots$ is a
sequence of parameters.

\subsection{Permutations}  We write permutations of $\{1,2,\ldots,n\}$
in {\em one-line notation}: $w=(w_1w_2\ldots w_n)$ where $w_i$ is the
image of $i$. A {\em descent} of $w$ is an index $i$ such that
$w_i>w_{i+1}$. The {\em major index} $\maj(w)$ is the sum $\sum i$ of
all the descents $i$ of $w$.

\subsection{Dyck paths} \label{ss:not-grid}
A {\em Dyck path}~$\p$ of length $2n$ is a lattice paths from
$(0,0)$ to $(2n,0)$ with steps $(1,1)$ and $(1,-1)$ that stay on or above
the $x$-axis. We use $\Dyck(n)$ to denote the set of Dyck paths of length~$2n$.
For a Dyck path $\p$, a {\em peak} is a point $(c,d)$ such
that $(c-1,d-1)$ and $(c+1,d-1)\in \p$.  Peak $(c,d)$ is called
a {\em high-peak} if~$d>1$.

%----------------------------------------------------------------
\bigskip\section{Excited diagrams}\label{sec:excited_diagrams}
%----------------------------------------------------------------

\subsection{Definition} \label{ss:excited-def}

Let $\lambda/\mu$ be a skew partition and $D$ be a subset of the Young
diagram of $\lambda$. A cell $u=(i,j) \in D$ is called {\em active} if
  $(i+1,j)$, $(i,j+1)$ and $(i+1,j+1)$ are all in
$[\lambda]\setminus D$.  Let $u$ be an
active cell of $D$, define $\alpha_u(D)$ to be the set obtained by
replacing $(i,j)$ in $D$ by $(i+1,j+1)$. We call this replacement an {\em excited move}. An {\em excited diagram} of
$\lambda/\mu$ is a subdiagram of $\lambda$ obtained from the Young
diagram of $\mu$
after a sequence of excited moves on active cells. Let
$\ED(\lambda/\mu)$ be the set of
excited diagrams of~$\lambda/\mu$ and $\sED(\lambda/\mu)$ its
cardinality. For example, Figure~\ref{fig:excited} shows the eight
excited diagrams of $(5441/21)$ (for the moment ignore the paths in
the complement).

\begin{figure}[hbt]
\begin{center}
\includegraphics{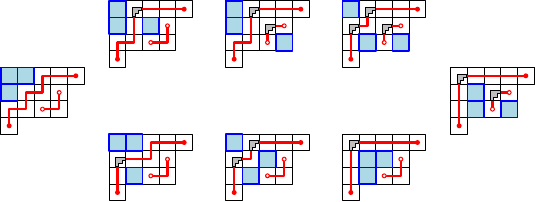}
\caption{The eight excited diagrams (in blue) of shape
  $\lambda/\mu=(5441/21)$ and the corresponding non-intersecting paths
  (in red) in their complement. The high peaks of the paths described
  in Section~\ref{subsec:pleasant} are in gray.}
\label{fig:excited}
\end{center}
\end{figure}
% For example,
% Figure~\ref{fig1}(b) shows the three excited
% diagrams of $(2^31/1^2)$.

%The following example illustrates Naruse's hook-length formula (NHLF) in a special casse.

\medskip \subsection{Flagged tableaux}\label{ss:excited-flagged}
Excited diagrams of $\lambda/\mu$ are
in bijection with certain {\em flagged tableaux} of shape~$\mu$
(see \cite[\S 3]{MPP1} and \cite[\S 6]{VK}). Thus, the number of
excited diagrams is given by a determinant, a
polynomial in the parts of $\lambda$ and~$\mu$ as follows. 

Consider the diagonal that passes through cell $(i,\mu_i)$, i.e. the
last  cell of row $i$ in $\mu$. Let this diagonal intersect the
boundary of $\lambda$ at a row denoted by
$\ssf^{(\lambda/\mu)}_i$. Let $\mathcal{F}(\lambda/\mu)$ be the set of
SSYT of shape $\mu$ with positive entries in row $i$ that are at most
$\ssf^{(\lambda/\mu)}_i$. Such SSYT are called flagged tableaux and
$(\ssf^{\lambda/\mu}_1,\ldots,\ssf^{\lambda/\mu}_{\ell})$ is called a \emph{flag}. Given an excited diagram $D$ in
$\ED(\lambda/\mu)$ each
$(x,y)$ in $[\mu]$ corresponds to a cell $(i_x,j_y)$ in $D$, let
$\varphi(D):=T$ be the tableau of shape $\mu$ with $T_{x,y} =
j_y$. See Figure~\ref{fig:excited2cat} for an example of $\varphi$.

\begin{proposition}[\S 3 \cite{MPP1}]  \label{prop:excited2flagged}
We have that $\sED(\lambda/\mu) = |\mathcal{F}(\lambda/\mu)|$ and
$\varphi$ is a bijection between these two sets.
\end{proposition}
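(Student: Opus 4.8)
The plan is to set up an explicit inverse to $\varphi$ and to match single excited moves with single unit increments of tableau entries, so that the whole statement follows by a short induction. First I would record the basic invariant of an excited move: the replacement $(i,j)\to(i+1,j+1)$ preserves the content $j-i$, so every cell stays on its diagonal throughout the excitation process, and the cells lying on a fixed diagonal never change their relative (top-to-bottom) order. Consequently, for each $D\in\ED(\lambda/\mu)$ there is a canonical content-preserving bijection $[\mu]\to D$ sending $(x,y)$ to a cell whose displacement $d_{x,y}\ge 0$ along the diagonal is well defined; writing the image as $(x+d_{x,y},\,y+d_{x,y})$, the tableau $T=\varphi(D)$ records the row $x+d_{x,y}$ of the image (equivalently the displacement $d_{x,y}$), which is the normalization making the flag bound a row index. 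Since $D$ is reconstructed from $T$ by $(x,y)\mapsto (T_{x,y},\,y+T_{x,y}-x)$, the map $\varphi$ is automatically injective; it remains to prove that $\varphi(D)$ is a flagged tableau and that every flagged tableau is hit.

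The heart of the argument is the following \emph{Move Lemma}: for $D\in\ED(\lambda/\mu)$ and the cell $u=(i,j)$ that is the image of $(x,y)\in[\mu]$, the cell $u$ is active in $D$ if and only if raising the entry $T_{x,y}$ of $T=\varphi(D)$ by one again yields a flagged tableau, and in that case $\alpha_u(D)$ corresponds to the raised tableau. To prove it I would locate each of the three cells $(i,j+1)$, $(i+1,j)$, $(i+1,j+1)$ on its diagonal and identify exactly which cell of $[\mu]$, if any, occupies it in $D$: using that entries are strictly increasing along down-right diagonals (a consequence of the row-weak and column-strict inequalities), one checks that $(i,j+1)\in D$ iff $T_{x,y+1}=T_{x,y}$, that $(i+1,j)\in D$ iff $T_{x+1,y}=T_{x,y}+1$, and that $(i+1,j+1)\in D$ iff $T_{x+1,y+1}=T_{x,y}+1$. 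Comparing with the conditions for the raised filling to remain a flagged tableau, the emptiness of the right neighbor matches the row inequality $T_{x,y}+1\le T_{x,y+1}$, the emptiness of the lower neighbor matches the column inequality $T_{x,y}+1< T_{x+1,y}$, and the occupancy of the diagonal neighbor $(i+1,j+1)$ is then automatically excluded (since $T_{x+1,y+1}\ge T_{x+1,y}\ge T_{x,y}+2$); the remaining requirement that $(i+1,j+1)$ actually lie in $[\lambda]$ reduces, via the weak decrease of the parts of $\lambda$, to the flag bound $T_{x,y}+1\le \ssf^{(\lambda/\mu)}_x$. This is the only place the boundary of $\lambda$ enters, and it bites precisely for the last cell $y=\mu_x$ of a row, which is the flag cell by definition.

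With the Move Lemma in hand the proposition follows by induction on the number of excited moves. The base case is $D=[\mu]$, whose associated tableau $T^0$ (all displacements zero) is the minimal flagged tableau. For the forward direction, each excited move raises one entry while, by the lemma, preserving the flagged property, so $\varphi(D)$ is always a flagged tableau. For surjectivity I would use that the set $\mathcal{F}(\lambda/\mu)$, ordered entrywise, is connected under unit increments with least element $T^0$: given any flagged tableau $T\ne T^0$ one can always lower some entry by one while staying flagged, and by the reverse form of the Move Lemma this lowering is an inverse excited move, so descending to $T^0$ and reversing the steps exhibits a $D$ with $\varphi(D)=T$. Hence $\varphi$ is a bijection and $\sED(\lambda/\mu)=|\mathcal{F}(\lambda/\mu)|$.

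The main obstacle is the Move Lemma, and within it the bookkeeping of strict versus weak inequalities: the $+1$ shifts in the diagonal-neighbor computations must be reconciled with column-strictness, and one must verify that $(i+1,j+1)$ is never blocked by another cell of $D$ once the row and column inequalities hold. The one genuinely geometric point is the reduction of the in-$[\lambda]$ condition for the last cell of a row to the flag $\ssf^{(\lambda/\mu)}_x$, which is exactly the definition of the flag as the row in which the corresponding diagonal meets the boundary of $\lambda$. The connectivity of flagged tableaux under unit moves is routine (they form a distributive lattice) and could alternatively be replaced by a second induction on $\sum_{(x,y)\in[\mu]}\bigl(T_{x,y}-T^0_{x,y}\bigr)$.
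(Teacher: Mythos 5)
Your proof is correct, and it is worth noting at the outset that this paper contains no proof to compare against: Proposition~\ref{prop:excited2flagged} is imported from \cite[\S 3]{MPP1} as a citation, and the argument given there is essentially the one you reconstruct --- the content-preserving displacement encoding $(x,y)\mapsto(x+d_{x,y},\,y+d_{x,y})$, the matching of a single excited move with a unit increment of one entry, and induction from the base case $D=[\mu]$, $T^0_{x,y}=x$. Two remarks. First, you have silently (and correctly) fixed a slip in the paper's definition of $\varphi$: the text sets $T_{x,y}=j_y$, the \emph{column} index, but with the flags $\ssf^{(\lambda/\mu)}_x$ defined as row bounds the entry must be the row index $i_x=x+d_{x,y}$, which is your convention and the one in \cite{MPP1}. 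Second, your Move Lemma checks out in all cases, including the two points where care is genuinely needed: the automatic exclusion of the diagonal neighbor via $T_{x+1,y+1}\ge T_{x+1,y}\ge T_{x,y}+2$ once the column condition holds, and the reduction of the in-$[\lambda]$ condition to the flag bound only at the end of a row --- for an interior cell $y<\mu_x$ the containment $(T_{x,y}+1,\,y-x+T_{x,y}+1)\in[\lambda]$ indeed follows from $T_{x,y}+1\le T_{x,y+1}$ together with $\lambda_{T_{x,y}+1}\ge\lambda_{T_{x,y+1}}\ge y+1-x+T_{x,y+1}$, exactly as you indicate. The only step you leave at the level of assertion is that every flagged tableau $T\ne T^0$ admits a unit lowering staying in $\mathcal{F}(\lambda/\mu)$; this is routine (lower the leftmost entry with $T_{x,y}>x$ in the topmost row containing one: the left neighbor then equals $x$ and the upper neighbor equals $x-1$, so both inequalities survive), and as you say it can be bypassed entirely by your second induction on $\sum_{(x,y)\in[\mu]}(T_{x,y}-x)$, which is the cleaner route.
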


By the enumeration of flagged
tableaux of Wachs \cite{W85} the next formula follows.

\begin{proposition}[\S 3 \cite{MPP1}] \label{prop:GV}
In notation above, we have:
\[
\sED(\lambda/\mu) = \det\left[
  \binom{\ssf^{(\lambda/\mu)}_i+\mu_i-i+j-1}{\ssf^{(\lambda/\mu)}_i-1}\right]_{i,j=1}^{\ell(\mu)}.
\]
\end{proposition}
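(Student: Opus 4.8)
The plan is to combine the bijection already established in Proposition~\ref{prop:excited2flagged} with a known determinantal enumeration of flagged tableaux. Since Proposition~\ref{prop:excited2flagged} gives $\sED(\lambda/\mu) = |\mathcal{F}(\lambda/\mu)|$, where $\mathcal{F}(\lambda/\mu)$ is the set of SSYT of shape $\mu$ whose entries in row $i$ are bounded above by the flag value $\ssf^{(\lambda/\mu)}_i$, it suffices to count these flagged tableaux. The reference to Wachs~\cite{W85} signals that the number of column-strict (semistandard) tableaux of shape $\mu$ with a row flag $(b_1,\ldots,b_\ell)$ satisfying the monotonicity $b_1 \le b_2 \le \cdots \le b_\ell$ is given by a Gessel--Viennot type determinant. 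The first thing I would do is record the hypothesis needed for Wachs' theorem, namely that the flag is weakly increasing, and verify it in our geometric setup.

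First I would check the monotonicity of the flag. The value $\ssf^{(\lambda/\mu)}_i$ is defined as the row at which the diagonal through the last cell $(i,\mu_i)$ of row $i$ of $\mu$ meets the boundary of $\lambda$. As $i$ increases, the starting cell $(i,\mu_i)$ moves down and weakly left (since $\mu$ is a partition, $\mu_i$ is weakly decreasing), so the relevant diagonal shifts in a way that forces the exit row $\ssf^{(\lambda/\mu)}_i$ to be weakly increasing in $i$. This is the combinatorial content that makes the tableaux in $\mathcal{F}(\lambda/\mu)$ genuinely flagged in the sense of Wachs. I would justify this monotonicity carefully, as it is the hypothesis on which the determinantal formula rests.

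Next I would invoke Wachs' determinant. For a flagged SSYT of shape $\mu$ with weakly increasing row bounds $b_i$, the count equals $\det\bigl[\binom{b_i + \mu_i - i + j - 1}{\mu_i - i + j}\bigr]$ or, after rewriting the binomial coefficient via $\binom{N}{k} = \binom{N}{N-k}$, the form stated in the proposition. Concretely, substituting $b_i = \ssf^{(\lambda/\mu)}_i$ and simplifying the lower index, the $(i,j)$ entry becomes $\binom{\ssf^{(\lambda/\mu)}_i + \mu_i - i + j - 1}{\ssf^{(\lambda/\mu)}_i - 1}$, which is exactly the claimed matrix. The translation between the standard statement of the Wachs formula and the displayed binomial amounts to a routine index manipulation that I would perform but not belabor.

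The main obstacle I anticipate is not the determinant itself, which is quoted from~\cite{W85}, but establishing cleanly that the geometrically defined flag $\ssf^{(\lambda/\mu)}_i$ is weakly increasing and that the entry bound in the definition of $\mathcal{F}(\lambda/\mu)$ matches precisely the row-bound convention under which Wachs' theorem is stated. A subtle point is the off-by-one bookkeeping: the diagonal through $(i,\mu_i)$ and the exit row must be related to the upper bound on entries in row $i$ in exactly the way the bijection $\varphi$ of Proposition~\ref{prop:excited2flagged} dictates, since $\varphi(D)$ records column indices $j_y$ and the excited moves push cells down-right along diagonals. Once the flag is verified to be weakly increasing and the bound convention reconciled, the proposition follows immediately by citing Proposition~\ref{prop:excited2flagged} and the Wachs determinant, with the final step being only the binomial reindexing indicated above.
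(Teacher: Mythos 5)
Your proposal takes essentially the same route as the paper: the paper's proof likewise consists of combining the bijection of Proposition~\ref{prop:excited2flagged} with Wachs's determinantal enumeration of flagged tableaux \cite{W85}, which it cites without further elaboration. Your additional checks (weak monotonicity of the flag $\ssf^{(\lambda/\mu)}_i$ and the binomial reindexing $\binom{N}{k}=\binom{N}{N-k}$) merely fill in routine details the paper leaves implicit.
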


\begin{example}
For the same shape as in Example~\ref{ex:LP} and Figure~\ref{fig:excited}, the number of excited
diagrams equals the number of flagged tableaux of shape $(2,1)$ with
entries in the first and second row $\leq 3$. Thus
\[
\sED(\lambda/\mu) = \det\begin{bmatrix} \binom{4}{2} & \binom{5}{2} \\[2pt]
  \binom{2}{2} & \binom{3}{2} \end{bmatrix}= \det \begin{bmatrix} 6
  &10 \\ 1 & 3\end{bmatrix} = 8.
\]
\end{example}

%\begin{figure}[hbt]
%\begin{center}
%\includegraphics[scale=1.1]{flagged_tableaux}
%\caption{Given a skew shape $\lambda/\mu$, for each corner $k$ of $\mu$ we
%record the last row $\ssf_k$ of $\lambda$ from diagonal $\dd_{\mu_k-k}$. These
%row numbers give the bound for the flagged tableaux of shape $\mu$ in $\mathcal{F}(\mu, \f^{(\lambda/\mu)})$.}
%\label{fig:flagged_tab}
%\end{center}
%\end{figure}

\subsection{Border strip decomposition formula for $\sED(\lambda/\mu)$} \label{sec:excited_paths}

This first determinantal identity for $\sED(\lambda/\mu)$ is similar
to the Jacobi--Trudi identity for $s_{\mu}$. In this section we prove
a new determinantal identity for
$\sED(\lambda/\mu)$ very similar to the Lascoux--Pragacz identity for $s_{\lambda/\mu}$.

\begin{theorem} \label{thm:num_excited_HG}
If $(\theta_1,\ldots,\theta_k)$ is the Lascoux--Pragacz decomposition of $\lambda/\mu$
into $k$ maximal outer border strips then
\[
\sED(\lambda/\mu) = \det {\big [}\, \sED({\theta_i \# \theta_j})\, {\big ]}_{i,j=1}^k,
\]
where $\sED({\varnothing})=1$ and $\sED({\phi[p,q]}) =0$ if $\phi[p,q]$ is undefined.
\end{theorem}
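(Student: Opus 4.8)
The plan is to prove this determinantal identity by the Lindström--Gessel--Viennot (LGV) lemma, in exact parallel to the way the Lascoux--Pragacz identity~\eqref{eq:LPid} for skew Schur functions arises from the non-intersecting-paths model for semistandard tableaux. The role played there by $\SSYT$ will here be played by excited diagrams: the matrix entries $\sED(\theta_i\#\theta_j)$ will be recast as counts of \emph{single} lattice paths, and the determinant will collapse, via the LGV sign-reversing involution, to the count of \emph{non-intersecting families} of such paths, which in turn biject with the excited diagrams of $\lambda/\mu$.

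First I would set up the single-strip dictionary. Since $\theta_i\#\theta_j=\phi[p(\theta_j),q(\theta_i)]$ is a content-interval substrip of the cutting strip $\tau$, it is itself a border strip, and I claim that $\sED(\theta_i\#\theta_j)$ equals the number $P(A_j\to B_i)$ of monotone lattice paths inside $[\lambda]$ running from the starting cell $A_j=(a_j,b_j)$ of $\theta_j$ to the ending cell $B_i=(c_i,d_i)$ of $\theta_i$. This is the standard reformulation: an excited move on a border strip slides a cell along its diagonal, so recording on each diagonal the row at which the occupied/unoccupied boundary sits produces a monotone north--east path, and conversely each such path is realizable. I would verify this bijection using the flagged-tableaux description of $\ED$ (Proposition~\ref{prop:excited2flagged}), which for a single border strip degenerates: the flagged tableau of the one-rowed-diagonal data is exactly a monotone sequence, i.e. a lattice path.

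Next I would realize the full set $\ED(\lambda/\mu)$ as non-intersecting families. Taking the starting cells $A_j$ of the $\theta_j$ as sources and the ending cells $B_i$ of the $\theta_i$ as sinks, a family of pairwise non-intersecting paths $(\gamma_1,\dots,\gamma_k)$ with $\gamma_i\colon A_i\to B_i$ in $[\lambda]$ has support equal to the complement of a unique excited diagram of $\lambda/\mu$ (as recorded in the Introduction), and this is a bijection. The Lascoux--Pragacz decomposition orders the strips by the contents of their northeast endpoints, and this ordering forces $\sigma=\id$ to be the only matching admitting non-crossing paths, so the LGV lemma yields
\[
\det\bigl[\sED(\theta_i\#\theta_j)\bigr]_{i,j=1}^k
\,=\, \sum_{\sigma\in\SS_k}\mathrm{sgn}(\sigma)\prod_{i=1}^k P(A_{\sigma(i)}\to B_i)
\,=\, \sED(\lambda/\mu).
\]

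Finally I would dispose of the degenerate entries: when $\phi[p,q]=\varnothing$ the unique empty path contributes $\sED(\varnothing)=1$, and when $\phi[p,q]$ is undefined (i.e. $p>q+1$) the source lies strictly northeast of the sink in content, so no monotone path exists, matching $\sED(\phi[p,q])=0$. The main obstacle I anticipate is the third step: rigorously showing that non-intersecting path families biject with excited diagrams and that the endpoint-content ordering of the $\theta_i$ forces $\sigma=\id$ in the LGV expansion. This requires translating the local excited-move condition (cells $(i,j+1),(i+1,j),(i+1,j+1)$ unoccupied) into the non-crossing condition on the diagonal-boundary paths, and checking that the $\succeq$-ordering of the strips is precisely the compatibility needed for the LGV involution to cancel every crossing family.
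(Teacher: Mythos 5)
Your overall strategy is the same as the paper's (LGV applied to the non-intersecting-path model for complements of excited diagrams), but there is a concrete error in your setup that breaks both halves of the argument: you take as LGV sources and sinks the actual starting and ending \emph{cells} of the Lascoux--Pragacz strips $\theta_j$ and $\theta_i$. For $j\geq 2$ these cells lie strictly inside the shape, whereas the correct sources and sinks are the endpoints of the \emph{Kreiman} decomposition $(\ga^*_1,\ldots,\ga^*_k)$, which lie on the lower-right boundary of $\lambda$ --- equivalently, they are the cells of the cutting strip $\tau=\theta_1$ having contents $p(\theta_j)$ and $q(\theta_i)$. The two decompositions are genuinely different; only the endpoint \emph{contents} agree, and that matching is exactly Lemma~\ref{lem:contentsLPvsK}, the ingredient your proposal is missing. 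Both of your claims fail on the paper's running example $\lambda/\mu=(5441/21)$: there $\theta_2=(33/21)$ starts at $(2,2)$ and ends at $(1,3)$, while the Kreiman path $\ga^*_2$ runs from $(3,3)$ to $(2,4)$. The number of monotone paths in $[\lambda]$ from $A_1=(4,1)$ to your $B_2=(1,3)$ is $6$, not $\sED(\theta_2\#\theta_1)=\sED(441/3)=4$ (the interior sink gives the path more room than the strip $\theta_2\#\theta_1$ actually has), so your single-strip dictionary fails; and the determinant built from your endpoints is $10\cdot 2-3\cdot 6=2\neq 8=\sED(5441/21)$, so the global count fails as well. Indeed, most excited complements are not even decomposable with your endpoints: for $D=\{(1,1),(1,2),(3,2)\}$, obtained from $[\mu]$ by exciting $(2,1)\to(3,2)$, the forced second path is $\{(2,2),(2,3),(1,3)\}$, after which no monotone path $(4,1)\to(1,5)$ can cover the rest of $[\lambda]\setminus D$, so this excited diagram is invisible to your path model.

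The repair is precisely the paper's route. One first proves (Proposition~\ref{prop:NIP2ED}, via Kreiman's ladder moves in one direction and the pleasant-diagram characterization in the other) that supports of families in $\NIP(\lambda/\mu)$, with the \emph{Kreiman} endpoints, are exactly the complements of excited diagrams, giving $\sED(\lambda/\mu)=|\NIP(\lambda/\mu)|$ (Corollary~\ref{cor:excited_eqs_NIP}); then LGV with unit weights yields the determinant of single-path counts between Kreiman endpoints as in \eqref{eq:excited_2nddet}. Because those endpoints lie on the boundary of $\lambda$, each entry counts excited diagrams of a substrip of the outer border strip, and Lemma~\ref{lem:contentsLPvsK} (equality of contents $p(\theta_j)=b_j-a_j$, $q(\theta_i)=d_i-c_i$, proved by peeling off the outer strip and inducting) identifies that substrip with $\theta_i\#\theta_j$. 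Your closing worry about translating excited moves into non-crossing paths is real but handled by this machinery; the genuine gap is the endpoint choice: without routing through the Kreiman decomposition (or an equivalent content-matching statement), the LGV determinant you write down is simply not the determinant in the theorem.
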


\begin{example} \label{ex:Kpaths}
For the same shape $\lambda/\mu$ as in Example~\ref{ex:LP} and
Figure~\ref{fig:excited} we have
\[
\sED({\lambda/\mu}) = \det \begin{bmatrix} \sED({5441/33}) & \sED({322/11}) \\
  \sED({441/3}) & \sED({22/1})\\ \end{bmatrix} =\det\begin{bmatrix} 10 & 3
  \\ 4  & 2 \end{bmatrix} =8.
\]
\end{example}

In order to prove Theorem~\ref{thm:num_excited_HG} we show a relation
between excited diagrams and certain tuples of
non-intersecting paths in the diagram of $\lambda$. The support of a
path are the cells of $[\lambda]$ contained in the path.

For the connected skew shape $\lambda/\mu$ there is a unique tuple of
border-strips (i.e. non-intersecting paths)
 $(\ga^*_1,\ldots,\ga^*_k)$ in $\lambda$ with support
$[\lambda/\mu]$, where each border strip $\ga^*_i$ begins at the southern box
$(a'_i,b'_i)$ of a column and ends at the eastern box $(c'_i,d'_i)$ of
a row
%, and the border strips are ordered by
%$\ga^*_i \preceq \ga^*_j$ if $(c_i,a_i) \leq (c_j,a_j)$ component-wise
\cite[Lemma 5.3]{VK}. We call this tuple the \emph{Kreiman
  decomposition} of $\lambda/\mu$. Let $\NIP(\lambda/\mu)$ be the set of $k$-tuples
$\Gamma:=(\ga_1,\ldots,\ga_k)$ of non-intersecting paths contained in
$[\lambda]$ with $\ga_i:(a_i,b_i)\to (c_i,d_i)$. Kreiman showed that the supports of the
paths in $\NIP(\lambda/\mu)$ are exactly the complements of excited
diagrams in $\ED(\lambda/\mu)$ \cite[$\S$5, $\S$6]{VK}. See
Figure~\ref{fig:excited}, for an example. We include a
proof of this result. The first half of the argument uses a lemma from
Kreiman, the rest of the argument is a new proof using pleasant diagrams
(see Section~\ref{sec:pleasant}) instead of the induction on the number of excited moves.

\begin{proposition}[Kreiman~\cite{VK}] \label{prop:NIP2ED}
The $k$-tuples of paths in $\NIP(\lambda/\mu)$ are uniquely determined by
their supports in $[\lambda]$  and moreover these supports are exactly the complements of
excited diagrams of~$\lambda/\mu$.
\end{proposition}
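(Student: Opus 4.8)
The plan is to treat the two assertions in turn: the \emph{uniqueness} of a tuple given its support, and the \emph{characterization} of supports as complements of excited diagrams. For uniqueness I would invoke Kreiman's reconstruction lemma \cite[Lemma 5.3]{VK}. Since each $\ga_i$ is a monotone border strip running from the bottom cell $(a_i,b_i)$ of a column up and to the right to the end cell $(c_i,d_i)$ of a row, every occupied cell is exited by a step up or a step to the right. Processing the cells of the support $\supp(\Gamma)$ by increasing content $j-i$ and using that the strips are vertex-disjoint forces each such local choice, so the tuple is recovered canonically from $\supp(\Gamma)$ alone; in particular no two distinct tuples in $\NIP(\lambda/\mu)$ can share a support.

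For the characterization I would first record a size count. Because each $\ga_i$ is monotone with \emph{fixed} endpoints, it occupies exactly $(a_i-c_i)+(d_i-b_i)+1$ cells, and the non-intersecting condition makes the supports disjoint; summing over $i$ and comparing with the Kreiman decomposition $(\ga^*_1,\dots,\ga^*_k)$, whose support is $[\lambda/\mu]$, gives $|\supp(\Gamma)| = |\lambda/\mu|$ for every $\Gamma \in \NIP(\lambda/\mu)$. Thus each support, and each complement $[\lambda]\setminus D$ with $D\in\ED(\lambda/\mu)$, is a subset of $[\lambda]$ of the same maximal size $|\lambda/\mu|$. Rather than propagate the base case $D=[\mu]$, $\supp(\ga^*)=[\lambda/\mu]$, through an induction on excited moves as in \cite{VK}, I would show directly that $\supp(\Gamma)$ satisfies the defining condition of a \emph{pleasant diagram} (Definition~\ref{def:agog}): the absence of a blocking $2\times 2$ configuration along disjoint monotone strips is exactly what the pleasant condition encodes. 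Combined with the characterization proved in Section~\ref{sec:pleasant}, that the pleasant diagrams of $\lambda/\mu$ are precisely the subsets of complements of excited diagrams, a pleasant diagram of the maximal size $|\lambda/\mu|$ must equal $[\lambda]\setminus D$ for some $D\in\ED(\lambda/\mu)$. This yields one inclusion of supports into complements of excited diagrams.

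For the reverse inclusion I would run the canonical reconstruction of the first paragraph starting from $[\lambda]\setminus D$: given $D\in\ED(\lambda/\mu)$, read off strips by increasing content from the prescribed starting cells $(a_i,b_i)$, and verify that the excited-diagram structure of $D$ guarantees the reading never stalls and produces vertex-disjoint monotone strips with the required endpoints, i.e.\ an element of $\NIP(\lambda/\mu)$ with support $[\lambda]\setminus D$. I expect the main obstacle to be precisely the pleasant-diagram step: establishing that an \emph{arbitrary} support $\supp(\Gamma)$ meets the intrinsic condition of Definition~\ref{def:agog} without reverting to the excited-move induction. This amounts to translating the local ``no $2\times 2$ obstruction'' property of non-intersecting monotone border strips directly into the defining pleasant condition, and it is here that the new argument carries the real content and departs from Kreiman's.
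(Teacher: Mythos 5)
Your second paragraph reproduces, in essence, the paper's own argument for one of the two inclusions: supports of tuples in $\NIP(\lambda/\mu)$ all have the fixed size $|\lambda/\mu|$, they are pleasant diagrams, and by the characterization of pleasant diagrams as subsets of complements of excited diagrams (Theorem~\ref{thm:charpleasant}, i.e.\ \cite[Thm.~6.5]{MPP1}) a pleasant diagram of this maximal size must equal $[\lambda]\setminus D$ for some $D\in\ED(\lambda/\mu)$. That part is sound, with one misstatement: Definition~\ref{def:agog} is \emph{not} a local ``no $2\times 2$ obstruction'' condition. It bounds, for every $k$, the length of descending chains of $S\cap\square^{\lambda}_k$ by the length of the $k$th diagonal of $\lambda/\mu$. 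The verification (which the paper also states without detail) rests on the observation that along a monotone up/right strip the row index weakly decreases as the column index increases, so each of the $k$ strips meets any descending chain in at most one cell, combined with a count of how many strips cross the relevant diagonal; translating the pleasant condition into a $2\times 2$ property, as you propose, would not establish Definition~\ref{def:agog} as stated.

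The genuine gap is in the reverse inclusion. The paper proves that $[\lambda]\setminus D\in\NIP(\lambda/\mu)$ for every $D\in\ED(\lambda/\mu)$ by induction on the number of excited moves: the base case is the Kreiman decomposition of $[\lambda/\mu]$, and each excited move on the diagram corresponds to a ladder move on the tuple of paths, which preserves non-intersection and the endpoints. You explicitly decline this induction and propose instead to ``read off strips by increasing content'' from $[\lambda]\setminus D$ and to ``verify that the excited-diagram structure of $D$ guarantees the reading never stalls'' --- but no such verification is sketched, and this is precisely where a structural property of excited diagrams (say, via their flagged-tableau description in Proposition~\ref{prop:excited2flagged}) would have to be invoked; as written the greedy reconstruction is an unproven claim standing in for the one step that carries real content in this direction. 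A minor point on uniqueness: the paper derives it from \cite[Lemma~5.2]{VK}, while Lemma~5.3 of \cite{VK} is what furnishes the existence of the Kreiman decomposition, so your citation is off; the content-by-content reconstruction you sketch is essentially that uniqueness lemma and is otherwise fine.
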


\begin{proof}
The fact that paths are uniquely determined by their support in
$[\lambda]$ follows by \cite[Lemma 5.2]{VK}. By abuse of notation we
identify the $k$-tuples of paths in $\NIP(\lambda/\mu)$ with their
supports. Note that the supports of all $k$-tuples of paths in
$\NIP(\lambda/\mu)$ have size $|\lambda/\mu|$.

We now show that the support of $k$-tuples in $\NIP(\lambda/\mu)$
correspond to complements of excited diagrams. First, we show that if $D\in \ED(\lambda/\mu)$ then $[\lambda]\setminus
D \in \NIP(\lambda/\mu)$ by induction on the number of excited moves. Given $[\mu] \in \ED(\lambda/\mu)$, its complement $[\lambda/\mu]$ corresponds to Kreiman decomposition
$(\ga^*_1,\ldots,\ga^*_k) \in \NIP(\lambda/\mu)$ as mentioned above. Then excited moves on
the diagrams correspond to {\em ladder moves} on the non-intersecting
paths:
\begin{center}
\includegraphics{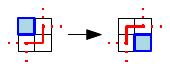}.
\end{center}
The latter do not introduce intersections and preserve
the endpoints of the paths. Thus for each $D \in \ED(\lambda/\mu)$,
its complement $[\lambda]\setminus D$ corresponds to a $k$-tuple
$(\ga_1,\ldots,\ga_k)$ of paths in $\NIP(\lambda/\mu)$.

Conversely, consider the support $S$ of a $k$-tuple of paths in
$\NIP(\lambda/\mu)$. The set $S$ has the following property: the subset $S_k := S \cap
\square^{\lambda}_k$ has no descending chain bigger than the length of
the $k$th diagonal of $\lambda/\mu$. Such sets $S$ are called {\em
  pleasant diagram} of
$\lambda/\mu$ \cite[\S$6$]{MPP1} (see Section~\ref{sec:pleasant}). Since $|S|=|\lambda/\mu|$, by
\cite[Thm. 6.5]{MPP1} the set $S$ is the complement of an excited
diagram in $\ED(\lambda/\mu)$, as desired.
\end{proof}

\begin{corollary}  \label{cor:excited_eqs_NIP} $\sED(\lambda/\mu) \, = \, |\NIP(\lambda/\mu)|$.
\end{corollary}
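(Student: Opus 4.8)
The plan is to observe that Corollary~\ref{cor:excited_eqs_NIP} follows immediately from Proposition~\ref{prop:NIP2ED}, and to make the underlying bijection explicit. Concretely, I would introduce the \emph{support map}
\[
\Theta \colon \NIP(\lambda/\mu) \longrightarrow \ED(\lambda/\mu), \qquad \Gamma \longmapsto [\lambda] \setminus \supp(\Gamma),
\]
sending a $k$-tuple of non-intersecting paths to the complement inside $[\lambda]$ of its support, and then verify that $\Theta$ is a well-defined bijection between two finite sets. Equality of cardinalities, hence $\sED(\lambda/\mu) = |\NIP(\lambda/\mu)|$, is then automatic.

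To see that $\Theta$ is well-defined and injective I would appeal directly to the two assertions of Proposition~\ref{prop:NIP2ED}. The second assertion says that the supports of tuples in $\NIP(\lambda/\mu)$ are exactly the complements of excited diagrams, so $[\lambda] \setminus \supp(\Gamma) \in \ED(\lambda/\mu)$ for every $\Gamma$, which is well-definedness. The first assertion says that a tuple of paths is uniquely recovered from its support, i.e.\ $\Gamma \mapsto \supp(\Gamma)$ is injective; since complementation $S \mapsto [\lambda] \setminus S$ is a bijection on subsets of $[\lambda]$, the composite $\Theta$ is injective as well.

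For surjectivity I would use the forward direction established inside the proof of Proposition~\ref{prop:NIP2ED}: for each $D \in \ED(\lambda/\mu)$ the complement $[\lambda] \setminus D$ is the support of some tuple $\Gamma \in \NIP(\lambda/\mu)$, obtained from the Kreiman decomposition by the ladder moves that mirror the excited moves producing $D$. Then $\Theta(\Gamma) = D$, so $\Theta$ is onto, and the three steps together give the claimed equality. I do not anticipate any genuine obstacle, since all the substantive content---Kreiman's uniqueness lemma and the pleasant-diagram characterization of which subsets arise as supports---has already been absorbed into Proposition~\ref{prop:NIP2ED}; the corollary is a purely formal consequence. The only point meriting a moment's care is finiteness of both sets, which is clear because each injects into the finite family of subsets of $[\lambda]$, so that a bijection indeed yields equal cardinalities.
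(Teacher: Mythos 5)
Your proposal is correct and matches the paper exactly: the paper states Corollary~\ref{cor:excited_eqs_NIP} without a separate proof, as an immediate consequence of Proposition~\ref{prop:NIP2ED}, and your support-complement bijection $\Theta$ simply makes that formal step explicit. (One small simplification: surjectivity already follows from the statement of the proposition itself---the word ``exactly'' gives both inclusions---so you need not reach into the interior of its proof.)
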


Next, we show a correspondence between the Kreiman decomposition and
the Lascoux--Pragacz decomposition of $\lambda/\mu$.

\begin{lemma} \label{lem:contentsLPvsK}
There is a correspondence between the border strips of the Lascoux--Pragacz
and the Kreiman decomposition of $\lambda/\mu$. This correspondence preserves the
lengths and contents of the starting and ending
points of the paths/border strips:
\[
p(\theta_i) = b_i-a_i, \quad q(\theta_i) = d_i-c_i.
\]
\end{lemma}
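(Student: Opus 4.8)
The plan is to pass to the diagonal (content) statistic and to show that, although the Lascoux--Pragacz strips $\theta_i$ and the Kreiman strips $\gamma_i$ generally occupy different cells, they occupy the same diagonals. Write $c(i,j)=j-i$. Since a rightward or an upward step each increase $j-i$ by one, along any border strip the content grows by exactly $1$ at each step; hence a strip meets every diagonal in its range in a single cell and is described, for our purposes, by its content interval $[p,q]$, with $p$ the content of its southwest (starting) endpoint, $q$ that of its northeast (ending) endpoint, and length $q-p+1$. Thus it suffices to prove that the two decompositions produce the same multiset of content intervals and then to list them in the common order $\succeq$ given by the content of the northeast endpoints; the identities $p(\theta_i)=b_i-a_i$ and $q(\theta_i)=d_i-c_i$, and the equality of lengths, follow.

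For the diagonal bookkeeping I would record, for each content $c$, the set $R_c=\{i:(i,i+c)\in[\lambda/\mu]\}$ and $m_c=|R_c|$. One checks that $R_c$ is an interval $[t_c,b_c]$, that its top $t_c=\min\{i:\mu_i-i<c\}$ is controlled by $\mu$ and its bottom $b_c=\max\{i:\lambda_i-i\ge c\}$ by $\lambda$, and that $t_c,b_c$ are weakly decreasing. In these coordinates both decompositions become systems of non-crossing paths whose step from diagonal $c$ to $c+1$ carries each occupied row $i$ to $i$ or $i-1$: non-crossing for Kreiman by definition, and for Lascoux--Pragacz because successive outer strips are nested. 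If $s_c$ and $e_c$ count the strips starting and ending at content $c$, then $s_c=m_c-X_{c-1}$ and $e_c=m_c-X_c$, where $X_c$ is the number of strips running through both $c$ and $c+1$. Since $k=\sum_c s_c=|\lambda/\mu|-\sum_c X_c$, using the minimum number $k$ of strips is equivalent to maximizing each $X_c$, and the maximal continuation allowed by the shift-by-one non-crossing rule is a function of $t_c,b_c,t_{c+1},b_{c+1}$ alone. For Kreiman this minimality is built into its characterization \cite[Lemma 5.3]{VK}, and for Lascoux--Pragacz it follows from the maximality of outer border strips (so that $k=\max_c m_c$). Hence the two decompositions share the same sequences $s_c$ and $e_c$, i.e. the same multisets of starting and of ending contents.

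What remains, and what I expect to be the crux, is that the two decompositions match up these starts and ends in the same way, i.e. produce the same pairing into intervals; minimality and non-crossing alone do \emph{not} force the pairing, since a generic minimal non-crossing decomposition can realize ``crossing'' pairs such as $[p_1,q_2],[p_2,q_1]$. Here I would use the extra structure of each construction to show that both pair starts with ends by nesting: the smallest starting content is joined to the largest ending content, and so on inward. For Lascoux--Pragacz this is immediate and recursive, since $\theta_1$ is the entire outer border strip of $\lambda$ and therefore runs from the bottom-left corner (content $1-\ell(\lambda)$) to the top-right corner (content $\lambda_1-1$), after which the residual shape is decomposed the same way. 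For the Kreiman decomposition the nesting must instead be extracted from the requirements that the paths be non-crossing, begin at the southern cell of a column, and end at the eastern cell of a row; this is the delicate point, and it is exactly here that one rules out the crossing pairings. Granting the nested pairing for both, the multisets of content intervals coincide, and listing the strips in decreasing order of northeast-endpoint content yields the bijection $\theta_i\leftrightarrow\gamma_i$ with $p(\theta_i)=b_i-a_i$ and $q(\theta_i)=d_i-c_i$.
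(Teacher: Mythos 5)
Your argument as written has a genuine gap, and it is exactly the one you flag yourself: the ``nested pairing'' claim for the Kreiman decomposition is granted rather than proved, and it is where the entire content of the lemma lives. Equality of the multisets of starting contents and of ending contents (your $s_c$, $e_c$ analysis) does not determine the content intervals $[p_i,q_i]$ --- as your own crossing example $[p_1,q_2],[p_2,q_1]$ shows --- so without the pairing step nothing about individual strips, in particular neither $p(\theta_i)=b_i-a_i$ nor $q(\theta_i)=d_i-c_i$ nor the equality of lengths, follows. Several supporting claims are also asserted rather than established: that the Kreiman decomposition is minimal in your flow model, that $k=\max_c m_c$ for the Lascoux--Pragacz decomposition, that minimizing $k$ forces each $X_c$ to be maximal simultaneously, and that successive outer strips translate into non-crossing paths in your shift-by-one automaton. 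Each is plausible but none is free, and the last two depend on connectivity hypotheses you never invoke.

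It is worth contrasting this with the paper's proof, which is a short induction that delivers the pairing for both decompositions at once. One observes that $\theta_1$ and $\gamma^*_1$ have the \emph{same} endpoints $(\lambda'_1,1)$ and $(1,\lambda_1)$, hence the same length and endpoint contents; and that $\lambda/\mu$ with $\theta_1$ removed and $\lambda/\mu$ with $\gamma^*_1$ removed coincide up to a shift, with the residual strips $(\theta_2,\ldots,\theta_k)$ and $(\gamma^*_2,\ldots,\gamma^*_t)$ forming the Lascoux--Pragacz and Kreiman decompositions of that smaller shape with endpoint contents unchanged; induction on $k$ then gives $k=t$ and matches the strips in order. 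Note that this peel-and-translate induction is precisely a proof of the nested pairing you need (outermost start with outermost end, then recurse), so the honest way to close your gap essentially reproduces the paper's argument --- at which point the diagonal bookkeeping with $m_c$, $s_c$, $e_c$, $X_c$ becomes superfluous. If you want to salvage your route instead, you would need to extract the nesting for the Kreiman side directly from its defining properties (uniqueness of the non-intersecting tuple with support $[\lambda/\mu]$, endpoints at southern boxes of columns and eastern boxes of rows), which in effect is again the same induction.
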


\begin{proof}
Let $(\theta_1,\ldots,\theta_k)$ and $(\ga^*_1,\ldots,\ga^*_t)$ be the
Lascoux--Pragacz and the Kreiman decomposition of the shape
$\lambda/\mu$. We prove the result by induction on $k$. Note that $\theta_1$ and $\ga^*_1$ have the same
endpoints $(\lambda'_1,1)$ and $(1,\lambda_1)$. Thus the strips have
the same length and their endpoints have the same respective contents.

Next, note that the skew shapes $\lambda/\mu$ with $\theta_1$ removed
and $\lambda/\mu$ with $\ga^*_1$ removed are the same up to
shifting. The Lascoux--Pragacz decomposition
of this new shape is $(\theta_2,\ldots,\theta_k)$ with the contents of
the endpoints unchanged. Similarly, the Kreiman decomposition of this
new shape is $(\ga^*_2,\ldots,\ga^*_t)$ with the contents of the endpoints
unchanged. By induction $k-1=t-1$ and the strip $\theta_i$ and
$\ga^*_i$ for $i=2,\ldots,k$ have the same length and their endpoints
have the same content. This completes the proof.
\end{proof}

\begin{figure}
\label{fig:contents_paths}
\includegraphics{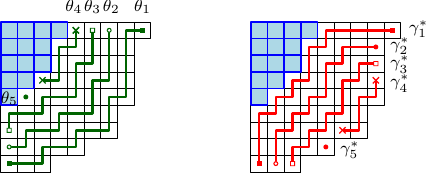}
\caption{Example of the correspondence of the border strips of the
  Lascoux--Pragacz outer decomposition $(\theta_1,\ldots,\theta_5)$ and
  Kreiman outer decomposition $(\ga^*_1,\ldots,\ga^*_5)$ of
  $\lambda/\mu$. The contents of the endpoints and the lengths of
  $\theta_i$ and $\ga^*_i$ are the same.}
\end{figure}

Lastly, we need a Lindstr\"om--Gessel--Viennot type lemma to count (weighted)
non-intersecting paths in $[\lambda]$. To state the Lemma we need some
notation. Its proof follows the usual sign-reversing involution on
paths that intersect (e.g. see \cite[\S 2.7]{EC2}). Let $(a_1,b_1),\ldots,
(a_k,b_k)$ and $(c_1,d_1),\ldots,(c_k,d_k)$ be cells in
$[\lambda]$ and let
\[
h((a_j,b_j) \to (c_i,d_i), {\bf y}) := \sum_{\ga} \prod_{(r,s) \in \ga} y_{r,s},
\]
where the sum is over paths $\ga:(a_j,b_j)\to (c_i,d_i)$ in
$[\lambda]$ with steps $(1,0)$ and $(0,1)$, and the product is over
cells $(r,s)$ of $\ga$. Let also
\[
N_{\lambda}((a_i,b_i)\to (c_i,d_i); {\bf y}) := \sum_{(\ga_1,\ldots,\ga_k)} \prod_{i=1}^k \prod_{(r,s)\in \ga_i} y_{r,s},
\]
where the sum is over $k$-tuples $(\ga_1,\ldots,\ga_k)$ of
non-intersecting paths $\ga_i:(a_i,b_i)\to (c_i,d_i)$ in
$[\lambda]$.

\begin{lemma}[Lindstr\"om--Gessel--Viennot]  \label{lem:LGVlambda}
\[
N_{\lambda}((a_i,b_i)\to (c_i,d_i), {\bf y})  = \det{\bigr[} h((a_j,b_j)\to
  (c_i,d_i), {\bf y}) {\bigl]}_{i,j=1}^k.
\]
\end{lemma}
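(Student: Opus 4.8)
The plan is to prove this by the classical Lindstr\"om--Gessel--Viennot sign-reversing involution, so I will only outline the steps. First I would expand the right-hand determinant over the symmetric group $\SS_k$:
\[
\det\bigl[\, h((a_j,b_j)\to(c_i,d_i),{\bf y})\,\bigr]_{i,j=1}^k \, = \, \sum_{\si \in \SS_k} \mathrm{sgn}(\si)\. \prod_{i=1}^k h\bigl((a_{\si(i)},b_{\si(i)})\to(c_i,d_i),{\bf y}\bigr).
\]
Since each factor $h((a_{\si(i)},b_{\si(i)})\to(c_i,d_i),{\bf y})$ is by definition the weighted sum over single monotone paths (steps $(1,0)$ and $(0,1)$) from $(a_{\si(i)},b_{\si(i)})$ to $(c_i,d_i)$ in $[\la]$, expanding the product turns the right-hand side into a signed weighted sum over all pairs $(\si,\Gamma)$, where $\Gamma=(\ga_1,\ldots,\ga_k)$ is a $k$-tuple of paths with $\ga_i:(a_{\si(i)},b_{\si(i)})\to(c_i,d_i)$ and weight $\prod_{i=1}^k\prod_{(r,s)\in\ga_i} y_{r,s}$.

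Next I would define a weight-preserving, sign-reversing involution $\Phi$ on the set of those pairs whose path-tuple $\Gamma$ has at least one intersection (two of its paths sharing a cell). Using a fixed canonical rule---for instance, take the smallest index $i$ such that $\ga_i$ meets another path, let $P$ be the first cell along $\ga_i$ lying on another path, and let $j$ be the smallest index with $j\neq i$ whose path also passes through $P$---I would swap the tails of $\ga_i$ and $\ga_j$ beyond $P$. The swap leaves the multiset of occupied cells unchanged, hence preserves the weight; it replaces $\si$ by $\si\cdot(i\,j)$, hence flips $\mathrm{sgn}(\si)$; and since the portion of $\ga_i$ before $P$ is untouched (so $P$ is still the first intersection and $i,j$ are recovered), $\Phi$ is an involution. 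Therefore all intersecting contributions cancel in pairs.

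The surviving terms are exactly the non-intersecting tuples. The last step is to show that a non-intersecting tuple must realize the identity permutation $\si=\id$: because all paths are monotone and the sources $(a_i,b_i)$ and sinks $(c_i,d_i)$ inherit their order from the Kreiman decomposition (Lemma~\ref{lem:contentsLPvsK}), the endpoints are non-permutable, so any non-crossing family connects $(a_i,b_i)$ to $(c_i,d_i)$ for each $i$. For these $\mathrm{sgn}(\si)=1$, and what remains is precisely $N_{\la}((a_i,b_i)\to(c_i,d_i),{\bf y})$, proving the identity.

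I expect the main obstacle to be the careful verification that $\Phi$ is well defined and genuinely involutive: the canonical triple $(P,i,j)$ must be stable under the tail swap, and one must check that swapping tails cannot alter the canonical data or manufacture an earlier intersection. The non-permutability of the endpoints is the secondary point needing the specific geometry of the $[\la]$-setting, but it is forced by the monotonicity of the paths together with the ordering supplied by Lemma~\ref{lem:contentsLPvsK}.
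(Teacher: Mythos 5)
Your proposal is correct and follows essentially the same route as the paper, whose proof of this lemma consists of invoking the standard sign-reversing tail-swap involution on intersecting path tuples (citing \cite[\S 2.7]{EC2}); your canonical choice of the triple $(P,i,j)$ is stable under the swap, so the involution is indeed well defined. The only content you add beyond the paper's one-line argument is the explicit verification that non-intersecting tuples force $\sigma = \mathrm{id}$, which holds here since the sources and sinks lie in compatible order along the boundary of $[\lambda]$.
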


\begin{proof}[Proof of Theorem~\ref{thm:num_excited_HG}]
Combining Corollary~\ref{cor:excited_eqs_NIP}  and
Lemma~\ref{lem:LGVlambda} with weights $y_{r,s}=1$ it
follows that $\sED(\lambda/\mu)$ can be calculated by the determinant:

\begin{equation}
\label{eq:excited_2nddet}
\sED(\lambda/\mu) = \det {\big [} \#\{\text{ paths } \ga \mid  \ga
\subseteq \lambda, \ga:(a_j,b_j) \to (c_i,d_i)\,\}  {\big ]}_{i,j=1}^k.
\end{equation}

Now, by Corollary~\ref{cor:excited_eqs_NIP}, the number of paths
in  each matrix entry in the RHS of \eqref{eq:excited_2nddet} is also
the number of excited diagrams of a border strip $\theta$. Since in Kreiman's
decomposition, the endpoints $(a_j,b_j)$ and $(c_i,d_i)$ are in the
bottom boundary of $\lambda$, then $\theta$ is a substrip of the outer
border strip $\theta_1$ of $\lambda/\mu$ going from $(a_j,b_j)$ to $(c_i,d_i)$,
\[
\#\{\text{ paths } \ga \mid  \ga
\subseteq \lambda, \ga:(a_j,b_j) \to (c_i,d_i)\,\} = \sED(\theta).
\]

Next, we claim that the substrip $\theta$ of $\theta_1$ described above is
precisely the substrip $\theta_i \# \theta_j$ of the cutting strip $\tau=\theta_1$ from
the Lascoux--Pragacz identity \eqref{eq:LPid}. This follows from
Lemma~\ref{lem:contentsLPvsK} since the starting point $(a_j,b_j)$ of
$\ga^*_j$ has the same content as the starting point of $\theta_j$ and
the end point $(c_i,d_i)$ of $\ga^*_i$ has the same content as the end
point of $\theta_i$, i.e. $p(\theta_j) = b_j-a_j$ and $q(\theta_i)=d_i-c_i$.
Thus $\theta = \phi[p(\theta_j), q(\theta_i)] = \theta_i \# \theta_j$
and so the previous equation becomes
\begin{equation} \label{eq:entries-LP-det}
\#\{\text{ paths } \ga \mid  \ga
\subseteq \lambda, \ga:(a_j,b_j) \to (c_i,d_i)\,\} = \sED(\theta_i \# \theta_j).
\end{equation}
Finally, the result follows by combining \eqref{eq:excited_2nddet} and \eqref{eq:entries-LP-det}.
\end{proof}

\begin{figure}
\includegraphics{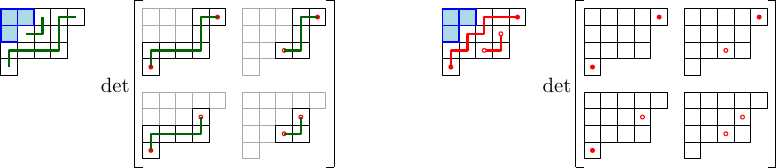}
\caption{For the shape $\lambda/\mu=(5441/21)$: on the left the
  Lascoux--Pragacz decomposition of $\lambda/\mu$  and the border strips of
  the determinantal identity for $s_{\lambda/\mu}$; on the right the
  Kreiman decomposition of $\lambda/\mu$ and the corresponding
  endpoints of the paths in the determinantal identity \eqref{eq:excited_2nddet} to calculate $\sED(\lambda/\mu)$.}
\label{fig:KvsLP}
\end{figure}
\bigskip

\section{Pleasant diagrams} \label{sec:pleasant}

\subsection{Definition and characterization}  \label{subsec:pleasant}

A {\em descending chain} of length $s$ of a set $S\subset [\lambda]$
is a sequence $\left((i_1,j_1),(i_2,j_2),\ldots,(i_s,j_s)\right)$ of
elements in $S$ satisfying  $1\leq i_1 < \cdots <
i_s \leq m$ and $1\leq j_1 < \cdots < j_s \leq n$.

\begin{definition}[Pleasant diagrams \cite{MPP1}] \label{def:agog}
 A diagram $S\subset [\lambda]$ is a {\em \pleasant~diagram} of $\lambda/\mu$ if for all
 integers $k$ with $1-\ell(\lambda)\leq k \leq \lambda_1-1$, the subarray
 $S_k := S\cap \square^{\lambda}_k$ has no descending chain bigger than
 the length of the $k$th diagonal of $\lambda/\mu$. We denote the set of
 \pleasant~diagrams of $\lambda/\mu$ by $\PD(\lambda/\mu)$ and its
 size by $\sPD(\lambda/\mu)$.
\end{definition}

\begin{example}
The pleasant diagrams of $\lambda/\mu = (22,1)$ are subsets of
$[\lambda]=\{(1,1),(1,2),(2,1),(2,2)\}$ with no descending chains in
$S_{-1},S_0,S_1$ of sizes $>1$. Thus, out of the $16$ subsets we discard the
four containing the descending chain $((1,1),(2,2))$, thus $\sPD(22/1)=12$.
\end{example}

Pleasant diagrams can be characterized in terms of complements of excited
diagrams.

\begin{theorem}[\cite{MPP1}] \label{thm:charpleasant}
A diagram $S\subset [\lambda]$ is a pleasant diagram in
$\PD(\lambda/\mu)$ if and only if $S \subseteq
[\lambda]\backslash D$ for some excited diagram $D\in \ED(\lambda/\mu)$.
\end{theorem}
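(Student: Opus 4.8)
The plan is to exploit the downward closure of pleasantness together with the path description of the complements of excited diagrams. First I would record the elementary observation that if $S'\subseteq S$ and $S$ is pleasant, then $S'$ is pleasant: for every $k$ one has $S'_k\subseteq S_k$, so any descending chain of $S'_k$ is a descending chain of $S_k$ and hence has length at most that of $\dd_k$. With this in hand the theorem splits into two implications. For the ``if'' direction it suffices, by downward closure, to prove that the complement $[\lambda]\setminus D$ of a single excited diagram $D$ is itself pleasant. For the ``only if'' direction I must produce, from a pleasant $S$, an excited diagram $D$ with $D\cap S=\varnothing$; by Proposition~\ref{prop:NIP2ED} this is the same as exhibiting a $k$-tuple of non-intersecting paths in $\NIP(\lambda/\mu)$ whose support contains $S$.

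For the ``if'' direction I would argue by induction on the number of excited moves used to create $D$ from $[\mu]$. In the base case $D=[\mu]$ and $[\lambda]\setminus D=[\lambda/\mu]$; here a descending chain $(i_1,j_1),\dots,(i_s,j_s)$ lying in $\square^\lambda_k$ can be injected into the cells of the diagonal $\dd_k$ by sending a chain cell lying weakly on one side of $\dd_k$ inside $\square^\lambda_k$ to the cell of $\dd_k$ in its own row, and a chain cell on the other side to the cell of $\dd_k$ in its own column. A short computation shows that the target rows are strictly increasing along the chain, so the map is injective, and that each target again lies in $[\lambda/\mu]$; hence $s\le|\dd_k|$. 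For the inductive step I would check that a single excited move, which in the complement replaces a cell $(i+1,j+1)$ by $(i,j)$, cannot create a descending chain in any $\square^\lambda_k$ longer than the bound already in force. This is a purely local verification that uses that excited moves preserve content, so that $[\lambda]\setminus D$ continues to meet each content diagonal in exactly $|\dd_k|$ cells.

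The main obstacle is the ``only if'' direction, where the chain bounds must be converted into an actual non-intersecting routing. Here I would read the defining inequality of Definition~\ref{def:agog} as a Hall/cut condition: the statement that $S$ has no descending chain in $\square^\lambda_k$ exceeding the length of $\dd_k$ is exactly the feasibility condition for threading $S$ through a family of non-intersecting monotone paths with the Kreiman endpoints, the length of $\dd_k$ being the capacity available on the $k$th diagonal. I would therefore set up a Lindstr\"om--Gessel--Viennot/Menger-type argument producing such a family: for each $k$ cover $S_k$ by $|\dd_k|$ antichains (Mirsky's theorem supplies exactly this many, since the longest chain of $S_k$ has length at most $|\dd_k|$), then glue these local antichain systems into globally non-intersecting paths. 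Once the routing exists, its support lies in $\NIP(\lambda/\mu)$, so by Proposition~\ref{prop:NIP2ED} it is the complement of an excited diagram containing $S$, which completes the proof. I expect the delicate point to be precisely the gluing step: ensuring the per-diagonal antichain covers can be chosen compatibly so that the resulting paths neither cross nor leave $[\lambda]$, which is where the full force of the simultaneous chain bounds over all $k$ is needed.
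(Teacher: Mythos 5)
Your ``only if'' direction contains the genuine gap, and it is the entire content of the theorem. Reading the defining chain bounds as a ``Hall/cut condition'' for threading $S$ through non-intersecting monotone paths with the Kreiman endpoints is not a proof but a restatement of what must be shown: no off-the-shelf max-flow/min-cut or LGV statement applies to this geometry. Note in particular that the rectangles $\square^\lambda_k$ are anchored at $(1,1)$ and are not diagonal bands, so the intuition ``$|\dd_k|$ is the capacity of the $k$th diagonal'' is only heuristic --- a descending chain in $S\cap\square^\lambda_k$ need not meet $\dd_k$ at all. Mirsky's theorem does cover each $S_k$ by $|\dd_k|$ antichains, but separately for each $k$; the covers for different $k$ live on overlapping rectangles with different capacities and need not be chosen compatibly, antichains are not segments of monotone lattice paths, and nothing in your sketch forces the glued paths to realize the prescribed endpoints, to stay inside $[\lambda]$, or to avoid crossings. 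You flag the gluing step yourself as the delicate point, but no argument for it is given, and it is precisely where the theorem lives. You also cannot fall back on this paper: Theorem~\ref{thm:charpleasant} is quoted from \cite{MPP1} without proof, and the implication you need (pleasant $\Rightarrow$ contained in the complement of an excited diagram) is exactly the part of \cite{MPP1} that this paper itself invokes as a black box, in the proof of Proposition~\ref{prop:NIP2ED} for the case $|S|=|\lambda/\mu|$. So the gluing step would require a genuinely new argument, which is missing.

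Your reductions and your ``if'' direction are sound in outline, with two details needing care. In the base case, the side-assignment of your injection matters: you must send chain cells strictly \emph{below} the diagonal to the diagonal cell in their own \emph{row} and cells above it to the diagonal cell in their own \emph{column}; the opposite assignment can land in $[\mu]$ (for $\lambda=(3,3,3)$, $\mu=(2)$, $k=0$, the chain cell $(2,1)$ would map to $(1,1)\in[\mu]$), whereas the correct one keeps all targets in $[\lambda/\mu]$ and strictly increasing along the chain. In the inductive step, the verification is not quite ``purely local'': when the cell $(i,j)$ gained by the complement is the final element of a chain, the natural substitutes $(i,j+1)$ and $(i+1,j)$ may lie outside $\square^\lambda_k$. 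This is repaired by noting that $(i,j)$ cannot be the bottom-right corner of $\square^\lambda_k$: the excited move requires $(i+1,j+1)\in[\lambda]$, which would contradict the maximality of the rectangle, so at least one substitute remains inside $\square^\lambda_k$ and the case analysis (using also that $(i+1,j+1)$ is unavailable as a successor in the new complement) closes. With these fixes the ``if'' half is a complete elementary argument; the ``only if'' half remains unproved.
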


Recall that by Proposition~\ref{prop:NIP2ED} for an excited diagram $D$, its complement corresponds to
a tuple of non-intersecting paths in $\NIP(\lambda/\mu)$ and that such
paths are characterized by their support of size $|\lambda/\mu|$.
Next, we give a formula for $\sPD(\lambda/\mu)$ from \cite{MPP1}. In
order to state it we need to define a peak statistic
 for the non-intersecting paths associated to the complement of an excited
diagram $D$.

To each tuple $\Gamma$ of non-intersecting paths we associate
recursively, via
ladder/excited moves, a subset of its support
called {\em excited peaks} and denoted by
$\Lambda(\Gamma)$. For $[\lambda/\mu] \in
\NIP(\lambda/\mu)$ the set of excited peaks is
$\Lambda([\lambda/\mu])=\varnothing$. If $\Gamma$ is a tuple in
$\NIP(\lambda/\mu)$ with an active cell $u=(i,j) \in
[\lambda]\setminus \Gamma$ then the excited peaks of
$\alpha_u(\Gamma)$ are
\[
\Lambda( \alpha_u(\Gamma)) := \Big(\Lambda(\Gamma)-
  \{(i,j+1),(i+1,j)\} \Big) \cup \{u\}.
\]
That is, the excited peaks of $\alpha_u(\Gamma)$ are obtained from those of
$\Gamma$ by adding the new peak $(i,j)$ and removing $(i,j+1)$ and $(i+1,j)$ if any of
the two are peaks in $\Lambda(\Gamma)$. Pictorially:
\begin{center}
\includegraphics{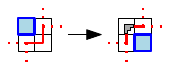},
\end{center}
where $\includegraphics{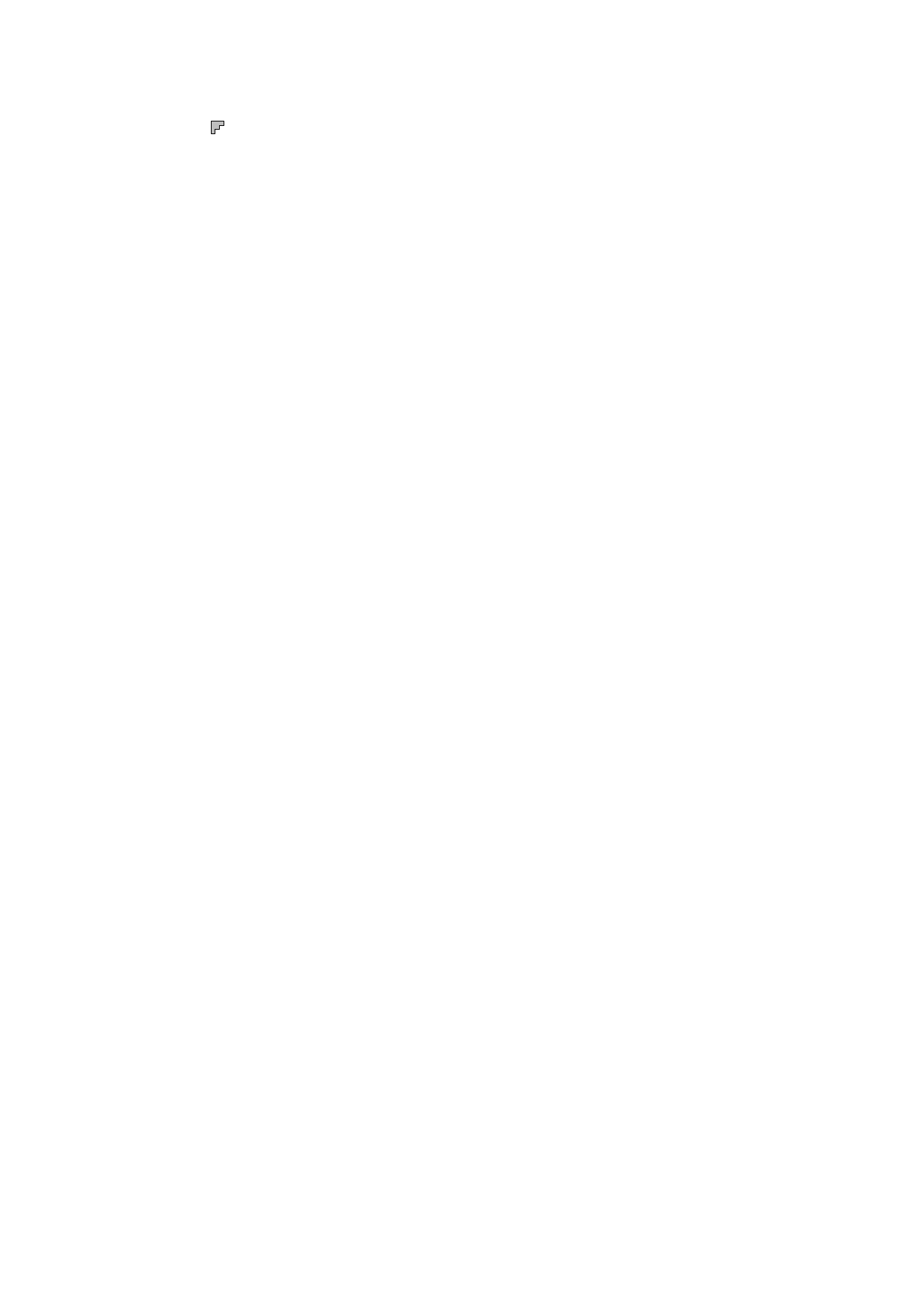}$ denotes a peak. Finally, let $\expeaks(\Gamma):=|\Lambda(\Gamma)|$ be
the number of excited peaks of $\Gamma$. Given
a set $\mathcal{S}$, let $2^{\mathcal{S}}$ denote the subsets of $\mathcal{S}$.

\begin{theorem}[\cite{MPP1}] \label{thm:num_pleasant}
For a skew shape $\lambda/\mu$ we have that $\PD(\lambda/\mu)$ is
given by the disjoint union
\[
\PD(\lambda/\mu) = \bigcup_{\Gamma \in \NIP(\lambda/\mu)} \Bigl(\Lambda(\Gamma) \times
2^{\Gamma \setminus \Lambda(\Gamma)}\Bigr).
\]
Thus
\[
\sPD(\lambda/\mu) = \sum_{\Gamma\in \NIP(\lambda/\mu)} 2^{|\lambda/\mu|-\expeaks(\Gamma)}.
\]
\end{theorem}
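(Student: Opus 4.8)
The plan is to establish the set-theoretic decomposition first; the cardinality statement then follows immediately, since the block $\Lambda(\Gamma)\times 2^{\Gamma\setminus\Lambda(\Gamma)}$ is by definition the family of sets $\Lambda(\Gamma)\cup A$ with $A\subseteq\supp(\Gamma)\setminus\Lambda(\Gamma)$, and there are $2^{\,|\supp(\Gamma)|-|\Lambda(\Gamma)|}=2^{\,|\lambda/\mu|-\expeaks(\Gamma)}$ of these (every support has size $|\lambda/\mu|$ by Proposition~\ref{prop:NIP2ED}). So everything reduces to showing that the blocks lie in $\PD(\lambda/\mu)$, that they cover $\PD(\lambda/\mu)$, and that they are pairwise disjoint; equivalently, that every pleasant $S$ satisfies $\Lambda(\Gamma)\subseteq S\subseteq\supp(\Gamma)$ for \emph{exactly one} $\Gamma\in\NIP(\lambda/\mu)$.

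One inclusion is immediate. If $\Lambda(\Gamma)\subseteq S\subseteq\supp(\Gamma)$, then by Proposition~\ref{prop:NIP2ED} we have $\supp(\Gamma)=[\lambda]\setminus D$ for some $D\in\ED(\lambda/\mu)$, whence $S\subseteq[\lambda]\setminus D$ and $S\in\PD(\lambda/\mu)$ by Theorem~\ref{thm:charpleasant}. Thus each block lies in $\PD(\lambda/\mu)$, and it remains to prove existence and uniqueness of the representing $\Gamma$.

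For existence I would work in the lattice of excited diagrams. Identifying $\ED(\lambda/\mu)$ with flagged tableaux of shape $\mu$ (Proposition~\ref{prop:excited2flagged}), each box of $\mu$ slides along its own diagonal, so an excited diagram is encoded by the tuple of diagonal positions of its cells under the componentwise order; this is a distributive lattice with meet given by componentwise minimum. Given a pleasant $S$, Theorem~\ref{thm:charpleasant} guarantees that $\mathcal U_S:=\{\Gamma\in\NIP(\lambda/\mu):S\subseteq\supp(\Gamma)\}$ is nonempty. The condition $S\subseteq\supp(\Gamma)$ forbids, for each box, a prescribed set of diagonal positions (those whose cell lies in $S$); since a componentwise minimum selects, coordinate by coordinate, one of two already-admissible values, $\mathcal U_S$ is closed under meet and hence has a unique minimum $\Gamma_{\min}$. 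I then claim $\Lambda(\Gamma_{\min})\subseteq S$: if some excited peak $(i,j)\in\Lambda(\Gamma_{\min})$ were not in $S$, then---using that excited peaks admit an inverse excited move, a property established below---performing that reverse ladder move deletes the cell $(i,j)\notin S$ from the support and yields $\Gamma''\in\mathcal U_S$ with $\Gamma''<\Gamma_{\min}$, contradicting minimality. Hence $\Gamma_{\min}$ represents $S$.

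The delicate point is uniqueness, and it rests on a static characterization of the excited peaks that I would prove first, by induction on the number of ladder moves: the recursively defined set $\Lambda(\Gamma)$ is independent of the chosen sequence of moves (one checks that the local update $\Lambda(\alpha_u(\Gamma))=(\Lambda(\Gamma)\setminus\{(i,j+1),(i+1,j)\})\cup\{(i,j)\}$ is compatible with interchanging commuting moves), and $\Lambda(\Gamma)$ is precisely the set of peaks of $\Gamma$ that admit an inverse excited move. Granting this, any representative $\Gamma$ of $S$ has all of its reverse-ladderable peaks inside $\Lambda(\Gamma)\subseteq S$, so every single reverse move available at $\Gamma$ would delete a cell of $S$ and leave $\mathcal U_S$. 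The remaining and main obstacle is to promote this ``no downward move stays in $\mathcal U_S$'' into the genuine minimality $\Gamma=\Gamma_{\min}$: because $\mathcal U_S$ is meet-closed but neither up- nor down-closed in the lattice, one must rule out a spurious local minimum $\Gamma>\Gamma_{\min}$ from which no single reverse move stays in $\mathcal U_S$. I expect to close this by comparing $\Gamma$ and $\Gamma_{\min}$ along the diagonals on which they differ: a maximal differing coordinate yields a reverse-ladderable peak of $\Gamma$, which by the static characterization lies in $\Lambda(\Gamma)\subseteq S$, and I would show that its diagonal position already differs in $\Gamma_{\min}$, forcing a cell of $S$ outside $\supp(\Gamma_{\min})$ and contradicting $\Gamma_{\min}\in\mathcal U_S$ via Proposition~\ref{prop:NIP2ED}. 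This diagonal comparison, fused with the static characterization of $\Lambda$, is the crux of the argument; once it gives $\Gamma=\Gamma_{\min}$, the union is disjoint and the displayed formula for $\sPD(\lambda/\mu)$ follows by summing block sizes.
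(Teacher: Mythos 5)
Your preliminary reductions are sound as far as they go: the blocks are exactly the intervals $\{S : \Lambda(\Gamma)\subseteq S\subseteq \supp(\Gamma)\}$, their membership in $\PD(\lambda/\mu)$ follows from Proposition~\ref{prop:NIP2ED} and Theorem~\ref{thm:charpleasant}, the cardinality count is right, and your observation that $\mathcal{U}_S$ is meet-closed in the distributive lattice of flagged tableaux (because each cell of $D\wedge D'$ is a cell of $D$ or of $D'$, so $D\wedge D'$ avoids $S$ whenever $D$ and $D'$ do) is correct and yields a well-defined minimum $\Gamma_{\min}$. The fatal problem is the ``static characterization'' on which everything else rests: it is \emph{false} that $\Lambda(\Gamma)$ equals the set of peaks of $\Gamma$ admitting an inverse excited move, and the inclusion you actually use --- every excited peak is reverse-ladderable --- already fails for $\lambda=(3,3,3)$, $\mu=(1)$. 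There $\ED(\lambda/\mu)$ is the chain $\{(1,1)\}\to\{(2,2)\}\to\{(3,3)\}$, and for $D=\{(3,3)\}$ the recursion gives $\Lambda(\Gamma)=\{(1,1),(2,2)\}$, while the support decomposes into the two non-intersecting paths $(2,1),(1,1),(1,2)$ and $(3,1),(3,2),(2,2),(2,3),(1,3)$. The inverse move at the excited peak $(1,1)$ requires $(2,2)\in D$, but $(2,2)$ lies on the second path; so no reverse move exists at $(1,1)$, and only $(2,2)$ is reverse-ladderable. The set $\Lambda(\Gamma)$ is genuinely historical: a later ladder move can slide a \emph{different} path into the cell diagonally southeast of an old excited peak without deleting that peak from $\Lambda$ (the update only removes $(i,j+1)$ and $(i+1,j)$), so excited peaks cannot be read off the final configuration locally.

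Consequently your existence step --- ``if $(i,j)\in\Lambda(\Gamma_{\min})\setminus S$, the reverse ladder move at $(i,j)$ yields $\Gamma''<\Gamma_{\min}$ in $\mathcal{U}_S$'' --- is invalid, because the move you need may simply not be available at that peak; and the uniqueness argument leans on the same false equality, in addition to the ``diagonal comparison'' step that you yourself flag as an unproven expectation rather than a proof. So the proposal has genuine gaps at both of its load-bearing points, even though the lattice-theoretic frame (representing $S$ by the meet of $\mathcal{U}_S$) is a reasonable skeleton; any repair must track $\Lambda$ \emph{along} sequences of excited moves (for instance, first proving its independence of the chosen sequence, which is true, and then an inductive statement relating $\Lambda(\Gamma)$, $S$, and single moves) rather than inferring reverse-ladderability from membership in $\Lambda(\Gamma)$. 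Note also that there is no in-paper proof to compare against: Theorem~\ref{thm:num_pleasant} is quoted here from \cite{MPP1} without proof, so your argument has to stand entirely on its own --- and as written it does not.
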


\begin{example} \label{ex:num_pleasant}
For the shape $\lambda/\mu = (5441/21)$, Figure~\ref{fig:excited} shows
the eight non-intersecting paths in $\NIP(\lambda/\mu)$, each with its
excited peaks marked by \includegraphics{peak}. Thus
\[
\sPD(5441/21) = 2^{11}+ 2\cdot 2^{10} + 2\cdot 2^9 + 2^8 + 2^{10}+ 2^9
= 6912.
\]
\end{example}

\subsection{Border strip decomposition formula for pleasant diagrams} \label{sec:LP4pleasant}

By Theorem~\ref{thm:num_pleasant}, the number of pleasant diagrams is
given by a weighted sum over non-intersecting paths in
$\NIP(\lambda/\mu)$. Since the number of such paths
$|\NIP(\lambda/\mu)| = \sED(\lambda/\mu)$ is given by a
Lascoux--Pragacz type determinant
(Theorem~\ref{thm:num_excited_HG}), one could ask if there also a similar determinantal
identity for $\sPD(\lambda/\mu)$. The following example shows that the
first guess of such a formula does not work. Later, we will see that Conjecture~\ref{conj:plesantdetSch}
suggests that in some cases there might be a determinantal formula for
$\sPD(\lambda/\mu)$.

\begin{example}
For $\lambda/\mu=(5441/21)$ we showed that $\sPD(5441/21)= 6912$,  but
$\sPD(5441/33)\sPD(22/1)- \sPD(322/11)\sPD(441/3) = 4352$ and the ratio of these
two numbers is $27/17$.
\end{example}

\begin{remark}
One difficulty in applying the Lindstr\"om--Gessel--Viennot Lemma
(Lemma~\ref{lem:LGVlambda}) in order to
write $\PD(\lambda/\mu)$ as a determinant of $\PD(\theta_i\#\theta_j)$
is that the non-intersecting paths corresponding to a pleasant diagram have
excited-peaks that depend on the structure of the path and not just
on the coordinates of its support. In the proof of the Lemma, the
sign-reversing involution of switching the
paths that intersect will not respect these
{\em local} excited peaks.
\end{remark}

%\begin{proposition}
%If $(\theta_1,\ldots,\theta_k)$ is the decomposition of $\lambda/\mu$
%into $k$ maximal outer border strips then
%\[
%P(\lambda/\mu) = q^?\det {\big [}\, P({\theta_i \# \theta_j})\, {\big ]}_{i,j=1}^k,
%\]
%where $P({\varnothing})=1$ and $P({\phi[p,q]}) =0$ if $\phi[p,q]$ is undefined.
%\end{proposition}

%\begin{proof}
%{\bf TBD}
%\end{proof}

%For the case $\lambda/\mu = \delta_{n+2k}/\delta_n$ we obtain the
%following identity of Schroeder numbers.

%\begin{corollary}
%$$
%P(\delta_{n+2k}/\delta_n) \, = \, 2^{\binom{k}{2}} \det \bigl[\mathfrak{s}_{n-2+i+j}\bigr]_{i,j=1}^k\,,
%\quad \text{where} \ \ \mathfrak{s}_n = 2^{n+2}s_n\..
%$$
%\end{corollary}

%\begin{proof}
%\end{proof}

\bigskip

\section{Combinatorial proofs of the NHLF and first $q$-NHLF} \label{sec:all_shapes}

The goal of this section is to give a combinatorial proof of the NHLF.
The proof is split into two parts: first, we reduce the claim from
all skew shapes to the border strips.  We then give two elementary
proofs of the NHLF in the border strip case, in the two sections that
follow.

% In Section~\ref{sec:all_shapes} we will use this and ideas from
% Section~\ref{sec:excited_paths} to prove the NHLF for all connected skew shapes.

\subsection{NHLF for border strips}  In this case the NHLF
is more elegant. We give two proofs of this case in
Sections~\ref{sec:1stproof} and \ref{sec:2ndproof}.

\begin{lemma}[NHLF for border strips] \label{thm:NHLFborderstrips}
For a border strip $\theta=\lambda/\mu$ with endpoints $(a,b)$ and
$(c,d)$ we have
\begin{equation} \label{eq:NHLFborderstrips}
\frac{f^{\theta}}{|\theta|!} = \sum_{\substack{\ga:  (a,b) \to (c,d),\\\ga \subseteq \lambda} }
\prod_{(i,j) \in \ga} \frac{1}{h(i,j)},
\end{equation}
where $h(i,j) = \lambda_i-i + \lambda'_j-j+1$.
\end{lemma}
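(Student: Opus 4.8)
The plan is to prove the claimed identity \eqref{eq:NHLFborderstrips} by induction on $n=|\theta|$, after strengthening it to a \emph{multivariate} statement that is stable under the induction. For bookkeeping write
\[
R_\lambda\big((a,b)\to(c,d)\big)\, :=\, \sum_{\substack{\ga:(a,b)\to(c,d)\\ \ga\subseteq\lambda}}\ \prod_{(i,j)\in\ga}\frac{1}{h(i,j)}
\]
for the right-hand side, where the sum runs over monotone lattice paths inside $[\lambda]$ joining the two endpoints of the strip (steps decreasing the row or increasing the column). The base case $n=1$ is immediate: then $\theta$ is a single cell with $h=1$ and $f^{\theta}=1$, so both sides equal $1$.

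For the path side I would first record an elementary one-step recursion. Every path ends at the northeast endpoint $(c,d)$, which it enters either from $(c+1,d)$ or from $(c,d-1)$; peeling off this last cell gives
\[
R_\lambda\big((a,b)\to(c,d)\big)\,=\,\frac{1}{h(c,d)}\Big(R_\lambda\big((a,b)\to(c+1,d)\big)+R_\lambda\big((a,b)\to(c,d-1)\big)\Big),
\]
with the convention that a term is dropped when its target leaves $[\lambda]$. A dual recursion peels the southwest endpoint $(a,b)$ instead, and I expect to need both forms. For the left side I would use the completely elementary branching rule for standard Young tableaux of a skew shape: since $f^{\theta}$ counts linear extensions of the cell poset $P_{\theta}$, deleting the cell carrying the largest entry expresses $f^{\theta}$ as a sum of $f$ over the border strips obtained by removing a poset-maximal cell of $\theta$. (Equivalently, one may start from Aitken's determinant $f^{\theta}/|\theta|!=\det[\,1/(\lambda_i-\mu_j-i+j)!\,]$ and expand along a row.)

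The main obstacle is that these two recursions do not line up naively. The path recursion \emph{always} strips the geometric endpoint $(c,d)$ and is moreover allowed to wander through cells of $[\lambda]\setminus[\theta]$, whereas the SYT branching strips a poset corner of $\theta$, which coincides with $(c,d)$ only when the strip happens to turn the right way at that endpoint. To reconcile them I would prove the stronger identity with each hook value $h(i,j)$ replaced by an independent formal variable attached to the diagonal of $(i,j)$ in $\lambda$, chosen so that the correct $h(i,j)$ is recovered upon specialization. In this multivariate setting the left- and right-hand recursions collapse into a single partial-fraction/telescoping identity in the endpoint variables, and the contributions of the ``excursions'' into $[\lambda]\setminus[\theta]$ cancel in pairs; establishing this telescoping is the crux of the argument, and it is what makes the induction close cleanly.

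As a fallback (and the likely content of the second, determinantal proof), I would instead evaluate $R_\lambda$ directly: by the Lindström–Gessel–Viennot lemma (Lemma~\ref{lem:LGVlambda}) the weighted path sum is a determinant, which can be recognized as a ratio of factorial Schur functions in the diagonal variables. Matching that ratio against Aitken's determinant for $f^{\theta}/|\theta|!$ then replaces the telescoping step by a known factorial-Schur identity, again yielding \eqref{eq:NHLFborderstrips}.
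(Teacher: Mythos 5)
Your overall architecture --- strengthen \eqref{eq:NHLFborderstrips} to a multivariate identity, induct, and let a telescoping kill the excursions into $[\lambda]\setminus[\theta]$ --- is exactly the shape of the paper's first proof, and your fallback correctly guesses the shape of its second. But two concrete choices in your plan would fail. First, the multivariate refinement you specify (``an independent formal variable attached to the diagonal of $(i,j)$'') does not exist: the paths range over all of $[\lambda]$, and distinct cells on the same diagonal of $\lambda$ generally have different hooks --- in $\lambda=(2,2)$ the cells $(1,1)$ and $(2,2)$ both lie on the diagonal $i-j=0$ but have hooks $3$ and $1$ --- so no assignment of one variable per diagonal can specialize back to $h(i,j)$. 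The refinement that works is the paper's two-family weight $1/(x_i-y_j)$, one variable per row and per column, with the specialization $x_i=\lambda_i+d-i+1$, $y_j=d+j-\lambda'_j$ giving $x_i-y_j=h(i,j)$; moreover the telescoping mechanism $x_k-x_{k-1}=(x_k-y_j)-(x_{k-1}-y_j)$, which drives all the cancellations in Lemma~\ref{proof_Nborders_lem2}, is only available with this two-variable structure. Second, you branch the SYT count on the \emph{largest} entry, removing a poset-maximal cell: that shrinks $\lambda$, so in the inductive hypothesis every hook --- and hence every path weight --- is computed in a different ambient partition, and the induction does not close. The paper instead branches on the \emph{smallest} entry, $f^{\lambda/\mu}=\sum_{\mu\to\nu}f^{\lambda/\nu}$ as in \eqref{eq:ChevalleySYT}, which grows $\mu$ and keeps $\lambda$ (hence all weights) fixed; on the path side this is matched not by peeling the endpoint $(c,d)$ but by the inner-corner identity of Lemma~\ref{lem:ChevalleyStrips}, whose prefactor $1/(x_1-y_1)$ specializes to $1/h(1,1)=1/n$ since for a border strip the hook of $(1,1)$ equals $|\theta|$.

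Beyond these structural corrections, the step you yourself call the crux --- the partial-fraction/telescoping identity --- is where essentially all the work lies, and you leave it unestablished; in the paper it is the identity $(x_1-x_d)F(A\to B)=\sum_C F(A\to C^*\to B)$ over inner corners, proved by a delicate double cancellation. The same issue undercuts your fallback: for a single border strip the Lindstr\"om--Gessel--Viennot determinant is $1\times 1$, so LGV contributes nothing, and ``recognized as a ratio of factorial Schur functions'' is not an off-the-shelf evaluation but precisely the content of Lemma~\ref{lem:key_border_strips} (the Ikeda--Naruse identity \cite{IkNa09}), which the paper reproves by an inductive determinant manipulation (Lemma~\ref{lem:GpathRel}) before specializing via Aitken's determinant (Lemma~\ref{lemma:factorial_syt}). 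Citing \cite{IkNa09} would make your fallback correct but not elementary, defeating the stated purpose. In short: right strategy and a correct base case, but the multivariate strengthening is mis-specified, the branching runs in the wrong direction, and the core cancellation is assumed rather than proved.
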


% As a biproduct of the proof of Theorem~\ref{thm:NHLFborderstrips} we
% will also obtain the SSYT $q$-analogue of the NHLF for border strips
% (see Corollary~\ref{cor:qNHLFborderstrips}).

Since the endpoints $(a,b)$ and $(c,d)$ are on the boundary of
$\lambda$ without loss of generality we assume that
$(a,b)=(\lambda'_1,1)$ and $(c,d)=(1,\lambda_1)$. The proof is
based on an identity of the following multivariate function. For a border strip $\lambda/\mu$ let
\[
F_{\lambda/\mu}({\bf x}\mid{\bf y}) =
F_{\lambda/\mu}(x_1,x_2,\ldots,x_d \mid
y_1,y_2\ldots,y_{n-d}):=\sum_{\substack{\ga:(\lambda'_1,1) \to
    (1,\lambda_1),\\\ga\subseteq \lambda}}
\prod_{(i,j)\in \ga} \frac{1}{x_i-y_j}.
\]
Note if we evaluate $F_{\lambda/\mu}({\bf x}\mid {\bf y})$ at
$x_i =\lambda_i+d-i+1$ and $y_j = d+j-\lambda'_j$ we obtain the RHS of \eqref{eq:NHLFborderstrips},
\begin{equation} \label{eq:hooksEDborder-strip}
\left.F_{\lambda/\mu}({\bf x} \mid {\bf y})
\right|_{\substack{x_i=\lambda_i+d-i+1,\\y_j=d+j-\lambda'_j}} =
\sum_{\substack{\ga:(\lambda'_1,1) \to
    (1,\lambda_1),\\\ga\subseteq \lambda}} \prod_{(i,j)\in \ga} \frac{1}{h(i,j)}.
\end{equation}

\subsection{From border trips to all skew shapes}
We need the analogue of  Theorem~\ref{thm:LascouxPragacz} for $f^{\lambda/\mu}$.

\begin{lemma}[Lascoux--Pragacz]
\label{lem:LascouxPragacz-SYT}
If $(\theta_1,\ldots,\theta_k)$ is a Lascoux--Pragacz decomposition of
$\lambda/\mu$, then
\begin{equation} \label{eq:LPid4SYT}
\frac{f^{\lambda/\mu}}{|\lambda/\mu|!} \,=\,  \det\left[\,  \frac{f^{\theta_i
      \# \theta_j}}{|\theta_i \# \theta_j|!} \,\right]_{i,j=1}^k.
\end{equation}
where $f^{\varnothing}=1$ and $f^{\phi[p,q]} =0$ if $\phi[p,q]$ is undefined.
\end{lemma}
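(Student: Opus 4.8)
The plan is to derive \eqref{eq:LPid4SYT} from the Lascoux--Pragacz identity for Schur functions \eqref{eq:LPid} by applying a single algebra homomorphism to both sides. The key observation is that for any (possibly skew) shape $\nu$ of size $n$, the quantity $f^{\nu}/n!$ is the image of $s_\nu$ under the \emph{exponential specialization}, i.e. the algebra homomorphism $\mathrm{ex}\colon \Lambda \to \qqq$ on symmetric functions determined by $p_1 \mapsto 1$ and $p_n \mapsto 0$ for $n\geq 2$ (equivalently $h_n \mapsto 1/n!$). For a straight shape this is immediate from the power-sum expansion $s_\nu = \sum_{\rho} \chi^\nu_\rho\, p_\rho/z_\rho$: only $\rho=1^n$ survives, contributing $\chi^\nu_{1^n}/n! = f^\nu/n!$. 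For a skew shape one expands $s_{\lambda/\mu} = \sum_\rho c^\lambda_{\mu\rho}\, s_\rho$ in Littlewood--Richardson coefficients and combines $\mathrm{ex}(s_\rho)=f^\rho/|\rho|!$ with $f^{\lambda/\mu}=\sum_\rho c^\lambda_{\mu\rho} f^\rho$; this is standard (see \cite[\S 7.8]{EC2}).

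First I would record that because $\mathrm{ex}$ is an algebra homomorphism, it commutes with the formation of determinants: for any $k\times k$ matrix $(g_{ij})$ of symmetric functions,
\[
\mathrm{ex}\!\left(\det[g_{ij}]_{i,j=1}^k\right) \, = \, \det\!\left[\,\mathrm{ex}(g_{ij})\,\right]_{i,j=1}^k,
\]
since the determinant is a fixed signed sum of products of its entries. Here this is consistent because the Lascoux--Pragacz identity is homogeneous: every term $\prod_i s_{\theta_i\#\theta_{\sigma(i)}}$ in the expansion of $\det[s_{\theta_i\#\theta_j}]$ has total degree $\sum_i |\theta_i\#\theta_{\sigma(i)}| = |\lambda/\mu|$. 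Applying $\mathrm{ex}$ to both sides of \eqref{eq:LPid} with entries $g_{ij}=s_{\theta_i\#\theta_j}$ then gives
\[
\frac{f^{\lambda/\mu}}{|\lambda/\mu|!} \, = \, \mathrm{ex}(s_{\lambda/\mu}) \, = \, \det\!\left[\,\mathrm{ex}(s_{\theta_i\#\theta_j})\,\right]_{i,j=1}^k \, = \, \det\!\left[\,\frac{f^{\theta_i\#\theta_j}}{|\theta_i\#\theta_j|!}\,\right]_{i,j=1}^k,
\]
which is exactly \eqref{eq:LPid4SYT}.

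The remaining step is to match the boundary conventions of Theorem~\ref{thm:LascouxPragacz} with those of the present lemma. For the empty strip, $\mathrm{ex}(s_\varnothing)=\mathrm{ex}(1)=1=f^\varnothing/0!$, consistent with the convention $f^\varnothing=1$; and for an undefined substrip $\phi[p,q]$, the convention $s_{\phi[p,q]}=0$ is sent by the linear map $\mathrm{ex}$ to $0$, matching $f^{\phi[p,q]}=0$. Hence the two determinants agree entry by entry, including the degenerate entries.

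I expect the only genuine content to be the specialization identity $\mathrm{ex}(s_\nu)=f^\nu/|\nu|!$ for skew $\nu$; everything else is formal, so the proof is short. An alternative, more self-contained route would avoid citing $\mathrm{ex}$ and instead extract the coefficient of the lowest-degree term (the $p_1^{|\lambda/\mu|}$ component) from \eqref{eq:LPid}, but this amounts to the same computation and is less transparent than invoking the homomorphism directly.
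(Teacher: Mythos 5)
Your proof is correct and is essentially the paper's own argument: the paper likewise deduces \eqref{eq:LPid4SYT} by specializing the Lascoux--Pragacz identity \eqref{eq:LPid}, namely via the stable principal specialization combined with $P$-partition theory \cite[Thm.~3.15.7]{EC2} and the limit $q\to 1$, which is exactly the exponential specialization $\mathrm{ex}$ you invoke, packaged in two steps. The only cosmetic difference is that you apply $\mathrm{ex}$ (with $\mathrm{ex}(s_\nu)=f^\nu/|\nu|!$) entrywise to the determinant in one stroke, whereas the paper passes through $s_\nu(1,q,q^2,\ldots)$ first; your homogeneity remark, that $\sum_i |\theta_i\#\theta_{\sigma(i)}|=|\lambda/\mu|$ for every nonvanishing permutation term, is the same fact implicitly used in the paper's $q\to 1$ normalization.
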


\begin{proof}
The result follows by doing the stable principal specialization in
\eqref{eq:LPid}, using the theory of $P$-partitions \cite[Thm. 3.15.7]{EC2} and
letting $q\to 1$.
\end{proof}

\begin{proof}[Proof of Theorem~\ref{thm:IN}]
Combining Lemma~\ref{thm:NHLFborderstrips} and
Lemma~\ref{lem:LascouxPragacz-SYT} we have
\begin{equation} \label{eq1:pfNHLF}
f^{\lambda/\mu} = |\lambda/\mu|! \cdot \det\left[
 \sum_{\substack{\ga:  (a_j,b_j) \to (c_i,d_i),\\ \ga \subseteq \lambda}}
\prod_{(r,s) \in \ga} \frac{1}{h(r,s)}
\right]_{i,j=1}^k.
\end{equation}
Note that the weight $1/h(r,s)$ of each step in the path only depends on
the coordinate $(r,s)$ and the fixed partition $\lambda$. By the
weighted Lindstr\"om--Gessel--Viennot lemma (Lemma~\ref{lem:LGVlambda}),
with $y_{r,s}=1/h(r,s)$, we rewrite the RHS of
\eqref{eq1:pfNHLF} as a weighted sum over  $k$-tuples of non-intersecting
paths $\Gamma$ in $\NIP(\lambda/\mu)$. That is,
\begin{equation}
f^{\lambda/\mu} = |\lambda/\mu|! \cdot \sum_{(\ga_1,\ldots,\ga_k) \in \NIP(\lambda/\mu)}
\prod_{(r,s)\in (\ga_1,\ldots,\ga_k)} \frac{1}{h(r,s)},
\end{equation}
Finally, by Proposition~\ref{prop:NIP2ED} the supports of these
non-intersecting paths are precisely the complements of excited
diagrams of $\lambda/\mu$. This finishes the proof of NHLF.
\end{proof}

\subsection{Proof of the first $q$-NHLF}
In this case too the SSYT $q$-analogue of NHLF is elegant and can be
stated as follows.

\begin{lemma} \label{lem:qNHLFborderstrips}
For a border strip $\theta=\lambda/\mu$ with end points $(a,b)$ and
$(c,d)$ we have
\begin{equation} \label{eq:qschurborderstrip}
s_{\theta}(1,q,q^2,\ldots,) = \sum_{\substack{\ga:  (a_j,b_j) \to (c_i,d_i),\\ \ga \subseteq \lambda}}
\prod_{(i,j) \in \ga} \frac{q^{\lambda'_j-i}}{1-q^{h(i,j)}}.
\end{equation}
\end{lemma}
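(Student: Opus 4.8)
The plan is to prove \eqref{eq:qschurborderstrip} by the same route used for its $q=1$ shadow, Lemma~\ref{thm:NHLFborderstrips}: introduce a multivariate generating function over the corner-to-corner paths, establish a closed identity for it by induction, and then specialize. Throughout I normalize so that $\theta$ is the outer border strip of $\lambda$, with endpoints $(a,b)=(\lambda'_1,1)$ and $(c,d)=(1,\lambda_1)$ and $\mu=(\lambda_2-1,\lambda_3-1,\ldots)$. The structural fact I lean on is that, since each unit step increases the content $j-i$ by exactly one, every border strip and every lattice path $\gamma\colon(\lambda'_1,1)\to(1,\lambda_1)$ inside $[\lambda]$ meets each diagonal $j-i=c$, for $c=1-\lambda'_1,\ldots,\lambda_1-1$, in exactly one cell. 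In particular $\sum_{(i,j)\in\gamma}(j-i)$ is one and the same constant for all such $\gamma$, a fact I will exploit below. By Proposition~\ref{prop:NIP2ED} these paths are exactly the complements of the diagrams in $\ED(\lambda/\mu)$, so the right-hand side of \eqref{eq:qschurborderstrip} is precisely the border-strip instance of the first $q$-NHLF summand.

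For the path side I would introduce a $q$-analogue of $F_{\lambda/\mu}({\bf x}\mid{\bf y})$, replacing each additive weight $(x_i-y_j)^{-1}$ by the multiplicative weight $q^{\lambda'_j-i}\,(1-x_i/y_j)^{-1}$, and summing the product of these over all $\gamma$. The substitution $x_i=q^{\lambda_i-i}$, $y_j=q^{\,j-1-\lambda'_j}$ sends $1-x_i/y_j$ to $1-q^{h(i,j)}$ and recovers the summand of \eqref{eq:qschurborderstrip}. Using the observation above, the monomial prefactor splits as $q^{\lambda'_j-i}=q^{\,j-i}\cdot q^{\lambda'_j-j}$, where the $q^{\,j-i}$ contributions multiply to a single global power of $q$ independent of $\gamma$ and can be pulled outside the sum. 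I would then evaluate the remaining multivariate sum by induction on $|\lambda|$, splitting each path according to whether its first step out of $(\lambda'_1,1)$ is vertical or horizontal; this expresses the sum for $\lambda$ through the sums for the two smaller shapes obtained by deleting the corner cell, exactly as in Section~\ref{sec:1stproof}.

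An alternative, following Section~\ref{sec:2ndproof}, is to match two determinants. On the path side, Lemma~\ref{lem:LGVlambda} with weights $y_{r,s}=q^{\lambda'_s-r}/(1-q^{h(r,s)})$ writes the sum as a determinant; on the symmetric-function side, the Jacobi--Trudi identity \eqref{eq:JTid} specialized at ${\bf x}=(1,q,q^2,\ldots)$, where $h_k(1,q,q^2,\ldots)=\prod_{i=1}^{k}(1-q^i)^{-1}$, writes $s_\theta(1,q,q^2,\ldots)$ as a determinant. The goal is to show that, after clearing the common $q$-denominators, both determinants reduce to the same ratio of factorial Schur functions $s_\mu^{(d)}({\bf x}\mid{\bf a})$ under the specialization $x_i=q^{\lambda_i-i}$ and a matching choice of the parameters~${\bf a}$, which identifies the two sides.

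The main obstacle is the bookkeeping of the monomial factors $q^{\lambda'_j-i}$: unlike the pure hook weights $1/h(i,j)$ of the $q=1$ case, they are asymmetric in the row and column data and do not, a priori, interact cleanly with the corner-peeling recursion or the factorial-Schur specialization. I expect the content-indexing of the strip to be the key simplification: attributing the $q^{\,j-i}$ part of each weight to its diagonal makes that part a global constant, while the residual $q^{\lambda'_j-j}$ factors change only on horizontal steps, so they should telescope along the induction. Once this is set up, the verification reduces to the single-cell base case, where both sides equal $(1-q)^{-1}$, together with checking that the extracted global power of $q$ agrees with the exponent predicted by the principal specialization of $s_\theta$.
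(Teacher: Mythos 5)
Your second, ``alternative'' route is essentially the paper's own proof. The paper proves this lemma (restated as Corollary~\ref{cor:qNHLFborderstrips}) by taking the already-established multivariate identity of Lemma~\ref{lem:key_border_strips}, namely $s_{\mu}^{(d)}(\mathbf{x}\mid\mathbf{z}^{\lambda})/s_{\lambda}^{(d)}(\mathbf{x}\mid\mathbf{z}^{\lambda}) = F_{\lambda/\mu}(\mathbf{x}\mid\mathbf{y})$, and substituting the geometric values $x_i=q^{\lambda_i+d-i+1}$, $y_j=q^{d+j-\lambda'_j}$ (your $x_i=q^{\lambda_i-i}$, $y_j=q^{j-1-\lambda'_j}$ is the same substitution after rescaling every variable by $q^{d+1}$, which leaves the ratios $x_i/y_j$ unchanged). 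In particular, no new $q$-deformed path function is needed: the additive weight $1/(x_i-y_j)$ already specializes to $-q^{\lambda'_j-j-d}/\bigl(1-q^{h(i,j)}\bigr)$, and your observation that every path meets each diagonal exactly once is precisely what converts the discrepancy between $q^{\lambda'_j-j}$ and $q^{\lambda'_j-i}$ into a path-independent global constant (a sign and a power of~$q$), which the factorial-Schur side absorbs as well. The identification of the specialized ratio with $s_{\theta}(1,q,q^2,\ldots)$ is the $q$-analogue of Lemma~\ref{lemma:factorial_syt}; the paper simply cites \cite[\S 4]{MPP1} for it, whereas you would redo it by recognizing the specialized determinant ratio as the Jacobi--Trudi determinant at the principal specialization --- the same computation that yields Aitken's formula in the $q=1$ case. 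So route two is sound, modulo writing out this bookkeeping carefully.

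Your primary route, however, has a genuine gap: the induction cannot be run ``exactly as in Section~\ref{sec:1stproof}.'' That proof is calibrated to the branching rule \eqref{eq:ChevalleySYT}, $f^{\lambda/\mu}=\sum_{\mu\to\nu}f^{\lambda/\nu}$, and its entire content is that $F$ satisfies the matching Pieri--Chevalley recursion \eqref{eq:ChevalleyStrips}; the principal specialization $s_{\theta}(1,q,q^2,\ldots)$ satisfies no such one-line branching identity, so there is no target recursion for the Section~\ref{sec:1stproof} template to hit. The corner-peeling recursion you actually describe --- splitting by whether the first step out of $(\lambda'_1,1)$ is vertical or horizontal --- is \eqref{eq:FpathRel}, which belongs to Section~\ref{sec:2ndproof}, and it is trivially true for the path sum; the hard half of that induction is verifying that the closed form satisfies the \emph{same} recursion, which in the paper is the nontrivial determinant manipulation of Lemma~\ref{lem:GpathRel}. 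Your remark that the residual $q^{\lambda'_j-j}$ factors ``should telescope'' does not substitute for that verification. In short, route one as stated collapses back onto the machinery of route two, and route two is the paper's argument.
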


The proof is postponed to Section~\ref{ss:qborderstrip}.

%Doing a principal specialization in the Lascoux--Pragacz \eqref{eq:LPid} gives a determinantal identity
%for $s_{\lambda/\mu}(1,q,q^2,\ldots)$ in terms. Thus, the same argument as the one
%above can be used to derive Theorem~\ref{thm:skewSSYT}: the first
%$q$-analogue of NHLF in \cite{MPP1} for skew SSYT (i.e. for
%$s_{\lambda/\mu}(1,q,q^2,\ldots)$) from the $q$-analogue for just
%border strips.

\begin{lemma}[Lascoux--Pragacz] \label{lem:LascouxPragacz-SSYT}
\begin{equation} \label{eq:LascouxPragacz-SSYT}
s_{\lambda/\mu}(1,q,q^2,\ldots)
= \det \begin{bmatrix}  s_{\theta_i \#\theta_j}(1,q,q^2,\ldots)  \end{bmatrix}_{i,j=1}^k,
\end{equation}
where $s_{\varnothing}=1$ and $s_{\phi[p,q]}=0$ if $\theta[p,q]$ is undefined.
\end{lemma}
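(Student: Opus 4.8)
The plan is to deduce this identity directly from the symmetric-function version \eqref{eq:LPid} of the Lascoux--Pragacz identity (Theorem~\ref{thm:LascouxPragacz}) by applying the \emph{stable principal specialization}, in exactly the spirit of the proof of Lemma~\ref{lem:LascouxPragacz-SYT}, but \emph{without} taking a $q\to 1$ limit at the end. This makes the present statement strictly easier than its $f^{\lambda/\mu}$ counterpart.

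First I would recall that \eqref{eq:LPid} is an identity in the ring $\Lambda$ of symmetric functions in the variables ${\bf x}=(x_0,x_1,x_2,\ldots)$. The stable principal specialization is the evaluation $x_i\mapsto q^i$ for $i\ge 0$. I would observe that this extends to a well-defined ring homomorphism $\Lambda\to\qqq[[q]]$: since $\Lambda_{\qqq}=\qqq[p_1,p_2,\ldots]$ is freely generated by the power sums, it suffices to record the images $p_k\mapsto \sum_{i\ge0}q^{ki}=1/(1-q^k)\in\qqq[[q]]$, which determine the homomorphism uniquely. Under this map $s_{\lambda/\mu}\mapsto s_{\lambda/\mu}(1,q,q^2,\ldots)$ by the definition in $\S$\ref{ss:not-sym}.

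Next, since the determinant is a polynomial with integer coefficients in its matrix entries, any ring homomorphism commutes with it. Applying the specialization to both sides of \eqref{eq:LPid} therefore yields
\[
s_{\lambda/\mu}(1,q,q^2,\ldots)=\det\bigl[\,s_{\theta_i \# \theta_j}(1,q,q^2,\ldots)\,\bigr]_{i,j=1}^k,
\]
which is the claim. It only remains to match the boundary conventions: $s_{\varnothing}=1$ specializes to $1$, and an undefined strip $\phi[p,q]$ contributes $s_{\phi[p,q]}=0$, which specializes to $0$; both agree with the conventions stated in the lemma.

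The only points requiring care---and they are minor---are checking that the principal specialization really defines a homomorphism into $\qqq[[q]]$ (equivalently, that each entry $s_{\theta_i \# \theta_j}(1,q,q^2,\ldots)$ is a genuine power series, not a divergent sum), and confirming that the homomorphism respects the $\varnothing$ and undefined-strip conventions. Neither is a genuine obstacle: well-definedness is immediate from the computation $p_k\mapsto 1/(1-q^k)$, so the whole argument reduces to a one-line specialization of the already-established identity \eqref{eq:LPid}.
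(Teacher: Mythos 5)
Your proposal is correct and is exactly the paper's argument: the paper's proof of Lemma~\ref{lem:LascouxPragacz-SSYT} is the one-line observation that the result follows by applying the stable principal specialization to \eqref{eq:LPid}. Your additional justifications (that $x_i\mapsto q^i$ defines a ring homomorphism into $\qqq[[q]]$ via $p_k\mapsto 1/(1-q^k)$, that determinants commute with ring homomorphisms, and that the $\varnothing$ and undefined-strip conventions specialize correctly) merely flesh out details the paper leaves implicit.
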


\begin{proof}
The result follows by doing a stable principal specialization in \eqref{eq:LPid}.
\end{proof}

\begin{proof}[Proof of Theorem~\ref{thm:skewSSYT}]
Combining Lemma \ref{lem:LascouxPragacz-SSYT} and Lemma~\ref{lem:qNHLFborderstrips} we have
\begin{equation} \label{eq1:pf-qNHLF}
s_{\lambda/\mu}(1,q,q^2,\ldots) = \det\left[
 \sum_{\substack{\ga:  (a_j,b_j) \to (c_i,d_i),\\ \ga \subseteq \lambda}}
\prod_{(r,s) \in \ga} \frac{q^{\lambda'_s-r}}{1-q^{h(r,s)}}
\right]_{i,j=1}^k.
\end{equation}
Note that the weight of each step $(r,s)$ in the path is
$q^{\lambda'_s-r}/(1-q^{h(r,s)})$ which only depends on
the coordinate $(r,s)$ and the fixed partition $\lambda$.
By the weighted Lindstr\"om--Gessel--Viennot lemma
(Lemma~\ref{lem:LGVlambda}), with $y_{r,s} = q^{\lambda'_s-r}/(1-q^{h(r,s)})$, we rewrite the RHS of
\eqref{eq1:pf-qNHLF} as a weighted sum of $k$-tuples of non-intersecting
paths in $[\lambda]$. That is,
\begin{equation}
s_{\lambda/\mu}(1,q,q^2,\ldots)  = \sum_{(\ga_1,\ldots,\ga_k) \in \NIP(\lambda/\mu)}
\prod_{(r,s)\in (\ga_1,\ldots,\ga_k)} \frac{q^{\lambda'_s-r}}{1-q^{h(r,s)}},
\end{equation}
Finally, by Proposition~\ref{prop:NIP2ED} the supports of these
non-intersecting paths are precisely the complements of excited
diagrams of $\lambda/\mu$. Thus we obtain Equation \eqref{eq:skewschur}.
\end{proof}

\bigskip

\section{First proof of NHLF for border strips} \label{sec:1stproof}

In this section we give a proof of the NHLF for border strips
based on a multivariate identity of the weighted
sum of paths $F_{\theta}({\bf x}\mid {\bf y})$. We show that this
weighted sum satisfies a recurrence for SYT.

\subsection{Multivariate lemma}

For any connected skew shape $\lambda/\mu$, the entry $1$ in a standard Young tableau
$T$ of shape $\lambda/\mu$ will be in an inner corner of $\lambda/\mu$. The remaining entries
$2,3,\ldots,n$ form a standard Young tableau $T'$ of shape
$\lambda/\nu$ where $\mu \to \nu$. Conversely, given a standard
Young tableau $T'$ of shape $\lambda/\nu$ where $\mu \to \nu$, by
filling the new cell with $0$ we
obtain a standard Young tableau of shape $\lambda/\mu$. Thus
\begin{equation} \label{eq:ChevalleySYT}
f^{\lambda/\mu} = \sum_{\mu \to \nu} f^{\lambda/\nu}.
\end{equation}

We show combinatorially that for border strips $\lambda/\mu$
the multivariate rational function $F_{\lambda/\mu}({\bf x}\mid {\bf
  y})$ satisfies
this type of relation.

\begin{lemma}[Pieri--Chevalley formula for border strips] \label{lem:ChevalleyStrips}
\begin{equation} \label{eq:ChevalleyStrips}
F_{\lambda/\mu}({\bf x}\mid {\bf y}) = \frac{1}{x_1-y_1}\sum_{\mu \to \nu} F_{\lambda/\nu}({\bf x} \mid {\bf y}).
\end{equation}
\end{lemma}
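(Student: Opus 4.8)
The plan is to prove the identity directly from the path model defining $F_{\lambda/\mu}$, treating both sides as rational functions in $\mathbf x,\mathbf y$ and lifting the standard SYT recurrence \eqref{eq:ChevalleySYT}. First I would record two structural facts. The first is that $F$ is multiplicative over edge-connected components: if a skew shape splits into two strips $\theta',\theta''$, then its path sum factors as $F_{\theta'}\,F_{\theta''}$, since a lattice path in each piece is chosen independently (so $F_{\lambda/\nu}$ on the right-hand side, where $\lambda/\nu$ need not be connected, is read as this product). The second is that the cells $\nu$ with $\mu\to\nu$ are exactly the inner corners (minimal cells) of the border strip $\theta=\lambda/\mu$; each such cell is either an endpoint of $\theta$, in which case $\lambda/\nu$ is the strip with that endpoint deleted, or an interior ``valley'' $(i,j)$ with $(i-1,j),(i,j-1)\notin\theta$, in which case deleting it disconnects $\theta$ into a southwest substrip ending at $(i+1,j)$ and a northeast substrip starting at $(i,j+1)$, and the term $F_{\lambda/\nu}$ is then the product of the two substrip path sums. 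Thus the right-hand side is an explicit sum of products of path sums indexed by the minimal cells of $\theta$, and the goal is to identify it with $(x_1-y_1)\,F_{\lambda/\mu}$.

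The main work is then an identity of rational functions, which I would prove by induction on $|\theta|$; the base case $|\theta|=1$ (a single corner cell, with $F=\frac{1}{x_1-y_1}$, one inner corner, and $F_{\lambda/\nu}=1$) is immediate. For the inductive step I would peel the contribution of a distinguished endpoint of $\theta$, say the northeast endpoint $B=(1,\lambda_1)$ of weight $\frac{1}{x_1-y_{\lambda_1}}$, by splitting each path according to whether its last step into $B$ is vertical or horizontal. Since paths only increase the column and decrease the row, this expresses $F_{\lambda/\mu}$ in terms of path sums of border strips with one fewer cell (sitting in a slightly smaller bounding box), to which the inductive hypothesis applies; performing the same peeling on each $F_{\lambda/\nu}$ lets me compare the two sides. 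What remains is a purely algebraic partial-fraction identity that reinstates the global factor $\frac{1}{x_1-y_1}$ and reorganizes the single-cell weights. Concretely, after clearing the common denominator $\prod_{(i,j)}(x_i-y_j)$, the matching reduces to telescoping relations of the types $\frac{1}{(x_a-y_c)(x_a-y_d)}$ and $\frac{1}{(x_a-y_c)(x_b-y_c)}$, which already in the domino cases produce the symmetric numerator $\sum_i x_i-\sum_j y_j$.

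I expect the principal obstacle to be exactly this bookkeeping: after peeling one endpoint, the inductive hypothesis produces a sum over the inner corners of the \emph{smaller} strips, and one must show these reassemble, via the partial-fraction telescoping together with multiplicativity, into the sum over inner corners of $\theta$ plus the single extracted factor $\frac{1}{x_1-y_1}$. The disconnection case (interior valleys) and the boundary case (inner corners coinciding with the endpoints $A$ or $B$) must be tracked separately, since they determine which substrip inherits the peeled cell. A cleaner but heavier alternative, which avoids the casework, is to regard both sides as rational functions of $y_1$ alone — $y_1$ enters only through the first-column factors $\frac{1}{x_i-y_1}$ of each path, giving at most simple poles at $y_1=x_i$ — and to match all residues at these poles together with the behavior as $y_1\to\infty$; I would hold this residue argument in reserve in case the inductive partial-fraction manipulation becomes unwieldy.
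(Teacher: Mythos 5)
Your skeleton is, up to transposing the diagram, the paper's own proof: the paper also reduces the lemma to the path identity \eqref{eq:pathstarget}, \ts $F(A\to B)=\frac{1}{x_1-y_1}\sum_{C}F(A\to C^*\to B)$, where $F(A\to C^*\to B)=F(A\to C^*)F(C^*\to B)$ is precisely your product-over-substrips reading of $F_{\lambda/\nu}$ at an interior valley, and proves it by induction on the length of the strip, peeling the southwest endpoint $A$ by the first step of the path where you peel the northeast endpoint $B$ by the last step. Your structural facts (minimal cells of $\theta$ are the cells $\mu\to\nu$; multiplicativity over the two substrips created by deleting a valley) are correct, and your domino sanity check matches the paper's $(2,2)/(1)$ example. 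The genuine gap is in the step you defer as ``partial-fraction bookkeeping''. In the inductive step only one of your two branches closes as stated: after a horizontal last step the smaller strip still meets row $1$ and column $1$, so the inductive hypothesis returns the correct prefactor $\frac{1}{x_1-y_1}$; after a vertical last step the smaller strip lives in rows $2,\ldots,d$, and the inductive hypothesis returns the prefactor $\frac{1}{x_2-y_1}$ (in the paper's orientation, the analogous mismatch is $\frac{1}{x_1-y_2}$ for the branch $F(A_r\to B)$). Reconciling this mismatch is not a pairwise telescoping of terms of type $\frac{1}{(x_a-y_c)(x_a-y_d)}$ after clearing denominators; it requires a separate global identity, which in the paper is Lemma~\ref{proof_Nborders_lem2}: \ts $(x_1-x_d)\,F(A\to B)=\sum_C F(A\to C^*\to B)$. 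Its proof writes $x_k-x_{k-1}=(x_k-y_j)-(x_{k-1}-y_j)$ at the column $j$ where a given path crosses from row $k$ to row $k-1$, sums over all paths and then over all rows $k$, and tracks a cancellation pattern that is global rather than local: almost all terms cancel across different rows, but contributions survive exactly at the outer corners $D_k$ of $\lambda$, and only the final sum over $k$ assembles these into $\sum_C F(A\to C^*\to B)$. Until you isolate, state, and prove this auxiliary identity (or its transpose in the $y$-variables for your orientation), your induction does not close; this is a missing lemma, not bookkeeping.

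Your reserve plan is plausible but is not a shortcut either. Both sides do vanish as $y_1\to\infty$ and, as sums of terms with distinct simple poles in the formal variables, have at most simple poles at $y_1=x_i$; however, the residues at these poles are again multivariate path sums of the same kind for smaller shapes (and on the right-hand side the pole at $y_1=x_1$ receives contributions both from the explicit prefactor and from those $F_{\lambda/\nu}$ whose contributing paths run through the cell $(1,1)$), so matching residues is essentially the same induction in disguise, with the same need for an identity of the type of Lemma~\ref{proof_Nborders_lem2} to organize the corner terms.
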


%Thus, in order to prove \eqref{eq:Naruse}, it suffices to verify that
%its RHS also satisfies
%\eqref{eq:ChevalleySYT}. This is precisely the approach of the recent
%combinatorial proof of \eqref{eq:Naruse} by Konvalinka \cite{Ko}.

\begin{remark}
A very similar multivariate relation holds for general skew shapes
(the only difference is a different linear factor on the RHS of \eqref{eq:ChevalleyStrips}), a fact proved by
Ikeda and Naruse \cite{IkNa09} algebraically and combinatorially by
Konvalinka~\cite{Ko}. Our proof for border strips is different
than these two proofs. See Section~\ref{ss:comparisonKonvalinka} for more details.
\end{remark}

\subsection{Proof of multivariate lemma}
The rest of the section is devoted to the proof of Lemma~\ref{lem:ChevalleyStrips}.
We start with some notation that will help us in the proof.

For cells $A, B \in [\lambda]$ such that $B$ is northeast of~$A$, let
\[
F(A\to B) := \sum_{\ga:A\to B, \ga \subseteq [\lambda]} \prod_{(i,j)
  \in \ga} \frac{1}{x_i-y_j},
\]
so that $F_{\lambda/\mu}({\bf x} \mid {\bf y}) = F((\lambda'_1,1)\to
(1,\lambda_1))$. For a given path $\gamma$ let
$$H(\gamma) := \prod_{(i,j) \in \ga} \frac{1}{x_i-y_j}$$ be its multivariate weight.  Let $F(A^*,B)$ and $F(A,B^*)$ denote similar rational
functions where we omit the term $x_i-y_j$ corresponding to $A$ and
$B$ respectively. In particular, $F(A^*,A)=F(A,A^*)=1$.  By abuse of notation $F(A\to C^*\to B)$ denotes
the product $F(A\to C^*)F(C^*\to B)$. Let $\overline{C}$ and
$\underline{C}$ denote the boxes in the Young diagram $[\lambda]$ that
are immediately above and below $C$, respectively. Let $R_k(\lambda)$ denote the $k$th row of the Young diagram of $\lambda$.

We will show that
\begin{equation} \label{eq:pathstarget}
F(A\to B) = \frac{1}{x_1-y_1}\sum_{C} F(A\to C^*\to B),
\end{equation}
where the sum is over inner corners $C$ of $\lambda/\mu$. This relation
 implies the desired result.

\begin{example}
Consider $\lambda=(2,2)$ and $\mu=(1)$, the shape $(2,2)/(1)$
has inner corners $(1,2)$ and $(2,1)$. We have
\begin{align*}
F((2,1)\to (1,2)) &= \frac{1}{(x_2-y_1)(x_2-y_2)(x_1-y_2)} + \frac{1}{(x_2-y_1)(x_1-y_1)(x_1-y_2)}\\
&= \frac{x_1-y_1 + x_2-y_2}{(x_1-y_1)(x_2-y_1)(x_1-y_2)(x_2-y_2)}\\
&= \frac{1}{x_1-y_1} \left( \frac{1}{(x_2-y_1)(x_2-y_2)} +
  \frac{1}{(x_2-y_2)(x_1-y_2)}\right),
\end{align*}
which equals $\left[F((1,2)\to (2,1)^*) + F((1,2)^*\to
  (2,1))\right]/(x_1-y_1)$, thus proving the relation.
\end{example}

We prove \eqref{eq:pathstarget} by induction on the total length of the
path between $A$ and $B$. The base case  $\lambda=(1)$ and $\mu=\varnothing$ since
$F(A\to B)=1/(x_1-y_1)$ and $(1,1)$ is the only inner corner so
$F(A\to C^*\to B)=1$. The next sublemma will be useful in the inductive step later.

%\begin{lemma} \label{proof_Nborders_lem1}
%For cells $A=(k,j)$ and $B$ in $[\lambda]$, $B$ NW of $A$ and more
%than two rows apart we have
%\begin{align*}
%(x_k-x_{k-1}) F(A\to B) = \sum_{C=(k-2,c)\in [\lambda], c \leq \lambda_{k-1}} F(C\to B) \times \\
%\left(\frac{1}{(x_{k-1}-y_j)\cdots (x_{k-1}-y_c)} -
%  \frac{1}{(x_{k}-y_j)\cdots (x_{k}-y_{\min(c,\lambda_k)})(x_{k-1}-y_{\min(\lambda_k,c)+1})\cdots(x_{k-1}-y_c)} \right).
%\end{align*}
%Here we assume that $(k-1,c) \in [\lambda]$, as otherwise any path in $[\lambda]$ passing through $C$ would have to also pass through $(k-2,c-1)$.
%{\bf A: Do we need to assume that $(k,c) \in [\lambda]$? -- G: not anymore, the min takes care of this.}
%\end{lemma}
%
%\begin{proof}
%A path from $A$ to $B$ will cross rows $k-1$ and $k-2$ at cells
%$E=(k-1,e)$ and $C=(k-2,c)$ for
%some $e\leq \min(c,\lambda_k)$, $e \geq j$. The weight from $A\to C^*$ is $\prod_{r=j}^{e}
%1/(x_k-y_r) \prod_{r=e}^{c} (x_{k-1}-y_r)$. Since $x_k-x_{k-1} =
%(x_k-y_e) - (x_{k-1}-y_e)$ for any $e$ then
%\begin{multline}
%(x_k-x_{k-1}) F(A\to B) = \sum_{C=(k-2,c)}  F(C\to B) \times \\
%\sum_{e=j}^{\min(c,\lambda_k)} \left(
%  \prod_{r=j}^{e-1} \frac{1}{x_k-y_r} \prod_{r=e}^c
%  \frac{1}{x_{k-1}-y_r}  -  \prod_{r=j}^{e} \frac{1}{x_k-y_r} \prod_{r=e+1}^c
%  \frac{1}{x_{k-1}-y_r} \right).
%\end{multline}
%This is a telescoping sum and after cancellations only the first term
%rom $e=j$ and the last term of $e=\min(c,\lambda_k)$ remain. This gives the stated
%relation.
%\end{proof}

\begin{lemma} \label{proof_Nborders_lem2}
For cells $A=(d,r)$ and $B=(1,s)$ in $[\lambda]$ with $r\leq s$, we have
\[
(x_1-x_d) F(A\to B) = \sum_C F(A\to C^*\to B),
\]
where the sum is over inner corners $C$ of $\lambda/\mu$.
\end{lemma}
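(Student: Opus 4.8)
The plan is to prove the identity path-by-path, converting the scalar prefactor $x_1-x_d$ into a telescoping sum of local contributions along each lattice path and then collapsing the result. Two elementary facts drive this. First, for an up-step of a path joining $(i+1,j)$ to $(i,j)$, the partial-fraction identity
\[
(x_i-x_{i+1})\,\frac{1}{(x_i-y_j)(x_{i+1}-y_j)} \, = \, \frac{1}{x_{i+1}-y_j}-\frac{1}{x_i-y_j}
\]
shows that multiplying $H(\ga)$ by $(x_i-x_{i+1})$ equals $H(\ga)$ with the factor of the top cell $(i,j)$ deleted, minus $H(\ga)$ with the factor of the bottom cell $(i+1,j)$ deleted. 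Second, every path $\ga\colon A=(d,r)\to B=(1,s)$ in $[\la]$ lowers its row index from $d$ to $1$ by exactly one up-step between rows $i+1$ and $i$ for each $i=1,\dots,d-1$; hence $x_1-x_d=\sum_{i=1}^{d-1}(x_i-x_{i+1})$ can be read off $\ga$ as a sum over its own up-steps.

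First I would multiply $F(A\to B)=\sum_\ga H(\ga)$ by $x_1-x_d$, expanded as above, and apply the partial-fraction identity at every up-step of every path. The weight with the top cell deleted enters with a plus sign and the weight with the bottom cell deleted with a minus sign. Regrouping the resulting signed weights by the cell $u$ whose factor is deleted, one obtains the cell-indexed identity
\[
(x_1-x_d)\,F(A\to B)\, = \, \sum_{u\in[\la]}\Bigl(F(A\to \underline u)\,F(u^{*}\to B)\,-\,F(A\to u^{*})\,F(\overline u\to B)\Bigr),
\]
with the convention that a summand vanishes whenever $\underline u$ or $\overline u$ leaves $[\la]$ or the relevant endpoint is unreachable; here $F(A\to\underline u)F(u^{*}\to B)$ collects the paths entering $u$ from below and $F(A\to u^{*})F(\overline u\to B)$ those leaving $u$ upward.

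It remains to collapse this alternating sum. Whenever a path runs straight up through a cell $u$ (entering from $\underline u$ and immediately leaving toward $\overline u$), that cell occurs once as the top of one up-step and once as the bottom of the next, so its two contributions cancel; more generally, the forced vertical passages (cells of the strip with a unique vertical continuation) cancel in pairs. What should survive are the contributions sitting at the turning cells of the border strip, and here the connectedness and the absence of $2\times 2$ blocks in $\la/\mu$ is what I would use to identify these turning cells with the inner corners $C$ of $\la/\mu$, each surviving contribution reassembling into $F(A\to C^{*})\,F(C^{*}\to B)=F(A\to C^{*}\to B)$. The two endpoints need separate handling: the term forced up out of $A=(d,r)$ and the term forced up into $B=(1,s)$ must be absorbed into the inner corners nearest them. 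Summing over $C$ then yields the claim.

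The crux, and the step I expect to be most delicate, is exactly this final collapse: tracking which cell-indexed terms cancel and verifying that the uncancelled remainder regroups, corner by corner, into $\sum_C F(A\to C^{*}\to B)$ with the correct accounting at $A$ and $B$. I anticipate organizing it as an induction on $d$: peel the bottom row of the strip off $A$, split $x_1-x_d=(x_1-x_{d-1})+(x_{d-1}-x_d)$, feed the first summand to the inductive hypothesis applied to the paths starting one row higher, and check the residual $(x_{d-1}-x_d)$ term directly against the inner corners affected by removing the bottom row.
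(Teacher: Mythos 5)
Your opening moves coincide exactly with the paper's proof of Lemma~\ref{proof_Nborders_lem2}: your partial-fraction identity at an up-step is the paper's equation~\eqref{eq:pfspaths}, and reading $x_1-x_d=\sum_i(x_i-x_{i+1})$ off the unique row-crossings of each path is precisely how the paper produces, for each $k$, the identity $(x_k-x_{k-1})F(A\to B)=\sum_{C\in R_k(\lambda)}\bigl(F(A\to C^*)F(\overline{C}\to B)-F(A\to C)F(\overline{C}^*\to B)\bigr)$; your cell-indexed display is this summed over $k$ and regrouped by the deleted cell. Your first cancellation is also sound: a cell traversed vertically contributes once with each sign, so after it each cell $u$ carries only the turning residue $F(A\to\underline{u})F(u_r\to B)-F(A\to u_l)F(\overline{u}\to B)$, where $u_l,u_r$ denote the horizontal neighbors of~$u$.

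The genuine gap is the final collapse, which you defer, and the mechanism you announce for it is off target. The residual turning terms live at \emph{every} cell of $[\lambda]$ --- the paths in $F(A\to B)$ are arbitrary monotone paths in the Young diagram, not subsets of the strip --- so connectedness and the absence of $2\times 2$ blocks in $\lambda/\mu$ cannot isolate the survivors, and there are no ``forced vertical passages'' of the strip available to cancel. What actually kills the residue is a second, cross-cell pairing: the plus piece at $u=(i,j)$, namely $F(A\to(i+1,j))\,F((i,j+1)\to B)$, is literally the same expression as the minus piece at $u'=(i+1,j+1)$, so these cancel diagonally whenever $(i+1,j+1)\in[\lambda]$. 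The survivors sit where $(i+1,j),(i,j+1)\in[\lambda]$ but $(i+1,j+1)\notin[\lambda]$, i.e.\ at the re-entrant corners of the \emph{outer boundary of $\lambda$} (these are the cells $(i,\lambda_{i+1})$ with $\lambda_{i+1}<\lambda_i$, which is how the inner corners of the border strip enter), together with unmatched terms at $A$ and $B$, where the splittings $F(u^*\to B)=F(u_r\to B)+F(\overline{u}\to B)$ and $F(A\to u^*)=F(A\to u_l)+F(A\to\underline{u})$ are not available. This diagonal pairing plus the endpoint bookkeeping is exactly the content of the paper's recasting into pairs of truncated paths $A\to A_1$, $B_1\to B$ and of conventions such as $F(A\to A^*)=1$; it is where essentially all of the work of the lemma lies, and your proposal replaces it with an expectation (``what should survive\dots''). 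The fallback induction on $d$ does not repair this as stated: peeling the bottom row changes the shape, hence the set of inner corners appearing on the right-hand side, so the statement you would feed $(x_1-x_{d-1})$ into is not the same lemma, and the residual $(x_{d-1}-x_d)$ check would confront the same unresolved corner accounting.
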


\begin{proof}

We can write $x_k - x_{k-1} = (x_k - y_j) - (x_{k-1} - y_j)$ for any $j$. Let $\gamma$ be a path from $A$ to $B$, and suppose that it crosses from row $k$ to row $k-1$ in column $j$ for some $j$. Then both points $(k,j) \in \gamma$ and $(k-1,j) \in \gamma$
\begin{equation} \label{eq:pfspaths}
(x_k - x_{k-1}) H(\gamma) = (x_k - y_j)H(\gamma) - (x_{k-1} -
y_j)H(\gamma) = H(\gamma \setminus (k,j) ) - H(\gamma \setminus
(k-1,j) ).
\end{equation}
%Denote by $r_k(\lambda)$ the $k$-th row of $[\lambda]$
Since every path from $A$ to $B$ crosses from row $k$ to row $k-1$ at
some cell, denoted by $C = (k,j)$, by \eqref{eq:pfspaths} then we have the following:
\begin{align}
(x_k - x_{k-1})F(A \to B) \, = \, \sum_{C \in R_k(\lambda)} \Big(F(A \to C^*) F(\overline{C} \to B) - F(A \to C)F(\overline{C}^* \to B) \Big) \notag
\\
=\, \sum_{C \in R_k(\lambda)} F(A \to C^* \to \overline{C} \to B) - \sum_{C_1 \in R_{k-1}(\lambda)} F(A \to \underline{C_1} \to C_1^* \to B) \, =: \ \textcircled{$\ast$} \notag
\end{align}
where in the last line we denote $C_1 = \overline{C}$ -- a box in row
$k-1$. Note that the existence of the boxes below and above is implicit in the specified path functions $F$.

Let us now rewrite the RHS. in the last equation in a different
way. Note that the paths $A \to C^* \to \overline{C} \to B$ can be
thought of as paths from $A$ to $B$ without their outer corner on row
$k$, and, likewise, the paths  $A \to \underline{C_1} \to C_1^* \to B$
are paths $A \to B$ without the inner corner on row $k-1$. However,
they can both be thought of as composed of two paths, $A \to A_1$ and
$B_1 \to B$, where $A_1$ is the last box on row $k$ (or row $k+1$ if
$C$ was the only cell on row $k$), $B_1$ is the first box on row $k-1$ (or the box above $\overline{C}$, in row $k-2$) and $A_1$'s top right vertex is the same as $B_1$'s bottom left (i.e. the boxes have that common vertex), or as in the second case $B_1$ is one box above $A_1$. In the case of $A \to C^* \to \overline{C} \to B = A \to A_1, B_1 \to B$, we must have that $A_1$ is not the last box in the row (for $C$ to exist), and for $A \to \underline{C_1} \to C_1^* \to B = A\to A_1, B_1\to B$ there are no restrictions. Thus
\begin{align*}
\lefteqn{(x_k - x_{k-1})F(A \to B) \, = \ \textcircled{$\ast$} }\\
%&=\sum_{C \in R_k(\lambda)} F(A \to C^* \to \overline{C} \to B) - \sum_{C_1 \in R_{k-1}(\lambda)} F(A \to \underline{C_1} \to C_1^* \to B)\\
&= \sum_{A_1 \neq (k,\lambda_k) , B_1} F(A \to A_1)F(B_1\to B) \,-\, \sum_{A_1 , B_1} F(A \to A_1)F(B_1\to B) \\
&= \sum_{j: \substack{ A_1= (k+1,j),\\ B_1=(k-1,j)} } F(A \to
  A_1)F(B_1\to B)\, - , \sum_{j: \substack{ A_1 = (k,j),\\ B_1=(k-2,j)}}
  \Bigl( F(A \to A_1)F(B_1\to B) - F(A \to D_k^* \to B) \Bigr),
\end{align*}
where all terms cancel except for the cases where $A_1,C,B_1$ are in the same column, and when $C$ is an outer corner of $\lambda$ on row $k$, denoted by $D_k$ (if such corner exists).

Finally, since $x_d - x_1 = \sum_k (x_k-x_{k-1})$, we have
\begin{align*}
\lefteqn{ (x_d-x_1)F(A \to B) \, = \, \sum_k (x_k-x_{k-1})F(A \to B) }\\
&= \sum_{k, j: \substack{ A_1= (k+1,j),\\ B_1=(k-1,j)} } F(A \to
  A_1)F(B_1\to B) \ - \, \sum_{k, j: \substack{ A_1 = (k,j),\\ B_1=(k-2,j)}}
  \Bigl( F(A \to A_1)F(B_1\to B) - F(A \to D_k^* \to B) \Bigr)\\
&= \, -\.\sum_k F(A \to D_k^* \to B),
\end{align*}
since all other terms cancel across the various values for $k$, and we obtain the desired identity. \end{proof}

%Let $P(k;a,b) = \prod_{r=a}^b \frac{1}{x_k-y_r}$, so in particular Lemma~\ref{proof_Nborders_lem1} can be rewritten as
%$$(x_k-x_{k-1})F(A \to B) =\sum_{C=(k-2,c)\in [\lambda], c \leq \lambda_{k-1}} F(C\to B) \times (P(k-1;j,c) - P(k;j,\min(c,\lambda_k))P(k-1;\min(c,\lambda_k)+1,c) ).$$
%We can now write $(x_1-x_d) =(x_1-x_2)+(x_2-x_3)+\cdots+(x_{d-1}-x_d)$ and observe that
%\begin{align*}
%(x_k -x_{k-1}) F( A \to B) = (x_k - x_{k-1}) \sum_{A_1 = (k,j) \in [\lambda]} F(A \to A_1^*)F(A_1 \to B)\\
%=\sum_{A_1 = (k,j), C =(k-2,c) \in [\lambda]} F(A \to A_1^*) F(C \to B) (P(k-1;j,c) - P(k;j,\min(c,\lambda_k))P(k-1;\min(c,\lambda_k)+1,c) )
%\end{align*}
%
%Denote by $\underline{C}$ the box right below $C$ and by $\overline{C}$ the box right above $C$. We recognize that $P(k;j,c) = F( (k,j) \to (k,c))$. Denote the points $(k,\lambda_k)$ by $C_k$. So we can rewrite
%\begin{align*}
%(x_k -x_{k-1}) F( A \to B)
%=\sum_{A_1 = (k,j), C =(k-2,c) \in [\lambda]} F(A \to A_1^*) F(C \to B) (P(k-1;j,c) - P(k;j,\min(c,\lambda_k))P(k-1;\min(c,\lambda_k)+1,c) )\\
%=\sum_{A_1 = (k,j), C= (k-2,c) \in [\lambda]} F(A \to A_1^* \to \overline(A_1) \to \min(\overline{ C_k}, \underline(C)) \to C \to B) - F(A \to A_1 \to ?? \to C_k \to C \to B)
%\end{align*}

We continue with the proof of Lemma~\ref{lem:ChevalleyStrips}. In a path
$\ga:A\to B$ the first step from $A$ is either
right to cell $A_r$ or up to cell $A_u$. Note that in the first case
$A$ is then an inner corner of $\lambda/\mu$. Thus
\[
F(A\to B) \, =\, \frac{1}{x_d-y_1} \, \left( F(A_r\to B) + F(A_u\to B) \right).
\]
By induction the term $F(A_u\to B)$ becomes
\begin{equation} \label{eq:pathsA}
F(A\to B)\, =\, \frac{1}{x_d-y_1} \left( F(A_r \to B)\. +\. \frac{1}{x_1-y_1} \.
  \sum_C F(A_u\to C^*\to B)\right).
\end{equation}

On the other hand, since a step to $A_r$ indicates that $A$ is an
inner corner then the RHS of
\eqref{eq:pathstarget} equals
\[
\frac{1}{x_1-y_1}\sum_{C} F(A\to C^*\to B) \, = \, \frac{1}{x_1-y_1}\left[
  F(A_r \to B) \.+ \. \frac{1}{x_d-y_1}\sum_{C} F(A^* \to C^* \to B) \right].
\]
Again, depending on the first step of the paths we split $F(A^* \to C^* \to B)$
into $F(A_r \to C^* \to B)$ and $F(A_u\to C^* \to B)$ so the above
equation becomes
\begin{multline} \label{eq:pathsB}
\frac{1}{x_1-y_1}\sum_{C} F(A\to C^*\to B) \\ = \,
\frac{1}{x_1-y_1} \left[F(A_r \to B) \.+ \. \frac{1}{x_d-y_1}\sum_C\Bigl(
    F(A_r\to C^*\to B) \. + \. F(A_u \to C^*\to B)\Bigr)\right].
\end{multline}

Finally, by \eqref{eq:pathsA} and \eqref{eq:pathsB} if we subtract the
LHS and RHS of \eqref{eq:pathstarget} the terms with $A_u \to
C^* \to B$ cancel. Collecting the terms with $A_r \to B$ we obtain
\begin{multline}
F(A\to B) \. - \, \frac{1}{x_1-y_1}\.\sum_{C} F(A\to C^*\to B) \\ =\,
\frac{x_1-x_d}{(x_1-y_1)(x_d-y_1)}\. F(A_r \to B)\.
  - \, \frac{1}{(x_1-y_1)(x_d-y_1)} \.\sum_C F(A_r\to C^*\to B_1)\ts.
\end{multline}
Lastly, the RHS above is zero since by Lemma~\ref{proof_Nborders_lem2}
we have
\[
(x_1-x_d)F(A_r\to B)\, = \, \sum_C F(A_r \to C^* \to B).
\]
Thus the desired relation \eqref{eq:pathstarget} follows.

\subsection{Proof of NHLF for border strips}

In this section we use Lemma~\ref{lem:ChevalleyStrips} to prove
Theorem~\ref{thm:NHLFborderstrips}.

Let $H_{\lambda/\mu}$ denote the RHS of
\eqref{eq:hooksEDborder-strip}. We prove by induction on
$n=|\lambda/\mu|$ that $f^{\lambda/\mu} = n! \cdot H_{\lambda/\mu}$.

We start with \eqref{eq:ChevalleyStrips} and evaluate $x_i =
\lambda_i+d-i+1$ and $y_j=d+j-\lambda'_j$, by
\eqref{eq:hooksEDborder-strip} we obtain
\[
n\cdot H_{\lambda/\mu} = \sum_{\mu \to \nu} H_{\lambda/\nu}.
\]
Multiplying both sides by $(n-1)!$ and using induction we obtain
\[
n! \cdot H_{\lambda/\mu} = \sum_{\mu \to \nu} f^{\lambda/\nu}.
\]
By \eqref{eq:ChevalleySYT} the result follows.

\bigskip

\section{Second proof of NHLF for border strips} \label{sec:2ndproof}
In this section we give another proof of the NHLF for border strips
based on another multivariate identity involving {\em factorial Schur
  functions}. The proof consists of two steps. First we show that a
ratio of an evaluation of factorial Schur functions equals the weighted
sum of paths $F_{\theta}({\bf x}\mid {\bf y})$. Second we show
how the ratio of factorial Schur functions properly specialized equals
$f^{\lambda/\mu}$ and $s_{\lambda/\mu}(1,q,q^2,\ldots)$.

\subsection{Multivariate lemma}
We show combinatorially that the function $F_{\lambda/\mu}({\bf x} \mid {\bf
  y})$ is an evaluation of a factorial Schur function. Let ${\bf
  z}^{\lambda}$ be the word of length $n$ of $x$'s and $y$'s obtained by
reading the horizontal and vertical steps of $\lambda$ from $(d,1)$ to
$(1,n-d)$:  i.e. $z^{\lambda}_{\lambda_i+d-i+1} = x_i$ and $z^{\lambda}_{\lambda'_j+n-d-j+1}=y_j$:
\begin{center}
\includegraphics{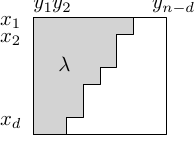}
\end{center}

\begin{example}
For $d=4, n=9$ and $\lambda = (5533)$ we have that
$z^{\lambda} = y_1y_2y_3x_4x_3y_4y_5x_2x_1$. See the figure in the
next example.
\end{example}

\begin{lemma}[\cite{IkNa09}] \label{lem:key_border_strips}
For a border strip $\lambda/\mu \subseteq d\times (n-d)$ we have
\begin{equation} \label{eq:key_border_strips}
\frac{s_{\mu}^{(d)}({\bf x} \mid {\bf
    z}^{\lambda})}{s_{\lambda}^{(d)}({\bf x} \mid {\bf
    z}^{\lambda})} =
F_{\lambda/\mu}({\bf x} \mid {\bf y}).
\end{equation}
\end{lemma}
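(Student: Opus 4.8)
The plan is to work straight from the bialternant definition of the factorial Schur functions and to evaluate the numerator and denominator of the ratio separately, exploiting that the parameter word ${\bf z}^\lambda$ is read off the boundary of $\lambda$. Since both factorial Schur functions use the same alphabet ${\bf z}^\lambda$, the Vandermonde factor $\prod_{1\le i<j\le d}(x_i-x_j)$ is common to the two bialternants and cancels in the ratio, so that
\[
\frac{s_{\mu}^{(d)}({\bf x}\mid{\bf z}^\lambda)}{s_{\lambda}^{(d)}({\bf x}\mid{\bf z}^\lambda)}=\frac{\det\bigl[\,\prod_{t=1}^{\mu_j+d-j}(x_i-z^\lambda_t)\,\bigr]_{i,j=1}^d}{\det\bigl[\,\prod_{t=1}^{\lambda_j+d-j}(x_i-z^\lambda_t)\,\bigr]_{i,j=1}^d}.
\]
The whole identity then reduces to understanding these two determinants.

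I would first dispatch the denominator, which is the clean part. By construction of ${\bf z}^\lambda$, the variable $x_k$ occupies position $\lambda_k+d-k+1$ and the variable $y_\ell$ occupies the position recording the $\ell$-th horizontal boundary step. Hence the entry $\prod_{t=1}^{\lambda_j+d-j}(x_i-z^\lambda_t)$ contains the factor $(x_i-x_k)$ exactly for $k>j$, and the factor $(x_i-y_\ell)$ exactly for those $\ell$ with $(j,\ell)\in[\lambda]$, i.e.\ $\ell\le\lambda_j$. In particular, whenever $i>j$ the vanishing factor $(x_i-x_i)$ occurs, so the denominator matrix is upper triangular; its determinant is the product of the diagonal entries, and after dividing by $\prod_{i<j}(x_i-x_j)$ one obtains the closed product
\[
s_{\lambda}^{(d)}({\bf x}\mid{\bf z}^\lambda)=\prod_{(i,j)\in[\lambda]}(x_i-y_j).
\]

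With the denominator in hand, the Lemma is equivalent to the excited-diagram expansion of the numerator,
\[
s_{\mu}^{(d)}({\bf x}\mid{\bf z}^\lambda)=\sum_{D\in\ED(\lambda/\mu)}\ \prod_{(i,j)\in D}(x_i-y_j),
\]
because dividing this by $\prod_{(i,j)\in[\lambda]}(x_i-y_j)$ leaves $\sum_{D}\prod_{(i,j)\in[\lambda]\setminus D}\frac{1}{x_i-y_j}$, and by Proposition~\ref{prop:NIP2ED} the complements $[\lambda]\setminus D$ are exactly the supports of the lattice paths $\gamma\colon(\lambda'_1,1)\to(1,\lambda_1)$ inside $[\lambda]$ (a single path, since $\lambda/\mu$ is a border strip); this last sum is by definition $F_{\lambda/\mu}({\bf x}\mid{\bf y})$. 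The displayed numerator identity is the equivariant-localization formula for factorial Schur functions of Ikeda--Naruse~\cite{IkNa09}, and establishing it is the main obstacle: unlike the denominator, the numerator matrix is not triangular, so the triangularity shortcut fails outright. I would prove it from the interpolation characterization of $s^{(d)}_\mu$ as the unique polynomial of the correct degree whose specializations at the nodes encoding a partition $\nu$ vanish unless $\mu\subseteq\nu$, checking that the right-hand sum satisfies the same vanishing and the same normalizing value; alternatively one can induct on $|\mu|$ along excited moves, matching the recursion that the right-hand side inherits from adding a single box against a Pieri--Chevalley rule for factorial Schur functions. The genuinely delicate point throughout is that ${\bf z}^\lambda$ is a \emph{self-referential} specialization, since its entries are the very variables $x_i,y_j$ occurring in ${\bf x}$ and ${\bf y}$; the interpolation arguments must therefore be carried out with the $x_i$ kept generic rather than specialized to the interpolation nodes.
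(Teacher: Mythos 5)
Your preparatory steps are correct and in fact coincide with the paper's own setup: cancelling the common Vandermonde gives the ratio $D(\mu)/D(\lambda)$ of polynomial determinants (this is \eqref{eq:GratioD}), your triangularity argument for the denominator is exactly \eqref{eq:D_upper} and yields $s_{\lambda}^{(d)}({\bf x}\mid{\bf z}^{\lambda})=\prod_{(i,j)\in[\lambda]}(x_i-y_j)$ after dividing by the Vandermonde, and Proposition~\ref{prop:NIP2ED} correctly converts the excited-diagram sum into the single-path sum defining $F_{\lambda/\mu}$ for a border strip. But at that point you have only \emph{restated} the lemma, not proved it: the displayed numerator identity $s_{\mu}^{(d)}({\bf x}\mid{\bf z}^{\lambda})=\sum_{D\in\ED(\lambda/\mu)}\prod_{(i,j)\in D}(x_i-y_j)$ is, modulo your denominator evaluation, precisely the Ikeda--Naruse localization formula (Theorem~\ref{thm:IkNa}), i.e.\ the very statement the lemma is a special case of and which this paper sets out to prove by elementary means; invoking it is circular relative to that goal. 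You acknowledge it is ``the main obstacle'' and offer two strategies, neither executed. Neither is routine, and neither uses the border-strip hypothesis: both would, if carried out, prove the general skew-shape theorem, which is strictly harder than the lemma and is deduced in the paper \emph{from} the border-strip case via Lascoux--Pragacz. The interpolation route in particular founders on exactly the issue you flag: the vanishing characterization pins down $s^{(d)}_{\mu}({\bf w}\mid{\bf a})$ through evaluations of a \emph{free} argument ${\bf w}$ at the nodes $a_{\nu}$, whereas in \eqref{eq:key_border_strips} the argument is already frozen at $x_i=z^{\lambda}_{\lambda_i+d-i+1}$, so there is no free argument left to evaluate; repairing this requires an auxiliary alphabet and an induction over $\lambda$, which amounts to reconstructing the Ikeda--Naruse proof in full generality.

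The missing content is supplied in the paper by a recursion specific to border strips. The trivial first-step decomposition of the single lattice path gives the three-term identity \eqref{eq:FpathRel} (peel off the last row of $\lambda$ or its first column according to whether the path begins with an up-step or a right-step), and Lemma~\ref{lem:GpathRel} shows by explicit column operations and cofactor expansions that the ratio $G_{\lambda/\mu}=D(\mu)/D(\lambda)$ satisfies the same identity \eqref{eq:lemGpathRel}; induction on the size of the shape then closes the argument. The border-strip condition $\mu_j=\lambda_{j+1}-1$ enters crucially there: it forces the first $d-1$ columns of the matrix for $D(\mu)$ to agree with the last $d-1$ columns of the matrix for $D(\lambda)$, leaving one column of all ones, and this coincidence is what makes the determinant manipulations tractable. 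Establishing such a recursion for the determinant side (or, alternatively, the Pieri--Chevalley recursion of Lemma~\ref{lem:ChevalleyStrips}, which underlies the paper's first proof) is the entire substance of the lemma, and it is absent from your proposal.
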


Before we begin the proof we make a few definitions to simplify notation and a few observations to be used throughout.
For any partition $\nu \subseteq d\times (n-d)$ and a set of variables ${\bf x}$ and ${\bf z}$ define
$$D(\nu):= \det[ (x_i-z_1)\ldots(x_i-z_{\nu_j+d-j}) ]_{i,j=1}^d,$$
so that
\begin{equation} \label{eq:GratioD}
\frac{s_{\mu}^{(d)}({\bf x} \mid {\bf
      z}^{\lambda})}{s_{\lambda}^{(d)}({\bf x} \mid {\bf z}^{\lambda})} = \frac{D(\mu)}{D(\lambda)}.
\end{equation}
Notice also that $z_{\lambda_j+1+d-j}=x_j$ and so $ (x_i-z_1)\ldots(x_i-z_{\lambda_j+d-j})  =0$ if
$j<i$. So the matrix in $D(\lambda)$ is upper-triangular and
\begin{equation}\label{eq:D_upper}
D(\lambda) =
\prod_{i=1}^d (x_i-z_1)\cdots(x_i -z_{\lambda_i+d-i}).
\end{equation}

\subsection{Proof of multivariate lemma}

To prove Lemma~\ref{lem:key_border_strips} we verify that both sides of \eqref{eq:key_border_strips} satisfy
the following trivial path identity. The first step of a  path $\ga:(\lambda'_1,1) \to
  (1,\lambda_1)$ is either $(0,1)$ (up) or $(1,0)$ (right) provided $\lambda_d>1$. So
\begin{equation} \label{eq:FpathRel}
(x_d-y_1)F_{\lambda/\mu}({\bf x}\mid  {\bf y}) =
F_{\lambda-\lambda_d/\mu-\mu_{d-1}}(x_1,\ldots,x_{d-1}\mid {\bf y}) + F_{\lambda - {\bf 1}/\mu-{\bf
    1}}({\bf x}\mid y_2,\ldots,y_{n-d}),
\end{equation}
where the second term on the RHS vanishes if $\lambda_d=1$.

\begin{example}
For the border strip $\lambda/\mu = (5533/422)$, we have
\begin{multline*}
(x_4-y_1) F_{(5533/422)}(x_1,\ldots,x_4 \mid y_1,\ldots,y_5) =\\
F_{(553/42)}(x_1,x_2,x_3\mid y_1,\ldots,y_5) + F_{(4422/311)}(x_1,\ldots,x_4\mid y_2,\ldots,y_5),
\end{multline*}
\begin{center}
\includegraphics{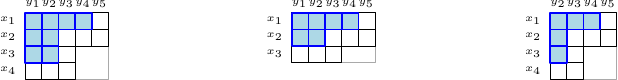}
\end{center}

\end{example}

Next we show that the following ratio of factorial Schur functions, satisfies the
same relation:
\[
G_{\lambda/\mu}({\bf x} \mid {\bf y}) := \frac{s_{\mu}^{(d)}({\bf x} \mid {\bf
      z}^{\lambda})}{s_{\lambda}^{(d)}({\bf x} \mid {\bf z}^{\lambda})}.
\]

\begin{lemma} \label{lem:GpathRel}
We have: 
\begin{equation} \label{eq:lemGpathRel}
(x_d-y_1)G_{\lambda/\mu}({\bf x}\mid  {\bf y}) \, = \,
G_{\lambda-\lambda_d/\mu-\mu_{d-1}}(x_1,\ldots,x_{d-1}\mid {\bf
  y}) \. +\. G_{\lambda - {\bf 1}/\mu-{\bf
    1}}({\bf x}\mid y_2,\ldots,y_{n-d})\ts,
\end{equation}
where the second term on the RHS vanishes if $\lambda_d=1$.
\end{lemma}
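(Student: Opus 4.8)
The plan is to prove \eqref{eq:lemGpathRel} entirely on the determinant side, via \eqref{eq:GratioD}. For a partition $\nu$ write $D^{\eta}(\nu)$ for the determinant $D(\nu)$ defined above but formed with the reading word ${\bf z}^{\eta}$, so that $G_{\lambda/\mu}=D^{\lambda}(\mu)/D^{\lambda}(\lambda)$, and likewise for the two smaller shapes $\tilde\lambda:=\lambda-\lambda_d$ (delete the bottom row, with variables $x_1,\dots,x_{d-1}$) and $\hat\lambda:=\lambda-{\bf 1}$ (delete the first column, with $y$-variables $y_2,\dots,y_{n-d}$). The first step is to record how the three words are related: straight from $z^{\lambda}_{\lambda_i+d-i+1}=x_i$ and $z^{\lambda}_{\lambda'_j+n-d-j+1}=y_j$ one checks that ${\bf z}^{\tilde\lambda}$ is ${\bf z}^{\lambda}$ with the single letter $x_d$ (in position $\lambda_d+1$) deleted, and ${\bf z}^{\hat\lambda}$ is ${\bf z}^{\lambda}$ with its first letter $y_1=z_1$ deleted.

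Combining these deletions with the upper-triangular product formula \eqref{eq:D_upper}, the denominators telescope into clean products:
\[
D^{\lambda}(\lambda)=\prod_{i=1}^{d}(x_i-y_1)\cdot D^{\hat\lambda}(\hat\lambda),\qquad
D^{\lambda}(\lambda)=\prod_{i=1}^{d-1}(x_i-x_d)\,\prod_{j=1}^{\lambda_d}(x_d-y_j)\cdot D^{\tilde\lambda}(\tilde\lambda).
\]
Substituting into \eqref{eq:lemGpathRel} and clearing the common denominator $D^{\lambda}(\lambda)$ reduces the lemma to the single polynomial identity
\[
(x_d-y_1)\,D^{\lambda}(\mu)=\prod_{i=1}^{d-1}(x_i-x_d)\prod_{j=1}^{\lambda_d}(x_d-y_j)\,D^{\tilde\lambda}(\tilde\mu)\ +\ \prod_{i=1}^{d}(x_i-y_1)\,D^{\hat\lambda}(\hat\mu),
\]
which I would then prove directly from the determinant definitions.

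To prove this identity, observe that $\mu_d=0$, so the last column of $D^{\lambda}(\mu)$ is the all-ones column $(1)_i$, while each column $j\le d-1$ has degree $m_j=\mu_j+d-j\ge 1$ and hence contains the factor $(x_i-z_1)=(x_i-y_1)$ in row $i$; pulling this factor out identifies columns $1,\dots,d-1$ of $D^{\lambda}(\mu)$ with those of $D^{\hat\lambda}(\hat\mu)$. Writing $C_j$ for the $j$-th column of $D^{\lambda}(\mu)$, multilinearity in the last column then gives $\prod_i(x_i-y_1)\,D^{\hat\lambda}(\hat\mu)=\det[C_1,\dots,C_{d-1},(x_i)_i]-y_1\,D^{\lambda}(\mu)$. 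Subtracting the second term of the identity from $(x_d-y_1)D^{\lambda}(\mu)$ and using multilinearity once more collapses the difference to $\det[C_1,\dots,C_{d-1},(x_d-x_i)_i]$, so it remains to match this determinant with the first (border-strip) term.

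Here the border-strip hypothesis enters decisively: connectedness forces $\mu_{d-1}=\lambda_d-1$, so in row $d$ every column $j<d-1$ vanishes (its degree exceeds $\lambda_d$, and $z_{\lambda_d+1}=x_d$), leaving a unique nonzero entry $\prod_{l=1}^{\lambda_d}(x_d-y_l)$ in column $d-1$. Expanding $\det[C_1,\dots,C_{d-1},(x_d-x_i)_i]$ along row $d$ peels off exactly this product times a $(d-1)\times(d-1)$ minor, in which---using ${\bf z}^{\tilde\lambda}={\bf z}^{\lambda}\setminus\{x_d\}$---every column carries the common row factor $(x_i-x_d)$; extracting it leaves precisely $D^{\tilde\lambda}(\tilde\mu)$, matching the first term. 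The main obstacle is exactly this last step, where the last column and row $d$ must be handled simultaneously and the single-nonzero-entry structure of row $d$ is what makes the cofactor expansion produce one clean minor. Finally, the degenerate case $\lambda_d=1$ (so $\mu_{d-1}=0$ and the bottom row is a lone cell) must be treated on its own: there the ``right'' first step is impossible, the term $G_{\hat\lambda/\hat\mu}$ drops out, and the identity reduces to matching $D^{\lambda}(\mu)$ against the first term alone.
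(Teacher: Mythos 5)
Your proposal is correct and takes essentially the same route as the paper's proof: after clearing the common denominator $D(\lambda)$, you use multilinearity in the ones column to create the column $(x_d-x_i)_i$, note that the border-strip condition $\mu_j=\lambda_{j+1}-1$ makes row $d$ vanish except for the single entry $\prod_{l=1}^{\lambda_d}(x_d-y_l)$ (since $z_{\lambda_d+1}=x_d$), expand along that row, and extract the row factors $(x_i-x_d)$ to recover $D^{\tilde\lambda}(\tilde\mu)$ --- exactly the paper's computation in \eqref{eq:Forig}--\eqref{eq:F_bar}, merely reorganized (denominators telescoped upfront into one polynomial identity, and the ones column kept in last position instead of being shifted to the front with sign $(-1)^{d-1}$). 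The remaining details --- the two cancelling signs in the cofactor expansion at position $(d,d-1)$ and the last column $(x_d-x_i)=-(x_i-x_d)$, and the separate $\lambda_d=1$ case --- work out as you indicate and match the paper's treatment.
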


\begin{proof}[Proof of Lemma~\ref{lem:key_border_strips}]
We proceed by induction. For the base case $\lambda=(1)$
and $\mu=\varnothing$, we directly check that
\[
F_{(1)/\varnothing}({\bf x}\mid {\bf y}) \. = \. G_{(1)/\varnothing}({\bf x}
\mid {\bf y}) = \frac{1}{x_d-y_1}\ts.
\]
Then by \eqref{eq:FpathRel} and Lemma~\ref{lem:GpathRel} we have
$F_{\lambda/\mu}({\bf x} \mid {\bf y})$ and $G_{\lambda/\mu}({\bf x}\mid {\bf
  y})$ satisfy the same recurrence. Therefore, we have
$F_{\lambda/\mu}({\bf x} \mid {\bf y}) = G_{\lambda/\mu}({\bf x}\mid
{\bf y})$ as desired.
\end{proof}

In the rest of the section we prove Lemma~\ref{lem:GpathRel}.

\begin{proof}[Proof of Lemma~\ref{lem:GpathRel}]
We denote the shape $(\lambda-\lambda_d) / (\mu-\mu_{d-1})$ by $\overline{\lambda}/\overline{\mu}$.
Removing the first column of $\lambda$ yields
${\bf z}^{\lambda-{\bf 1}} =y_2,\ldots=z_2,\ldots$ and 
removing the last row of
$\lambda$ yields ${\bf z}^{\bar{\lambda}} = y_1,y_2,\ldots,\widehat{x_d},\ldots =
z_1,\ldots,z_{\lambda_d},z_{\lambda_d+2},\ldots$, i.e. ${\bf z^{\lambda}}$ with the
entry $x_d$ omitted.

Assume $\lambda_d\neq 0$ and $\mu_d=0$, the other case is trivially
reduced.  If $\lambda/\mu$ is a border strip $\mu_j =\lambda_{j+1} -1$
for $j=1,\ldots,d-1$. Hence in the ratio of determinants in \eqref{eq:GratioD} we have
that the first $d-1$ columns of the determinant from
$s^{(d)}_{\mu}(\cdot \mid \cdot )$ are the last $d-1$ columns from the
determinant for $s^{(d)}_{\lambda}(\cdot \mid \cdot)$, and the $d$th
column from $s^{(d)}_{\mu}(\cdot \mid \cdot)$ is all ones, since
$\mu_d +d - d =0$. Thus in \eqref{eq:GratioD}, upon shifting the $d$th column to the first
column of the determinant $D(\mu)$ in the numerator, we obtain
 \begin{equation} \label{eq:Forig}
 G_{\lambda/\mu}({\bf x} \mid {\bf y})  = \frac{ (-1)^{d-1}}{D(\lambda)}\det\left
   [ \begin{cases} 1, & j=1\\  (x_i-z_1)\ldots(x_i-z_{\lambda_j+d-j}),
     &  j>1 \end{cases} \right]_{i,j=1}^d.
 \end{equation}
Next we have two cases depending on whether $\lambda_d=1$ or
$\lambda_d>1$.

\medskip

\noindent {\em Case $\lambda_d=1$:} For $\lambda$ we have
$z^{\lambda}_1=y_1$ and $z^{\lambda}_2=x_d$. The $(d,d)$ entry of the upper triangular
matrix of the determinant in $D(\lambda)$ is $x_d-y_1$, so by doing a
cofactor expansion on this row we get
\[
D(\lambda) \, =\, (x_d-y_1) \ts \det
\bigl[(x_i-x_d)(x_i-y_1)(x_i-z_3)\cdots (x_i-z_{\lambda_j+d-j})\bigr]_{i,j=1}^{d-1}\ts.
\]
By factoring $x_i-z_2=x_i-x_d$ from each row above we get
\[
D(\lambda) \, =  \, (x_d-y_1) \ts
             \det\left[(x_i-y_1)(x_i-z_3)\cdots
             (x_i-z_{\lambda_j+d-j})\right]_{i,j=1}^{d-1} \prod_{i=1}^{d-1}(x_i-x_d)\ts.
\]
Since $z^{\overline{\lambda}}_1=y_1$ and
$z^{\overline{\lambda}}_j=z^{\lambda}_{j+1}$ for $j=2,\ldots,d-1$,
then by relabeling we get
\begin{equation} \label{eq:case1dlam}
D(\lambda) \. = \. (x_d-y_1)\ts D(\overline{\lambda}) \. \prod_{i=1}^{d-1}(x_i-x_d)\ts.
\end{equation}

For $\mu$ we have $\mu_d=\mu_{d-1}=0$ so the matrix in $D(\mu)$
has a $d$th column of ones
\[
D(\mu) = \det \begin{bmatrix} \ldots & (x_1-z_1)\cdots(x_1 -
  z_{\mu_j+d-j}) & \cdots & (x_1-y_1) & 1 \\ \ldots & (x_2-z_1)\cdots(x_2 -
  z_{\mu_j+d-j}) &\cdots  & (x_2-y_1) & 1 \\ \vdots & &  & & \vdots \\
  0 & \cdots & 0 & (x_d-y_1) & 1 \end{bmatrix}
\]
Then, by adding $(x_d-y_1)$ to each entry in the $(d-1)$th column of
the matrix above, the determinant
remains unchanged but the last row becomes $0\ldots 01$. This gives
\[
D(\mu) = \det \begin{bmatrix} \begin{cases} 1, &  j=d\\ x_i-x_d, &
    j=d-1\\
 (x_i-z_1)\ldots(x_i-z_{\mu_j+d-j}),
     &  j<d-1 \end{cases}\end{bmatrix}_{i,j=1}^d
\]
Next, we do a cofactor expansion on the last row of this matrix and then we
factor $x_i-z_2=x_i-x_d$ from each row,
\begin{align*}
D(\mu) &= \det \begin{bmatrix} \begin{cases} x_i-x_d, &
    j=d-1\\
 (x_i-z_1)(x_i-z_2)\ldots(x_i-z_{\mu_j+d-j}),
     &  j<d-1 \end{cases} \end{bmatrix}_{i,j=1}^{d-1}\\
&= \det \begin{bmatrix} \begin{cases} 1, &
    j=d-1\\
 (x_i-z_1)\widehat{(x_i-x_d)}\ldots(x_i-z_{\mu_j+d-j}),
     &  j<d-1 \end{cases} \end{bmatrix}_{i,j=1}^{d-1}
       \prod_{i=1}^{d-1} (x_i-x_d).
\end{align*}
Again, since $z^{\bar{\lambda}}_1=y_1$ and
$z^{\bar{\lambda}}_j=z^{\lambda}_{j+1}$ for $j=2,\ldots,d-1$, we have
by relabeling that
\begin{equation} \label{eq:case1dmu}
D(\mu) = D(\bar{\mu}) \prod_{i=1}^{d-1} (x_i-x_d).
\end{equation}

We now combine \eqref{eq:case1dlam} and \eqref{eq:case1dmu} in
$(x_d-y_1)G_{\lambda/\mu}(\cdot \mid \cdot)$,
\[
(x_d-y_1)
  G_{\lambda/\mu}({\bf x}\mid {\bf y})  = (x_d-y_1) \frac{
    D(\bar{\mu})\prod_{i=1}^{d-1} (x_i-x_d) }{ (x_d-y_1)
     D(\bar{\lambda})\prod_{i=1}^{d-1} (x_i - x_d) } =
  G_{\bar{\lambda}/\bar{\mu}}(x_1,\ldots,x_{d-1}\mid {\bf y} ),
\]
confirming the desired identity \eqref{eq:lemGpathRel} in this case since the term
$G_{\lambda-{\bf 1}/\mu-{\bf 1}}({\bf x} \mid {\bf y})$ is vacuously
  zero when $\lambda_d=1$.

\medskip

\noindent {\em Case $\lambda_d>1$:} Using $z_1=y_1$ we have
\begin{align}
 G_{\lambda-{\bf 1}/\mu-{\bf 1}}({\bf x}\mid y_2,\ldots,y_{n-d}) &=
 \frac{(-1)^{d-1}}{D(\lambda - {\bf 1})} \det\left [ \begin{cases} 1, &  j=1\\
       (x_i-z_2)\ldots(x_i-z_{\lambda_j+d-j}), &  j>1 \end{cases}
                                                \right]_{i,j=1}^d
                                                 \notag \\
&=  \frac{(-1)^{d-1}}{D(\lambda)}\det\left [ \begin{cases} 1, &  j=1\\
    (x_i-z_2)\ldots(x_i-z_{\lambda_j+d-j}), &  j>1 \end{cases}
                                             \right]_{i,j=1}^d \cdot
                                              \prod_{i=1}^d (x_i-y_1)
                                              \notag \\
& = \frac{(-1)^{d-1}}{D(\lambda)} \det\left [ \begin{cases} (x_i-y_1),
    & j=1\\ (x_i-z_1) \ldots(x_i-z_{\lambda_j+d-j}),&  j>1 \end{cases}
                                                      \right]_{i,j=1}^d \label{eq:Flam_minus1}
 \end{align}
Similarly, we have:
\begin{multline} \label{eq:F_bar}
 G_{\overline{\lambda}/\overline{\mu} }(x_1,\ldots,x_{d-1} \mid {\bf y}) \\
 =\, \frac{ (-1)^{d-2}}{D(\lambda)} \. \det\left [ \begin{cases}  (x_i -x_d),
     &j=1 \\ (x_i-z_1) \ldots(x_i-z_{\lambda_{j-1}+d-{j-1}}), &2 \leq
     j \leq d-1 \end{cases}\right ]_{i,j=1}^{d-1}  \cdot \prod_{j=1}^{\lambda_d}(x_d-y_j)
%
% = \frac{ \det[ (x_i-z_1) \ldots \widehat{(x_i - z_{\lambda_d+1})} \ldots(x_i-z_{\mu_j+d-j}) ]_{i,j=1}^{d-1} }{D(\lambda)}\prod_{i=1}^{d-1}(x_i-x_d)\prod_{j=1}^{\lambda_d}(x_d -y_j)  \\
% =\frac{ \det[ (x_i-z_1) \ldots \widehat{(x_i - z_{\lambda_d+1})} \ldots(x_i-z_{\lambda_{j+1}+d-{j+1}}) ]_{i,j=1}^{d-1} }{D(\lambda)}\prod_{i=1}^{d-1}(x_i-x_d)\prod_{j=1}^{\lambda_d}(x_d -y_j) \\
% =\frac{ \det[ (x_i-z_1) \ldots \widehat{(x_i - z_{\lambda_d+1})} (x_i-z_{\lambda_d+1}) \ldots(x_i-z_{\lambda_{j}+d-{j}}) ]_{i=1:d-1}^{j=2:d} }{D(\lambda)}\prod_{j=1}^{\lambda_d}(x_d -y_j)  \\
% =\frac{ \det[ (x_i-z_1) \ldots  \ldots(x_i-z_{\max( \lambda_j+d-j,\lambda_d+1) }) ]_{i=1:d-1}^{j=2:d} }{D(\lambda)}\prod_{j=1}^{\lambda_d}(x_d -y_j)
 \end{multline}

 Next, we evaluate the difference of $(x_d-y_1)G_{\lambda/\mu}(\cdot \mid
 \cdot )$ and  $ G_{\lambda-{\bf 1}/\mu-{\bf 1}}(\cdot \mid \cdot)$
 using \eqref{eq:Forig}, with the multilinearity
 property on the first column, and \eqref{eq:Flam_minus1} to obtain

\begin{align*}
\lefteqn{(x_d-y_1) G_{\lambda/\mu}({\bf x}\mid {\bf y}) -
  G_{\lambda-{\bf 1}/\mu-{\bf 1}}({\bf x} \mid y_2,\ldots,y_{n-d}) \, = \,
  \frac{(-1)^{d-1}}{D(\lambda)} \, \times } \nonumber \\
&\left( \det\left [ \begin{cases}
      (x_d-y_1), &  j=1\\  (x_i-z_1)\ldots(x_i-z_{\lambda_j+d-j}), &
      j>1 \end{cases} \right]_{i,j=1}^d - \det\left [ \begin{cases}
      (x_i-y_1), &  j=1\\ (x_i-z_1)\ldots(x_i-z_{\lambda_j+d-j}), &
      j>1 \end{cases} \right]_{i,j=1}^d\right) \nonumber \\
&=\frac{(-1)^{d-1}}{D(\lambda)} \det\left [ \begin{cases} x_d-x_i, &
    j=1\\  (x_i-z_1)\ldots(x_i-z_{\lambda_j+d-j}), &  j>1 \end{cases}
                                                    \right]_{i,j=1}^d
                                                     =: (*)\\
\end{align*}

Consider the row $i=d$ in the last determinant. The entries there are
all 0, except when $j=d$: when $j=1$ we have $x_d-x_i=0$ for $i=d$,
when $j\in[2,d-1]$ we have $\lambda_j + d- j \geq \lambda_d+1$,
and since $z_{\lambda_d+1} = x_d$ we have $\prod_{r=1}^{\lambda_j+d-j}
(x_d-z_r) =0$. Using the cofactor expansion we compute the determinant
in the last equation as the principal minor of the matrix times the
$(d,d)$ entry gives
\[
(*) = \frac{(-1)^{d-1}}{D(\lambda)}  \det\left [ \begin{cases} x_d-x_i, &
    j=1\\  (x_i-z_1)\ldots(x_i-z_{\lambda_j+d-j}), &  j>1 \end{cases} \right]_{i,j=1}^{d-1}  (x_d-z_1)\cdots(x_d - z_{\lambda_d}).
\]

We now compare this with equation~\eqref{eq:F_bar}, realizing that
$z_1,\ldots,z_{\lambda_d} = y_1,\ldots,y_{\lambda_d}$, so the last
expression coincides with $G_{\bar{\lambda}/\bar{\mu}}({\bf x} \mid
{\bf y})$ as desired.
Notice also that if $j<i$, we have $\lambda_j +d -j
\geq\lambda_i+d-i+1$, and since $x_i = z_{\lambda_i+d-i+1}$, the terms
above are 0 when $j<i$ and $j \neq 1$.
\end{proof}

\begin{comment}
\begin{align*}
D(\mu) &= \det \begin{bmatrix} * & (x_1-z_1)\cdots(x_1 - z_{\mu_j+d-j}) & \cdots & (x_1-x_1) & 1 \\ * & (x_2-z_1)\cdots(x_2 - z_{\mu_j+d-j}) &\cdots  & (x_2-x_d) & 1 \\ \cdots & &  & & \vdots \\ 0 & \cdots & 0 & (x_d-x_d) & 1 \end{bmatrix}\\
&= -  (x_d-y_1) \det \begin{bmatrix} * & (x_1-z_1)\cdots(x_1 - z_{\mu_j+d-j}) & \cdots  & 1 \\ * & (x_2-z_1)\cdots(x_2 - z_{\mu_j+d-j}) &\cdots   & 1 \\ \cdots & &  &  \vdots \\  \cdots & (x_{d-1} -z_1)\cdots(x_{d-1} -z_{\mu_j+d-j})  & \cdots& 1 \end{bmatrix} \\
&+ 1 \det \begin{bmatrix} * & (x_1-z_1)\cdots(x_1 - z_{\mu_j+d-j}) & \cdots & (x_1-y_1)  \\ * & (x_2-z_1)\cdots(x_2 - z_{\mu_j+d-j}) &\cdots  & (x_2-y_1)  \\ \cdots & &  &  \vdots \\  \cdots & (x_{d-1} -z_1)\cdots(x_{d-1} -z_{\mu_j+d-j}) & \cdots & (x_{d-1}-y_1)  \end{bmatrix} \\
&= \det \begin{bmatrix} * & (x_1-z_1)\cdots(x_1 - z_{\mu_j+d-j}) & \cdots & (x_1-y_1) -(x_d-y_1)1 \\ * & (x_2-z_1)\cdots(x_2 - z_{\mu_j+d-j}) &\cdots  & (x_2-y_1)  - (x_d-y_1)1\\ \cdots & &  &  \vdots \\  \cdots & (x_{d-1} -z_1)\cdots(x_{d-1} -z_{\mu_j+d-j} & \cdots & (x_{d-1}-y_1) - (x_d-y_1)1  \end{bmatrix} \\
&= \prod_{i=1}^{d-1} (x_i-x_d) \det \begin{bmatrix} * & (x_1-z_1)\widehat{(x_1-x_d)}\cdots(x_1 - z_{\mu_j+d-j}) & \cdots & 1 \\ * & (x_2-z_1)\widehat{(x_2-x_d)}\cdots(x_2 - z_{\mu_j+d-j}) &\cdots  & 1\\ \cdots & &  &  \vdots \\  \cdots & (x_{d-1} -z_1) \widehat{(x_{d-1}-x_d)}\cdots(x_{d-1} -z_{\mu_j+d-j} & \cdots & 1  \end{bmatrix} = \prod_{i=1}^{d-1} (x_i-x_d) D(\bar{\mu})
\end{align*}
\end{comment}

\subsection{Proof of NHLF for border strips}

\begin{lemma}\label{lemma:factorial_syt}
Let $\mu \subset \lambda$ be two partitions with at most $d$ parts. Then
\[
\left. \frac{s_{\mu}^{(d)}({\bf x} \mid {\bf
      z}^{\lambda})}{s_{\lambda}^{(d)}({\bf x} \mid {\bf z}^{\lambda})}
\right|_{\substack{x_i=\lambda_i+d-i+1, \\ y_i=d+j-\lambda'_j}} = \frac{f^{\lambda/\mu}}{|\lambda/\mu|!}.
\]
\end{lemma}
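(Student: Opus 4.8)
The plan is to evaluate the two determinants $D(\mu)$ and $D(\lambda)$ of \eqref{eq:GratioD} directly at the prescribed specialization and to recognize the resulting ratio as a classical determinantal formula for $f^{\lambda/\mu}$. The pivotal observation is that this specialization collapses the auxiliary word ${\bf z}^\lambda$ to the identity sequence $(1,2,\ldots,n)$, i.e.\ $z_k=k$ for all $k$. Indeed, $x_i$ occupies position $\lambda_i+d-i+1$ of ${\bf z}^\lambda$, and under the specialization its value is exactly $x_i=\lambda_i+d-i+1$, so its value equals its index. The positions of the $y$'s are the complement in $\{1,\dots,n\}$ of the positions $\{\lambda_i+d-i+1\}$ of the $x$'s, and by the beta-set identity relating $\lambda$ and $\lambda'$ this complement is $\{d+j-\lambda'_j\}$; since the specialized value of $y_j$ is likewise $d+j-\lambda'_j$, again value equals index. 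Hence after specializing, $(x_i-z_k)$ becomes $(\lambda_i+d-i+1-k)$ for every $k$.

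Granting $z_k=k$, I would first read off $D(\lambda)$ from its upper-triangular form \eqref{eq:D_upper}: writing $b_i:=\lambda_i+d-i$, the $i$-th diagonal factor is $\prod_{k=1}^{b_i}(b_i+1-k)=b_i!$, so $D(\lambda)=\prod_{i=1}^d(\lambda_i+d-i)!$. For a general entry of $D(\mu)$ the same substitution produces a falling factorial,
\[
(x_i-z_1)\cdots(x_i-z_{\mu_j+d-j})=(b_i)^{\underline{\,\mu_j+d-j\,}}=\frac{(\lambda_i+d-i)!}{(\lambda_i-\mu_j-i+j)!},
\]
where the right-hand side is interpreted as $0$ when $\lambda_i-\mu_j-i+j<0$; this matches the vanishing of the falling factorial on the left, since there a factor $(b_i - k+1)$ with $k=b_i+1\le \mu_j+d-j$ equals $0$ precisely in that range.

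Factoring $(\lambda_i+d-i)!$ out of the $i$-th row of $D(\mu)$ then yields
\[
D(\mu)=\Bigl(\prod_{i=1}^d(\lambda_i+d-i)!\Bigr)\det\!\left[\frac{1}{(\lambda_i-\mu_j-i+j)!}\right]_{i,j=1}^d=D(\lambda)\,\det\!\left[\frac{1}{(\lambda_i-\mu_j-i+j)!}\right]_{i,j=1}^d,
\]
so by \eqref{eq:GratioD} the specialized ratio equals $\det[\,1/(\lambda_i-\mu_j-i+j)!\,]$. Finally I would identify this with $f^{\lambda/\mu}/|\lambda/\mu|!$ through the classical Aitken determinant, which is the image of the Jacobi--Trudi identity \eqref{eq:JTid} under the exponential specialization $h_m\mapsto 1/m!$ (equivalently, stable principal specialization followed by $q\to 1$, exactly as in the proof of Lemma~\ref{lem:LascouxPragacz-SYT}).

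The only genuinely nontrivial step is the first: verifying that the two specialization vectors are precisely the position vectors of the $x$'s and $y$'s in ${\bf z}^\lambda$, so that the word collapses to $(1,2,\ldots,n)$. Once this collapse is secured, the remainder is routine bookkeeping with falling factorials, and the identification of the surviving determinant with $f^{\lambda/\mu}/|\lambda/\mu|!$ is a standard consequence of Jacobi--Trudi that is independent of the NHLF, keeping the argument non-circular.
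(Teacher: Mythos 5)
Your proof is correct and takes essentially the same route as the paper's: both hinge on the observation that the specialization collapses ${\bf z}^{\lambda}$ to the sequence $(1,2,\ldots,n)$, whence each entry of $D(\mu)$ becomes the falling factorial $(\lambda_i+d-i)!/(\lambda_i-\mu_j-i+j)!$ (vanishing when $\lambda_i-i<\mu_j-j$), the factor $D(\lambda)=\prod_{i=1}^d(\lambda_i+d-i)!$ cancels, and the surviving determinant $\det\bigl[1/(\lambda_i-\mu_j-i+j)!\bigr]$ is identified with $f^{\lambda/\mu}/|\lambda/\mu|!$ via Aitken's formula, i.e.\ the exponential specialization of Jacobi--Trudi. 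Your beta-set argument for why value equals index and your explicit check of the zero entries merely spell out details the paper states more briefly.
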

An equivalent form of this statement was announced in \cite{Strobl}
(see \cite[\S 8.4]{MPP1}) with a different proof.
%for general skew shapes states gives the LHS of  RHS of \eqref{eq:NHLFborderstrips},
\begin{proof}
Let $x_i=\lambda_i + d-i+1$ and $y_j=d+j-\lambda'_j$, then notice that $x_i$ and $y_j$ are exactly the numbers on the horizontal/vertical steps at row $i$/column $j$ of the lattice path determined by $\lambda$ when writing the numbers $1,2,\ldots$ along the path from the bottom left to the top right end. Thus $z_\lambda = 1,2,3\ldots$, and so
$$(x_i-z_1)\cdots (x_i-z_{\mu_j+d-j}) =  (\lambda_i+d-i)\cdots(\lambda_i + d-i +1 - (\mu_j +d-j) )= \frac{ (\lambda_i+d-i)!}{(\lambda_i -i - \mu_j+j)!}$$
whenever $\lambda_i - i \geq \mu_j -j$ and 0 otherwise.  When  $\mu=\lambda$ and $i=j$, we have
$(x_i - z_1)\cdots (x_i - z_{\lambda_i + d-i} ) = (\lambda_i+d-i)!$ and by \eqref{eq:D_upper} we have
$$D(\lambda)\, {\biggl|}_{\substack{x_i=\lambda_i+d-i+1,\\y_j=d+j-\lambda'_j}} \,=\, \prod_{i=1}^d  (\lambda_i+d-i)! $$

Then, by definition and \eqref{eq:GratioD}, we have
\begin{align*}
G_{\lambda/\mu}({\bf x} \mid {\bf y})\Bigg|_{\substack{x_i=\lambda_i+d-i+1,\\y_j=d+j-\lambda'_j}}  \, &\, =\, \frac{D(\mu)}{D(\lambda)}\Bigg|_{\substack{x_i=\lambda_i+d-i+1,\\y_j=d+j-\lambda'_j}} \, = \, \frac{ \det \bigl[   (\lambda_i+d-i)! / (\lambda_i -i - \mu_j+j)! \bigr]_{i,j=1}^d }{\prod_{i=1}^d  (\lambda_i+d-i)!  } \\ &=  \, \det \left[  \frac{1}{(\lambda_i -i - \mu_j+j)!} \right]_{i,j=1}^d
\end{align*}
Multiplying the last determinant by $|\lambda/\mu|!$, we recognize  the exponential specialization of the Jacobi-Trudi identity for the ordinary $s_{\lambda/\mu}$ giving $f^{\lambda/\mu}$ (a formula due to Aitken, see e.g.~\cite[Cor. 7.16.3]{EC2}). Hence we get the desired formula.
\end{proof}

\begin{proof}[Second proof of Theorem~\ref{thm:NHLFborderstrips}]
We start with the relation from Lemma~\ref{lem:key_border_strips} and
evaluate $x_i =\lambda_i+d-i+1$ and $y_j = d+j-\lambda'_j$. In the RHS
by
\eqref{eq:hooksEDborder-strip} we immediately obtain the
RHS of \eqref{eq:NHLFborderstrips}.

Next, we do the same evaluation on the ratio of factorial Schur
functions applying Lemma~\ref{lemma:factorial_syt} that gives the
ratio of factorial Schur functions as $f^{\lambda/\mu}/|\lambda/\mu|!$. \end{proof}

\subsection{SSYT $q$-analogue for border strips} \label{ss:qborderstrip}

To wrap up the section we show how the tools developed to prove
Theorem~\ref{thm:NHLFborderstrips} also yield the SSYT $q$-analogue
for border strips.

\begin{corollary}[\eqref{eq:skewschur} for border strips]\label{cor:qNHLFborderstrips}
For a border strip $\theta=\lambda/\mu$ with end points $(a,b)$ and
$(c,d)$ we have
\begin{equation} \label{eq:qschurborderstrip}
s_{\theta}(1,q,q^2,\ldots,) = \sum_{\substack{\ga:  (a_j,b_j) \to (c_i,d_i),\\ \ga \subseteq \lambda}}
\prod_{(i,j) \in \ga} \frac{q^{\lambda'_j-i}}{1-q^{h(i,j)}}.
\end{equation}
\end{corollary}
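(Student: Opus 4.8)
The plan is to reuse the multivariate identity $F_{\lambda/\mu}({\bf x}\mid{\bf y}) = G_{\lambda/\mu}({\bf x}\mid{\bf y})$ of Lemma~\ref{lem:key_border_strips}, but now specialized at the multiplicative (``$q$-lifted'') values
\[
x_i \, = \, q^{\lambda_i + d - i + 1}, \qquad y_j \, = \, q^{d + j - \lambda'_j}, \qquad \text{i.e.\ } z^\lambda_k = q^k,
\]
in place of the integer specialization $z^\lambda_k = k$ used in the second proof of Theorem~\ref{thm:NHLFborderstrips}. Because a border strip carries a single source--sink pair, no determinant or Lindstr\"om--Gessel--Viennot step is needed: I would simply evaluate both sides at this point and read off the claimed identity \eqref{eq:qschurborderstrip}.

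On the path side, factoring $x_i - y_j = q^{\,d+j-\lambda'_j}\bigl(q^{h(i,j)} - 1\bigr)$ turns the weight of a cell into
\[
\frac{1}{x_i - y_j} \, = \, \frac{-\,q^{\,\lambda'_j - d - j}}{1 - q^{h(i,j)}}\,,
\]
which differs from the target weight $q^{\lambda'_j - i}/(1-q^{h(i,j)})$ only by the monomial $-q^{\,d+j-i}$. The crucial point is that along any monotone lattice path $\ga\colon(\lambda'_1,1)\to(1,\lambda_1)$ inside $[\lambda]$ every step raises the content $j-i$ by exactly one, so $\ga$ runs through the contents $1-\lambda'_1,\,2-\lambda'_1,\ldots,\lambda_1-1$ and has $\lambda_1+\lambda'_1-1$ cells, \emph{independently of} $\ga$. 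Hence $\prod_{(i,j)\in\ga}(-q^{\,d+j-i})$ equals one and the same monomial $K=(-1)^{|\theta|}q^{\,d|\theta|+\sum_{c} c}$ for every path, and summing over $\ga$ shows that the right-hand side of \eqref{eq:qschurborderstrip} equals $K$ times the specialized $F_{\lambda/\mu}$.

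On the factorial-Schur side I would mirror the computation of Lemma~\ref{lemma:factorial_syt}, with ordinary factorials replaced by their $q$-analogues. A direct evaluation gives
\[
\prod_{r=1}^{\mu_j+d-j}(x_i - z_r) \, = \, (-1)^{\mu_j+d-j}\,q^{\binom{\mu_j+d-j+1}{2}}\,\frac{(1-q)(1-q^2)\cdots(1-q^{\lambda_i+d-i})}{(1-q)(1-q^2)\cdots(1-q^{\lambda_i - i - \mu_j + j})}\,,
\]
and likewise for the upper-triangular $D(\lambda)$ via \eqref{eq:D_upper}. Forming the ratio $D(\mu)/D(\lambda)$ as in \eqref{eq:GratioD}, the row factors $(1-q)\cdots(1-q^{\lambda_i+d-i})$ cancel, leaving an explicit monomial $C_q$ (the product of the column sign/power prefactors) times $\det\bigl[\,1/\bigl((1-q)\cdots(1-q^{\lambda_i-\mu_j-i+j})\bigr)\bigr]_{i,j=1}^d$. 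Since $h_m(1,q,q^2,\ldots) = 1/\bigl((1-q)\cdots(1-q^m)\bigr)$, the Jacobi--Trudi identity \eqref{eq:JTid} identifies this determinant with $s_{\lambda/\mu}(1,q,q^2,\ldots)$, so the specialized $G_{\lambda/\mu}$ equals $C_q\,s_{\lambda/\mu}(1,q,q^2,\ldots)$.

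Equating the two evaluations through Lemma~\ref{lem:key_border_strips} reduces the corollary to the single scalar identity $K\,C_q = 1$, and verifying this is the main obstacle. The sign is painless: for a border strip $|\theta| = \lambda_1+\lambda'_1-1$, which makes the sign $(-1)^{|\theta|}$ of $K$ agree with the sign $(-1)^{|\mu|-|\lambda|}=(-1)^{|\theta|}$ of $C_q$. The vanishing of the total $q$-exponent is then a routine bookkeeping, pitting the content sum $\sum_{c=1-\lambda'_1}^{\lambda_1-1} c$ and the term $d\,(\lambda_1+\lambda'_1-1)$ from $K$ against $\sum_{j}\binom{\mu_j+d-j+1}{2}-\sum_i\binom{\lambda_i+d-i+1}{2}$ from $C_q$; I expect only careful accounting here, the one genuinely new idea being the content-invariance that collapses the path-dependent correction on the $F$-side to a single monomial.
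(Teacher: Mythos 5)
Your proposal is correct and takes essentially the same route as the paper: the paper's proof likewise specializes the multivariate identity of Lemma~\ref{lem:key_border_strips} at $x_i=q^{\lambda_i+d-i+1}$, $y_j=q^{d+j-\lambda'_j}$ (i.e.\ $z^\lambda_k=q^k$), citing \cite[\S 4]{MPP1} for the factorial Schur evaluation that you instead carry out directly via a $q$-analogue of Lemma~\ref{lemma:factorial_syt}, with your content-invariance observation handling the monomial corrections the paper leaves implicit. Your final scalar check does close: since $\binom{a+1}{2}-\binom{b+1}{2}=\sum_{r=b+1}^{a}r$, summing over rows gives $\sum_i\binom{\lambda_i+d-i+1}{2}-\sum_j\binom{\mu_j+d-j+1}{2}=\sum_{(i,j)\in\theta}(d+j-i)=d\,|\theta|+\sum_{c}c$, where $c$ runs over the contents of $\theta$, so indeed $K\,C_q=1$.
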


\begin{proof}
We start with \eqref{eq:key_border_strips} from Lemma~\ref{lem:key_border_strips} and evaluate both sides at $x_i =q^{\lambda_i+d-i+1}$ and $y_j=q^{d+j-\lambda'_j}$. The path series
$F_{\lambda/\mu}({\bf x} \mid {\bf y})$ gives the  RHS of \eqref{eq:qschurborderstrip}
\begin{equation} \label{eq:pf q-analogue 1}
\left.F_{\lambda/\mu}({\bf x} \mid {\bf y})
\right|_{\substack{x_i=q^{\lambda_i+d-i+1},\\y_j=q^{d+j-\lambda'_j}}}
= (-1)^{|\theta|}\sum_{\substack{\ga:  (a_j,b_j) \to (c_i,d_i),\\ \ga \subseteq \lambda}}
\prod_{(i,j) \in \ga} \frac{q^{-d+\lambda'_j-j}}{1-q^{h(i,j)}}.
\end{equation}
Next, by \cite[\S 4.2]{MPP1} the evaluation of the ratio of the factorial
Schur functions gives the stable principal specialization of the Schur
function, the LHS of \eqref{eq:qschurborderstrip}
\begin{equation} \label{eq:pf q-analogue 2}
\left.\frac{s_{\mu}^{(d)}({\bf x} \mid {\bf
    z}^{\lambda})}{s_{\lambda}^{(d)}({\bf x} \mid {\bf
    z}^{\lambda})}
\right|_{\substack{x_i=q^{\lambda_i+d-i+1},\\y_j=q^{d+j-\lambda'_j}}}
= (-1)^{|\theta|} q^{-g(\lambda)+g(\mu)}s_{\theta}(1,q,q^2,\dots),
\end{equation}
where $g(\nu) := \sum_{i=1}^d \binom{\nu_i+d+1-i}{2}$. Combining
\eqref{eq:pf q-analogue 1} and \eqref{eq:pf q-analogue 2} gives 
\[
s_{\theta}(1,q,q^2,\ldots) = q^{g(\lambda)-g(\mu)} \sum_{\substack{\ga:  (a_j,b_j) \to (c_i,d_i),\\ \ga \subseteq \lambda}}
\prod_{(i,j) \in \ga} \frac{q^{-d+\lambda'_j-j}}{1-q^{h(i,j)}}.
\]
Finally, by a calculation in \cite[Prop. 4.7]{MPP1} the power of $q$
in the RHS above can be rewritten to obtain the RHS of \eqref{eq:qschurborderstrip}.
\end{proof}

\subsection{Lascoux--Pragacz identity for factorial Schur functions} \label{subsec:LP}

Lemma~\ref{lem:key_border_strips} holds for connected skew shape $\lambda/\mu$ in terms of
non-intersecting
paths $\Gamma=(\ga_1,\ldots,\ga_k)$ in $\NIP(\lambda/\mu)$
(i.e. complements of excited diagrams).
\begin{align*}
F_{\lambda/\mu}({\bf x} \mid {\bf y}) &:= \sum_{\Gamma \in
  \NIP(\lambda/\mu)} \prod_{(r,s)\in \Gamma} \frac{1}{x_r-y_s}= \sum_{D\in\ED(\lambda/\mu)} \prod_{(r,s)\in [\lambda]\setminus D} \frac{1}{x_r-y_s}.
\end{align*}
Ikeda and Naruse \cite{IkNa09} showed
algebraically the following identity that we call the {\em multivariate NHLF}.

\begin{theorem}[\cite{IkNa09}] \label{thm:IkNa}
For a connected skew shape $\lambda/\mu \subseteq d\times (n-d)$ we have
\begin{equation} \label{eq:IkNa}
\frac{s_{\mu}^{(d)}({\bf x} \mid {\bf
    z}^{\lambda})}{s_{\lambda}^{(d)}({\bf x} \mid {\bf
    z}^{\lambda})} =
F_{\lambda/\mu}({\bf x} \mid {\bf y}).
\end{equation}
\end{theorem}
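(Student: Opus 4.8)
The plan is to lift the border-strip identity of Lemma~\ref{lem:key_border_strips} to an arbitrary connected skew shape $\lambda/\mu$, following the same template used for the NHLF (Theorem~\ref{thm:IN}) and the first $q$-NHLF (Theorem~\ref{thm:skewSSYT}). Three ingredients get combined: the border-strip case already in hand; the Lindstr\"om--Gessel--Viennot lemma, to turn the path sum $F_{\lambda/\mu}$ into a determinant of border-strip path sums; and a Lascoux--Pragacz identity at the level of factorial Schur functions, to recognize that determinant as the ratio $s_\mu^{(d)}/s_\lambda^{(d)}$. The first two steps are essentially repetitions of arguments already carried out in the excerpt, so the real work is isolating and proving the third.

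First I would fix the Lascoux--Pragacz decomposition $(\theta_1,\dots,\theta_k)$ of $\lambda/\mu$ with cutting strip $\tau=\theta_1$, and recall from \S\ref{subsec:LP} that $F_{\lambda/\mu}({\bf x}\mid{\bf y})$ is the sum over $\NIP(\lambda/\mu)$, equivalently over complements of excited diagrams, with step weight $1/(x_r-y_s)$ depending only on the coordinate $(r,s)$. Applying Lemma~\ref{lem:LGVlambda} with $y_{r,s}=1/(x_r-y_s)$ and the Kreiman endpoints $(a_j,b_j)\to(c_i,d_i)$ rewrites this sum as
\[
F_{\lambda/\mu}({\bf x}\mid{\bf y}) \, = \, \det\bigl[\,F_{\theta_i\#\theta_j}({\bf x}\mid{\bf y})\,\bigr]_{i,j=1}^k,
\]
where, exactly as in the proof of Theorem~\ref{thm:num_excited_HG}, Lemma~\ref{lem:contentsLPvsK} identifies the $(i,j)$ entry as the path sum of the substrip $\theta_i\#\theta_j=\phi[p(\theta_j),q(\theta_i)]$ of $\tau$. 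Since each $\theta_i\#\theta_j$ is a border strip, Lemma~\ref{lem:key_border_strips} evaluates each entry as the factorial-Schur ratio attached to that substrip.

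It then remains to show that this determinant collapses to $s_\mu^{(d)}({\bf x}\mid{\bf z}^\lambda)/s_\lambda^{(d)}({\bf x}\mid{\bf z}^\lambda)=D(\mu)/D(\lambda)$ (with $D$ as in \S\ref{sec:2ndproof} and ${\bf a}={\bf z}^\lambda$), which is precisely the Lascoux--Pragacz identity for factorial Schur functions. I would prove this by adapting the lattice-path proof of the Hamel--Goulden/Lascoux--Pragacz theorem (as in \cite{HGoul,CYY}) to the factorial setting: factorial Schur functions admit a Jacobi--Trudi determinant and hence a Gessel--Viennot non-intersecting-path model, and the maximal-outer-strip decomposition corresponds to the Stembridge family whose determinant recovers the full function. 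Equivalently, one can verify the determinant identity $\det[D(\mu_{ij})/D(\lambda_{ij})]=D(\mu)/D(\lambda)$ directly, exploiting that all substrips live on the single cutting strip $\tau$, so that their ${\bf z}$-words are coherent windows of ${\bf z}^\lambda$.

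The main obstacle is this last step. The individual border-strip ratios $F_{\theta_i\#\theta_j}$ carry \emph{different} denominators $s_{\lambda_{ij}}^{(d)}$, and one must reconcile them with the single global denominator $s_\lambda^{(d)}$ before the determinant can be recognized as the claimed ratio. Concretely, the care lies in pinning down the shapes $\lambda_{ij}/\mu_{ij}$ of the substrips, their bounding rectangles, and the precise relabelling of ${\bf z}^\lambda$ on each, so that the multilinearity of the determinant in its columns extracts the common normalization $D(\lambda)$; this is the factorial analogue of the bookkeeping in the $\lambda_d=1$ and $\lambda_d>1$ cases of Lemma~\ref{lem:GpathRel}. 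Once the factorial Lascoux--Pragacz identity is in place, chaining it with the displayed equality yields Theorem~\ref{thm:IkNa}, and specializing ${\bf x},{\bf y}$ as in Lemma~\ref{lemma:factorial_syt} and Corollary~\ref{cor:qNHLFborderstrips} recovers Theorems~\ref{thm:IN} and~\ref{thm:skewSSYT} as the two shadows of this multivariate statement.
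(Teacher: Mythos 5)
Your first two steps are fine and are exactly the machinery already in the paper: Proposition~\ref{prop:NIP2ED}, Lemma~\ref{lem:LGVlambda} with $y_{r,s}=1/(x_r-y_s)$, and Lemma~\ref{lem:contentsLPvsK} do give $F_{\lambda/\mu}({\bf x}\mid{\bf y})=\det\bigl[F_{\theta_i\#\theta_j}({\bf x}\mid{\bf y})\bigr]_{i,j=1}^k$, and Lemma~\ref{lem:key_border_strips} evaluates each entry as a factorial-Schur ratio. But your third step is a genuine gap, and it is precisely the point the paper flags as open. The paper does not prove Theorem~\ref{thm:IkNa} at all: it cites the algebraic proof of Ikeda--Naruse \cite{IkNa09} (via the Pieri--Chevalley formula for Kostant polynomials, cf.~\S\ref{ss:comparisonKonvalinka}), and then runs your implication in the \emph{opposite} direction, deriving the factorial Lascoux--Pragacz identity at ${\bf y}={\bf z}^{\lambda}$ (Corollary~\ref{cor:LP4evfactorialSchurs}) \emph{from} Theorem~\ref{thm:IkNa} together with LGV and the border-strip lemma. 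So you cannot invoke Corollary~\ref{cor:LP4evfactorialSchurs} as it stands---that would be circular---and you must prove it independently. Your plan for doing so (``adapt the Hamel--Goulden/Chen--Yan--Yang lattice-path proof to the factorial setting'') is a hope rather than an argument: the factorial Jacobi--Trudi entries involve parameter sequences shifted in a column-dependent way, so the Gessel--Viennot/Stembridge model underlying Hamel--Goulden does not transfer verbatim, and the ``coherent windows of ${\bf z}^{\lambda}$'' bookkeeping you defer is exactly where the proof would have to live. The paper records this state of affairs explicitly: the Remark closing \S\ref{subsec:LP} says that a combinatorial proof of Corollary~\ref{cor:LP4evfactorialSchurs} would show the multivariate NHLF for skew shapes to be \emph{equivalent} to the border-strip case, and the general-parameter version is left as Conjecture~\ref{conj:LP4factorialSchurs}.

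Two smaller remarks. First, your worry about ``different denominators $s^{(d)}_{\lambda_{ij}}$'' is an artifact of placing each substrip in its own bounding rectangle; the paper instead views every substrip as $\lambda/(\lambda\setminus\theta_i\#\theta_j)$ inside the \emph{same} ambient $\lambda$ with the same word ${\bf z}^{\lambda}$, so all entries share the single denominator $s^{(d)}_{\lambda}({\bf x}\mid{\bf z}^{\lambda})$, which factors out of the determinant trivially. Second, if you want a route to Theorem~\ref{thm:IkNa} that genuinely bootstraps from what this paper proves, the known option is not the LP determinant but the multivariate Pieri--Chevalley recurrence: extend Lemma~\ref{lem:ChevalleyStrips} from border strips to all skew shapes (with the modified linear factor), as Ikeda--Naruse do algebraically and Konvalinka \cite{Ko} does combinatorially via a bumping algorithm on flagged tableaux; induction on $|\lambda/\mu|$ then yields \eqref{eq:IkNa} directly, with no determinantal identity needed.
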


In Lemma~\ref{lem:key_border_strips} we proved combinatorially this result
for border strips. We can use the approach from the previous subsections in reverse to obtain a Lascoux--Pragacz
type identity for evaluations of factorial Schur functions.

\begin{corollary} \label{cor:LP4evfactorialSchurs}
If $(\theta_1,\ldots,\theta_k)$ is a Lascoux--Pragacz decomposition of
$\lambda/\mu\subset d\times (n-d)$, then
\begin{equation} \label{eq:LP4evfactorialSchurs}
s_{\mu}^{(d)}({\bf x} \mid {\bf
    z}^{\lambda}) \cdot {s_{\lambda}^{(d)}({\bf x} \mid {\bf
    z}^{\lambda})}^{k-1}
 = \det \left[ s^{(d)}_{\lambda \setminus\, \theta_i
    \# \theta_j}({\bf x} \mid {\bf
    z}^{\lambda}) \right]_{i,j=1}^k
\end{equation}
where $\lambda \setminus\, \theta_i\#\theta_j$ denotes the partition
obtained by removing from $\lambda$ the outer substrip $\theta_i\#\theta_j$.
\end{corollary}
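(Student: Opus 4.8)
The plan is to divide the claimed identity by $\bigl(s_{\lambda}^{(d)}({\bf x}\mid {\bf z}^{\lambda})\bigr)^{k}$ and reduce everything to the multivariate path function $F$, where the Lindstr\"om--Gessel--Viennot lemma takes over. Pulling a factor $1/s_{\lambda}^{(d)}({\bf x}\mid {\bf z}^{\lambda})$ out of each of the $k$ rows of the matrix, \eqref{eq:LP4evfactorialSchurs} is equivalent to
\[
\frac{s_{\mu}^{(d)}({\bf x}\mid {\bf z}^{\lambda})}{s_{\lambda}^{(d)}({\bf x}\mid {\bf z}^{\lambda})}
\,=\,
\det\left[\,\frac{s^{(d)}_{\lambda \setminus \theta_i\#\theta_j}({\bf x}\mid {\bf z}^{\lambda})}{s_{\lambda}^{(d)}({\bf x}\mid {\bf z}^{\lambda})}\,\right]_{i,j=1}^{k}.
\]
The crucial point is that the auxiliary word ${\bf z}^{\lambda}$ is attached to the \emph{ambient} partition $\lambda$ and does not change when we delete an outer substrip from $\lambda$. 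Hence each quotient on the right is again a ratio of factorial Schur functions of the form covered by the multivariate NHLF: by Theorem~\ref{thm:IkNa} (proved combinatorially for border strips in Lemma~\ref{lem:key_border_strips}), applied to the border strip $\lambda/(\lambda\setminus\theta_i\#\theta_j)=\theta_i\#\theta_j$, we have $s^{(d)}_{\lambda\setminus\theta_i\#\theta_j}({\bf x}\mid {\bf z}^{\lambda})/s_{\lambda}^{(d)}({\bf x}\mid {\bf z}^{\lambda})=F_{\theta_i\#\theta_j}({\bf x}\mid {\bf y})$, and likewise the left-hand side equals $F_{\lambda/\mu}({\bf x}\mid {\bf y})$.

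Thus the corollary reduces to the path identity
\[
F_{\lambda/\mu}({\bf x}\mid {\bf y})\,=\,\det\bigl[\,F_{\theta_i\#\theta_j}({\bf x}\mid {\bf y})\,\bigr]_{i,j=1}^{k},
\]
which I would prove exactly as in the proofs of the NHLF and the first $q$-NHLF in Section~\ref{sec:all_shapes}, but run in reverse. Let $(\ga^*_1,\ldots,\ga^*_k)$ be the Kreiman decomposition of $\lambda/\mu$, with $\ga^*_i\colon (a_i,b_i)\to (c_i,d_i)$, and set the weights $y_{r,s}=1/(x_r-y_s)$ in Lemma~\ref{lem:LGVlambda}. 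Since the substrip $\theta_i\#\theta_j=\phi[p(\theta_j),q(\theta_i)]$ lies along the outer border strip of $\lambda$ and, by the content correspondence of Lemma~\ref{lem:contentsLPvsK}, its endpoints have the same contents as $(a_j,b_j)$ and $(c_i,d_i)$, the single-path generating function $h\bigl((a_j,b_j)\to (c_i,d_i),{\bf y}\bigr)$ equals $F_{\theta_i\#\theta_j}({\bf x}\mid {\bf y})$. The Lindstr\"om--Gessel--Viennot lemma then identifies the determinant with the weighted sum over non-intersecting tuples $\Gamma\in\NIP(\lambda/\mu)$, and by Proposition~\ref{prop:NIP2ED} this is precisely $F_{\lambda/\mu}({\bf x}\mid {\bf y})$ as defined by the sum over $\NIP(\lambda/\mu)$, equivalently over complements of excited diagrams. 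This is the same mechanism already used to prove Theorem~\ref{thm:num_excited_HG}.

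The main thing to get right is the bookkeeping at the boundary, i.e.\ the degenerate entries. When $\theta_i\#\theta_j=\varnothing$ the corresponding quotient is $s_{\lambda}^{(d)}/s_{\lambda}^{(d)}=1=F_{\varnothing}$, matching the single empty path of weight $1$; when $\phi[p(\theta_j),q(\theta_i)]$ is undefined there is no monotone lattice path between the prescribed endpoints, so $h=0$ and the convention $s^{(d)}_{\lambda\setminus\theta_i\#\theta_j}=0$ is forced. I expect the only genuine obstacle to be checking that the endpoint-content matching of Lemma~\ref{lem:contentsLPvsK} makes $h\bigl((a_j,b_j)\to (c_i,d_i),{\bf y}\bigr)$ coincide with $F_{\theta_i\#\theta_j}$ in all cases, including these degenerate ones; everything else is a formal rewriting once the weights $y_{r,s}=1/(x_r-y_s)$ are in place.
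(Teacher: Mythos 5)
Your proposal is correct and is essentially the paper's own proof: the paper likewise invokes Theorem~\ref{thm:IkNa}, applies the weighted Lindstr\"om--Gessel--Viennot lemma (Lemma~\ref{lem:LGVlambda}) with $y_{r,s}=1/(x_r-y_s)$ to identify $F_{\lambda/\mu}({\bf x}\mid{\bf y})$ with $\det\bigl[F_{\theta_i\#\theta_j}({\bf x}\mid{\bf y})\bigr]_{i,j=1}^k$ via the Kreiman decomposition and the endpoint-content matching of Lemma~\ref{lem:contentsLPvsK} (the same mechanism as Theorem~\ref{thm:num_excited_HG}), then rewrites each entry by Lemma~\ref{lem:key_border_strips} as $s^{(d)}_{\lambda\setminus\theta_i\#\theta_j}({\bf x}\mid{\bf z}^{\lambda})/s^{(d)}_{\lambda}({\bf x}\mid{\bf z}^{\lambda})$ and factors the denominators out of the matrix. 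You merely run the same chain in the opposite direction (dividing by $\bigl(s^{(d)}_{\lambda}({\bf x}\mid{\bf z}^{\lambda})\bigr)^{k}$ first), and your extra bookkeeping for the degenerate entries is consistent with the paper's conventions.
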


\begin{proof}
By the weighted Lindstr\"om-Gessel-Viennot lemma (Lemma~\ref{lem:LGVlambda}) with
$y_{r,s} = 1/(x_r-y_s)$, we rewrite the RHS of
\eqref{eq:IkNa} as a determinant.
\[
\frac{s_{\mu}^{(d)}({\bf x} \mid {\bf
    z}^{\lambda})}{s_{\lambda}^{(d)}({\bf x} \mid {\bf
    z}^{\lambda})}  = \det\left[\sum_{\substack{\ga:  (a_j,b_j) \to (c_i,d_i),\\ \ga \subseteq \lambda}}
\prod_{(r,s) \in \ga} \frac{1}{x_r-y_s} \right]_{i,j=1}^k= \det \left[
F_{\theta_i\#\theta_j}({\bf x} \mid {\bf y}) \right]_{i,j=1}^k.
\]
Finally, by Lemma~\ref{lem:key_border_strips} each entry of the matrix
can be written as the quotient of $s_{\lambda\setminus\, \theta_i\#\theta_j}^{(d)}({\bf x}\mid
{\bf z_{\lambda}})$ and $s_{\lambda}^{(d)}({\bf x} \mid {\bf
  z_{\lambda}})$. By factoring the denominators out of the matrix we
obtain the result.
\end{proof}

Calculations suggest that an analogue of \eqref{eq:LP4evfactorialSchurs}
holds for general factorial Schur functions $s_{\mu}^{(d)}({\bf x}\mid
{\bf y})$ and not just for the evaluation ${\bf y}={\bf z^{\lambda}}$.

\begin{conjecture} \label{conj:LP4factorialSchurs}
If $(\theta_1,\ldots,\theta_k)$ is a Lascoux--Pragacz decomposition of
$\lambda/\mu\subset d\times (n-d)$, then
\begin{equation} \label{eq:LP4factorialSchurs}
s_{\mu}^{(d)}({\bf x} \mid {\bf y}) \cdot {s_{\lambda}^{(d)}({\bf x}
  \mid {\bf y})}^{k-1}
 = \det \left[ s^{(d)}_{\lambda \setminus\, \theta_i
    \# \theta_j}({\bf x} \mid {\bf
    y}) \right]_{i,j=1}^k
\end{equation}
where $\lambda \setminus\, \theta_i\#\theta_j$ denotes the partition
obtained by removing from $\lambda$ the outer substrip $\theta_i\#\theta_j$.
\end{conjecture}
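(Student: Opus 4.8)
The plan is to recast \eqref{eq:LP4factorialSchurs} as a \emph{formal} identity among maximal minors of a single matrix and to prove it at that level, so that the ordinary and factorial cases become simultaneous specializations. Write $\Delta:=\prod_{1\le i<j\le d}(x_i-x_j)$ and $v_m(x_i):=\prod_{\ell=1}^m(x_i-y_\ell)$, with $v_0=1$, and let $V=\bigl[v_m(x_i)\bigr]_{1\le i\le d,\ m\ge 0}$ be the resulting $d\times\infty$ matrix. For a partition $\nu$ with at most $d$ parts set $\beta(\nu):=(\nu_1+d-1,\nu_2+d-2,\ldots,\nu_d)$ and let $[\beta(\nu)]$ denote the maximal minor of $V$ on the columns indexed by $\beta(\nu)$. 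Directly from the definition of $s_\nu^{(d)}({\bf x}\mid{\bf y})$ one has $s_\nu^{(d)}({\bf x}\mid{\bf y})=[\beta(\nu)]/\Delta$, and clearing the common factor $\Delta^k$ turns \eqref{eq:LP4factorialSchurs} into
\[
[\beta(\mu)]\cdot[\beta(\lambda)]^{\,k-1}\,=\,\det\bigl[\,[\beta(\lambda\setminus\theta_i\#\theta_j)]\,\bigr]_{i,j=1}^k.
\]
Since each substrip $\theta_i\#\theta_j=\phi[p(\theta_j),q(\theta_i)]$ is a single border strip of the cutting strip $\tau$, removing it from $\lambda$ is a single rim-hook removal, which in the bead encoding is a single move of the column set $\beta(\lambda)$; the first step is to record these moves explicitly from the contents $p(\theta_j),q(\theta_i)$ and to check that $\beta(\mu)$ is recovered as the innermost configuration. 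This exhibits the conjecture as a polynomial identity in the \emph{entries} of $V$ alone.

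The main thrust is then to prove it as a formal minor identity, i.e.\ for a matrix $V$ of independent indeterminates. I would run the Lindstr\"om--Gessel--Viennot / Hamel--Goulden argument in this bialternant picture: interpret $[\beta(\nu)]$ as the signed generating function of $d$-tuples of non-intersecting paths in the planar network whose horizontal edges carry the ladder weights coming from $v_m=(x_i-y_m)v_{m-1}$, with sink data dictated by $\beta(\nu)$, and then adapt the outer-decomposition bijection of Hamel--Goulden \cite{HGoul} so that the non-intersecting families with endpoints determined by the decomposition produce exactly the determinant on the right. Because LGV is insensitive to the actual edge weights, the argument would be purely combinatorial and would re-prove the ordinary Schur case ($y_\ell\equiv 0$) at the same time. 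This is the same mechanism used for Corollary~\ref{cor:LP4evfactorialSchurs}, except that there the ratio $s_\mu^{(d)}/s_\lambda^{(d)}$ could be read off as the path function $F_{\lambda/\mu}$ only after the triangular specialization ${\bf y}={\bf z}^\lambda$ collapses $s_\lambda^{(d)}$ to the product $D(\lambda)$; passing to minors of $V$ removes the need for that collapse, which is precisely what should let general ${\bf y}$ through. (One could instead try the factorial Jacobi--Trudi identity with its column-dependent parameter shifts together with a twisted Hamel--Goulden, but the shifts break the ``reading-word'' consistency that Hamel--Goulden assumes, which is why the bialternant route looks cleaner.)

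I would \emph{not} attempt to deduce the conjecture from Corollary~\ref{cor:LP4evfactorialSchurs} by a density argument. The corollary asserts the identity only after the substitution ${\bf y}={\bf z}^\lambda$, which forces $y_{\lambda_i+d-i+1}=x_i$ for all $i$; as $({\bf x},{\bf y})$ vary this is a subvariety of codimension $d$ in parameter space, and both sides of \eqref{eq:LP4factorialSchurs} are polynomials of large degree, so agreement on this locus is far from enough to force equality. This is exactly why the identity resists the combinatorial proof of the special case and must be attacked formally.

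The step I expect to be the genuine obstacle is the combinatorial core of the minor identity: showing that for an \emph{arbitrary} Lascoux--Pragacz decomposition $(\theta_1,\ldots,\theta_k)$ the column-index sets $\beta(\lambda\setminus\theta_i\#\theta_j)$ assemble into a valid (multi-term) Pl\"ucker / outer-decomposition pattern, and that the non-intersecting-path count matches the ``complement'' normalization $[\beta(\mu)]\,[\beta(\lambda)]^{k-1}$ rather than the skew form $s_{\lambda/\mu}=\det[s_{\theta_i\#\theta_j}]$ of Theorem~\ref{thm:LascouxPragacz}. In the proof of the special case this matching is supplied for free by Lemma~\ref{lem:key_border_strips} (i.e.\ by Theorem~\ref{thm:IkNa}), which identifies $s^{(d)}_{\lambda\setminus\theta_i\#\theta_j}/s^{(d)}_\lambda$ with $F_{\theta_i\#\theta_j}$; for general ${\bf y}$ no such identification is available, so one must establish the bead-move/path correspondence directly and uniformly over all decompositions. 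Verifying that this correspondence indeed holds for every shape is, I believe, the crux that keeps \eqref{eq:LP4factorialSchurs} a conjecture.
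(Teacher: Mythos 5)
You should first be aware that the paper offers no proof of this statement: it is stated as Conjecture~\ref{conj:LP4factorialSchurs}, supported only by the remark that ``calculations suggest'' it, by its specialization ${\bf y}={\bf 0}$ to the Schur identity \eqref{eq:LPratioschurs}, and by the proved evaluation ${\bf y}={\bf z}^{\lambda}$ in Corollary~\ref{cor:LP4evfactorialSchurs}. So there is no paper argument to match, and your text must stand on its own --- which it does not. The preparatory reductions you make are correct: from the definition in Section~\ref{sec:factorial_schurs} the $(i,j)$ entry of the numerator determinant is $v_{\nu_j+d-j}(x_i)$, so indeed $s_{\nu}^{(d)}({\bf x}\mid{\bf y})=[\beta(\nu)]/\Delta$ with $\beta(\nu)=(\nu_1+d-1,\ldots,\nu_d)$; clearing $\Delta^k$ is legitimate; removal of the border strip $\theta_i\#\theta_j$ is a single rim-hook removal and hence a single move in the $\beta$-encoding; and you are right that Corollary~\ref{cor:LP4evfactorialSchurs} cannot be upgraded by density, since ${\bf y}={\bf z}^{\lambda}$ cuts out a positive-codimension locus.

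The genuine gap is that your ``main thrust'' is an announced plan whose central step you yourself concede is open: you write that showing the column sets $\beta(\lambda\setminus\theta_i\#\theta_j)$ assemble, for an \emph{arbitrary} Lascoux--Pragacz decomposition, into a valid outer-decomposition/Pl\"ucker pattern with the normalization $[\beta(\mu)]\,[\beta(\lambda)]^{k-1}$ is ``the crux that keeps \eqref{eq:LP4factorialSchurs} a conjecture.'' A proof whose combinatorial core is acknowledged to be unverified is not a proof. Worse, the specific mechanism you propose is doubtful even as a plan: the bialternant matrix $V$ is not a useful Lindstr\"om--Gessel--Viennot path matrix, because each entry $v_m(x_i)=\prod_{\ell\le m}(x_i-y_\ell)$ is a single product, so every source--sink pair carries a unique path and the LGV involution has nothing to act on. The Hamel--Goulden machinery genuinely lives on the Jacobi--Trudi side, where entries are sums over many lattice paths, and it is exactly there that the column-dependent parameter shifts of the factorial Jacobi--Trudi identity destroy the translation-invariance of edge weights that the Hamel--Goulden cut-and-paste requires --- the difficulty you note parenthetically and then set aside. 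If instead the identity were a formal consequence of Pl\"ucker relations for an arbitrary $d\times\infty$ matrix, you would have to exhibit it as such (already for $k=2$ one must match the asserted quadratic relation among the six minors with an actual Pl\"ucker relation, and for general $k$ produce the multi-term pattern); this is precisely what is missing. As written, your proposal correctly identifies why the statement is hard, but leaves it exactly where the paper does: open.
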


Since factorial Schur functions reduce to Schur functions when ${\bf
  y}={\bf 0}$, this conjecture implies an identity of Schur functions.

\begin{proposition}
Conjecture~\ref{conj:LP4factorialSchurs} implies the Schur function
identity
\begin{equation} \label{eq:LPratioschurs}
s_{\mu} \cdot s_{\lambda}^{k-1}
 = \det \left[ s_{\lambda \setminus\, \theta_i\# \theta_j} \right]_{i,j=1}^k,
\end{equation}
where $(\theta_1,\ldots,\theta_k)$ is a Lascoux--Pragacz decomposition of
$\lambda/\mu$.
\end{proposition}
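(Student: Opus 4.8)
The plan is to specialize Conjecture~\ref{conj:LP4factorialSchurs} at ${\bf y}={\bf 0}$ and then lift the resulting polynomial identity to the ring of symmetric functions. First I would recall the bialternant formula: for any partition $\nu$ with at most $d$ parts,
\[
s_{\nu}^{(d)}({\bf x}\mid {\bf 0}) \. = \. \frac{\det\bigl[x_i^{\,\nu_j+d-j}\bigr]_{i,j=1}^d}{\prod_{1\le i<j\le d}(x_i-x_j)} \. = \. s_{\nu}(x_1,\ldots,x_d),
\]
so that setting every parameter equal to $0$ collapses each factorial Schur function in \eqref{eq:LP4factorialSchurs} to the ordinary Schur polynomial in the variables $x_1,\ldots,x_d$. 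Applying this to the two factors on the left and to every entry of the matrix on the right, the identity of Conjecture~\ref{conj:LP4factorialSchurs} becomes
\[
s_{\mu}(x_1,\ldots,x_d)\cdot s_{\lambda}(x_1,\ldots,x_d)^{k-1} \. = \. \det\bigl[s_{\lambda\setminus\,\theta_i\#\theta_j}(x_1,\ldots,x_d)\bigr]_{i,j=1}^k.
\]

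Next I would argue that this polynomial identity, which holds for every $d$ with $\lambda/\mu\subseteq d\times(n-d)$, promotes to the claimed symmetric-function identity \eqref{eq:LPratioschurs}. The Lascoux--Pragacz decomposition $(\theta_1,\ldots,\theta_k)$ and the shapes $\lambda\setminus\theta_i\#\theta_j$ are intrinsic to $\lambda/\mu$ and do not depend on the number of variables. Moreover both sides are homogeneous of the same degree: using $|\theta_i\#\theta_j| = q(\theta_i)-p(\theta_j)+1$, the exponent $\sum_i|\theta_i\#\theta_{\sigma(i)}|$ in each term of the determinant expansion is independent of the permutation $\sigma$, and the diagonal term matches the degree $|\mu|+(k-1)|\lambda|$ of the left-hand side. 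Hence letting $d$ grow (equivalently, passing to the limit in the number of variables) upgrades the equality of Schur polynomials to the equality \eqref{eq:LPratioschurs} of the associated Schur symmetric functions.

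I expect no genuine obstacle: the entire content of the proposition is the observation that $s_{\nu}^{(d)}(\,\cdot\mid {\bf 0})=s_{\nu}$. The only points needing minor care are bookkeeping. One should confirm that each shape $\lambda\setminus\theta_i\#\theta_j$ occurring in the determinant is a genuine partition with at most $d$ parts, so that the specialization of its factorial Schur function is defined and equals the ordinary Schur polynomial; and one should check that the boundary conventions match, namely that the undefined substrips (those for which $\phi[p,q]$ is undefined) and the empty substrip specialize consistently with the conventions $s_{\varnothing}=1$ and $s_{\phi[p,q]}=0$ used in \eqref{eq:LPid}. Once these conventions are aligned, the specialization is immediate and the symmetric-function identity follows at once.
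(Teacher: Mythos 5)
Your proposal is correct and takes essentially the same route as the paper, whose entire proof is the observation that setting ${\bf y}={\bf 0}$ reduces each factorial Schur function in \eqref{eq:LP4factorialSchurs} to the ordinary Schur polynomial in $x_1,\ldots,x_d$. Your extra bookkeeping (the bialternant formula, the degree count via $|\theta_i\#\theta_j| = q(\theta_i)-p(\theta_j)+1$, and the lift from $d$ variables to symmetric functions by letting $d$ grow) merely spells out standard details the paper leaves implicit.
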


\begin{example}
For the Lascoux--Pragacz decomposition in Example~\ref{ex:LP},
\eqref{eq:LPratioschurs} gives the following identity that indeed
holds:
\[
s_{(2,1)}s_{(5,4^2,1)} = s_{(3^2)}s_{(5,3,2,1)}-s_{(3^2,2,1)}s_{(5,3)}.
\]
\end{example}

\begin{remark}
Note that instead of reversing the approach in
Section~\ref{sec:all_shapes}, having a combinatorial proof of the identity in Corollary~\ref{cor:LP4evfactorialSchurs} would
show that the multivariate NHLF (Theorem~\ref{thm:IkNa}) for skew shapes is
equivalent to the multivariate NHLF for border strips
(Lemma~\ref{lem:key_border_strips}).
\end{remark}

\bigskip
%----------------------------------------------------------------

\section{Excited diagrams and SSYT of border strips and thick strips}  \label{sec:enum_strips_SSYT}

\nin
In the next two sections we focus on the case of the  {\em thick strip}
$\delta_{n+2k}/\delta_n$ where $\delta_n$ denotes the staircase shape
$(n-1,n-2,\ldots,2,1)$. We study the excited diagrams
$\ED(\delta_{n+2k}/\delta_n)$ using the results from Section~\ref{sec:excited_paths} and the number of SYT of this shape combining
the NHLF, its SSYT $q$-analogue (Theorem~\ref{thm:skewSSYT}) and the
Lascoux--Pragacz identity.

\smallskip

\subsection{Excited diagrams and Catalan numbers} \label{ss:excited_fandyck}
%
% Let $C_n:=\frac{1}{n+1}\binom{n}{2}$ denote the $n$-th Catalan number
% which counts Dyck paths: paths from $(0,0)$ to $(2n,0)$ with steps
% $(1,1)$ and $(1,-1)$ that never go below the $x$-axis.
%

We start by enumerating the excited diagrams of the shape $\delta_{n+2k}/\delta_n$.

\begin{corollary} \label{cor:excitedcat}
We have: \. $\sED(\delta_{n+2}/\delta_n) \ts = \ts C_{n}$, \,
\. $\sED(\delta_{n+4}/\delta_n) \ts = \ts C_n\ts C_{n+2} -
C_{n+1}^2$, and in general
\begin{equation} \label{eq:excitedcat}
\sED(\delta_{n+2k}/\delta_n) \. =  \, \det [C_{n-2+i+j}]^{k}_{i,j=1}
\.=\, \prod_{1\leq i < j \leq n} \frac{2k+i+j-1}{i+j-1}\..
\end{equation}
\end{corollary}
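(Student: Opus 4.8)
The plan is to evaluate $\sED(\delta_{n+2k}/\delta_n)$ through the Lascoux--Pragacz/Kreiman determinant of Theorem~\ref{thm:num_excited_HG}, to recognize each matrix entry as a Catalan number via the reflection principle, and finally to evaluate the resulting Hankel determinant of Catalan numbers. Throughout I write $N=n+2k$, so that $[\delta_N/\delta_n]$ is the set of cells $(i,j)$ with $n<i+j\le N$.

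First I would pin down the decomposition. Peeling off outer border strips gives $\theta_1=\delta_N/\delta_{N-2}$, then the outer strip of the remaining $\delta_{N-2}/\delta_n$, and so on; this shows there are exactly $k$ strips $\theta_1,\dots,\theta_k$, and by Lemma~\ref{lem:contentsLPvsK} the starting and ending points of $\theta_i$ have contents $2i-N$ and $N-2i$. In the Kreiman decomposition these correspond to paths $\ga_i^*$ beginning at the southern box of a column and ending at the eastern box of a row; in the staircase $\delta_N$ such extremal cells lie on the hypotenuse $\{i+j=N\}$, so matching the contents forces $\ga_i^*$ to run between $(N-i,\,i)$ and $(i,\,N-i)$, both endpoints on $\{i+j=N\}$. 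This boundary placement is the crucial structural input.

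Next I would apply Theorem~\ref{thm:num_excited_HG}, whose proof (via Corollary~\ref{cor:excited_eqs_NIP} and Lemma~\ref{lem:LGVlambda} with unit weights) identifies the $(i,j)$ entry of the determinant as the number of lattice paths inside $[\delta_N]$ joining the start $(N-j,\,j)$ of $\ga_j^*$ to the end $(i,\,N-i)$ of $\ga_i^*$. Such a path has $N-i-j$ up-steps and $N-i-j$ right-steps; reading $i+j$ as a height that drops by $1$ on an up-step and rises by $1$ on a right-step, the walk starts and ends at height $N$ and, to remain inside $[\delta_N]$, must never exceed $N$. These are exactly the Dyck paths of semilength $N-i-j$, so the reflection principle gives $\sED(\theta_i\#\theta_j)=C_{N-i-j}=C_{n+2k-i-j}$. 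Reversing the order of both the rows and the columns (a two-sided permutation, hence determinant-preserving) turns $\det[C_{n+2k-i-j}]_{i,j=1}^k$ into $\det[C_{n-2+i+j}]_{i,j=1}^k$, which is the first claimed equality; the cases $k=1,2$ recover the two displayed special values.

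Finally, for the product I would evaluate this Catalan--Hankel determinant. Shifting indices, $\det[C_{n-2+i+j}]_{i,j=1}^{k}=\det[C_{n+i+j}]_{i,j=0}^{k-1}$, and the classical evaluation $\det[C_{r+i+j}]_{i,j=0}^{k-1}=\prod_{1\le i\le j\le r-1}\frac{2k+i+j}{i+j}$ with $r=n$ gives $\prod_{1\le i\le j\le n-1}\frac{2k+i+j}{i+j}$; the substitution $(i,j)\mapsto(i,j+1)$ rewrites this as $\prod_{1\le i<j\le n}\frac{2k+i+j-1}{i+j-1}$, as desired. That determinant evaluation is standard and can itself be obtained by applying Lemma~\ref{lem:LGVlambda} to families of non-intersecting Dyck paths and invoking MacMahon's box formula. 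The step I expect to be the main obstacle is locating the Kreiman endpoints and verifying that both of them sit on the outer antidiagonal $\{i+j=N\}$: this is precisely what makes the height walk begin and end at its maximal value and hence be a genuine Dyck path, and an entry whose endpoints fell on an inner antidiagonal would instead count a ``fatter'' strip with a non-Catalan value. Once the entries are identified as $C_{n+2k-i-j}$, both the determinant reversal and the Hankel evaluation are routine.
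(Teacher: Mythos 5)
Your proof is correct, and for the first equality it follows essentially the paper's own route, while for the second it takes a genuinely different one. For $\det[C_{n-2+i+j}]$ the paper likewise applies Theorem~\ref{thm:num_excited_HG} to the Lascoux--Pragacz decomposition of $\delta_{n+2k}/\delta_n$ into $k$ zigzags, but evaluates the entries by recognizing each $\theta_i\#\theta_j$ as a zigzag $\delta_{m+2}/\delta_m$ and quoting the $k=1$ case, which it proves by rotating the single lattice path $45^{\circ}$ into a Dyck path; your direct evaluation of the Lindstr\"om--Gessel--Viennot entries---pinning the Kreiman endpoints to the antidiagonal $\{a+b=N\}$ via Lemma~\ref{lem:contentsLPvsK} and reading $a+b$ as a height bounded by $N$---is the same idea executed entrywise, and your bookkeeping is in fact the accurate version: the printed proof contains small typos (the endpoints of $\theta_i$ should be $(n+2k-2i+1,1)$ and $(1,n+2k-2i+1)$, and the zigzag parameter should be $m=n+2k-i-j$, exactly as you compute), though its conclusion $\det[C_{n+i+j-2}]$ agrees with yours. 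For the product formula the proofs diverge: the paper never evaluates the Hankel determinant, but instead uses Proposition~\ref{prop:excited2flagged} to identify excited diagrams with flagged tableaux of shape $\delta_n$ with flag $(k+1,\ldots,k+n-1)$, equivalently reverse plane partitions of shape $\delta_n$ with entries at most $k$, and quotes Proctor's product formula \cite{Pr2}; you instead quote the classical Catalan--Hankel evaluation, i.e.\ de Sainte-Catherine--Viennot \cite{StCath-Viennot}, which the paper itself cites as known right after the corollary. Both routes consume one nontrivial external product formula and are comparable in depth; the paper's version has the bonus of re-deriving the Hankel evaluation as a byproduct (determinant and product both shown equal to $\sED(\delta_{n+2k}/\delta_n)$), while yours bypasses the flagged-tableau machinery entirely. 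One inaccuracy in a parenthetical aside: the Hankel evaluation does not follow from Lemma~\ref{lem:LGVlambda} plus MacMahon's box formula---LGV only reduces it to counting $k$-fans of noncrossing Dyck paths, and that count is precisely the content of \cite{StCath-Viennot} (equivalently Proctor's staircase theorem \cite{Pr2}), not an instance of MacMahon's box formula, which enters only in the shifted type~$B$ analogue via \cite{Pr1}. Since you invoke the evaluation itself as standard, this mis-attribution does not affect the correctness of your argument.
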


\begin{proof}
We start with the case $k=1$ for the zigzag border strip $\delta_{n+2}/\delta_n$. By
Proposition~\ref{prop:NIP2ED} the complement of excited diagrams of
$\delta_{n+2}/\delta_n$ are paths $\ga:(n+1,1)\to (1,n+1)$,
$\ga\subseteq \delta_{n+2}$. By rotating these paths $45^{\circ}$
clockwise one obtain the Dyck paths in $\Dyck(n)$ as illustrated in
Figure~\ref{fig:excited2cat}. Thus $\sED(\delta_{n+2}/\delta_n)=C_n$.

For general $k$, the shape $\delta_{n+2k}/\delta_n$ has a
Lascoux--Pragacz decomposition into $k$ maximal border strips
$(\theta_1,\ldots,\theta_k)$ where $\theta_i$ is the zigzag strip from
$(n+2k-2i-1,1)$ to $(1,n+2k-2i-1)$ (see
Figure~\ref{fig:excited_cat_hooks}: Left). Then by
Theorem~\ref{thm:num_excited_HG} we have
\[
\sED(\delta_{n+2k}/\delta_n) =  \det {\big [}\, \sED({\theta_i \# \theta_j})\, {\big ]}_{i,j=1}^k.
\]
The cutting strip $\tau$ of the decomposition of $\delta_{n+2k}/\delta_n$ is
the zigzag $\theta_1$. The strips $\theta_i \# \theta_j$ in the
determinant, being substrips of $\theta_1$, are themselves zigzags.
The strip $\theta_i \# \theta_j$ in $\theta_1$ consists of the
cells with content from $2+2j-n-2k$ to $n+2k-2i-2$. So the strip is a
zigzag $\delta_{m+2}/\delta_m$ of size $2m+1$ where $m=n+2k+i+j+2$. Since we
already know that the shape $\delta_{m+2}/\delta_m$ has $C_m$ excited
diagrams then the above determinant becomes
\[
\sED(\delta_{n+2k}/\delta_n) = \det {\big [}\, C_{n+2k-i-j-2}\, {\big
                               ]}_{i,j=1}^k=\det {\big [}\,
                               C_{n+i+j-2}\, {\big ]}_{i,j=1}^k,
\]
where the last equality is obtained by relabeling the matrix. This
proves the first equality of \eqref{eq:excitedcat}.

To prove the second equality we use the characterization of excited
diagrams as flagged tableaux. By Proposition~\ref{prop:excited2flagged}, excited diagrams
in $\ED(\delta_{n+2k}/\delta_n)$ are in bijection with flagged
tableaux of shape $\delta_n$ with flag $(k+1,k+2,\ldots,k+n-1)$. By
subtracting $i$ to all entries in row $i$, these tableaux are
equivalent to reverse plane partitions of shape $\delta_n$ with
entries $\leq k$ which are counted by the given product
formula due to Proctor \cite{Pr2}.
\end{proof}

\begin{remark}
Similarly excited
diagrams in $\ED(\delta_{n+2k+1}/\delta_n)$ are in
correspondence with flagged tableaux of shape $\delta_n$ with flag
$(k+1,k+2,\ldots,k+n-1)$, thus
$|\ED(\delta_{n+2k}/\delta_n)|  = |\ED(\delta_{n+2k+1}/\delta_n)|$. In
what follows the formulas for the even case $\delta_{n+2k}$ are
simpler than those of the odd case so we omit the latter.
\end{remark}

\begin{figure}[h]
\begin{center}
\includegraphics[scale=0.8]{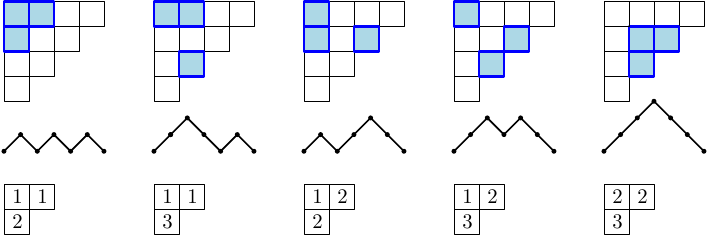}
\caption{Correspondence between excited diagrams in
  $\delta_5/\delta_3$, Dyck paths in $\Dyck(3)$ and flagged tableaux of shape $\delta_3$ with flag
  $(2,3)$.}
\label{fig:excited2cat}
\end{center}
\end{figure}

From the first determinantal formula for $\sED(\lambda/\mu)$
(Proposition~\ref{prop:GV}) we easily obtain the following curious
determinantal identity (see also~$\S$\ref{ss:finrem-cat}).

\begin{corollary}  \label{cor:cat-det}
We have:
\[
 \det\left[ \binom{n-i+j}{i} \right]_{i,j=1}^{n-1} \, = \, C_n\..
\]
\end{corollary}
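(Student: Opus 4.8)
The plan is to recognize the claimed determinant as precisely the flagged-tableau determinant of Proposition~\ref{prop:GV} applied to the zigzag shape $\delta_{n+2}/\delta_n$, whose number of excited diagrams is already computed to be $C_n$ in Corollary~\ref{cor:excitedcat}. So the identity is nothing more than the equality of two expressions for the same quantity $\sED(\delta_{n+2}/\delta_n)$.

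First I would set $\lambda=\delta_{n+2}$ and $\mu=\delta_n$, so that $\mu_i=n-i$ and $\ell(\mu)=n-1$. The essential input is the flag $(\ssf^{(\lambda/\mu)}_1,\ldots,\ssf^{(\lambda/\mu)}_{n-1})$. Following the diagonal (in the excited-move direction $(1,1)$) through the last cell $(i,n-i)$ of row $i$ in $\mu$, the cells $(i+t,\,n-i+t)$ stay inside $\delta_{n+2}$ exactly for $t\leq 1$, and the cell at $t=1$, namely $(i+1,\,n+1-i)$, is the rightmost box of row $i+1$; hence the diagonal meets the boundary of $\lambda$ in row $i+1$, giving $\ssf^{(\lambda/\mu)}_i=i+1$. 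This is the $k=1$ instance of the flag $(k+1,\ldots,k+n-1)$ recorded in the remark after Corollary~\ref{cor:excitedcat}.

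Next I would substitute $\ssf^{(\lambda/\mu)}_i=i+1$ and $\mu_i=n-i$ into the determinant of Proposition~\ref{prop:GV}. The lower index becomes $\ssf^{(\lambda/\mu)}_i-1=i$, while the upper index simplifies as $\ssf^{(\lambda/\mu)}_i+\mu_i-i+j-1=(i+1)+(n-i)-i+j-1=n-i+j$. Thus each matrix entry is exactly $\binom{n-i+j}{i}$, and Proposition~\ref{prop:GV} yields
\[
\sED(\delta_{n+2}/\delta_n)=\det\left[\binom{n-i+j}{i}\right]_{i,j=1}^{n-1}.
\]
Finally, Corollary~\ref{cor:excitedcat} with $k=1$ gives $\sED(\delta_{n+2}/\delta_n)=C_n$, and comparing the two evaluations produces the stated identity.

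Since every step is a direct substitution into a formula proved earlier, I expect no serious obstacle. The only point requiring care is the determination of the flag $\ssf^{(\lambda/\mu)}_i$, that is, verifying that the content diagonal emanating from $(i,n-i)$ exits $\delta_{n+2}$ precisely in row $i+1$; once this is in hand, the remaining work is purely bookkeeping in the binomial indices.
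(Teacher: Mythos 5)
Your proposal is correct and is essentially the paper's own proof: the paper likewise applies Proposition~\ref{prop:GV} to $\delta_{n+2}/\delta_n$ with the flag vector $(2,3,\ldots,n)$ and combines the resulting determinant with $\sED(\delta_{n+2}/\delta_n)=C_n$ from Corollary~\ref{cor:excitedcat}. The only difference is that you verify explicitly that the diagonal through $(i,n-i)$ exits $\delta_{n+2}$ in row $i+1$, a detail the paper states without derivation, and your index bookkeeping $\binom{\ssf^{(\lambda/\mu)}_i+\mu_i-i+j-1}{\ssf^{(\lambda/\mu)}_i-1}=\binom{n-i+j}{i}$ is accurate.
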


\begin{proof}
By Corollary~\ref{cor:excitedcat}, we have $|\ED(\delta_{n+2}/\delta_n)|=C_n$.
We apply Proposition~\ref{prop:GV} to the shape $\delta_{n+2}/\delta_n$,
where the vector $\f^{\delta_{n+2}/\delta_n} = (2,3,\ldots,n)$, see~$\S$\ref{ss:excited-flagged}.
This expresses \ts $|\ED(\delta_{n+2}/\delta_n)|$ \ts as the
given determinant, and the identity follows.
\end{proof}

Next we give a description of the excited diagrams of the shape
$\delta_{n+2k}/\delta_n$. Let $\FDyck(k,n)$ be the set of tuples  $(p_1,\ldots,p_k)$ of $k$ noncrossing Dyck paths from $(0,0)$
to $(2n,0)$ (see Figure~\ref{fig:excited_cat_hooks}: Right). We call
such tuples {\em $k$-fans of Dyck paths}. It is known \cite{StCath-Viennot} that fans of
Dyck paths are counted by the determinant
of Catalan numbers and the product formula in \eqref{eq:excitedcat}.

\begin{corollary} \label{cor:excited2fandyckpaths}
We have $\sED(\delta_{n+2k}/\delta_n) =|\FDyck(k,n)|$ and the
complements of the excited diagrams correspond to $k$-fans of paths in $\FDyck(k,n)$.
\end{corollary}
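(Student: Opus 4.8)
The plan is to separate the statement into the numerical equality $\sED(\delta_{n+2k}/\delta_n) = |\FDyck(k,n)|$ and the actual correspondence between complements of excited diagrams and $k$-fans. The first part is immediate: Corollary~\ref{cor:excitedcat} already gives $\sED(\delta_{n+2k}/\delta_n) = \det[C_{n-2+i+j}]_{i,j=1}^k$, and this same Catalan determinant is the classical count of $k$-fans of Dyck paths \cite{StCath-Viennot}, so the two sets have the same cardinality. Thus all the content is in exhibiting the correspondence itself.

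For the correspondence I would start from Proposition~\ref{prop:NIP2ED}: the complement $[\delta_{n+2k}]\setminus D$ of an excited diagram $D$ is precisely the support of a $k$-tuple $(\ga_1,\ldots,\ga_k) \in \NIP(\delta_{n+2k}/\delta_n)$ of non-intersecting lattice paths, and these paths are determined by their support. By Lemma~\ref{lem:contentsLPvsK} and the computation in the proof of Corollary~\ref{cor:excitedcat}, the $i$-th path $\ga_i$ runs between the boundary cells $(n+2k-2i+1,1)$ and $(1,n+2k-2i+1)$. I would then rotate the whole staircase $45^\circ$ clockwise, exactly as in the $k=1$ case recorded in Corollary~\ref{cor:excitedcat} and Figure~\ref{fig:excited2cat}: writing each cell $(r,s)$ in the coordinates $(s-r,\,n+2k-r-s)$, a right step of a path sends the second coordinate down and an up step sends it up, while the first coordinate increases by one at every step. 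Hence each $\ga_i$ becomes a balanced $\pm1$-step path that begins and ends at the same height and stays weakly above the image of the inner boundary, and since the $\ga_i$ are non-intersecting and nested, their images are pairwise noncrossing.

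It then remains to identify this family of noncrossing balanced paths with an element of $\FDyck(k,n)$. Here the image of $\ga_i$ has semilength $n+2(k-i)$, so one obtains noncrossing paths of \emph{different} lengths nested in a triangular region, rather than $k$ Dyck paths of the common semilength $n$; converting between these two standard pictures is the one genuinely nontrivial step. I would invoke the classical Lindstr\"om--Gessel--Viennot identification underlying \cite{StCath-Viennot}, which matches noncrossing families in a triangular array with weakly nested $k$-tuples $p_1\le\cdots\le p_k$ of Dyck paths of semilength $n$: both models are counted by the same determinant $\det[C_{n-2+i+j}]$, and the passage between them is the same vertical-shift/reparametrization used to move between non-intersecting and weakly noncrossing path families. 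Composing the rotation with this identification yields the stated correspondence.

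The main obstacle is precisely this last reconciliation of lengths: the rotation produces the ``centered, nested, unequal-length'' model naturally attached to excited diagrams, whereas $\FDyck(k,n)$ is defined in the ``left-aligned, equal-length'' model, and a clean direct bijection requires the standard fan machinery rather than the naive rotation alone. Everything else --- the enumeration and the rotation --- is routine given Corollary~\ref{cor:excitedcat}, Proposition~\ref{prop:NIP2ED}, and Lemma~\ref{lem:contentsLPvsK}.
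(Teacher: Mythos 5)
The enumerative half of your proposal is sound: Corollary~\ref{cor:excitedcat} together with the de~Sainte-Catherine--Viennot count of fans does prove the numerical equality $\sED(\delta_{n+2k}/\delta_n)=|\FDyck(k,n)|$. But the corollary's real content is the explicit correspondence (it is what later feeds the hook-length dictionary $h(u)=2b+4r-3$ in Corollary~\ref{cor:kEulerPath}), and there your argument has a concrete error and a genuine gap. The error: Lemma~\ref{lem:contentsLPvsK} preserves only the \emph{contents} of the endpoints, not the cells, and the Kreiman paths are not the Lascoux--Pragacz strips. By Kreiman's definition each $\ga^*_i$ begins at the southern box of a column and ends at the eastern box of a row, which for the staircase forces $\ga^*_i\colon (n+2k-i,\ts i)\to (i,\ts n+2k-i)$ on the antidiagonal boundary --- not $(n+2k-2i+1,1)\to(1,n+2k-2i+1)$; the two agree only for $i=1$. (For $\delta_9/\delta_3$ the second path starts at $(7,2)$, not $(6,1)$; in fact $(6,1)$ lies on $\ga_1$ in every tuple of $\NIP(\delta_9/\delta_3)$.) Since Proposition~\ref{prop:NIP2ED} concerns tuples with the Kreiman endpoints, the rotated picture you describe, with the $i$-th path beginning and ending at height $2i-2$, is not the one that proposition delivers; with the correct endpoints every rotated path begins and ends on the $x$-axis.

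The gap is the step you yourself flag as ``genuinely nontrivial'' and then outsource: matching nested paths of semilengths $n+2(k-i)$ with $k$ Dyck paths of common semilength $n$. Observing that both models are counted by $\det[C_{n-2+i+j}]$ gives equinumerosity, not the asserted correspondence, and ``the same vertical-shift/reparametrization'' is not well defined until you know \emph{which} $2n$ steps of each long path are the free ones. The missing idea --- and this is essentially the paper's entire proof --- is that excited/ladder moves preserve contents. The cells of any excited diagram $D$ have contents in $[-(n-2),n-2]$ (the contents of $\delta_n$), so the complement contains every cell of $[\delta_{n+2k}]$ of content $|c|\ge n-1$, the same set for all $D$; the non-intersecting and parity constraints (at fixed content, heights of distinct paths differ by at least $2$) then force these cells onto the paths identically for every tuple in $\NIP(\delta_{n+2k}/\delta_n)$. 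In other words, the initial vertical and final horizontal segments of each $\ga^*_i$ are frozen, only the middle portion over contents $|c|\le n$ varies, and that portion, lowered by the fixed offset $2(k-i)$, is a Dyck path of semilength exactly~$n$; strict non-intersection becomes weak noncrossing after the shift, and truncating the frozen ends is manifestly reversible. With this observation your rotation \emph{is} the bijection, no external ``fan machinery'' needed; without it, your proposal proves the first clause of the corollary but not the second.
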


\begin{proof}
By Proposition~\ref{prop:NIP2ED} the complements of excited diagrams
in $\ED(\delta_{n+2k}/\delta_n)$ correspond to $k$-tuples of
nonintersecting paths in $\NIP(\delta_{n+2k}/\delta_n)$ (paths
obtained via ladder moves from the original paths
$(\ga^*_1,\ldots,\ga^*_k)$ of the Kreiman outer decomposition of
$\delta_{n+2k}/\delta_n$).

The path $\ga^*_i$ consists of zigzag path $p^*_i$ of
$2n+1$ cells bookended by a vertical and horizontal segment of $k-i$
cells each (see Figure~\ref{fig:excited_cat_hooks}:Middle). Because the excited/ladder moves preserve the
contents of the cells of $\delta_n$, the path $\ga_i$ in  $(\ga_1,\ldots,\ga_k)
\in \NIP(\delta_{n+2k}/\delta_n)$ will consist of a Dyck path $p_i$
bookended by the same vertical and
horizontal segments as in $\ga^*_i$. Thus the map $(\ga_1,\ldots,\ga_k)
\mapsto (p_1,\ldots,p_k)$ denoted by $\varphi$ is a correspondence between
$\NIP(\delta_{n+2k}/\delta_n)$ and $\FDyck(k,n)$. See
Figure~\ref{fig:excited_cat_hooks}, right, for an example.
\end{proof}

\begin{remark}
Fans of Dyck paths in $\FDyck(k,n)$ are equinumerous with
{\em $k$-triangulations} of an $(n+2k)$-gon \cite{Jonsson}
(see also~\cite[A12]{StCat} and \cite{SS} for a bijection for general $k$).
\end{remark}

\begin{center}
\begin{figure} [hbt]
\includegraphics{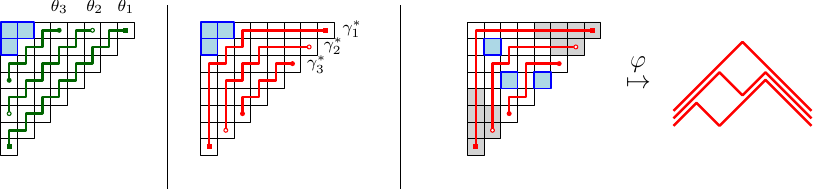}
\caption{Left, Middle: the Lascoux--Pragacz and the Kreiman outer
  decompositions of the shape $\delta_{3+6}/\delta_3$. Right: the hook-lengths of an excited diagram of
  $\delta_{3+6}/\delta_3$ corresponding to the $3$-fan of Dyck paths on
  the right. Each gray area has cells with product of hook-lengths $\ts (3!! \cdot 7!!)$.}
\label{fig:excited_cat_hooks}
\end{figure}
\end{center}

\medskip \subsection{Determinantal identity of Schur functions of thick strips}
Observe that SYT of shape $\delta_{n+2}/\delta_n$ are in bijection
with {\em alternating permutations} of size $2n+1$. These permutations
are counted by the odd Euler number $E_{2n+1}$. Thus,
\[
f^{\delta_{n+2}/\delta_n} \. = \. E_{2n+1}\..
\]
Let $E_n(q)$ be as in the introduction, the $q$-analogue
of Euler numbers.\footnote{In the survey
\cite[\S 2]{Stanley_SurveyAP}, our $E_n(q)$ is denoted by
$E^{\star}_n(q)$.}

\begin{example} \label{ex:qEulerA}
We have: \. $E_1(q)=E_2(q) =1$, \. $E_3(q) \ts = \ts q^2+q$, \. $E_4(q)  \ts = \ts q^4+q^3+ 2q^2+q$, \ts and

\nin
$E_5(q)\ts = \ts q^8+2q^7+3q^6+4q^5+3q^4+2q^3+q^2$\ts.
\end{example}

By the theory of $(P,\omega)$-partitions
\cite[$\S$2]{Stanley_SurveyAP}\cite[$\S$7.19]{EC2},  we have:
\begin{equation}
E_{2n+1}(q) \. = \. s_{\delta_{n+2}/\delta_n}(1,q,q^2,\ldots)\cdot \prod_{i=1}^{2n+1} (1-q^i)\..
 \label{eq:qEuler}
\end{equation}

Next we apply the Lascoux--Pragacz identity to the shape
$\delta_{n+2k}/\delta_n$.

\begin{corollary}[Lascoux--Pragacz for $\delta_{n+2k}/\delta_n$]
\label{cor:schurzigzagdet}
We have:
$$
s_{\delta_{n+2k}/\delta_n}({\bf x}) \, = \, \det \left[
  s_{\delta_{n+i+j}/\delta_{n-2+i+j}}({\bf x})\right]^{k}_{i,j=1}\,.
$$
\end{corollary}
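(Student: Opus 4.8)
The plan is to obtain this identity as a direct instance of the Lascoux--Pragacz identity (Theorem~\ref{thm:LascouxPragacz}) applied to the staircase skew shape $\lambda/\mu = \delta_{n+2k}/\delta_n$, reusing verbatim the decomposition analysis already carried out in the proof of Corollary~\ref{cor:excitedcat}. The key observation is that the Lascoux--Pragacz decomposition $(\theta_1,\ldots,\theta_k)$ and the resulting substrips $\theta_i \# \theta_j$ depend only on the shape $\lambda/\mu$, not on whether one is computing $\sED$ (as in Theorem~\ref{thm:num_excited_HG}) or the skew Schur function (as in Theorem~\ref{thm:LascouxPragacz}). Thus the content bookkeeping that identifies each $\theta_i\#\theta_j$ is identical to the one in Corollary~\ref{cor:excitedcat}; only the quantity attached to each strip is replaced, $C_m = \sED(\delta_{m+2}/\delta_m)$ becoming $s_{\delta_{m+2}/\delta_m}({\bf x})$.

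Concretely, first I would recall that $\delta_{n+2k}/\delta_n$ decomposes into $k$ maximal zigzag border strips $(\theta_1,\ldots,\theta_k)$, with cutting strip $\tau = \theta_1$ equal to the full zigzag $\delta_{n+2k}/\delta_{n+2k-2}$. Applying Theorem~\ref{thm:LascouxPragacz} then gives
\[
s_{\delta_{n+2k}/\delta_n}({\bf x}) \, = \, \det\big[\, s_{\theta_i \# \theta_j}({\bf x}) \,\big]_{i,j=1}^k .
\]
Next I would identify each substrip $\theta_i \# \theta_j = \phi[p(\theta_j), q(\theta_i)]$: since $\tau$ is a full zigzag, the substrip cut out by any consecutive range of contents is again a full zigzag, hence of the form $\delta_{m+2}/\delta_m$. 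The same content computation as in Corollary~\ref{cor:excitedcat} yields $\theta_i \# \theta_j = \delta_{n+i+j}/\delta_{n-2+i+j}$ after the relabeling $i \mapsto k+1-i$, $j \mapsto k+1-j$. One should check that $\phi[p(\theta_j), q(\theta_i)]$ is never undefined over the range $1 \le i,j \le k$, which follows from $i+j \le 2k$ and the staircase geometry, so the degenerate conventions $s_\varnothing = 1$ and $s_{\phi[p,q]} = 0$ do not intervene.

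Finally, to match the precise indexing in the statement, I would invoke the invariance of the determinant under reversing the order of both the rows and the columns: this is conjugation by the anti-diagonal permutation matrix $J$, and $\det(J M J) = \det(J)^2 \det(M) = \det(M)$. This turns the naturally-ordered strips into the entries $s_{\delta_{n+i+j}/\delta_{n-2+i+j}}({\bf x})$ as claimed. I do not expect any genuine obstacle here: the only substantive work is the content and endpoint bookkeeping, and that has already been done for $\sED$ in Corollary~\ref{cor:excitedcat}; transporting it to skew Schur functions is mechanical once Theorem~\ref{thm:LascouxPragacz} is in hand.
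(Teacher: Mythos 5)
Your proposal is correct and takes essentially the same route as the paper: both apply Theorem~\ref{thm:LascouxPragacz} to $\delta_{n+2k}/\delta_n$, reuse the content bookkeeping from the proof of Corollary~\ref{cor:excitedcat} to identify each substrip $\theta_i\#\theta_j$ as a zigzag $\delta_{m+2}/\delta_m$, and then relabel the matrix to obtain the stated determinant. The extra details you make explicit (that no $\phi[p,q]$ is undefined in this range, and that reversing both rows and columns is conjugation by the anti-diagonal permutation and so preserves the determinant) are exactly what the paper's phrase ``after relabeling the matrix'' leaves implicit.
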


\begin{proof}
By Theorem~\ref{thm:LascouxPragacz} for the shape
$\delta_{n+2k}/\delta_n$ we have
\[
s_{\delta_{n+2k}/\delta_n}({\bf x}) = \det{\big [} \, s_{\theta_i \#
  \theta_j}({\bf x})\, {\big ]}_{i,j=1}^k,
\]
where $(\theta_1,\ldots,\theta_k)$ is the decomposition of the shape
$\delta_{n+2k}/\delta_n$ into $k$ maximal border strips. As in the
proof of Corollary~\ref{cor:excitedcat}, the strip
$\theta_i\#\theta_j$ has zigzag shape $\delta_{m+2}/\delta_m$ for
$m=n+2k-i-j+2$. Thus, after relabeling the matrix, the above equation
becomes the desired expression.
\end{proof}

\begin{corollary}
\label{cor:qEdet}  We have:
$$
s_{\delta_{n+2k}/\delta_n}(1,q,q^2,\ldots) \, = \,
\det\left[\widetilde{E}_{2(n+i+j)-3}(q)\right]^{k}_{i,j=1}\,,
$$
where
$$
\widetilde{E}_n(q) \, := \, \frac{E_n(q)}{(1-q)(1-q^2)\cdots (1-q^n)}\,.
$$
\end{corollary}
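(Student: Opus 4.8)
The plan is to derive Corollary~\ref{cor:qEdet} as a direct specialization of the Schur-function determinant already proved in Corollary~\ref{cor:schurzigzagdet}, combined with the $(P,\omega)$-partition identity~\eqref{eq:qEuler} that expresses the $q$-Euler numbers through zigzag skew Schur functions. No new structural input is needed beyond these two facts; the work is bookkeeping of indices.

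First I would apply the stable principal specialization ${\bf x}=(1,q,q^2,\ldots)$ to both sides of the identity in Corollary~\ref{cor:schurzigzagdet}. Each entry $s_{\delta_{n+i+j}/\delta_{n-2+i+j}}({\bf x})$ is a formal power series in the variables, and the determinant is a finite polynomial expression in these entries, so the specialization commutes with forming the determinant and yields
\[
s_{\delta_{n+2k}/\delta_n}(1,q,q^2,\ldots) \, = \, \det\left[\, s_{\delta_{n+i+j}/\delta_{n-2+i+j}}(1,q,q^2,\ldots)\,\right]_{i,j=1}^k.
\]
Next I would identify each matrix entry. Writing the $(i,j)$ entry as $s_{\delta_{m+2}/\delta_m}(1,q,q^2,\ldots)$ with $m=n+i+j-2$, the identity~\eqref{eq:qEuler} reads $E_{2m+1}(q)=s_{\delta_{m+2}/\delta_m}(1,q,q^2,\ldots)\cdot\prod_{r=1}^{2m+1}(1-q^r)$, whence $s_{\delta_{m+2}/\delta_m}(1,q,q^2,\ldots)=\widetilde{E}_{2m+1}(q)$ by the definition of $\widetilde{E}$ in the statement. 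Substituting $2m+1=2(n+i+j-2)+1=2(n+i+j)-3$ then rewrites the $(i,j)$ entry as $\widetilde{E}_{2(n+i+j)-3}(q)$, which is exactly the claimed determinant.

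The only obstacle here is elementary index arithmetic: one must confirm that the subscript $2m+1$ matches the target index $2(n+i+j)-3$, and that the single-zigzag identity~\eqref{eq:qEuler} applies verbatim to every entry of the matrix. I would also record that $m=n+i+j-2$ ranges over $n\le m\le n+2k-2$ as $i,j$ vary in $\{1,\ldots,k\}$, so each $\delta_{m+2}/\delta_m$ is a genuine zigzag and \eqref{eq:qEuler} is valid throughout; the smallest cases are consistent with the boundary values $\widetilde{E}_1(q)=\widetilde{E}_2(q)=1$. With these routine checks in place, the result follows.
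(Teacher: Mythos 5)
Your proposal is correct and is essentially the paper's proof: the paper likewise obtains the result by specializing the Lascoux--Pragacz determinant of Corollary~\ref{cor:schurzigzagdet} at ${\bf x}=(1,q,q^2,\ldots)$ and converting each entry via equation~\eqref{eq:qEuler}, and your index check $2m+1=2(n+i+j)-3$ with $m=n+i+j-2$ matches. The only difference is that you spell out the routine bookkeeping that the paper leaves implicit.
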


\begin{proof}
The result follows from Corollary~\ref{cor:schurzigzagdet} and equation~\eqref{eq:qEuler}.
\end{proof}

Taking the limit $q\to 1$ in Corollary~\ref{cor:qEdet} we get corresponding
identities for $f^{\delta_{n+2k}/\delta_n}$.
\begin{corollary} \label{cor:Edet}
We have:
$$
\frac{f^{\delta_{n+2k}/\delta_n}}{|\delta_{n+2k}/\delta_n|!} \, = \, \det \left[ \wh{E}_{2(n+i+j)-3}\right]^{k}_{i,j=1}\,,
\quad \text{where} \ \  \wh{E}_n \. :=\, \frac{E_n}{n!}\,.
$$
\end{corollary}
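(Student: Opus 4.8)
The plan is to deduce Corollary~\ref{cor:Edet} by taking the limit $q \to 1$ in the $q$-analogue Corollary~\ref{cor:qEdet}, exactly as the preceding sentence announces. Write $N := |\delta_{n+2k}/\delta_n|$. The whole argument rests on two limit computations — one for each side of Corollary~\ref{cor:qEdet} — glued together by the bookkeeping fact that the orders of the poles at $q=1$ on the two sides agree and equal $N$.

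For the left-hand side I would invoke the theory of $(P,\omega)$-partitions (the same tool used in the proof of Lemma~\ref{lem:LascouxPragacz-SYT}), which expresses the principal specialization as
\[
s_{\delta_{n+2k}/\delta_n}(1,q,q^2,\ldots)\,\prod_{i=1}^{N}(1-q^i) \;=\; \sum_{T \in \SYT(\delta_{n+2k}/\delta_n)} q^{\maj(T)}.
\]
Since $\prod_{i=1}^N (1-q^i) = (1-q)^N \prod_{i=1}^N \tfrac{1-q^i}{1-q} \to N!\,(1-q)^N$ and the right-hand side tends to $f^{\delta_{n+2k}/\delta_n}$ as $q\to 1$, this yields
\[
\lim_{q\to 1}(1-q)^N\, s_{\delta_{n+2k}/\delta_n}(1,q,q^2,\ldots) \;=\; \frac{f^{\delta_{n+2k}/\delta_n}}{N!}.
\]

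For the right-hand side the cleanest route is to factor the pole out of the determinant by multilinearity. Each entry carries subscript $m_{ij} := 2(n+i+j)-3$, and from \eqref{eq:qEuler} one checks that $(1-q)^{m_{ij}}\widetilde{E}_{m_{ij}}(q) = E_{m_{ij}}(q)\big/\prod_{r=1}^{m_{ij}}\tfrac{1-q^r}{1-q} \to E_{m_{ij}}/m_{ij}! = \wh{E}_{m_{ij}}$. Crucially $m_{ij}$ splits additively as $m_{ij} = a_i + b_j$ with $a_i = 2n-3+2i$ and $b_j = 2j$, so pulling $(1-q)^{-a_i}$ out of row $i$ and $(1-q)^{-b_j}$ out of column $j$ gives
\[
(1-q)^{\,\sum_i a_i + \sum_j b_j}\det\!\big[\widetilde{E}_{m_{ij}}(q)\big]_{i,j=1}^k = \det\!\big[(1-q)^{m_{ij}}\widetilde{E}_{m_{ij}}(q)\big]_{i,j=1}^k \,\longrightarrow\, \det\!\big[\wh{E}_{2(n+i+j)-3}\big]_{i,j=1}^k
\]
as $q\to1$. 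The step I expect to be the crux is checking that the two pole orders coincide, i.e.\ that $\sum_{i=1}^k a_i + \sum_{j=1}^k b_j = N$. A direct computation gives $\sum_i a_i + \sum_j b_j = k(2n-3) + 2k(k+1) = k(2n+2k-1)$, while $N = \binom{n+2k}{2} - \binom{n}{2} = k(2n+2k-1)$; these agree. (Equivalently, in the Leibniz expansion every permutation $\sigma$ contributes the same pole order $\sum_i m_{i,\sigma(i)} = k(2n+2k-1)$, because $\sum_i \sigma(i)$ is independent of $\sigma$; this $\sigma$-independence is precisely what lets the limit distribute across the determinant.) Granting this balance, multiplying Corollary~\ref{cor:qEdet} by $(1-q)^N$ and letting $q\to1$ equates the two displayed limits, giving $f^{\delta_{n+2k}/\delta_n}/N! = \det[\wh{E}_{2(n+i+j)-3}]_{i,j=1}^k$, which is the claim.
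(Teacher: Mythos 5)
Your proposal is correct and is precisely the paper's proof: the authors obtain Corollary~\ref{cor:Edet} by letting $q\to 1$ in Corollary~\ref{cor:qEdet}, stating this in one line, and your write-up merely supplies the details they leave implicit. Your verifications check out --- the $(P,\omega)$-partition specialization gives $(1-q)^N s_{\delta_{n+2k}/\delta_n}(1,q,\ldots)\to f^{\delta_{n+2k}/\delta_n}/N!$, the additive splitting $m_{ij}=a_i+b_j$ lets the pole factor out of the determinant by row/column multilinearity, and the balance $\sum_i a_i+\sum_j b_j = k(2n+2k-1) = N$ is exactly right.
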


\begin{remark}
Baryshnikov and Romik~\cite{BR} gave similar determinantal formulas
for the number of standard Young tableaux of skew shape
$(n+m-1,n+m-2,\ldots,m)/(n-1,n-2,\ldots,1)$, extending the method of
Elkies (see e.g.~\cite[Ch.~14]{HEC}).
\end{remark}

In a different direction, one can use Corollary~\ref{cor:Edet} when $n=1,2$ to
obtain the following determinant formulas for Euler numbers in terms of $f^{\delta_{2k+1}}$ and $f^{\delta_{2k}}$, which of course can be computed
by a~HLF (cf.~\cite[\href{https://oeis.org/A005118}{A005118}]{OEIS}).

\begin{corollary} We have:
$$
\det\Bigl[\wh{E}_{2(i+j)-1}\Bigr]_{i,j=1}^k \, = \, \frac{f^{\delta_{2k+1}}}{\binom{2k+1}{2}!}\,, \qquad
\det\Bigl[\wh{E}_{2(i+j)+1}\Bigr]_{i,j=1}^k \, = \, \frac{f^{\delta_{2k}}}{\bigl(\binom{2k}{2}-1\bigr)!}\,.
$$
\end{corollary}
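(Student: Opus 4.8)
The plan is to derive both identities as the two smallest specializations $n=1$ and $n=2$ of Corollary~\ref{cor:Edet}, which already equates $f^{\delta_{n+2k}/\delta_n}/|\delta_{n+2k}/\delta_n|!$ with $\det[\wh{E}_{2(n+i+j)-3}]_{i,j=1}^k$. Beyond plugging in $n$, the only real work is to recognize the two boundary staircases $\delta_1,\delta_2$ and to convert the resulting skew counts into straight-shape counts. First I would treat the odd case by setting $n=1$. Since $\delta_1=\varnothing$, the shape $\delta_{2k+1}/\delta_1$ is the straight staircase $\delta_{2k+1}$, so $f^{\delta_{2k+1}/\delta_1}=f^{\delta_{2k+1}}$ and $|\delta_{2k+1}/\delta_1|=|\delta_{2k+1}|=\binom{2k+1}{2}$. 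The exponent becomes $2(1+i+j)-3=2(i+j)-1$, so Corollary~\ref{cor:Edet} reads
\[
\frac{f^{\delta_{2k+1}}}{\binom{2k+1}{2}!}\,=\,\det\bigl[\wh{E}_{2(i+j)-1}\bigr]_{i,j=1}^k,
\]
which is exactly the first identity.

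For the even case I would set $n=2$, so that $\delta_2=(1)$ and the exponent becomes $2(2+i+j)-3=2(i+j)+1$. Here Corollary~\ref{cor:Edet} gives $\det[\wh{E}_{2(i+j)+1}]_{i,j=1}^k=f^{\delta_{2k+2}/\delta_2}/|\delta_{2k+2}/\delta_2|!$, where $|\delta_{2k+2}/\delta_2|=\binom{2k+2}{2}-1$. The one nonroutine step is to replace the skew count $f^{\delta_{2k+2}/\delta_2}$ by the straight count $f^{\delta_{2k+2}}$. This follows from the cell-removal recurrence \eqref{eq:ChevalleySYT} applied with $\mu=\varnothing$ and $\lambda=\delta_{2k+2}$: the only cell that can be adjoined to the empty diagram inside the straight staircase is its corner $(1,1)=\delta_2$, so the sum on the right of \eqref{eq:ChevalleySYT} has a single term and $f^{\delta_{2k+2}}=f^{\delta_{2k+2}/\delta_2}$. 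Equivalently, the entry $1$ occupies the unique minimal cell $(1,1)$ in every $T\in\SYT(\delta_{2k+2})$, and deleting it is a bijection onto $\SYT(\delta_{2k+2}/\delta_2)$. Combining these yields the even-staircase determinant formula.

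I do not expect a genuine obstacle here; the delicate point is purely bookkeeping, namely keeping the staircase index aligned with the size $k$ of the determinant. Since the $n=2$ specialization naturally produces $f^{\delta_{2k+2}}$, the even identity is cleanest when stated for $\delta_{2k+2}$ with denominator $(\binom{2k+2}{2}-1)!$; the printed form is then recovered by shifting the index $k$. No further input is needed, as the matrix entries are already the scaled Euler numbers $\wh{E}_m=E_m/m!$ delivered by the specialization, and the corner-cell bijection of the previous paragraph is the sole ingredient not immediate from Corollary~\ref{cor:Edet}.
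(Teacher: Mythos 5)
Your derivation is correct and is precisely the paper's own argument: the corollary is stated there as an immediate consequence of Corollary~\ref{cor:Edet} at $n=1,2$, and the only non-plugging ingredient, $f^{\lambda/(1)}=f^{\lambda}$ via the forced position of the entry $1$ at $(1,1)$, is exactly the corner-cell bijection you supply (for $n=1$ even this is unnecessary, since $\delta_1=\varnothing$). One caveat concerns your closing remark that the printed even case ``is recovered by shifting the index $k$'': it is not. Your $n=2$ computation gives $\det\bigl[\wh{E}_{2(i+j)+1}\bigr]_{i,j=1}^{k}=f^{\delta_{2k+2}}/\bigl(\binom{2k+2}{2}-1\bigr)!$, and replacing $k$ by $k-1$ shrinks the determinant to order $k-1$, yielding $\det\bigl[\wh{E}_{2(i+j)+1}\bigr]_{i,j=1}^{k-1}=f^{\delta_{2k}}/\bigl(\binom{2k}{2}-1\bigr)!$, whereas the statement as printed keeps an order-$k$ determinant with $f^{\delta_{2k}}$ on the right. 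That printed form is false already at $k=1$: the left side is $\wh{E}_5=16/120=2/15$, while the right side is $f^{\delta_2}/0!=1$ (and at $k=2$ the left side is $\wh{E}_5\wh{E}_9-\wh{E}_7^2=1/297675=f^{\delta_6}/14!$, not $f^{\delta_4}/5!$). So no index shift reconciles your identity with the printed one; rather, the printed second formula has an off-by-one typo (either the superscript $k$ should be $k-1$, or $\delta_{2k}$ and $\binom{2k}{2}$ should read $\delta_{2k+2}$ and $\binom{2k+2}{2}$), and the version your specialization produces is the correct one. Apart from correctly diagnosing this bookkeeping defect as a flaw in the statement rather than in your argument, there is nothing to add: your proof and the paper's are the same.
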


\medskip \subsection{SYT and Euler numbers}
We use the NHLF to obtain an expression
for $f^{\delta_{n+2}/\delta_n}=E_{2n+1}$ in terms of Dyck paths.
%
% \begin{corollary} \label{cor:EulerPath}
% \begin{equation} \label{eq:cat2euler}
% \frac{E_{2n+1}}{(2n+1)!} =\sum_{\gamma \in \Dyck(n)} {\bigg [}\prod_{(a,b) \in
%   \gamma}\frac{1}{2b+1} {\bigg ]},
% \end{equation}
% where $(a,b)\in \gamma$ denotes a point $(a,b)$ of the Dyck path $\gamma$.
% \end{corollary}

\begin{proof}[Proof of Corollary~\ref{cor:euler-nhlf}]
By the NHLF, we have
\begin{equation} \label{eq:pfcat2euler}
f^{\delta_{n+2}/\delta_n} \, = \, |\delta_{n+2}/\delta_n|! \. \sum_{D \in
  \ED(\delta_{n+2}/\delta_n)} \. \prod_{u\in \overline{D}} \. \frac{1}{h(u)}\,,
\end{equation}
where $\overline{D}=[\delta_{n+2}/\delta_n ]\setminus D$. Now
$|\delta_{n+2}/\delta_n|=(2n+1)!$ and by
 Corollary~\ref{cor:excitedcat} (complements of) excited
diagrams $D$ of $\delta_{n+2}/\delta_n$ correspond to Dyck paths $\gamma$ in
$\Dyck(n)$. In this correspondence, if $u\in
\overline{D}$ corresponds to point $(a,b)$ in $\gamma$ then
$h(u)=2b+1$ (see Figure~\ref{fig:excited2cat}). Translating from excited diagrams
 to Dyck paths, \eqref{eq:pfcat2euler} becomes the desired identity~\eqref{eq:cat2euler}.
\end{proof}

Equation~\eqref{eq:cat2euler} can be generalized to thick strips
$\delta_{n+2k}/\delta_n$.

\begin{corollary}\label{cor:kEulerPath}
\label{eq:cat4euler}  We have:
\begin{equation} \label{eq:cat2keuler}
\sum_{\substack{(\p_1,\ldots,\p_k) \in
    \Dyck(n)^k\\ \text{\rm{noncrossing}}}} \, \,
\prod_{r=1}^k \. \prod_{(a,b) \in
  \p_r}\frac{1}{2b + 4r-3} \,\, = \,\,
\left[\prod_{r=1}^{k-1} \. (4r-1)!!\right]^2 \.
\det \left[   \wh{E}_{2(n+i+j)-3}\right]^{k}_{i,j=1}\,,
\end{equation}
where $\wh{E}_n = E_n/n!$ and $(a,b)\in \p$ denotes a point of the Dyck path $\p$.
\end{corollary}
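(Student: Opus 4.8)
The plan is to run the NHLF-to-Dyck-path translation used for Corollary~\ref{cor:euler-nhlf}, but now for the thick strip $\delta_{n+2k}/\delta_n$ and bootstrapped against the determinantal evaluation of Corollary~\ref{cor:Edet}. First I would apply the NHLF (Theorem~\ref{thm:IN}) to $\delta_{n+2k}/\delta_n$ and divide by $|\delta_{n+2k}/\delta_n|!$ to get
\[
\frac{f^{\delta_{n+2k}/\delta_n}}{|\delta_{n+2k}/\delta_n|!} \,=\, \sum_{D\in\ED(\delta_{n+2k}/\delta_n)} \ \prod_{u\in\overline{D}}\frac{1}{h(u)}, \qquad \overline{D}:=[\delta_{n+2k}]\setminus D.
\]
By Corollary~\ref{cor:Edet} the left-hand side is exactly $\det[\wh{E}_{2(n+i+j)-3}]_{i,j=1}^k$, so the whole task reduces to rewriting the hook sum on the right as the desired path summation.

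Next I would invoke Corollary~\ref{cor:excited2fandyckpaths}: the complements $\overline{D}$ are precisely the supports of $k$-fans $(\p_1,\ldots,\p_k)\in\FDyck(k,n)$ of noncrossing Dyck paths, where $\overline{D}=(\ga_1,\ldots,\ga_k)\in\NIP(\delta_{n+2k}/\delta_n)$ and each $\ga_r$ is the Dyck path $\p_r$ flanked by fixed vertical and horizontal bookend segments (see Figure~\ref{fig:excited_cat_hooks}). Since in $\delta_{n+2k}$ one has $h(i,j)=2(n+2k-i-j)+1$, the hook-length depends only on the antidiagonal $i+j$; as ladder moves preserve the bookend cells, the product of hooks over the bookends is the same for every fan, so $\prod_{u\in\overline{D}}1/h(u)$ splits as a bookend constant (independent of the fan) times the product of $1/h(u)$ over the cells making up the central Dyck portions $\p_1,\ldots,\p_k$.

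The crux is then two hook computations. For the central part I would check that the cell of $\ga_r$ corresponding to the height-$b$ point of $\p_r$ lies on the antidiagonal $i+j=n+2k-b-2r+2$, so that its hook equals $2(b+2r-2)+1=2b+4r-3$ (consecutive paths are offset by two antidiagonals, explaining the $4r$); this specializes to the $2b+1$ of Corollary~\ref{cor:euler-nhlf} when $r=1$. For the bookends I would use that the vertical bookend of $\ga_r$ runs up the boundary of $\delta_{n+2k}$ through $2(k-r)$ cells on consecutive antidiagonals, with hooks $1,3,\ldots,4(k-r)-1$, giving product $(4(k-r)-1)!!$, and that by the transpose symmetry of $\delta_{n+2k}$ (which is self-conjugate, and under which the Kreiman decomposition maps to itself) the horizontal bookend of $\ga_r$ contributes the same. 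Multiplying over $r=1,\ldots,k$ (the $r=k$ term being empty) yields total bookend hook-product $\bigl[\prod_{r=1}^{k-1}(4r-1)!!\bigr]^2$, matching the constant recorded in Figure~\ref{fig:excited_cat_hooks}.

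Assembling the pieces, for each fan
\[
\prod_{u\in\overline{D}}\frac{1}{h(u)} \,=\, \Bigl[\prod_{r=1}^{k-1}(4r-1)!!\Bigr]^{-2}\ \prod_{r=1}^k\ \prod_{(a,b)\in\p_r}\frac{1}{2b+4r-3},
\]
and summing over $\ED(\delta_{n+2k}/\delta_n)\leftrightarrow\FDyck(k,n)$ and comparing with Corollary~\ref{cor:Edet} gives \eqref{eq:cat2keuler} after moving the bookend constant to the other side. The only real obstacle is the bookkeeping in the two hook computations --- pinning down the antidiagonal of each central path cell as a function of $b$ and $r$, and identifying the antidiagonals traversed by each bookend --- since everything else is a direct assembly of results already proved.
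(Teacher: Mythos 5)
Your proposal is correct and follows essentially the same route as the paper's proof: apply the NHLF to $\delta_{n+2k}/\delta_n$, identify the left-hand side via Corollary~\ref{cor:Edet}, pass from excited diagrams to $k$-fans of noncrossing Dyck paths via Corollary~\ref{cor:excited2fandyckpaths}, and factor the hook product into bookend and central contributions. Your explicit bookkeeping --- hook $2b+4r-3$ for the cell on antidiagonal $i+j=n+2k-b-2r+2$ corresponding to a height-$b$ point of $\p_r$, and bookends of $2(k-r)$ cells each with hook product $(4(k-r)-1)!!$ --- is precisely what the paper compresses into ``one can check (see Figure~\ref{fig:excited_cat_hooks})'', and it checks out.
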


\begin{proof}
For the RHS we use Corollary~\ref{cor:Edet} to express
$f^{\delta_{n+2k}/\delta_n}$ in terms of Euler numbers. For the LHS, we first
use the NHLF to write $f^{\delta_{n+2k}/\delta_n}$ as a sum
over excited diagrams $\ED(\delta_{n+2k}/\delta_n)$~:
\[
f^{\delta_{n+2k}/\delta_n} \, = \, |\delta_{n+2k}/\delta_n|! \. \sum_{D \in
  \ED(\delta_{n+2k}/\delta_n)} \. \prod_{u\in \overline{D}} \. \frac{1}{h(u)}\.,
\]
where $\overline{D}=[\delta_{n+2k}/\delta_n ]\setminus D$. By Corollary~\ref{cor:excited2fandyckpaths}, excited
diagrams of $\delta_{n+2k}/\delta_n$ correspond to $k$-tuples of
noncrossing Dyck paths in $\FDyck(k,n)$ via the map $\varphi$. Finally, one can check (see
Figure~\ref{fig:excited_cat_hooks} right) that if
$\varphi:D\mapsto (\p_1,\ldots,\p_k)$ then
\[
\prod_{u\in \overline{D}} h(u) \, = \, \bigg [\prod_{r=1}^{k-1}
  (4r-1)!! {\bigg ]^2
  \prod_{(a,b)\in \p_r} (2b+4r-3)  }\,,
\]
which gives the desired RHS.
\end{proof}

\medskip \subsection{Probabilistic variant of
  \eqref{eq:cat2euler}}
Here we present a new identity~\eqref{eq:cat2eulerB} which is a close
relative of the curious identity~\eqref{eq:cat2euler} we proved above.

Let $\BT(n)$ be the set of {\em plane full binary trees} $\tau$
with $2n+1$ vertices, i.e. plane binary trees where every vertex has
zero or two descendants. These trees are counted by $|\BT(n)|=C_n$ (see
e.g. \cite[\S 2]{StCat}). Given a
vertex $v$ in a tree $\tau \in \BT(n)$, $h(v)$ denotes the number of
descendants of $v$ (including itself). An {\em increasing}  labeling
of $\tau$ is a labeling $\omega(\cdot)$ of the vertices of $\tau$ with
$\{1,2,\ldots,2n+1\}$ such that if $u$ is a descendant of $v$ then
$\omega(v)\leq \omega(u)$. By abuse of notation, let $f^{\tau}$ is the
number of increasing labelings of $\tau$. By the HLF for trees
(see e.g.~\cite{BStree}), we have:
\begin{equation} \label{eq:incr-trees}
f^{\tau} \. = \. \frac{(2n+1)!}{\prod_{v\in \tau} h(v)}\..
\end{equation}
% We are now ready to state the identity.

\begin{proposition}\label{prop:cat2euler}
We have:
\begin{equation} \label{eq:cat2eulerB}
\sum_{\tau \in \BT(n)} \. \prod_{v\in \tau} \. \frac{1}{h(v)} \, = \, \frac{E_{2n+1}}{(2n+1)!}\,.
\end{equation}
\end{proposition}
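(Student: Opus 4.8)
The plan is to reduce the claimed identity to a purely enumerative statement and then prove it by a recurrence. By the hook-length formula for trees~\eqref{eq:incr-trees}, each summand satisfies $\prod_{v\in\tau} 1/h(v) = f^{\tau}/(2n+1)!$, so the left-hand side of~\eqref{eq:cat2eulerB} equals $\frac{1}{(2n+1)!}\sum_{\tau\in\BT(n)} f^{\tau}$. Hence it suffices to prove that
\[
S_n \, := \, \sum_{\tau\in\BT(n)} f^{\tau} \, = \, E_{2n+1},
\]
i.e. that the number of increasingly labeled full binary trees on $2n+1$ vertices equals the number of alternating permutations of $\{1,\ldots,2n+1\}$.

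First I would establish a recurrence for $S_n$ by decomposing at the root. In any increasing labeling the root carries the label~$1$; a tree $\tau\in\BT(n)$ splits into a left subtree in $\BT(a)$ and a right subtree in $\BT(b)$ with $a+b=n-1$, and an increasing labeling of $\tau$ is obtained by choosing which $2a+1$ of the remaining $2n$ labels go to the left subtree and labeling each subtree increasingly. Thus $f^{\tau} = \binom{2n}{2a+1} f^{\tau_L} f^{\tau_R}$, and summing over all tree shapes gives
\[
S_n \, = \, \sum_{a+b=n-1} \binom{2n}{2a+1} \, S_a \, S_b, \qquad S_0 = 1.
\]

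Next I would show that $E_{2n+1}$ satisfies the identical recurrence, by splitting an up-down permutation $\sigma$ of $\{1,\ldots,2n+1\}$ at the position of its largest entry $2n+1$. Since the maximum is a peak, it sits at an even position $2j$; the prefix $\sigma_1\cdots\sigma_{2j-1}$ is then an up-down permutation of odd length $2a+1$ (with $a=j-1$) and the suffix $\sigma_{2j+1}\cdots\sigma_{2n+1}$ is an up-down permutation of odd length $2b+1$ (with $b=n-j$), so that $a+b=n-1$. Choosing the $2a+1$ values of the prefix among the $2n$ entries other than the maximum, in $\binom{2n}{2a+1}$ ways, yields
\[
E_{2n+1} \, = \, \sum_{a+b=n-1} \binom{2n}{2a+1} \, E_{2a+1} \, E_{2b+1}.
\]
Together with the base case $E_1 = 1 = S_0$, induction on $n$ gives $S_n = E_{2n+1}$, which is the desired statement.

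I expect the only delicate point to be the verification that the prefix and suffix of $\sigma$ are genuine up-down permutations of the stated parities: one must check the comparison directions at the two positions flanking the peak occupied by $2n+1$ and confirm that, after standardization, both blocks again start with an ascent and end with a descent, so that they are counted by $E_{2a+1}$ and $E_{2b+1}$ respectively. Everything else is routine bookkeeping of binomial coefficients.
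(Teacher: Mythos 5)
Your proof is correct, but it takes a genuinely different route from the paper's. The paper's argument is probabilistic--bijective: it invokes the classical correspondence $w \mapsto T(w)$ between permutations and increasing binary trees \cite[Prop.~1.5.3]{EC2}, under which down-up permutations of $\{1,\ldots,2n+1\}$ (equinumerous with alternating ones) correspond exactly to increasing labelings of full binary trees; this gives $\sum_{\tau\in\BT(n)} f^{\tau} = E_{2n+1}$ in one stroke, and the tree hook-length formula \eqref{eq:incr-trees} finishes the computation, just as in your opening reduction. You instead verify the equality $\sum_{\tau\in\BT(n)} f^{\tau} = E_{2n+1}$ by induction, showing both sides satisfy the convolution recurrence $X_n = \sum_{a+b=n-1}\binom{2n}{2a+1}\ts X_a X_b$ with $X_0=1$: on the tree side by decomposing at the root (which must carry the label $1$), on the permutation side by splitting an up-down permutation at its maximum, which indeed must occupy an even position $2j$, so the flanking blocks standardize to up-down permutations of odd lengths $2a+1$ and $2b+1$ with $a+b=n-1$. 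Your ``delicate point'' checks out: each block inherits the comparison pattern $<,>,\ldots,>$ verbatim from $\sigma$, and conversely reinserting the maximum between two such blocks restores the up-down pattern, since the comparisons at positions $2j-1$ and $2j$ hold automatically for the maximal entry; so the count is exact and the recurrence is correct (it is the classical tangent-number recurrence, easily confirmed on $E_3=2$, $E_5=16$, $E_7=272$). What your approach buys is self-containedness -- only the elementary tree HLF and an induction, with no appeal to the increasing-binary-tree correspondence. What it loses is the structural explanation: your two decompositions are precisely the recursive shadows of the paper's bijection (the root of $T(w)$ splits $w$ at its extreme entry, up to complementation), so the paper exhibits directly the bijection of which your induction is the numerical trace.
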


\begin{proof} % [Proof of Proposition~\ref{prop:cat2euler}]
%Let us show first that
The RHS of~\eqref{eq:cat2eulerB} gives the probability
 $E_{2n+1}/(2n+1)!$ that a permutation $w\in \SS_{2n+1}$ is alternating. We use
the representation of a permutation $w$ as an {\em increasing binary tree}
$T(w)$ with $2n+1$ vertices (see e.g.~\cite[\S
1.5]{EC2}). It is well-known
that $w$ is an {\em down-up} permutation (equinumerous with
up-down/alternating permutations) if and only if $T(w)$ is an increasing
full binary tree \cite[Prop. 1.5.3]{EC2}. See Figure~\ref{fig:alt2fullbintree} for an
example.  We conclude that the
probability $p$ that an increasing binary tree is a full binary tree
is given by $p=E_{2n+1}/(2n+1)!$.

On the other hand, we have:
\[
p \. = \, \sum_{\tau\in \BT(n)} \frac{f^{\tau}}{(2n+1)!}\,,
\]
where $f^{\tau}/(2n+1)!$ is the probability that a labeling of a
full binary tree $\tau$ is increasing. By~\eqref{eq:incr-trees}, the
result follows.
\end{proof}

\begin{figure}[hbt]
\begin{center}
\includegraphics{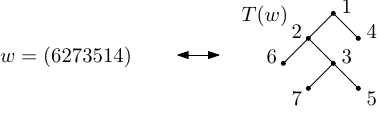}
\caption{The full binary tree corresponding to the alternating permutation $w=(6273514)$.}
\label{fig:alt2fullbintree}
\end{center}
\end{figure}

\begin{remark}
Note the similarities between~\eqref{eq:cat2eulerB} and~\eqref{eq:cat2euler}.
They have the same~RHS,
both are sums over the same number~$C_n$ of Catalan objects of products
of $n$ terms, and both are variations on the (usual) \eqref{eq:hlf} for other
posets.  As the next example shows, these equations are quite different.
\end{remark}

\begin{example}
For $n=2$ there are $C_2=2$  full binary trees with $5$ vertices and
$E_5=16$. By Equation~\eqref{eq:cat2eulerB}
\[
\frac{1}{3\cdot 5} + \frac{1}{3\cdot 5} \. = \. \frac{16}{5!}\..
\]
On the other hand, for the two Dyck paths in $\Dyck(2)$,  Equation~\eqref{eq:cat2euler} gives
\[
\frac{1}{3\cdot 3} + \frac{1}{3\cdot 3 \cdot 5}  \. = \. \frac{16}{5!}\..
\]
\end{example}

\begin{comment}

\smallskip

To further underscore the difference,
let us reformulate what follows from~\eqref{eq:cat2euler} under
the standard bijection (see e.g.~\cite[\S 1]{StCat}) between Dyck
paths and {\em plane rooted trees}.
To state the resulting expression we need the following notation. Let
$\PT(n)$ be the set of plane rooted trees with $n+1$ vertices. Given a
rooted plane tree $\tau$ and a vertex $v$ of~$\tau$, let the
$\height(v)$ of~$v$ be the edge-distance from $v$ to the root,
and the $\outdeg(v)$ be the degree of~$v$.

\begin{corollary}\label{cor:probab-hlf-dyck}
We have:
\begin{equation}\label{eq:probab-hlf-dyck}
\sum_{\tau \in \PT(n)} \. \prod_{v\in \tau} \, \frac{1}{(2\height(v)+1)^{\outdeg(v)}}
\, = \, \frac{E_{2n+1}}{(2n+1)!}\,.
\end{equation}
\end{corollary}
}

\end{comment}

\medskip \subsection{Identity
  \eqref{eq:cat2euler} for other types}

In this section $\lambda$ and $\mu$ are partitions with distinct
parts. We consider shifted diagrams of shape $\lambda$ and skew shape
$\lambda/\mu$ and standard tableaux of shifted shape $\lambda/\mu$. Along with Theorem~\ref{thm:IN}, Naruse also announced two formulas for the number
$g^{\lambda/\mu}$ of standard tableaux of skew shifted shape~$\lambda/\mu$,
in terms of type $B$ and type $D$ excited diagrams. These
excited diagrams are obtained from the diagram of $\mu$ by applying
the following excited moves:

\begin{center}
\begin{tabular}{lccc}
type $B$:& \includegraphics{excited_move} & and &
\includegraphics{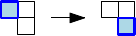}\\
type
$D$: &  \includegraphics{excited_move} & and & \includegraphics{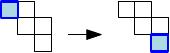}.
\end{tabular}
\end{center}

We denote the set of type $B$ (type $D$) excited diagrams of shifted skew shape
$\lambda/\mu$ by $\ED^B(\lambda/\mu)$  ($\ED^D(\lambda/\mu)$). As in
Section~\ref{ss:excited-flagged} or \cite[\S 3]{MPP1}, type $B$ (type
$D$) excited diagrams of $\lambda/\mu$ are equivalent to
certain (subset of) flagged tableaux of shifted shape $\mu$ and to certain
non-intersecting paths (see Figure~\ref{fig:Bexcited2cat}).

Given a shifted shape $\lambda$, the type $B$ hook of a cell $(i,i)$ on the diagonal is the cells in
row $i$ of $\lambda$. The hook of a cell $(i,j)$ for $i\leq j$ is the cells in row $i$ right of $(i,j)$, the
cells in column $j$ below $(i,j)$, and if $(j,j)$ is one of these cells below
then the hook also includes the cells in the $j$th row of $\lambda$
(overall counting $(j,j)$ twice). The type $D$ hook is the usual
shifted hook (e.g., see \cite[Ex. 3.21]{SSG}) The hook-length of $(i,j)$ is
the size of the hook of $(i,j)$ and is denoted by $h^B(i,j)$
($h^D(i,j)$); see Figure~\ref{fig:BDhooks}.

\begin{figure}
\includegraphics{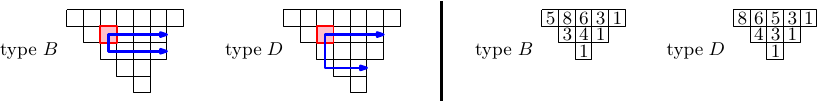}
\caption{Left: examples of the type $B$ and type $D$ hook of a cell $(i,j)$ of $\lambda$
  of lengths $9$ (cell $(3,3)$ is counted twice) and $7$ respectively. Right: the type $B$ and $D$
  hook-lengths of the cells of the shifted shape $(5,3,1)$.}
\label{fig:BDhooks}
\end{figure}

 The NHLF then extends verbatim.

\begin{theorem}[Naruse \cite{Strobl}]
Let $\lambda, \mu$ be partitions with distinct parts, such that $\mu
\subset \lambda$. We have
\begin{align}
g^{\lambda/\mu} &= |\lambda/\mu|! \sum_{S \in
  \ED^B(\lambda/\mu)} \prod_{(i,j) \in [\lambda]\setminus S}
                  \frac{1}{h^B(i,j)}, \label{eq:nhlfB}\\
&=|\lambda/\mu|! \sum_{S \in
  \ED^D(\lambda/\mu)} \prod_{(i,j) \in [\lambda]\setminus S}
  \frac{1}{h^D(i,j)} \label{eq:nhlfD},
\end{align}
where $h^B(i,j)$ and $h^D(i,j)$ are the shifted hook-lengths of type~B
and type~D, respectively.
\end{theorem}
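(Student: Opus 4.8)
The plan is to mirror the strategy used for Theorem~\ref{thm:IN} in Sections~\ref{sec:all_shapes}--\ref{sec:2ndproof}, carrying each step across to the shifted setting and treating $\star=B$ and $\star=D$ in parallel (writing $h^\star$ and $\ED^\star$ throughout). First I would reduce to the case of a \emph{shifted border strip} $\lambda/\mu$. Here I expect the type $\star$ excited diagrams to be in bijection with lattice paths inside the shifted diagram $[\lambda]$ running from the southwest endpoint to the northeast endpoint, so that both \eqref{eq:nhlfB} and \eqref{eq:nhlfD} collapse to a single sum over such paths of $\prod 1/h^\star$. To prove this border strip case I would introduce a multivariate path function $F^\star_{\lambda/\mu}(\mathbf x\mid\mathbf y)$, as in Section~\ref{sec:1stproof}, and show it satisfies the Pieri--Chevalley recurrence for shifted standard tableaux $g^{\lambda/\mu}=\sum_{\mu\to\nu} g^{\lambda/\nu}$ (the shifted analogue of \eqref{eq:ChevalleySYT}); specializing $\mathbf x,\mathbf y$ to the appropriate hook values then yields the formula. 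The genuinely new bookkeeping is at the diagonal: the type $B$ hook counts the diagonal row a second time and the type $B$/$D$ excited moves include the special diagonal move, so a path step that crosses or meets the main diagonal must be assigned a separate weight, and the recurrence acquires a modified linear factor there.

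Next I would lift the border strip case to arbitrary shifted skew shapes. The essential change from the type $A$ argument is that the ordinary Lascoux--Pragacz determinant must be replaced by the \emph{Pfaffian} (Giambelli / J\'ozefiak--Pragacz) identity for skew Schur $P$- and $Q$-functions, since a shifted skew shape decomposes into border strips whose generating functions assemble into a Pfaffian rather than a determinant; the two symmetric function families correspond precisely to the type $D$ and type $B$ formulas. Correspondingly, the Lindstr\"om--Gessel--Viennot lemma (Lemma~\ref{lem:LGVlambda}) must be upgraded to its Pfaffian analogue (Stembridge's lemma for non-intersecting paths with endpoints free on the diagonal). Applying this Pfaffian path lemma to the border strip formula rewrites $g^{\lambda/\mu}/|\lambda/\mu|!$ as a weighted sum over tuples of non-intersecting shifted paths inside $[\lambda]$, with each step weighted by $1/h^\star$.

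Finally I would prove the shifted counterpart of Proposition~\ref{prop:NIP2ED}: the supports of these non-intersecting paths are exactly the complements of the type $B$ (resp.\ type $D$) excited diagrams of $\lambda/\mu$. As in Section~\ref{sec:excited_diagrams} this rests on a shifted Kreiman-type outer decomposition together with the observation that type $\star$ excited moves translate into ladder moves on the paths; the only new phenomenon is that the special diagonal excited move corresponds to a fold of a path against the main diagonal, which the Pfaffian path model is designed to accommodate. Combining the three steps gives \eqref{eq:nhlfB} and \eqref{eq:nhlfD}.

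The main obstacle is the middle step: producing a clean Pfaffian analogue of the combined Lascoux--Pragacz-plus-LGV machinery and reconciling the Pfaffian sign structure with the manifestly positive product-over-hooks right-hand side. The identity that ultimately controls the correct specialization is the factorial $Q$-function (equivariant) formula of Ikeda--Naruse for $\mathrm{OG}$ and $\mathrm{IG}$, the type $\star$ analogue of Lemma~\ref{lem:key_border_strips}; verifying that evaluating it at the hook parameters reproduces exactly $h^B$ with its diagonal double-count, and $h^D$ with the ordinary shifted hook, is where the diagonal geometry must be handled most carefully. An alternative that sidesteps the Pfaffian would be a direct involutive/inductive proof on shifted tableaux in the spirit of the bijectification remark, but I expect the Pfaffian route to be the most economical.
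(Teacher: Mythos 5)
You should first know that the paper contains \emph{no proof} of this statement: it is quoted as announced by Naruse in \cite{Strobl}, and the paper only applies it --- deriving \eqref{eq:ECtypeB} from \eqref{eq:nhlfB} for the shifted zigzag $\nabla_{n+1}/\nabla_n$, and remarking that \eqref{eq:nhlfD} for that shape reduces to \eqref{eq:cat2euler}. So there is no in-paper argument to compare yours against; your proposal must stand on its own, and as written it is a research program rather than a proof.

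The skeleton is the right one: it transplants the paper's proof of Theorem~\ref{thm:IN} (border strips via a multivariate identity, then Lascoux--Pragacz plus Lemma~\ref{lem:LGVlambda}, then Proposition~\ref{prop:NIP2ED}), and you correctly locate the controlling identity in the Ikeda--Naruse equivariant theory \cite{IkNa09}, with Kreiman's \cite{VK2} supplying the Lagrangian analogue of the excited-diagram/path dictionary. But each transplanted step is asserted rather than carried out, and each hides a genuine difficulty. (i) The cutting-strip mechanism: you need a Pfaffian identity for skew $P$-/$Q$-functions whose entries are indexed by substrips $\theta_i\#\theta_j$ of a single cutting strip; the classical J\'ozefiak--Pragacz Pfaffian has entries indexed by (at most) two-row shapes, and even granting a Hamel-type strip Pfaffian you must verify that its entries match the endpoints of a shifted Kreiman-type decomposition --- none of this is established. (ii) Stembridge's Pfaffian analogue of the LGV lemma carries signs and free diagonal endpoints; showing that your ``fold against the diagonal'' is sign-coherent with the manifestly positive hook product on the right-hand side is exactly where a naive transplant could fail, and you explicitly defer it. (iii) You never verify which factorial family specializes to $h^B$ (with its doubled diagonal row) and which to $h^D$, nor that the special diagonal excited move corresponds to the path fold; note also that the equality of the two sums in \eqref{eq:nhlfB}--\eqref{eq:nhlfD} is itself part of the theorem, since $\ED^B(\lambda/\mu)$ and $\ED^D(\lambda/\mu)$ are different sets with different weights, and your outline gives no mechanism relating them. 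In short: a plausible roadmap in the same spirit as the paper's type $A$ argument, but the middle step you yourself call ``the main obstacle'' is the entire content, so the proposal has a genuine gap.
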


\begin{example}[shifted thick zigzag strip]
The shifted  analogue of the staircase is the \emph{triangle} $\nabla_n =
(2n-1,2n-3,\ldots,1)$. The analogue of the thick strip is the shifted skew
shape $\nabla_{n+k}/\nabla_n$. The number of type
$B$ excited diagrams of this shape has a product formula analogous to
\eqref{eq:excitedcat}.

\begin{proposition}
\begin{equation} \label{eq:typeBzigzag}
|\ED^B(\nabla_{n+k}/\nabla_n)| = \prod_{h=1}^k \prod_{i=1}^n \prod_{j=1}^n \frac{h+i+j-1}{h+i+j-2}.
\end{equation}
\end{proposition}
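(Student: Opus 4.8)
The plan is to mirror the proof of Corollary~\ref{cor:excitedcat}, replacing the straight staircase $\delta_n$ by the triangle $\nabla_n$ and Proctor's staircase formula by MacMahon's box formula. First I would invoke the type $B$ analogue of the flagged-tableaux correspondence discussed in Section~\ref{ss:excited-flagged}: the diagrams in $\ED^B(\nabla_{n+k}/\nabla_n)$ are in bijection with type $B$ flagged tableaux of the shifted shape $\nabla_n$. Here $\nabla_n$ has $n$ rows, with row $i$ occupying columns $i$ through $2n-i$. Following the rightmost cell $(i,2n-i)$ of row $i$ down its (content-preserving) diagonal inside $\nabla_{n+k}$ shows that it meets the boundary in row $i+k$, so the relevant flag is $f_i = k+i$ (this computation holds uniformly, including for $i=n$, where the cell $(n,n)$ sits on the main diagonal).

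Next I would normalize: subtracting $i$ from every entry of row $i$ converts a type $B$ flagged tableau into a shifted reverse plane partition of shape $\nabla_n$ with all entries in $\{0,1,\ldots,k\}$, weakly increasing along rows and columns. The point is that the extra type $B$ excited move on the diagonal translates, after this shift, precisely into the condition that the main-diagonal entries only weakly increase (this is exactly what distinguishes type $B$ from type $D$). Hence $|\ED^B(\nabla_{n+k}/\nabla_n)|$ equals the number of shifted reverse plane partitions of the triangular shape $\nabla_n$ bounded by $k$.

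It remains to count these. The shape $\nabla_n$ has exactly $n^2$ cells, and a short bijection (equivalently, Proctor's enumeration of shifted plane partitions of trapezoidal shape, cf.~\cite{Pr2}) shows that shifted reverse plane partitions of $\nabla_n$ with entries at most $k$ are equinumerous with ordinary plane partitions in the $n\times n\times k$ box. By MacMahon's box formula their number is
\[
\PP(n,n,k) \, = \, \prod_{i=1}^n \prod_{j=1}^n \prod_{h=1}^k \frac{h+i+j-1}{h+i+j-2},
\]
which is exactly the right-hand side of~\eqref{eq:typeBzigzag}; one checks directly that telescoping the inner product over $h$ gives the closed form $\prod_{i,j=1}^n (i+j+k-1)/(i+j-1)$. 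I would also record the smallest cases $n=1$ (giving $k+1$) and $n=2,k=1$ (giving $6$) as sanity checks.

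The main obstacle is the first step: making the type $B$ flagged-tableaux bijection precise along the diagonal. The type $A$ correspondence of Section~\ref{ss:excited-flagged} must be extended to the shifted setting, and one must verify that the second type $B$ excited move corresponds, after the shift by $i$, to the correct boundary behaviour of the reverse plane partition on the cells $(i,i)$. Getting this diagonal bookkeeping right — and confirming that it yields the \emph{full} box count $\PP(n,n,k)$ rather than the symmetric plane partition count that the type $D$ rule would produce — is where the real work lies.
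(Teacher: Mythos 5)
Your proposal follows the paper's proof essentially verbatim: the same type~$B$ flagged-tableaux correspondence with flag $f_i=i+k$, the same shift by $i$ in row $i$ to obtain shifted plane partitions of shape $\nabla_n$ with entries at most $k$, the same reduction via Proctor's trapezoid theorem to plane partitions in an $n\times n\times k$ box, and MacMahon's box formula. The only discrepancy is bibliographic: the trapezoid result you need is \cite{Pr1} (proved bijectively in \cite{HPPW}), not \cite{Pr2}, which the paper instead uses for the staircase count in Corollary~\ref{cor:excitedcat}.
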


\begin{proof}
As in the standard shape case, the type $B$ excited diagrams
correspond to shifted flagged tableaux of shape $\nabla_n$ with
entries in row $i$  $\leq i+k$. By subtracting $i$ from all entries in
row $i$ of such tableaux they are equivalent to plane partitions of shape $\nabla_n$
with entries $\leq k$. By a result of Proctor \cite{Pr1}, recently
proved bijectively in \cite{HPPW}, these are equinumerous with plane
partitions in a $n\times n \times k$ box (see also \cite{HW}). Thus
the result follows by
MacMahon's boxed plane partition formula.
\end{proof}

In the case $k=1$ we obtain $|\ED^B(\nabla_{n+1}/\nabla_n)| =
\binom{2n}{n}$ (see Figure~\ref{fig:Bexcited2cat}). When
$k=n$,  $|\ED^B(\nabla_{2n}/\nabla_n)|$ counts plane partitions that fit inside the $n\times n\times n$ box
(see e.g. \cite[\href{http://oeis.org/A008793}{A08793}]{OEIS}).
\end{example}

The shape $\nabla_{n+1}/\nabla_n$ is a zigzag and so $g^{\nabla_{n+1}/\nabla_n} =
E_{2n+1}$ (see Figure~\ref{fig:Bexcited2cat}). Thus, as a corollary of
\eqref{eq:nhlfB}, we obtain a type $B$ variant
of the Euler-Catalan identity \eqref{eq:cat2euler}. Let $\Dyck^B(n)$
be the set of lattice paths $\mathsf{p}$ starting at $(0,0)$ with steps $(1,1)$ and
$(1,-1)$ of length $2n$ that stay on or above the $x$-axis. Note that
$|\Dyck^B(n)| = \binom{2n}{n}$, sometimes called the type $B$ Catalan
number \cite[\href{https://oeis.org/A000984}{A000984}]{OEIS}.

\begin{corollary}
\begin{equation} \label{eq:ECtypeB}
\sum_{\mathsf{p} \in \Dyck^B(n)} \prod_{(a,b)\in \mathsf{p}} \frac{1}{wt(a,b)}
  = \frac{E_{2n+1}}{(2n+1)!}, \quad \text{where} \quad wt(a,b) = \begin{cases}
2b+1 & \text{ if } a \leq n,\\
2b+2 & \text{ if }  n<a < 2n,\\
b+1 & \text{ if } a=2n.
\end{cases}
 \tag{EC-B}
\end{equation}
\end{corollary}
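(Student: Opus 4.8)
The plan is to run the type~$B$ analogue of the proof of Corollary~\ref{cor:euler-nhlf}. I would apply the type~$B$ NHLF~\eqref{eq:nhlfB} to the shifted skew shape $\lambda/\mu = \nabla_{n+1}/\nabla_n$, which is a shifted zigzag of $2n+1$ cells, so that $g^{\nabla_{n+1}/\nabla_n} = E_{2n+1}$ and $|\nabla_{n+1}/\nabla_n| = 2n+1$. Then~\eqref{eq:nhlfB} reads
\[
\frac{E_{2n+1}}{(2n+1)!} \, = \, \sum_{S \in \ED^B(\nabla_{n+1}/\nabla_n)} \, \prod_{(i,j) \in [\nabla_{n+1}] \setminus S} \frac{1}{h^B(i,j)}\,,
\]
and it remains to identify the right-hand side with the sum over $\Dyck^B(n)$ in~\eqref{eq:ECtypeB}.

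The first step is to set up the bijection between the index set $\ED^B(\nabla_{n+1}/\nabla_n)$ and $\Dyck^B(n)$. As noted after~\eqref{eq:nhlfB}, type~$B$ excited diagrams are equivalent to non-intersecting paths; since $\nabla_{n+1}/\nabla_n$ is a single strip, each complement $[\nabla_{n+1}]\setminus S$ is the support of a single lattice path in the shifted diagram. Rotating this path by $45^\circ$ clockwise, exactly as in the straight case of Corollary~\ref{cor:excitedcat} and as illustrated in Figure~\ref{fig:Bexcited2cat}, yields a lattice path of length $2n$ with up/down steps staying weakly above the axis, i.e.\ an element of $\Dyck^B(n)$; the failure to return to the axis reflects the type~$B$ boundary behavior at the diagonal. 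The counts agree, $|\ED^B(\nabla_{n+1}/\nabla_n)| = \binom{2n}{n} = |\Dyck^B(n)|$ by~\eqref{eq:typeBzigzag} with $k=1$, confirming that the rotation is a bijection; under it the $2n+1$ cells of the complement match the $2n+1$ points $(a,b)$ of the image path, with the northeast corner sent to $a=0$ and the diagonal endpoint to $a=2n$.

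It then suffices to show that, under this correspondence, the type~$B$ hook-length $h^B(i,j)$ of the cell carrying the point $(a,b)$ equals $wt(a,b)$. The three cases of $wt$ should arise from three regimes of the type~$B$ hook in $[\nabla_{n+1}]$: cells far enough from the main diagonal, whose type~$B$ hook coincides with the ordinary straight-shape value $2b+1$ (the portion $a\le n$); cells whose leg reaches the diagonal, where the appended $j$th row together with the doubly counted cell $(j,j)$ modifies the hook to $2b+2$ (the portion $n<a<2n$); and the single terminal diagonal cell, whose hook is $b+1$ (the point $a=2n$). The horizontal coordinate $a$ records exactly when the rotated path has crossed past the diagonal, which is why the case boundaries fall at $a=n$ and $a=2n$.

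I expect the hook-length bookkeeping to be the main obstacle. One must track precisely how the type~$B$ convention---the extra row attached at $(j,j)$ together with the doubled diagonal cell---feeds into $h^B$ as the complement path sweeps from the northeast corner down to its diagonal endpoint, and verify that the value is $2b+1$, $2b+2$, or $b+1$ according to $a$; matching the intermediate regime correctly is the delicate point, since there the type~$B$ augmentation interacts with the height of the path. Everything else transcribes the type~$A$ argument of Corollary~\ref{cor:euler-nhlf}. Once $h^B(i,j) = wt(a,b)$ is established cell by cell, substituting it into the displayed identity turns the excited-diagram sum into the claimed sum over $\Dyck^B(n)$, yielding~\eqref{eq:ECtypeB}.
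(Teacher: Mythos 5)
Your route is exactly the one the paper intends: apply the type~$B$ NHLF~\eqref{eq:nhlfB} to $\nabla_{n+1}/\nabla_n$, identify complements of diagrams in $\ED^B(\nabla_{n+1}/\nabla_n)$ with single lattice paths and hence with $\Dyck^B(n)$ (the counts matching by~\eqref{eq:typeBzigzag} with $k=1$), and translate the type~$B$ hook-lengths into the weight $wt(a,b)$. But the step you yourself flag as the ``main obstacle'' is precisely where the plan breaks, because your guiding claim --- that the horizontal coordinate $a$ records when the type~$B$ augmentation kicks in, with regimes $a\le n$, $n<a<2n$, $a=2n$ --- is false. Under the correspondence (northeast corner at $a=0$, diagonal endpoint at $a=2n$), the cell $(i,j)$ of the complement maps to the point $(a,b)=\bigl(2n-(j-i),\,2n+2-i-j\bigr)$, and a direct computation in $\nabla_{n+1}=(2n+1,2n-1,\ldots,1)$ gives: $h^B(i,j)=2b+1$ when the column of $(i,j)$ misses the diagonal, i.e.\ $j>n+1$, which translates to $a+b<2n$; $h^B(i,j)=2b+2$ when $i<j\le n+1$, i.e.\ $a+b\ge 2n$ and $a<2n$ (here the appended $j$th row and the doubled cell $(j,j)$ contribute); and $h^B(i,i)=\lambda_i=b+1$ at $a=2n$. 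The case boundary is the antidiagonal $a+b=2n$, \emph{not} the vertical line $a=n$: whether the hook reaches the diagonal depends on the column index $j=2n+1-(a+b)/2$, which a path point of given $a$ does not determine.

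Concretely, the point $(n,n)$ carries hook $2n+2$, not $2n+1$: for $n=2$ the cell $(1,3)$ of $\nabla_3=(5,3,1)$ has $h^B=6$ (visible among the factors $8\cdot 6\cdot 5\cdot 3\cdot 1$ in the paper's own worked example), while the $a$-based rule gives $5$. Conversely, for $n=3$ the point $(4,0)$ on the path $+-+-+-$ corresponds to the cell $(3,5)$ of $\nabla_4=(7,5,3,1)$ with $h^B=1=2b+1$, even though $n<a<2n$. In fact the displayed statement, read literally, is false: for $n=1$ it would give $\frac{1}{1\cdot 3\cdot 1}+\frac{1}{1\cdot 3\cdot 3}=\frac49\ne\frac{E_3}{3!}=\frac13$, and for $n=2$ it gives $\frac{31}{225}\ne\frac{16}{120}$, whereas the hook products $12,36,72,216,432,720$ in the paper's example match the corrected weight with cases governed by $a+b$. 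So there is a misprint in the corollary itself (which you could not have known writing blind), and your verification, carried out honestly in the middle regime, would have refuted the printed $wt$ rather than confirmed it. The repair is to replace the conditions ``$a\le n$'' and ``$n<a<2n$'' by ``$a+b<2n$'' and ``$a+b\ge 2n$, $a<2n$'' respectively; with that change your argument, which is the paper's, goes through cell by cell. (A minor further point: equality of cardinalities plus well-definedness does not by itself make your rotation a bijection, though here injectivity is immediate since the cell path is recovered from the lattice path.)
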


\begin{example}
Figure~\ref{fig:Bexcited2cat} shows the $\binom{4}{2}$ excited
diagrams of shape $\nabla_3/\nabla_2$. By taking their complements and
reflecting vertically, we obtain the paths in $\Dyck^B(2)$. Either using
$wt(a,b)$ on the paths or the
hook-lengths for the shape $\nabla_3/\nabla_2$ (see
Figure~\ref{fig:BDhooks} right), \eqref{eq:ECtypeB} gives
\[
\frac{1}{4\cdot 3\cdot 1^3} \. + \. \frac{1}{6\cdot 4 \cdot 3 \cdot 1^2}  \. + \.
\frac{1}{4\cdot 3^2\cdot 1^2}  \. + \.  \frac{1}{6\cdot 4\cdot 3^2\cdot 1}  \. + \.
\frac{1}{8\cdot 6 \cdot 3^2\cdot 1}  \. + \.  \frac{1}{8\cdot 6 \cdot 5 \cdot
  3 \cdot 1} \, = \, \frac{16}{5!}\..
\]
\end{example}

\begin{figure}
\includegraphics[scale=0.8]{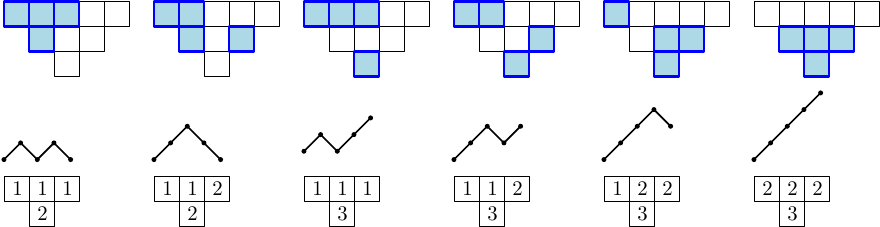}
\caption{Correspondence between type $B$ excited diagrams in
  $\nabla_3/\nabla_2$, paths in $\Dyck^B(2)$ and flagged tableaux of shape $\nabla_2$ with flag
  $(2,3)$.}
\label{fig:Bexcited2cat}
\end{figure}

\begin{remark}
The complements of type $D$ excited diagrams of the shape
  $\nabla_{n+1}/\nabla_n$ are just the Dyck paths in $\Dyck(n)$, thus
  $|\ED^D(\nabla_{n+1}/\nabla_n)| = C_n$. In addition, one can see that \eqref{eq:nhlfD} for $\nabla_{n+1}/\nabla_n$ is just \eqref{eq:cat2euler}. It would be of interest to
  find a formula for $|\ED^D(\nabla_{n+k}/\nabla_n)|$ analogous to
  \eqref{eq:typeBzigzag}.
\end{remark}

\medskip \subsection{$q$-analogue of Euler numbers via SSYT}
We use our first $q$-analogue of NHLF (Theorem~\ref{thm:skewSSYT})  to obtain identities
for $s_{\delta_{n+2k}/\delta_n}(1,q,q^2,\ldots)$ in terms of Dyck
paths.

% Recall that $E_n(q) :=\sum_{\sigma\in
%     \Alt(n)} q^{\maj(\sigma^{-1})}$ and
% \[
% E_{2n+1}(q) = s_{\delta_{n+2}/\delta_n}(1,q,q^2,\ldots)\cdot \prod_{i=1}^{2n+1} (1-q^i).
% \]

% We have the
% following identity for $E_{2n+1}(q)$.
% \begin{corollary} \label{cor:qEulerPath}
% \[
% \frac{E_{2n+1}(q)}{(1-q)(1-q^2)\cdots (1-q^{2n+1}) } = \sum_{\gamma \in \Dyck(n)} {\bigg [}\prod_{(a,b) \in
%  \gamma}\frac{q^b}{1-q^{2b+1}} {\bigg ]},
% \]
% where $(a,b)\in \gamma$ denotes a point $(a,b)$ of the Dyck path $\gamma$.
% \end{corollary}

\begin{proof}[Proof of Corollary~\ref{cor:euler-nhlf-ssyt}]
By Theorem~\ref{thm:skewSSYT} for the skew shape $\delta_{n+2}/\delta_n$ and \eqref{eq:qEuler} we have
\begin{equation} \label{eq:pf1stqEuler}
\frac{E_{2n+1}(q)}{(1-q)(1-q^2)\cdots (1-q^{2n+1}) }= \sum_{D \in
  \ED({\delta_{n+2}/\delta_n})} \prod_{(i,j)\in
  [\delta_{n+2}]\setminus D} \frac{q^{\lambda'_j-i}}{1-q^{h(i,j)}}.
\end{equation}
Suppose $D$ in $\ED(\delta_{n+2}/\delta_n)$ corresponds to the Dyck path
$\p$ and cell $(i,j)$ in $D$ corresponds to point
$(a,b)$ in $\p$ then $h(i,j)=2b+1$ and $\lambda'_j-i=b$. Using
this correspondence, the LHS of \eqref{eq:pf1stqEuler} becomes the LHS
of the
desired expression.
\end{proof}

\begin{corollary}\label{cor:thick-det-ssyt}
$$
\sum_{\substack{(\p_1,\ldots,\p_k) \in
    \Dyck(n)^k\\ \text{\rm{noncrossing}}}} \, \. \prod_{r=1}^k \. \prod_{(a,b) \in
  \p_r}\frac{q^{b+2r-2}}{1-q^{2b + 4r-3}} \.
  \, = \,  \.
\left(\prod_{r=1}^{k-1} \. [4r-1]!!\right)^2 \,
\det \left[ \wt{E}_{2(n+i+j)-3}(q)\right]^{k}_{i,j=1}\,
 $$
where \. $\widetilde{E}_n(q) := E_n(q)/(1-q)(1-q^2)\cdots (1-q^n)$ \. and \.
$[2m-1]!!:=(1-q)(1-q^3)\cdots (1-q^{2m-1})$\ts.
\end{corollary}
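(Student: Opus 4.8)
The plan is to run the same argument that established the $q=1$ identity in Corollary~\ref{cor:kEulerPath}, but with the SSYT $q$-analogue of the NHLF (Theorem~\ref{thm:skewSSYT}) in place of the NHLF and with Corollary~\ref{cor:qEdet} in place of Corollary~\ref{cor:Edet}. The right-hand side is then disposed of immediately: by Corollary~\ref{cor:qEdet} the determinant $\det[\wt E_{2(n+i+j)-3}(q)]_{i,j=1}^k$ is exactly $s_{\delta_{n+2k}/\delta_n}(1,q,q^2,\ldots)$, so it remains to show that the left-hand sum equals $\bigl(\prod_{r=1}^{k-1}[4r-1]!!\bigr)^2$ times this principal specialization.

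First I would apply Theorem~\ref{thm:skewSSYT} to $\lambda/\mu=\delta_{n+2k}/\delta_n$, writing
\[
s_{\delta_{n+2k}/\delta_n}(1,q,q^2,\ldots) \, = \, \sum_{D\in\ED(\delta_{n+2k}/\delta_n)} \ \prod_{(i,j)\in[\delta_{n+2k}]\setminus D} \frac{q^{\lambda'_j-i}}{1-q^{h(i,j)}}\,.
\]
The simplification special to the staircase is that $\lambda=\delta_{n+2k}$ has $\lambda'_j=n+2k-j$, so every cell satisfies $\lambda'_j-i=n+2k-i-j=(h(i,j)-1)/2$; hence each factor is $q^{(h-1)/2}/(1-q^{h})$ and is governed by its hook length alone. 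I would then invoke Corollary~\ref{cor:excited2fandyckpaths} to convert the sum over $D$ into a sum over $k$-fans $(\p_1,\ldots,\p_k)\in\FDyck(k,n)$ via the map $\varphi$, exactly as in the proof of Corollary~\ref{cor:kEulerPath}.

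The heart of the argument is the computation of the weight of $\overline{D}=[\delta_{n+2k}]\setminus D$ under $\varphi$. In the $q=1$ case one uses the hook factorization $\prod_{u\in\overline D} h(u)=\bigl[\prod_{r=1}^{k-1}(4r-1)!!\bigr]^2\prod_r\prod_{(a,b)\in\p_r}(2b+4r-3)$ read off from Figure~\ref{fig:excited_cat_hooks}. Replacing each hook $h$ by $1-q^{h}$ gives the denominator $\bigl[\prod_{r=1}^{k-1}[4r-1]!!\bigr]^2\prod_r\prod_{(a,b)\in\p_r}(1-q^{2b+4r-3})$. For the numerators, the zigzag cell of $\p_r$ at height $b$ has hook $2b+4r-3$, so by the staircase identity it contributes $q^{(2b+4r-3-1)/2}=q^{b+2r-2}$, which is precisely the exponent in the claimed summand. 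Thus the zigzag cores of the $k$ paths reproduce $\prod_r\prod_{(a,b)\in\p_r}q^{b+2r-2}/(1-q^{2b+4r-3})$ on the nose; factoring the fan-independent contribution of the remaining (bookend) cells out of the sum and combining with Corollary~\ref{cor:qEdet} should then give the determinant.

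The step I expect to be most delicate is exactly this bookend bookkeeping. One must verify that the cells outside the $k$ zigzag cores—the ``gray'' vertical and horizontal segments of Figure~\ref{fig:excited_cat_hooks}—form the same multiset of hooks for every fan, so that their contribution pulls out of the sum, and then pin down that contribution precisely. The double-factorial denominators $\bigl(\prod_{r=1}^{k-1}[4r-1]!!\bigr)^2$ come out as in the $q=1$ case, but, unlike there, the bookends now also carry numerator powers $q^{(h-1)/2}$ that do not cancel by symmetry; confirming that these assemble into exactly the stated prefactor (and controlling the residual power of $q$ they produce) is the main computation. Everything else is a direct transcription of the $q=1$ argument, with $m\mapsto 1-q^{m}$ throughout.
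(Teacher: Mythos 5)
Your plan coincides with the paper's own proof, which consists of exactly the two reductions you describe: Corollary~\ref{cor:qEdet} identifies the determinant with $s_{\delta_{n+2k}/\delta_n}(1,q,q^2,\ldots)$, and Theorem~\ref{thm:skewSSYT} together with the fan correspondence of Corollary~\ref{cor:excited2fandyckpaths} converts that specialization into a sum over $\FDyck(k,n)$, with the hook bookkeeping of Corollary~\ref{cor:kEulerPath} transcribed via $m\mapsto 1-q^m$. Your local computations are correct: on the staircase every cell satisfies $\lambda'_j-i=(h(i,j)-1)/2$, so the core of the $r$-th path contributes $q^{b+2r-2}/(1-q^{2b+4r-3})$ per point; and the fan-independence of the bookends, which you worry about, is automatic, since by the proof of Corollary~\ref{cor:excited2fandyckpaths} the ladder moves fix the bookend segments cell-by-cell, not merely as a multiset of hooks. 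The genuine problem is the step you postponed: the residual $q$-powers on the bookends do \emph{not} assemble into the stated prefactor. The two bookends at level $r$ (for $1\le r\le k-1$) carry hooks $1,3,\ldots,4r-1$, hence contribute
\[
\Bigl(\ts\prod_{j=1}^{2r}\frac{q^{\ts j-1}}{1-q^{2j-1}}\Bigr)^{2}
\;=\;\frac{q^{\ts 2r(2r-1)}}{\bigl([4r-1]!!\bigr)^{2}}\ts,
\]
so the total fan-independent factor is $q^{B}\big/\bigl(\prod_{r=1}^{k-1}[4r-1]!!\bigr)^{2}$ with $B=\sum_{r=1}^{k-1}2r(2r-1)=k(k-1)(4k-5)/3$. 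Running your argument to the end therefore yields the displayed identity with an extra factor $q^{-B}$ on the right-hand side, analogous to the explicit $q^{-N}$ appearing in Conjecture~\ref{conj:RPPEdet}.

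That this monomial cannot be made to disappear is confirmed at $(n,k)=(1,2)$, where there is a single fan (both paths equal to the unique element of $\Dyck(1)$): the left-hand side equals $q^{8}/\bigl[(1-q)^2(1-q^3)(1-q^5)^2(1-q^7)\bigr]$, while
\[
\bigl([3]!!\bigr)^{2}\,\det\bigl[\ts\wt{E}_{2(i+j)-1}(q)\ts\bigr]_{i,j=1}^{2}
\;=\;(1-q)^2(1-q^3)^2\, s_{\delta_5}(1,q,q^2,\ldots)
\;=\;\frac{q^{10}}{(1-q)^2(1-q^3)(1-q^5)^2(1-q^7)}\ts,
\]
since $n(\delta_5)=10$; the ratio is $q^{2}=q^{B}$. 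So your proposal is the right proof and is the paper's proof, but carried out honestly it establishes the corollary only after inserting the correction $q^{-B}$: as printed the identity is exact for $k=1$ (where it reduces to Corollary~\ref{cor:euler-nhlf-ssyt} and $B=0$) but is off by the monomial $q^{B}$ for $k\ge2$. The paper's one-line proof glosses over precisely the bookend numerators you flagged, and your instinct that this is ``the main computation'' is exactly right --- you should finish it as above rather than assume the stated prefactor absorbs it.
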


\begin{proof}
For the LHS, use Corollary~\ref{cor:qEdet} to express
$s_{\delta_{n+2k}/\delta_n}(1,q,q^2,\ldots)$ in terms of $q$-Euler polynomials
$\wt E_m(q)$. For the RHS, first use Theorem~\ref{thm:skewSSYT} for the skew
shape $\delta_{n+2k}/\delta_n$ and then follow the same argument as
that of Corollary~\ref{cor:kEulerPath}.
\end{proof}

\begin{remark}
Combinatorial proofs of Corollary~\ref{cor:euler-nhlf} and
Corollary~\ref{cor:euler-nhlf-ssyt} are
obtained from the proof in Section~\ref{sec:all_shapes} of
\eqref{eq:Naruse}. Similarly, combinatorial proofs of Corollary~\ref{cor:kEulerPath} and Corollary~\ref{cor:thick-det-ssyt}
are obtained from the proof in Section~\ref{sec:all_shapes} of
\eqref{eq:Naruse} and the first $q$-NHLF for all shapes,
or from Konvalinka's bijective proof of \eqref{eq:Naruse} in \cite{Ko}
and the first $q$-NHLF for border strips.\footnote{Jang Soo Kim has a direct proof of Corollary~\ref{cor:euler-nhlf-ssyt}
using continued fractions and orthogonal polynomials (private
communication).}
\end{remark}

%----------------------------------------------------------------
\bigskip\section{Pleasant diagrams and RPP of border strips and thick strips} \label{sec:enum_strips_RPP}
%----------------------------------------------------------------

\nin
In this section we study pleasant diagrams in
$\PD(\delta_{n+2}/\delta_n)$ and our second $q$-analogue of NHLF
(Theorem~\ref{thm:skewRPP}) for RPP of shape
$\delta_{n+2}/\delta_n$. Recall that $\sPD(\lambda/\mu)$ denotes the
number of pleasant diagrams of shape $\lambda/\mu$.

\subsection{Pleasant diagrams and Schr\"{o}der numbers} \label{sec:pleas_is_schroeder}
Let $s_n$ be the $n$-th {\em  Schr\"oder number}
\cite[\href{https://oeis.org/A001003}{A001003}]{OEIS} which counts
lattice paths from $(0,0)$ to $(2n,0)$ with steps $(1,1)$, $(1,-1)$,
and $(2,0)$ that never
go below the $x$-axis and no steps $(2,0)$ on the $x$-axis.

\begin{theorem} \label{prop:pleas-schr}
We have: \ts
$\sPD(\delta_{n+2}/\delta_n)=2^{n+2}s_n$\ts, for all $n \ge 1$.
\end{theorem}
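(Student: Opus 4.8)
The plan is to combine the general pleasant-diagram count of Theorem~\ref{thm:num_pleasant} with the identification of $\NIP(\delta_{n+2}/\delta_n)$ with Dyck paths, and then to evaluate the resulting Dyck-path sum by a generating-function computation.

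First I would specialize Theorem~\ref{thm:num_pleasant} to the zigzag border strip $\lambda/\mu=\delta_{n+2}/\delta_n$. Here $k=1$, so each $\Gamma\in\NIP(\delta_{n+2}/\delta_n)$ is a single path, and by (the proof of) Corollary~\ref{cor:excitedcat} these are in bijection with Dyck paths $\p\in\Dyck(n)$ under the $45^\circ$ rotation. Under this bijection the excited peaks $\Lambda(\Gamma)$ become exactly the high peaks of $\p$ (the cells marked gray in Figure~\ref{fig:excited}), so that $\expeaks(\Gamma)=|\HP(\p)|$; I would verify this by checking that a ladder move on $\Gamma$ corresponds to the local move on $\p$ that creates or destroys a high peak, matching the recursion defining $\Lambda$. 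Since $|\delta_{n+2}/\delta_n|=2n+1$, Theorem~\ref{thm:num_pleasant} then gives
\[
\sPD(\delta_{n+2}/\delta_n) \, = \, \sum_{\p\in\Dyck(n)} 2^{\,2n+1-|\HP(\p)|} \, = \, 2^{2n+1}\sum_{\p\in\Dyck(n)} 2^{-|\HP(\p)|}.
\]

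It then remains to show $\sum_{\p\in\Dyck(n)}2^{-|\HP(\p)|}=s_n/2^{n-1}$. Let $A(x)=\sum_{n\ge 0}x^n\sum_{\p\in\Dyck(n)}2^{-|\HP(\p)|}$. Decomposing a Dyck path into its sequence of primitive arches, I would use the key observation that a nontrivial arch $U\p'D$ turns every peak of $\p'$ into a high peak and creates no others, while the trivial arch $UD$ has no high peak. Writing $T(x)=T(x,\tfrac12)$ for the Narayana (peak) generating function $T(x,y)=\sum_{\p}x^{|\p|}y^{\#\mathrm{peaks}(\p)}$ specialized at $y=\tfrac12$, this observation yields that the arch generating function equals $x\,T(x)$, hence $A(x)=1/(1-x\,T(x))$. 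Using the quadratic $xT^2+(xy-x-1)T+1=0$ for the Narayana series and substituting $y=\tfrac12$ gives the closed form $A(x)=\bigl(1-\tfrac{x}{2}-\sqrt{1-3x+\tfrac{x^2}{4}}\bigr)/x$, which one checks equals $2S(x/2)-1$ for the little Schr\"oder series $S(x)=\sum_{n}s_n x^n=(1+x-\sqrt{1-6x+x^2})/(4x)$. Extracting coefficients gives $[x^n]A(x)=s_n/2^{n-1}$ for $n\ge 1$, and therefore $\sPD(\delta_{n+2}/\delta_n)=2^{2n+1}\cdot s_n/2^{n-1}=2^{n+2}s_n$.

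The main obstacle is the first step: justifying carefully that the excited-peak statistic $\expeaks$ on $\NIP(\delta_{n+2}/\delta_n)$ matches the high-peak statistic $|\HP(\p)|$ of the corresponding Dyck path, since the definition of $\Lambda(\Gamma)$ is recursive along ladder moves and one must check this recursion is compatible with the rotation identifying paths with $\Dyck(n)$. The generating-function computation, by contrast, is routine once the arch-decomposition identity $B(x)=x\,T(x)$ is in hand; I would sanity-check it against the small cases $n=1,2,3$, where the sum evaluates to $8=2^{3}s_1$, $48=2^{4}s_2$, and $352=2^{5}s_3$.
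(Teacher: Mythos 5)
Your proposal is correct, and its first half coincides with the paper's: specializing Theorem~\ref{thm:num_pleasant} to $\delta_{n+2}/\delta_n$, identifying $\NIP(\delta_{n+2}/\delta_n)$ with $\Dyck(n)$, and matching excited peaks with high peaks is exactly the content of Corollary~\ref{lem:pleasant_is_subset_catalan}, so the step you flag as the main obstacle is precisely what the paper isolates as a lemma (and your plan --- checking that ladder moves commute with the $45^\circ$ rotation and with the recursive definition of $\Lambda$ --- is the right justification). Where you genuinely diverge is the evaluation of $\sum_{\p\in\Dyck(n)}2^{-|\HP(\p)|}$. The paper cites two known results: Deutsch's theorem that Dyck paths with $k-1$ high peaks are counted by the Narayana number $N(n,k)$, and Sulanke's identity $s_n=\sum_k N(n,k)\ts 2^{k-1}$, combined via the symmetry $N(n,k)=N(n,n-k+1)$. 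You instead give a self-contained generating-function argument: the arch decomposition, the observation that a nontrivial arch $U\p'D$ promotes every peak of $\p'$ (of any height) to a high peak while creating none --- which is correct, since arch junctions are valleys and the endpoints of a nonempty elevated arch are not peaks --- reduces the high-peak series to the Narayana series at $y=\tfrac12$, and the algebra checks out: $A(x)=\bigl(1-\tfrac{x}{2}-\sqrt{1-3x+\tfrac{x^2}{4}}\bigr)/x=2S(x/2)-1$, giving $[x^n]A(x)=s_n/2^{n-1}$ and hence $\sPD(\delta_{n+2}/\delta_n)=2^{2n+1}\cdot s_n/2^{n-1}=2^{n+2}s_n$. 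Your route is longer but buys independence from the two cited enumerative facts (everything reduces to the standard Narayana quadratic $xT^2+(xy-x-1)T+1=0$ and the little Schr\"oder generating function), and it makes the appearance of $s_n$ transparent as a $y=\tfrac12$ specialization; the paper's route is shorter and stays purely combinatorial, but leans on Deutsch's involution and Sulanke's Narayana-sum formula as black boxes. Both are valid proofs of Theorem~\ref{prop:pleas-schr}.
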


\begin{figure}[hbt]
\begin{center}
\includegraphics[scale=0.8]{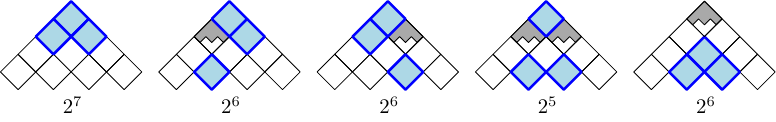}
\caption{Each Dyck path $\p$ of size $n$ with $m$ excited peaks
  (denoted in gray) yields $2^{2n-m+2}$ pleasant
  diagrams. For $n=3$, we have $C_{5}=5$ and $s_3=11$.  Thus,
  there are $|\ED(\delta_{3+2}/\delta_3)|=C_3 = 5$
  excited diagrams and $\sPD(\delta_{3+2}/\delta_3)=2^5\ts s_3=352$ pleasant
diagrams.} \label{fig:sch_pleasant}
\end{center}
\end{figure}

The proof is based on the following
corollary which is in turn a  direct application of Theorem~\ref{thm:num_pleasant}.
A {\em high peak} of a Dyck path $\p$ is a peak of height strictly
greater than one. We denote by $\HP(\p)$ the set of high peaks of~$\p$,
and by $\nonpeaks(\p)$ the points of the path that are not high
peaks. We use $2^{\mathcal{S}}$ denote the set of subsets of $\mathcal{S}$.

\begin{corollary} \label{lem:pleasant_is_subset_catalan}
The pleasant diagrams in $\PD(\delta_{n+2}/\delta_n)$ are in
bijection with
$$
\bigcup_{\p \in \Dyck(n)}
\Bigl(\ts\HP(\p) \times 2^{\nonpeaks(\p)}\ts\Bigr)\..$$
\end{corollary}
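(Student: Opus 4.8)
The plan is to obtain this statement as the specialization of Theorem~\ref{thm:num_pleasant} to the zigzag shape $\lambda/\mu = \delta_{n+2}/\delta_n$, translated through the Dyck-path description of its non-intersecting paths. Since $\delta_{n+2}/\delta_n$ is a border strip, we have $k=1$, so each element $\Gamma \in \NIP(\delta_{n+2}/\delta_n)$ is a single lattice path $\ga$ from $(n+1,1)$ to $(1,n+1)$ inside $[\delta_{n+2}]$. Theorem~\ref{thm:num_pleasant} already writes $\PD(\delta_{n+2}/\delta_n)$ as the disjoint union $\bigcup_{\Gamma} \bigl(\Lambda(\Gamma) \times 2^{\Gamma \setminus \Lambda(\Gamma)}\bigr)$, so the whole task is to re-index this union by Dyck paths and to match the two decorating statistics: the excited-peak set $\Lambda(\Gamma)$ with the high-peak set $\HP(\p)$, and its complement $\Gamma \setminus \Lambda(\Gamma)$ with $\nonpeaks(\p)$.

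First I would invoke the bijection from Corollary~\ref{cor:excitedcat} (see Figure~\ref{fig:excited2cat}): rotating $\ga$ by $45^\circ$ clockwise identifies $\NIP(\delta_{n+2}/\delta_n)$ with $\Dyck(n)$, sending each cell $(r,s)\in\ga$ to a lattice point $(a,b)$ of the corresponding Dyck path $\p$, and sending the convex corners of $\ga$ to the peaks of $\p$, with the height $b$ recording the distance of the corner from the bottom diagonal. Under this identification the points of $\p$ are exactly the cells of the support $\Gamma$, so the correspondence $\Gamma\setminus\Lambda(\Gamma) \leftrightarrow \nonpeaks(\p)$ follows automatically once the peak statistics are matched.

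The crux --- and the step I expect to be the main obstacle --- is the claim that under this rotation $\Lambda(\Gamma)$ is precisely the set of high peaks $\HP(\p)$ (peaks of height $b>1$). I would prove this by induction on the number of excited/ladder moves needed to reach $\Gamma$ from the Kreiman path $[\delta_{n+2}/\delta_n]$. For the base case, $[\delta_{n+2}/\delta_n]$ rotates to the lowest Dyck path, an alternating sequence of $n$ up-steps and down-steps, all of whose peaks sit at height $1$; hence $\HP(\p)=\varnothing=\Lambda([\delta_{n+2}/\delta_n])$. For the inductive step I would read off the local effect of a single excited move at an active cell $u=(i,j)$: on the path side it lifts the convex corner from $(i+1,j+1)$ to $(i,j)$, which on the Dyck side raises that corner off height $1$ and produces a genuine high peak at the image of $u$, while the images of $(i,j+1)$ and $(i+1,j)$ --- if they were peaks --- are absorbed into the ascending and descending slopes of the new taller peak and cease to be peaks. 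This is exactly the transition rule $\Lambda(\alpha_u(\Gamma)) = \bigl(\Lambda(\Gamma)-\{(i,j+1),(i+1,j)\}\bigr)\cup\{u\}$ defining excited peaks, so the two sets evolve identically along any sequence of moves.

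The delicate point to get right in that step is the height-$1$ threshold: I must check that the corner created by the first move off the bottom diagonal is already a high peak (height $2$), so that newly created excited peaks never coincide with the non-counted height-$1$ peaks, and conversely that a height-$1$ peak of $\p$ is never an excited peak. Granting the inductive claim, substituting $\HP(\p)$ for $\Lambda(\Gamma)$ and $\nonpeaks(\p)$ for $\Gamma\setminus\Lambda(\Gamma)$ in the disjoint union of Theorem~\ref{thm:num_pleasant} yields $\PD(\delta_{n+2}/\delta_n) = \bigcup_{\p\in\Dyck(n)}\bigl(\HP(\p)\times 2^{\nonpeaks(\p)}\bigr)$, as claimed.
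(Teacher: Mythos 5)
Your proposal is correct and takes essentially the same route as the paper: specialize Theorem~\ref{thm:num_pleasant} to the zigzag (where $k=1$), identify $\NIP(\delta_{n+2}/\delta_n)$ with $\Dyck(n)$ via the $45^\circ$ rotation, and match $\Lambda(\Gamma)$ with $\HP(\p)$ and $\Gamma\setminus\Lambda(\Gamma)$ with $\nonpeaks(\p)$. The only difference is that the paper simply asserts that the excited peaks of a Dyck path are exactly the high peaks, whereas you supply the (correct) induction on ladder moves justifying it, including the height-$1$ threshold check that a new peak created by a move sits at height at least~$2$.
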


\begin{proof}
By Corollary~\ref{cor:excited2fandyckpaths} for the zigzag strip
$\delta_{n+2}/\delta_n$, $\NIP(\delta_{n+2}/\delta_n)$ is
the set of Dyck paths $\Dyck(n)$. Then by Theorem~\ref{thm:num_pleasant}, we have:
\[
\PD(\delta_{n+2}/\delta_n) = \bigcup_{\p \in \Dyck(n)}  \Bigl(\Lambda(\p) \times
2^{\p \setminus \Lambda(\p)}\Bigr).
\]
Lastly, note that the excited peaks of a Dyck path are exactly the
high peaks so $\Lambda(\p) = \HP(\p)$ and $\p \setminus \Lambda(p)=\nonpeaks(\p)$.
\end{proof}

\begin{proof}[Proof of Theorem~\ref{prop:pleas-schr}]
It is known (see~\cite{Deutsch}), that the number of Dyck paths of size~$n$
with~$k-1$ high peaks equals the {\em Narayana number} \ts
$N(n,k)\ts =\ts \frac{1}{n}\binom{n}{k}\binom{n}{k-1}$.
On the other hand, Schr\"{o}der numbers~$s_n$ can be written as
\begin{equation} \label{eq:Sch-Nar}
s_n \, = \, \sum_{k=1}^n \. N(n,k)\ts 2^{k-1}
\end{equation}
(see e.g.~\cite{Sulanke}).
By Lemma~\ref{lem:pleasant_is_subset_catalan}, we have:
\begin{equation} \label{eq:pfplesch}
\sPD(\delta_{n+2}/\delta_n) \, = \, \sum_{\p \in \Dyck(n)} \.
2^{|\nonpeaks(\p)|}\..
\end{equation}
Suppose Dyck path $\gamma$ has $k-1$ peaks, $1\leq k \leq n$.
Then $|\nonpeaks(\gamma)|= 2n+1-(k-1)$. Therefore, equation~\eqref{eq:pfplesch} becomes
$$
\sPD(\delta_{n+2}/\delta_n) \, = \, 2^{n+2}\. \sum_{k=1}^n \. N(n,k)\ts 2^{n-k}
\, = \, 2^{n+2}\. \sum_{k=1}^n \. N(n,n-k+1)\ts 2^{n-k} \, = \, 2^{n+2}\ts s_n\.,
$$
where we use the symmetry $N(n,k)=N(n,n-k+1)$ and~\eqref{eq:Sch-Nar}.
\end{proof}

In the same way as $|\ED(\delta_{n+2k}/\delta_n)|$ is given by a
determinant of Catalan numbers, preliminary computations suggest that
$\sPD(\delta_{n+2k}/\delta_n)$ is given by a determinant of
Schr\"oder numbers.

\begin{conjecture} \label{conj:plesantdetSch}
We have: \. $\sPD(\delta_{n+4}/\delta_n) \ts = \ts 2^{2n+5} ({s}_n \ts s_{n+2}\ts -\ts {s}_{n+1}^2)$.
More generally, for all $k \ge 1$, we have:
$$
\sPD(\delta_{n+2k}/\delta_n) \, = \, 2^{\binom{k}{2}} \det \bigl[\mathfrak{s}_{n-2+i+j}\bigr]_{i,j=1}^k\,,
\quad \text{where} \ \ \mathfrak{s}_n = 2^{n+2}s_n\..
$$
\end{conjecture}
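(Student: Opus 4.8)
The plan is to upgrade the Catalan determinant of Corollary~\ref{cor:excitedcat} to a Schr\"oder determinant by carrying the peak weight through the same Lindstr\"om--Gessel--Viennot machinery. By Theorem~\ref{thm:num_pleasant} together with Corollary~\ref{cor:excited2fandyckpaths}, I would first write
\[
\sPD(\delta_{n+2k}/\delta_n) \,=\, \sum_{\Gamma\in\FDyck(k,n)} 2^{\,|\delta_{n+2k}/\delta_n|-\expeaks(\Gamma)},
\]
so that the entire problem becomes the evaluation of a peak-weighted sum over noncrossing $k$-fans of Dyck paths. Since the $k=1$ case is exactly Theorem~\ref{prop:pleas-schr}, whose proof (via Corollary~\ref{lem:pleasant_is_subset_catalan} and the Narayana expansion $s_n=\sum_k N(n,k)2^{k-1}$) identifies the single-path weighted sum with $\mathfrak{s}_n=2^{n+2}s_n$, the target determinant $\det[\mathfrak{s}_{n-2+i+j}]$ is essentially forced; what remains is to produce it by an LGV argument parallel to Theorem~\ref{thm:num_excited_HG}.

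The key step, and the main obstacle, is a locality lemma: for a noncrossing fan $\Gamma\leftrightarrow(\p_1,\dots,\p_k)$ I want $\expeaks(\Gamma)=\sum_{i=1}^k|\HP(\p_i)|$, with the excited peaks of $\Gamma$ being the disjoint union of the high peaks of the components. This is precisely the decomposition that the Remark after Theorem~\ref{thm:num_pleasant} warns can fail, because an excited-peak move on an active cell $u=(i,j)$ deletes the cells $(i,j+1)$ and $(i+1,j)$, which a priori may belong to a \emph{different} component of the fan than $u$. I would establish the lemma by exploiting the rigidity of the staircase geometry: for the Kreiman paths of $\delta_{n+2k}/\delta_n$ the components are translated zigzags, and I would show that a high peak is created only strictly inside a single component, so the two deleted cells always lie on the same path as $u$ and no cross-component cancellation of peaks occurs. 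The delicate case is a pair of tightly nested components at diagonal distance one, where the two deleted cells could straddle two paths; controlling exactly these adjacencies is where the proof will be won or lost, and I expect the correction factor $2^{\binom{k}{2}}$ to be accounted for here (or, equivalently, by the bookend segments of the $k$ Kreiman paths).

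Granting the locality lemma, the weight factors as $\prod_i 2^{|\p_i|-|\HP(\p_i)|}$, i.e.\ an honest product of per-path weights, each depending only on its own Dyck path. I would then apply Lemma~\ref{lem:LGVlambda} to the noncrossing fan with these local weights, exactly as in the proofs of Theorem~\ref{thm:num_excited_HG} and Corollary~\ref{cor:excitedcat}, to turn the fan sum into $\det\bigl[W_{ij}\bigr]_{i,j=1}^k$, where $W_{ij}$ is the single-strip weighted path sum between the staggered Kreiman endpoints. By the $k=1$ identification each $W_{ij}$ equals $s_{n-2+i+j}$ up to a power of two, giving
\[
\sPD(\delta_{n+2k}/\delta_n)\,=\,2^{c(k,n)}\det\bigl[s_{n-2+i+j}\bigr]_{i,j=1}^k
\]
for an explicit exponent $c(k,n)$; absorbing this exponent into the entries recovers $\mathfrak{s}_{n-2+i+j}=2^{n+i+j}s_{n-2+i+j}$ and leaves the global prefactor, which I would pin down and verify against the stated instance $\sPD(\delta_{n+4}/\delta_n)=2^{2n+5}(s_ns_{n+2}-s_{n+1}^2)$.

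As a fallback should the peak weight resist the LGV involution at shared lattice points, I would avoid the peak statistic altogether: first expand each weighted Dyck path into Schr\"oder paths by the standard peak-doubling bijection, converting $2^{|\nonpeaks(\p)|}$ into a genuine count of Schr\"oder paths carrying only \emph{local} step weights, and then apply LGV directly to noncrossing $k$-fans of Schr\"oder paths, where the sign-reversing involution is unproblematic and the resulting Hankel-type determinant is $\det[s_{n-2+i+j}]$ by construction. The cost is upgrading the peak-doubling bijection to one that preserves the noncrossing condition on fans, which I expect to be the technical heart of this alternative route.
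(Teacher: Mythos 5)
You should first be aware that the statement you were asked to prove is not a theorem of the paper: it is stated as Conjecture~\ref{conj:plesantdetSch}, supported only by preliminary computations, and the paper deliberately offers no proof. Indeed, Section~\ref{sec:LP4pleasant} shows that the naive Lascoux--Pragacz-type determinant for pleasant diagrams is false for general shapes ($\sPD(5441/21)=6912$, while the corresponding $2\times 2$ determinant gives $4352$), and the remark there isolates exactly the obstruction your main route runs into: excited peaks depend on the local structure of the paths and not merely on the coordinates of their supports, so the tail-swapping involution behind Lemma~\ref{lem:LGVlambda} does not preserve the weight $2^{|\lambda/\mu|-\expeaks(\Gamma)}$. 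The remark following the conjecture reiterates that no Lascoux--Pragacz analogue for $\sPD$ is known. So there is no proof in the paper to compare yours against; the question is whether your proposal closes the gap, and as written it does not.

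Your reduction of $\sPD(\delta_{n+2k}/\delta_n)$ to a peak-weighted sum over $\FDyck(k,n)$ via Theorem~\ref{thm:num_pleasant} and Corollary~\ref{cor:excited2fandyckpaths} is correct, and your ``locality lemma'' is actually the most tractable part: for any skew shape, the three cells $(i,j+1)$, $(i+1,j+1)$, $(i+1,j)$ surrounding an active cell $u$ must be consecutive cells of a \emph{single} path of $\Gamma$ --- the path through $(i+1,j)$ cannot end there (since $(i+1,j+1)\in[\lambda]$) and cannot enter $u\in D$, while no second path can reach $(i,j+1)$, whose only possible predecessors are $u$ and $(i+1,j+1)$ and which cannot be a starting point because endpoints are preserved by ladder moves and the cell below it lies in $[\lambda]$. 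Hence the peak bookkeeping is per-component, and with the rigid bookends from the proof of Corollary~\ref{cor:excited2fandyckpaths} one expects $\expeaks(\Gamma)=\sum_i |\HP(\p_i)|$ together with an explicit bookend power of two (there are $k(k-1)$ bookend cells, none of them peaks). But this is where the proof stops. Even granting locality, the per-path weight $2^{|\p_i|-|\HP(\p_i)|}$ is still not a product of per-cell weights $y_{r,s}$, so Lemma~\ref{lem:LGVlambda} simply does not apply: swapping tails at a crossing creates or destroys peaks at the swap point. This is precisely the failure the paper records, not a technicality to be pushed through ``as in Theorem~\ref{thm:num_excited_HG}.'' Your fallback is the genuinely promising route, since Schr\"oder paths carry honest local step weights, but it needs two things you do not supply: first, the single-strip identity behind Theorem~\ref{prop:pleas-schr} is not a naive peak-doubling --- the weight there is $2$ per \emph{non-peak} point, so one must first route through the Narayana symmetry $N(n,k)=N(n,n-k+1)$ (e.g.\ via Deutsch's involution \cite{Deutsch}) before high peaks can be expanded into flat steps; and second, that involution-plus-doubling must be shown compatible with the noncrossing condition on all $k$ paths simultaneously, which is exactly the open combinatorial content of the conjecture (including the precise origin of the global factor $2^{\binom{k}{2}}$, which you defer rather than derive). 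In short: your text is a sensible research program whose hard steps are correctly identified but left unproven; it is not a proof, and the statement remains open in the paper.
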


Here we use $\mathfrak{s}_n=\sPD(\delta_{n+2}/\delta_n)$ in place of $s_n$ in the determinant to make the
formula more elegant.  In fact, the power of~$2$ can be factored
out.

\begin{remark}
This conjecture is somewhat unexpected since unlike the number of excited
diagrams, there is no known 
Lascoux--Pragacz-type identity for the number of pleasant diagrams (see Section~\ref{sec:LP4pleasant}).
\end{remark}

% \begin{remark}
% Theorem~\ref{prop:pleas-schr} can also be proved by using a summation formula in
% Theorem~\ref{thm:num_pleasant} (see also Theorem~\ref{conj:charpleasant}). We
% chose a self contained proof since the combinatorics is simpler and more revealing
% in this case.
% \end{remark}

%\begin{remark}
%The number $2^{n+2}s_n$ count paths from $(0,0)$ to $(2n+2,0)$ with
%steps $(i,\pm j)$ where $i,j$ are nonnegative integers not both zero
%that stay strictly above the $x$-axis except for the start and
%end \cite[\href{https://oeis.org/A054726}{A054726}]{OEIS}. Finding a
% more direct proof of Theorem~\ref{prop:pleas-schr} by a
%correspondence between pleasant diagrams in
%$\PD(\delta_{n+2}/\delta_n)$ and these paths could be useful as
%a first step towards the proof of
%Conjecture~\ref{conj:plesantdetSch} using
%the Lindstr\"om--Gessel--Viennot lemma.
%\end{remark}

\medskip \subsection{$q$-analogue of Euler numbers via RPP}
We use our second $q$-analogue of the NHLF (Theorem~\ref{thm:skewRPP})
and Lemma~\ref{lem:pleasant_is_subset_catalan} to obtain identities
for the generating function of RPP of shape $\delta_{n+2}/\delta_n$ in
terms of Dyck paths. Recall the definition of $\EP_{n}(q)$ from the introduction:
\[
\EP_{n}(q) \, = \, \sum_{\sigma \in \Alt(n)} \. q^{\maj(\sigma^{-1} \kappa)}\.,
\]
where $\kappa=(13254\ldots)$. Note that $\maj(\sigma^{-1}\kappa)$ is the
sum of the descents of $\sigma \in \SS_n$ not involving both $2i+1$ and~$2i$.

\begin{example} \label{ex:qEulerB}
To complement Example~\ref{ex:qEulerA}, we have: \.
$\EP_1(q)=\EP_2(q) = 1$, \. $\EP_3(q)\ts =\ts q+1$, \.

\noindent
$\EP_4(q) \ts = \ts q^4+q^3+q^2+q+1$,
\ts and \. $\EP_5(q)\ts =\ts q^7+2q^6+2q^5+3q^4+3q^3+2q^2+2q+1$\ts.
\end{example}

\smallskip

\begin{proof}[Proof of Corollary~\ref{cor:euler-nhlf-rpp}]
By the theory of $P$-partitions,
the generating series of RPP of shape $\delta_{n+2}/\delta_n$ equals
\[
\sum_{\pi \in \RPP(\delta_{n+2}/\delta_n)} q^{|\pi|} \, = \,
\frac{\sum_{u \in \mathcal{L}(P_{\delta_{n+2}/\delta_n})} q^{\maj(u)}}{(1-q)(1-q^2)\cdots (1-q^{2n+1})}\,,
\]
where the sum in the numerator is over linear extensions $\mathcal{L}(P_{\delta_{n+2}/\delta_n})$ of the {\em
  zigzag} poset $P_{\delta_{n+2}/\delta_n}$ with the order preserving
labeling:
\begin{center}
\includegraphics[scale=0.8]{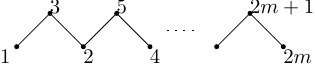}
\end{center}
These linear extensions
are in bijection with alternating permutations of size $2n+1$ and
\[
\EP_{2n+1}(q) \, = \, \sum_{\sigma\in \Alt_{2n+1}} \. q^{\maj(\sigma^{-1}\kappa)} \,
= \, \sum_{u\in \mathcal{L}(P_{\delta_{n+2}/\delta_n})} q^{\maj(u)}\..
\]
Thus
\begin{equation} \label{eq:RPP2ndqEuler}
\sum_{\pi \in \RPP(\delta_{n+2}/\delta_n)} q^{|\pi|} \, = \,
\frac{\EP_{2n+1}(q)}{(1-q)(1-q^2)\cdots (1-q^{2n+1})}\,.
\end{equation}
By Theorem~\ref{thm:num_pleasant} (see also \cite[\SS 6.4]{MPP1}) for the skew shape
$\delta_{n+2}/\delta_n$ and \eqref{eq:RPP2ndqEuler}, we have:
\begin{equation} \label{eq:pf2ndqEuler}
\sum_{D\in
  \ED(\delta_{n+2}/\delta_n)} q^{a'(D)} \prod_{u\in [\lambda]\setminus D}\frac{1}{1-q^{h(u)}} \,
=  \, \frac{\EP_{2n+1}(q)}{(1-q)(1-q^2)\cdots (1-q^{2n+1})}\,,
\end{equation}
where $a'(D)=\sum_{u \in \Lambda(D)} h(u)$. By the proof of
Lemma~\ref{lem:pleasant_is_subset_catalan}, if $D \in \ED(\delta_{n+2}/\delta_n)$
corresponds to the Dyck path~$\p$ then excited peaks $u\in \Lambda(D)$
correspond to high peaks $(c,d) \in \HP(\p)$ and $h(u)=2d+1$.  Using this
correspondence, the LHS of~\eqref{eq:pf2ndqEuler} becomes the LHS of the desired expression.
\end{proof}

Finally, preliminary computations suggest the following analogue
of Corollary~\ref{cor:qEdet}.
\begin{conjecture} \label{conj:RPPEdet}
We have:
$$
\sum_{\pi \in \RPP(\delta_{n+2k}/\delta_n)} q^{|\pi|} =
q^{-N}  \det
\left[\wt{E}^\ast_{2(n+i+j)-3}(q)\right]^{k}_{i,j=1}\,,
$$
where \ts $N  = k(k-1)(6n+8k-1)/6$ \ts and \ts
$\wt{E}^\ast_{k}(q) =\EP_{k}(q)/(1-q)\cdots (1-q^{k})\ts.$
\end{conjecture}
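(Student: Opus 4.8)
The plan is to establish the reverse plane partition analogue of Corollary~\ref{cor:qEdet}, mirroring as closely as possible the route that proved the SSYT case. Recall that Corollary~\ref{cor:qEdet} was deduced from the genuine Lascoux--Pragacz identity for Schur functions (Corollary~\ref{cor:schurzigzagdet}) together with the $P$-partition identity \eqref{eq:qEuler}, and that its determinant entries are the \emph{intrinsic} zigzag generating functions, with no correction factor. The base case $k=1$ of the conjecture is already in hand: it is precisely \eqref{eq:RPP2ndqEuler}, and indeed $N=0$ there. What remains is to produce a Lascoux--Pragacz--type determinantal identity for $\sum_{\pi \in \RPP(\delta_{n+2k}/\delta_n)} q^{|\pi|}$ and to locate the correction $q^{-N}$ with $N = k(k-1)(6n+8k-1)/6$. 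I would begin by reducing to a path model via Theorem~\ref{thm:skewRPP} and Theorem~\ref{thm:num_pleasant}: the generating function equals $\sum_{\Gamma \in \NIP(\delta_{n+2k}/\delta_n)} q^{a'(\Gamma)} \prod_{(r,s)\in\Gamma} (1-q^{h(r,s)})^{-1}$, where $a'(\Gamma)=\sum_{u\in\Lambda(\Gamma)} h(u)$ sums the big-shape hook lengths over the excited peaks of the \emph{entire} tuple. Factoring off the bookend segments of each strand (whose hooks are the fixed double-factorial values, exactly as in the hook bookkeeping $\prod_{r=1}^{k-1}(4r-1)!!$ of Corollary~\ref{cor:kEulerPath}) reduces the claim to a determinantal identity for the $k$-fans of Dyck paths, parallel to the proven SSYT statement Corollary~\ref{cor:thick-det-ssyt}.

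The clean mechanism I would hope for is the following. Over a single zigzag substrip $\theta_i\#\theta_j$ the intrinsic excited-peak weight sums to exactly $\wt{E}^\ast_{2(n+i+j)-3}(q)$, by the $k=1$ computation in the proof of Corollary~\ref{cor:euler-nhlf-rpp}. If the fan weight $q^{a'(\Gamma)}\prod(1-q^h)^{-1}$ factored as a product of per-strand intrinsic weights, then Lemma~\ref{lem:LGVlambda} would immediately yield $\det\bigl[\wt{E}^\ast_{2(n+i+j)-3}(q)\bigr]_{i,j=1}^k$, and the power $q^{-N}$ would emerge as a purely monomial discrepancy between the big-shape and intrinsic gradings. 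Concretely, two effects must be measured: (i) a cell on the $r$-th strand at Dyck-point $(c,d)$ has big-shape hook $2d+4r-3$ rather than the intrinsic $2d+1$, so each excited peak contributes an extra $q^{4(r-1)}$ to $a'(\Gamma)$; and (ii) the bookend double-factorials. Since these shifts are quadratic in $r$ and $N$ is cubic in $k$, summing $\sum_{r} q$-contributions of order $r^2$ across the strands is exactly the right shape to telescope to the stated $N$, which is the RPP shadow of the prefactor $\bigl(\prod_{r=1}^{k-1}[4r-1]!!\bigr)^2$ appearing in Corollary~\ref{cor:thick-det-ssyt}.

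The main obstacle is effect (i) combined with the non-locality of $\Lambda(\Gamma)$: the number of excited peaks on a given strand is \emph{not} fixed across fans, so the total extra exponent $\sum_{\text{peaks}} 4(r-1)$ varies from tuple to tuple, whereas a pure monomial $q^{-N}$ demands a path-independent correction. This is the same phenomenon that, as noted in Section~\ref{sec:LP4pleasant}, prevents a Lascoux--Pragacz identity for pleasant diagrams: the sign-reversing involution behind Lemma~\ref{lem:LGVlambda} does not preserve the excited-peak weight. I would therefore not apply LGV to that weight directly. Instead I would first settle the case $k=2$, where $N=2n+5$ and the conjecture reads
\[
\sum_{\pi \in \RPP(\delta_{n+4}/\delta_n)} q^{|\pi|} \, = \, q^{-(2n+5)}\Bigl(\wt{E}^\ast_{2n+1}(q)\,\wt{E}^\ast_{2n+5}(q) \, - \, \wt{E}^\ast_{2n+3}(q)^2\Bigr),
\]
by building a weight-tracking involution on crossing pairs of strands whose net failure to cancel is a single monomial, and verifying that this monomial is $q^{-(2n+5)}$. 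Establishing $k=2$ should reveal whether the discrepancy between the global statistic $a'(\Gamma)$ and the additive per-strand one differs by a quantity depending only on $(n,k)$; this is precisely what the clean power $q^{-N}$ predicts. With that mechanism understood, I would climb to general $k$ either by induction through a Desnanot--Jacobi (Dodgson condensation) recurrence relating the $k$, $k-1$, and $k-2$ determinants, or by directly checking that the accumulated strand shifts and peak discrepancies telescope to $N = k(k-1)(6n+8k-1)/6$. A secondary sanity route worth attempting in parallel is to transfer the proven SSYT determinant Corollary~\ref{cor:qEdet} across the relation between $E_n(q)$ and $\EP_n(q)$, though since these two $q$-Euler numbers are genuinely distinct polynomials I do not expect a direct transfer, and I regard the $k=2$ involution as the decisive step.
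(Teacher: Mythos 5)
The statement you are proving is Conjecture~\ref{conj:RPPEdet}: the paper offers \emph{no proof} of it, only the case $k=1$ (equation~\eqref{eq:RPP2ndqEuler}, established in the proof of Corollary~\ref{cor:euler-nhlf-rpp}) and the remark that ``preliminary computations suggest'' the determinantal formula. So there is no argument of the paper to match against; the only question is whether your proposal settles the conjecture, and it does not. Your setup is sound: the reduction via Theorem~\ref{thm:skewRPP} and Theorem~\ref{thm:num_pleasant} to the weight $q^{a'(\Gamma)}\prod_{(r,s)\in\Gamma}(1-q^{h(r,s)})^{-1}$ over fans of Dyck paths is correct, the hook bookkeeping $2d+4r-3=(2d+1)+4(r-1)$ on strand $r$ matches Corollary~\ref{cor:kEulerPath}, and your arithmetic checks ($N=0$ at $k=1$, $N=2n+5$ at $k=2$) are right. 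But the entire burden of proof rests on the one step you defer: the existence of a ``weight-tracking involution'' on crossing pairs of strands whose net defect is a single monomial, equivalently the claim that the discrepancy between the global statistic $a'(\Gamma)$ and an additive per-strand statistic depends only on $(n,k)$ and not on the fan. You give no construction and no argument for this path-independence. Indeed you yourself identify, correctly echoing the remark in Section~\ref{sec:LP4pleasant}, exactly why it is problematic: the number of excited peaks on a given strand varies over fans, so the extra exponent $\sum 4(r-1)$ over peaks is \emph{not} constant, and the sign-reversing involution behind Lemma~\ref{lem:LGVlambda} does not preserve the excited-peak weight. Observing that $N$ is cubic in $k$ while the strand shifts are quadratic in $r$ is a degree-count consistency check, not a cancellation mechanism; nothing in the proposal shows the variable part actually cancels in the signed sum.

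The fallback routes do not close the gap either. Dodgson condensation relating the $k$, $k-1$, $k-2$ determinants requires an independently established quadratic recurrence for the generating functions $\sum_{\pi\in\RPP(\delta_{n+2k}/\delta_n)}q^{|\pi|}$ themselves; that recurrence is the conjecture in barely disguised form, so the induction is circular unless the $k=2$ mechanism is first proven and shown to propagate. Transferring the proven SSYT determinant (Corollary~\ref{cor:qEdet}) across the relation between $E_n(q)$ and $\EP_n(q)$ is blocked for the reason you state: these are genuinely different polynomials arising from different labelings of the zigzag poset, and no identity in the paper connects them entrywise. In short, your proposal is a reasonable research plan that correctly isolates the known obstruction, but it contains no new idea that overcomes it; the statement remains what the paper says it is --- an open conjecture, in the same position as the companion Conjecture~\ref{conj:plesantdetSch} on $\sPD(\delta_{n+2k}/\delta_n)$, for which the authors likewise note that no Lascoux--Pragacz-type identity is known.
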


\bigskip

\bigskip\section{Final remarks} \label{sec:finrem}

\subsection{} \label{sec:compare}

Other known formulas for $f^{\lambda/\mu}$ are the {\em Jacobi--Trudi
  identity}, the {\em Littlewood--Richardson rule}, and the
{\em Okounkov--Olshanski formula} \cite{OO}. We discuss these and other less-known
formulas for $f^{\lambda/\mu}$ coming from \emph{equivariant Schubert
  structure constants} in \cite[\S 9]{MPP1}.

The Jacobi-Trudi identity is one of the first nontrivial formulas to
count $f^{\lambda/\mu}$. In this paper we have unveiled a strong relation between the
Lascoux--Pragacz identity for Schur functions and the NHLF for
$f^{\lambda/\mu}$. As mentioned in Section~\ref{subsec:LP}, Hamel and Goulden
\cite{HGoul} unified these two identities
into an exponential family of determinantal identities of Schur
functions. Chen--Yan--Yang
\cite{CYY} gave a method to transform among these identities. It would
be of interest if other formulas for $f^{\lambda/\mu}$, like the ones
mentioned above, are related
to special cases of Hamel--Goulden identities.

\subsection{} \label{ss:comparisonKonvalinka}

In~\cite{Ko}, Konvalinka gives a new proof of the NHLF.  Specifically,
he presents a bumping algorithm on bicolored flagged
tableaux of shape $\mu$ to prove the Pieri--Chevalley formula for
general skew shapes (see \cite[\S 8.4]{IkNa09} and \cite{MPP3}). For
the border strips this approach is different from our proof of
Lemma~\ref{lem:ChevalleyStrips}.  In fact, our proof uses the underlying
single path in the excited diagrams of a border strip to perform
cancelations. While Konvanlinka's proof is subtraction-free,
it involves an insertion on the inner partition~$\mu$ that
could be arbitrary even for a border strip.

It is worth noting that both proofs of Lemma~\ref{thm:NHLFborderstrips}
are quite technical.  Initially, this came as a surprise to us, and
our effort to understand the underlying multivariate algebraic
identities led to~\cite{MPP3}.
Let us also mention that in~\cite{IkNa09}, the authors use the
Pieri--Chevalley formula for Billey's \cite{Bil} {\em Kostant polynomials} to prove the
analogue of Lemma~\ref{lem:ChevalleyStrips} for all skew shapes.

\subsection{} \label{ss:finrem-hist-survey}
There is a very large literature on alternating permutations,
Euler numbers, Dyck paths, Catalan and Schr\"{o}der numbers, which are some
of the classical combinatorial objects and sequences.  We refer to~\cite{Stanley_SurveyAP}
for the survey on the first two, to~\cite{StCat} for a thorough treatment of
the last three, and to~\cite{GJ,OEIS,EC2} for various generalizations, background
and further references.

Finally, the first $q$-analogue $E_n(q)$ of Euler numbers we consider is
standard in the literature and satisfies a number of natural properties,
including a $q$-version of equation~\eqref{eq:tan-sec}
(see e.g.~\cite[$\S$4.2]{GJ}).  However, the second $q$-analogue
$\EP_n(q)$ appears to be new.  It would be interesting
to see how it fits with the existing literature of multivariate
Euler polynomials and statistics on alternating permutations.

\subsection{}  \label{ss:finrem-cat}
The curious Catalan determinant in Corollary~\ref{cor:cat-det}
appeared in the first {\tt arXiv} version of \cite{MPP1}. However, the
proof here is more self contained as a direct application of
Theorem~\ref{thm:num_excited_HG}. This Catalan determinant is both
similar and related\footnote{The connection was found by T.~Amdeberhan (personal communication).}
to another Catalan determinant
in~\cite[proof of Lemma~1.1]{AL}.
In fact, both determinants are special cases of more general counting results,
and both can be proved by the Lindstr\"om--Gessel--Viennot lemma.

% \subsection{} \label{ss:finrem-combin-proof}
%{Combinatorial proof of Theorem~\ref{thm:skewSSYT}}
%
% In Section~\ref{sec:HGSSYT} we proved Theorem~\ref{thm:bij}: that the
% (restricted) Hillman--Grassl map is a bijection between SSYT of skew
% shape and excited arrays. The proof uses Theorem~\ref{thm:skewSSYT},
% proved algebraically and Theorem~\ref{thm:bij}, proved
% combinatorially. A combinatorial proof of the converse of the latter:
% i.e. showing that $\HG(SYT(\lambda/\mu))
% \subseteq \bigcup_{D\in \ED(\lambda/\mu)} \mathcal{A}^*_D$ would yield
% a combinatorial proof of Theorem~\ref{thm:skewSSYT}. One first
% step is given a SSYT (SYT) $T$ of shape
% $\lambda/\mu$, determine what is its associated excited diagram of the array
% $\HG(T)$?

\subsection{} \label{ss:finrem-foulkes}
The connection between alternating permutations and symmetric functions
of border strips goes back to Foulkes~\cite{Fou}, and has
been repeatedly generalized and explored ever since
(see~\cite{Stanley_SurveyAP}).  It is perhaps surprising that
Corollary~\ref{cor:euler-nhlf} is so simple, since the other two
positive formulas in Section~\ref{sec:compare} become quite involved.
For the LR-coefficients, let partition $\nu\vdash 2n+1$ be such that
$\ts \nu_1, \ell(\nu) \le n+1$.  It is easy to see that in this case
the corresponding LR-coefficient is nonzero:
\ts $c_{\de_n\ts\nu}^{\de_{n+2}} >0$, suggesting that
summation over all such~$\nu$ would can be hard to compute.

% \begin{theorem}[\cite{Strobl}]
% \[
% g^{\lambda/\mu} = |\lambda/\mu|! \sum_{S \in \ED_D(\lambda/\mu)} \prod_{u\in
%  [\lambda]\setminus S} \frac{1}{h_D(u)},
% \]
% where $h_D(u)$ is the shifted hook-length of cell $u$, and
% $\ED_D(\lambda/\mu)$ is the set of type D excited diagrams of $\lambda/\mu$.
% \end{theorem}
%
\subsection{}  \label{ss:finrem-RPP}
%
%   $q$-analogue of NHLF for reverse plane partitions
%
The proof in~\cite{MPP1} of the skew RPP $q$-analogue of
Naruse (Theorem~\ref{thm:skewRPP}) is already bijective using the
Hillman--Grassl correspondence. It would be interesting to see if for RPP the case
for border strips implies the case for all connected skew shapes.
Note that we do not know of a Lascoux--Pragacz analogue of~\eqref{eq:LascouxPragacz-SSYT}
for skew RPP. Relatedly, we also do not know of such an analogue for the number of pleasant diagrams (the supports of arrays obtained from
Hillman--Grassl applied to skew RPP) as discussed in Section~\ref{sec:LP4pleasant}.
On the other hand, Conjectures~\ref{conj:RPPEdet} and \ref{conj:plesantdetSch}  
suggest that there might be such formulas in some cases.

%----------------------------------------------------------------

\vskip.86cm

\subsection*{Acknowledgements}
We are grateful to Per Alexandersson, Dan Betea, Sara Billey, Jang Soo
Kim, Matja\v{z} Konvalinka, Leo Petrov, Robert Proctor, Eric Rains,
Luis Serrano, Richard Stanley, Matthew Willis, and Alex Yong
for useful comments and help with the references.  We are also thankful to
Tewodros Amdeberhan, Brendon Rhoades and Emily Leven for discussions on the
curious Catalan determinants, and to the referees for helpful comments
and suggestions.  The first author is supported by an AMS-Simons
travel grant. The second and third authors were partially supported by the~NSF.

\vskip.8cm

%\newpage

\end{document}